\theoremstyle{plain}
\newtheorem{theorem}{Theorem}[section]
\newtheorem{lemma}[theorem]{Lemma}     
\newtheorem{corollary}[theorem]{Corollary}
\newtheorem{proposition}[theorem]{Proposition}
\newtheorem{theorem-var}{Theorem}[]
\newtheorem{lemma-var}[theorem-var]{Lemma}     
\newtheorem{corollary-var}[theorem-var]{Corollary}
\newtheorem{proposition-var}[theorem-var]{Proposition}
\theoremstyle{definition}
\newtheorem{definition}[theorem]{Definition}
\theoremstyle{remark}
\newtheorem{remark}{Remark}
\newtheorem{example}{Example}
\newtheorem*{question}{Question}
\newtheorem*{twoht}{(2-$\mathrm{ht}$)}
\numberwithin{equation}{section}
\DeclareMathOperator{\Pic}{Pic}
\DeclareMathOperator{\End}{End}
\DeclareMathOperator{\Hom}{Hom}
\DeclareMathOperator{\supp}{supp}
\DeclareMathOperator{\alt}{ht}
\DeclareMathOperator{\height}{height}
\DeclareMathOperator{\GL}{GL}
\DeclareMathOperator{\SL}{SL}
\DeclareMathOperator{\Spin}{Spin}
\DeclareMathOperator{\SO}{SO}
\DeclareMathOperator{\Sp}{Sp}
\DeclareMathOperator{\Ad}{Ad}
\title[Projective normality of model varieties]{Projective normality of model varieties\\ 
and related results}
\author{Paolo Bravi, Jacopo Gandini, Andrea Maffei}
\date{\today}
\email{bravi@mat.uniroma1.it}
\curraddr{\textsc{Dipartimento di Matematica\\ Sapienza Universit\`a di Roma\\ 
Piazzale Aldo Moro 5\\ 00185 Roma, Italy}}
\email{jacopo.gandini@sns.it}
\curraddr{\textsc{Scuola Normale Superiore\\
Piazza dei Cavalieri 7\\ 56126 Pisa, Italy}}
\email{maffei@dm.unipi.it}
\curraddr{\textsc{Dipartimento di Matematica\\ Universit\`a di
    Pisa\\ Largo Bruno Pontecorvo 5\\ 56127 Pisa, Italy}}
\subjclass[2010]{14M27 (primary), 20G05 (secondary).}
\begin{document}

\begin{abstract}
We prove that the multiplication of sections of globally generated
line bundles on a model wonderful variety $M$ of simply connected type
is always surjective. This follows by a general argument which works
for every wonderful variety and reduces the study of the
surjectivity for every couple of globally generated line bundles to a
finite number of cases. As a consequence, the cone defined by a
complete linear system over $M$ or over a closed $G$-stable subvariety
of $M$ is normal. We apply these results to the study of
the normality of the compactifications of model varieties in simple projective
spaces and of the closures of the spherical nilpotent orbits. 
Then we focus on a particular case   
proving two specific conjectures of Adams, Huang and Vogan 
on an analogue of the model orbit of the group of type $\mathsf E_8$.
\end{abstract}

\maketitle

\section*{Introduction}

Let $G$ be a complex linear algebraic group, semisimple and simply
connected. A $G$-variety $M$ is called \textit{wonderful} of rank $n$
if it satisfies the following conditions:
\begin{itemize}
\item[--] $M$ is smooth and projective;
\item[--] $M$ possesses an open orbit whose complement is a union of
  $n$ smooth prime divisors (the \textit{boundary divisors}) with
  non-empty transversal intersections;
\item[--] any orbit closure in $M$ equals the intersection of the
  prime divisors which contain it.
\end{itemize}

Examples of wonderful varieties are the \textit{flag varieties}, which
are the wonderful varieties of rank zero, and the \textit{complete
  symmetric varieties} introduced by C.~De Concini and C.~Procesi \cite{CP},
which we rather call \textit{adjoint symmetric wonderful
  varieties}. Wonderful varieties were then considered in full generality by D.~Luna, 
who started a program of classification in terms of
combinatorial invariants \cite{Lu2}.

Consider the following.

\begin{question} Let $M$ be a wonderful variety and, for
$\mathcal L, \mathcal L' \in \Pic(M)$, consider the multiplication map
\[
m_{\mathcal L,\mathcal L'} \colon \Gamma(M,\mathcal L) \otimes
\Gamma(M,\mathcal L') \longrightarrow \Gamma(M,\mathcal L \otimes
\mathcal L').
\]
Is $m_{\mathcal L,\mathcal L'}$ surjective for all globally generated $\mathcal L, \mathcal L'$?
\end{question}

In the case of a flag variety, the answer to the previous question is affirmative, 
indeed by the Borel-Weil theorem $\Gamma(M,\mathcal L)$ is a simple
$G$-module for all $\mathcal L \in \Pic(M)$. 
In the case of the wonderful compactification of an adjoint
group (regarded as a symmetric $G\times G$-variety) a still affirmative answer was obtained
by S.S.~Kannan \cite{Ka} with a very explicit analysis. 
In the case of any adjoint
symmetric wonderful variety the same was obtained by R.~Chiriv{\`\i} and the third named author in
\cite{CM} with an inductive argument. 
In general, when all the above-mentioned multiplication maps are surjective,
it follows that the image of $M$ in the
dual projective space of the complete linear system associated to any
globally generated line bundle is projectively normal.

Another remarkable class of wonderful varieties is that of the \textit{model wonderful varieties}, introduced by Luna \cite{Lu3}. 
Given a central subgroup $\Gamma \subset G$, a \textit{model homogeneous space} for the algebraic group $G_\Gamma := G/\Gamma$ is a 
quasi-affine homogeneous space $G/H$ such that $\Gamma \subset H$ and the coordinate ring $\mathbb{C}[G/H]$ is a model of the representations 
of $G_\Gamma$ in the sense of I.M.~Gel'fand (see \cite{BGG}, \cite{GZ1}, \cite{GZ2}), that is, $\mathbb{C}[G/H]$ is isomorphic as a $G$-module 
to the direct sum of all the irreducible representations of $G_\Gamma$. 
Main examples of model homogeneous spaces arise as nilpotent orbits for the adjoint action of $G$,
see also \cite{CM2} where a standard monomial theory for some classical
model homogeneous spaces was developed.
The model homogeneous spaces for $G_\Gamma$ were classified in \cite{Lu3},
where Luna constructed a variety $M^{\mathrm{mod}}_{G_\Gamma}$, which is wonderful for the action of $G$, whose orbits parametrize 
the model homogeneous spaces for $G_\Gamma$. Varieties of the shape $M^{\mathrm{mod}}_{G_\Gamma}$ for some central subgroup $\Gamma \subset G$ 
are called model wonderful varieties. Given a model wonderful variety $M^{\mathrm{mod}}_{G_\Gamma}$, we say that it is of \textit{simply 
connected type} if $M^{\mathrm{mod}}_{G_\Gamma}$ and $ M^\mathrm{mod}_G$ are $G$-equivariantly isomorphic. For $G$ almost simple (i.e., a non-commutative group having no proper closed connected normal subgroups), it 
follows by Luna's description that $M^{\mathrm{mod}}_{G_\Gamma}$ is not of simply connected type if and only if $G_\Gamma = \SO(2r+1)$ (in 
which case $G = \Spin(2r+1)$ and $\Gamma \simeq \mathbb{Z}/2\mathbb{Z}$ is the center of $G$).

The first result of the paper is the following.

\begin{theorem-var}	[see Theorem~\ref{teo: model}] \label{teo: modello-intro}
Let $M$ be a model wonderful variety of simply connected type. The multiplication of global sections
\[
m_{\mathcal L,\mathcal L'} \colon \Gamma(M,\mathcal L) \otimes
\Gamma(M,\mathcal L') \longrightarrow \Gamma(M,\mathcal L \otimes
\mathcal L')
\]
is surjective for all globally generated line bundles $\mathcal
L,\mathcal L'$ on $M$.
\end{theorem-var}

This is false if $M$ is a model wonderful variety not of
simply connected type. Indeed, in case $M =
M^{\mathrm{mod}}_{\SO(2r+1)}$ the multiplication of global
sections $m_{\mathcal L,\mathcal L'}$ is surjective for all globally
generated line bundles $\mathcal L,\mathcal L'$ on $M$ if and only if
$r < 4$ (see Section \ref{subsect: non projnorm}). This is essentially
a consequence of the fact that the tensor product of an almost simple group of
type $\mathsf{B}_r$ does not satisfy the saturation property in the sense of
A.~Klyachko (see \cite{KT} and \cite{Ku}).

Theorem~\ref{teo: modello-intro} follows from a general argument which works for every
wonderful variety $M$ and reduces the surjectivity of the maps
$m_{\mathcal L,\mathcal L'}$ for all globally generated line bundles
$\mathcal L, \mathcal L'$ on $M$ to the surjectivity for a finite number of
couples. 
Indeed, we also prove
similar theorems for other classes of wonderful varieties 
arising in some specific applications.\\

We now explain in more details our reduction in the study of the surjectivity
of the multiplication maps on a wonderful variety, and how we apply Theorem~\ref{teo: modello-intro}
and its analogues in different directions.

\subsection*{A general reduction.}
Fix a Borel subgroup $B\subset G$ and a maximal torus $T \subset
B$. Denote by $\Sigma$ the set of $G$-stable prime divisors of $M$ and
by $\Delta$ the set of $B$-stable prime divisors of $M$ which are not
$G$-stable: since $M$ possesses an open $B$-orbit, $\Delta$ is a finite
set. Then the Picard group $\Pic(M)$ is freely generated by the line
bundles $\mathcal L_D$ with $D \in \Delta$ and the free group $\mathbb{Z} \Sigma$
embeds in $\Pic(M)$, so that we may regard $\mathbb{Z}\Sigma$ as a sublattice
of $\mathbb{Z}\Delta$. Moreover, via the isomorphism $\Pic(M) \simeq \mathbb{Z}\Delta$,
the semigroup of globally generated line bundles is identified with
the semigroup $\mathbb{N}\Delta$. We denote by $\mathcal L_E$ the globally generated
line bundle associated to an element $E \in \mathbb{N}\Delta$. Define the
following partial order relation on $\mathbb{N}\Delta$:
\[	E \leqslant_\Sigma F \qquad \text{ if } \qquad E-F \in \mathbb{N}\Sigma.	\]
This partial order is tightly related to the isotypic
decomposition of the spaces of global sections of the globally
generated line bundles on $M$, which we may always assume linearized
(see Proposition~\ref{prop: decomposizione sezioni}).

In case $M$ is the wonderful compactification of the adjoint group
$G_\mathrm{ad} = G/Z(G)$ regarded as a $G\times G$-variety, then $\Sigma$ is
naturally identified with the basis of the root system of $G$, while
$\Delta$ is naturally identified with the set of the fundamental weights
of $G$. More generally, this is true whenever $M$ is the wonderful compactification
of a non-Hermitian symmetric space, in which case there always exists a root
system $\Phi_\Sigma$ (the \textit{reduced root system}) such that $\Sigma$
is a basis of $\Phi_\Sigma$ and $\Delta$ is the corresponding set of
fundamental weights. This is no longer true in the case of a general
wonderful variety: while there always exists a root system $\Phi_\Sigma
\subset \mathbb{Z}\Delta$ with $\Sigma$ as set of simple roots, the fundamental
weights of $\Phi_\Sigma$ associated to $\Sigma$ may differ from
$\Delta$. Therefore we may think the couple $(\Sigma, \Delta)$ as a
generalization of a root datum. 

Suppose that $E,F \in \mathbb{N} \Delta$ are such that $E <_\Sigma F$ and there is no
$D$ with $E <_\Sigma D <_\Sigma F$: then we say that $F-E \in \mathbb{N}\Sigma$
is a \textit{covering difference}. The set of the covering differences
is finite and in the case of an usual root system it was studied by J.R.~Stembridge
in \cite{St}.

Given $E = \sum_{D \in \Delta} n_D D \in \mathbb{Z} \Delta$, define the
\textit{positive part} $E^+ = \sum_{n_D > 0} n_D D$ and the
\textit{height} $\alt(E) = \sum_{D \in \Delta} n_D$. We prove the
following. 

\begin{lemma-var} [see Lemma~\ref{lem: proiettiva normalita}] \label{lem: riduzione-intro}
Let $M$ be a wonderful variety and let $n$ be such that $\alt(\gamma^+)
\leqslant n$ for every covering difference $\gamma$. If the multiplication
map
\[m_{\mathcal L_E, \mathcal L_F} \colon \Gamma(M,\mathcal L_E) \otimes \Gamma(M,\mathcal L_F) \longrightarrow  \Gamma(M,\mathcal L_{E+F})\]
is surjective for all $E,F \in \mathbb{N} \Delta$ with $\alt(E+F)\leqslant n$, then
it is surjective for all $E,F \in \mathbb{N}\Delta$.
\end{lemma-var}

We use Lemma~\ref{lem: riduzione-intro} to prove Theorem~\ref{teo: modello-intro}.
We first study the covering relation in the case of a model wonderful
variety proving that $\alt(\gamma^+) \leqslant 2$ for all covering differences $\gamma$,
then we study the multiplication maps $m_{\mathcal L_E,\mathcal L_F}$ in the
fundamental cases $E,F \in \Delta$. 
To check the inclusions arising in the exceptional group cases we use the computer. 

The fact that $\alt(\gamma^+) \leqslant 2$
for all covering differences $\gamma$ is an easy exercise in case the
couple $(\Sigma, \Delta)$ corresponds to a root system, and as far as we
know it could be a general fact which holds for all wonderful
varieties.

Proceeding inductively on the partial order $\leqslant_\Sigma$, it is easy to
reduce the surjectivity of the multiplication map
$m_{\mathcal L_E,\mathcal L_F}$ for every $E,F \in \mathbb{N}\Delta$ to the fact that
some special submodules of $\Gamma(M,\mathcal L_{E+F})$ occur in the image of
$m_{\mathcal L_E,\mathcal L_F}$.
This leads to the definition of \textit{low triple} (see
Definition~\ref{dfn: low triples}), which was already introduced in \cite{CM}
to treat the case of an adjoint symmetric
wonderful variety. To prove Lemma~\ref{lem: riduzione-intro} we show that it is possible
to treat inductively (w.r.t.\ the height) the low triples of $M$. 

\subsection*{Spherical orbits in simple projective spaces.}
Our first application of the surjectivity of the multiplication maps 
regards the study of the normality of the closure of a spherical orbit in a simple projective space, i.e., the projective space of a simple $G$-module. 
Recall that a $G$-variety $X$ is called \textit{spherical} if it is normal and possesses an open $B$-orbit, 
wonderful varieties form a very relevant class of spherical varieties and play a prominent role in their classification.

Let $V$ be a simple $G$-module, 
let $G \cdot [v] \subset \mathbb{P}(V)$ be a spherical orbit and consider its closure $X = \overline{G \cdot [v]}$, 
which is not necessarily normal. 
Let $H$ be the stabilizer of $[v]$, 
then the spherical homogeneous space $G/H$ can always be embedded as the open orbit of a wonderful variety $M$, 
called the \textit{wonderful compactification} of $G/H$, 
which dominates any other projective compactification of $G/H$ with a unique closed orbit.

If the multiplication of global sections of globally generated line bundles on $M$ is surjective,
the normality of every orbit closure $X \subset \mathbb{P}(V)$ dominated by $M$ can be reduced to the normality of some ``fundamental'' orbit closures. On the other hand, sometimes such fundamental orbit closures are easily shown to be normal. In particular, we have the following.

\begin{theorem-var} [see Corollary~\ref{cor:normality-in-sps}] \label{teo: normality-in-sps-intro}
Let $V$ be a simple $G$-module, let $X \subset \mathbb{P}(V)$ be the closure of a spherical orbit 
and consider a wonderful variety $M$ which dominates $X$. 
If $M$ is symmetric with reduced root system of type $\mathsf{A}$ or model for a connected semisimple group of type $\mathsf{A} \mathsf{D}$, 
then $X$ is normal.
\end{theorem-var}

\subsection*{Spherical nilpotent orbits.} 
Our second application regards the study of the normality of the closure of a spherical nilpotent orbit $\mathcal{O} \subset \mathfrak{g}$, 
where $\mathfrak{g}$ denotes the Lie algebra of $G$. 
In particular, if we consider the projectivization of $\mathcal{O}$, 
we get again a spherical orbit $\mathcal{U} \subset \mathbb{P}(\mathfrak{g})$, 
and the closure $\overline{\mathcal{O}}$ coincides with the affine cone over $\overline{\mathcal{U}}$.

Following \cite{CDM} and a suggestion of D.~Luna, to study the normality of $\overline{\mathcal{O}}$
we may consider a wonderful variety $M_\mathcal{O}$, namely the wonderful compactification of $\mathcal{U}$. 
It turns out that the wonderful variety $M_\mathcal{O}$ is either symmetric or model, or closely related to one of these. 
If the surjectivity of the multiplication maps of $M_\mathcal{O}$ is known, 
the description of the $G$-module structure of the coordinate ring of $\overline{\mathcal{O}}$ is an easy task.

Therefore, we prove the following (with computer-aided computations in the exceptional group cases).

\begin{theorem-var}	[see Theorem~\ref{teo:orbite}]	\label{teo-orbite-intro}
Let $\mathcal{O} \subset \mathfrak{g}$ be a spherical nilpotent orbit and let $M_\mathcal{O}$
be the associated wonderful variety. Then the multiplication map
\[ m_{\mathcal L,\mathcal L'}\colon\Gamma(M_\mathcal{O},\mathcal L) \otimes \Gamma(M_\mathcal{O},\mathcal L')
\longrightarrow \Gamma(M_\mathcal{O},\mathcal L\otimes \mathcal L')\] 
is surjective for all globally generated line bundles $\mathcal L,\mathcal L'$ on $M_\mathcal{O}$.
\end{theorem-var}

As a corollary (see Corollary~\ref{cor: normality-sno}) 
we obtain the classification of the spherical nilpotent adjoint orbits $\mathcal{O}$ whose closure is normal 
(these results are already known, see for instance \cite[Theorem~5.1]{Cos} or \cite[Table~2]{Pa}).
As a particular case, among these orbits we also find the model orbit of $\mathsf E_8$ 
studied by J.~Adams, J-S.~Huang and D.A.~Vogan~Jr.\ in \cite{AHV}.
 
\subsection*{The real model orbit of type $\mathsf{E}_8$.}
Following \cite{AHV}, we also consider an analogue
of the model nilpotent adjoint orbit $\mathcal{O}$ of $\mathsf{E}_8$.
More explicitly, we consider a $K$-orbit, 
where $K$ (an algebraic group) is the complexification 
of a maximal compact subgroup $K_\mathbb R$ of the split real form of $\mathsf E_8$.
Then $K$ is the fixed point subgroup of an involution of $\mathsf E_8$,
this involution passes to the Lie algebra and $K$ acts on the eigenspace $\mathfrak{p}$ of eigenvalue $-1$.
The analogue of the model orbit that we consider is the intersection
$\mathcal{O}_\mathfrak{p} = \mathcal{O} \cap \mathfrak{p}$, which is a $K$-orbit.

For the closure of $\mathcal{O}_\mathfrak{p}$ we prove its normality and
we describe its coordinate ring (Theorem~\ref{teo:E8R}). 
Furthermore, we describe the space of $K_{\mathbb R}$-finite
vectors of the unitary representation 
of the split real form of $\mathsf E_8$ 
that should be associated to this $K$-orbit via the so-called orbit method (Theorem~\ref{teo:E8chi}). 
Both descriptions were already present in \cite{AHV} as consequences of some conjectures, 
which as far as we know are still open.

In order to prove these theorems, we are led to consider another class
of wonderful varieties, which we call \textit{comodel wonderful varieties}
since they are somewhat dual to the model wonderful varieties
(see Theorem~\ref{teo: comodello-esistenza} for a precise definition).
For the comodel wonderful varieties we also show 
that the multiplication of global sections of globally generated line bundles is surjective
(see Theorem~\ref{teo: comodello-proj-norm}).

It turns out indeed that $\mathcal{O}_\mathfrak{p}$ is the cone over a homogeneous space
whose wonderful compactification is closely related to a comodel wonderful variety.

\subsection*{Multiplication of functions on a spherical homogeneous space}

Suppose that $G/H$ is a spherical homogeneous space, in the last section of the paper we also consider the problem of multiplying $G$-modules of functions in the coordinate ring $\mathbb C[G/H]$. Indeed, $\mathbb C[G/H]$ is a multiplicity-free $G$-module (that is, every isotypical component is irreducible), and given two irreducible components $V_\lambda$ and $V_{\lambda'}$ of highest weight $\lambda$ and $\lambda'$ it is well defined their product $V_\lambda V_{\lambda'} \subset \mathbb C[G/H]$, that is, the $G$-module generated by the products $ff'$ with $f \in V_\lambda$ and $f' \in V_{\lambda'}$.

More generally, given $\xi \in \mathcal{X}(H)$, one may consider the corresponding $G$-module of $H$-semiinvariant function
$$
	\mathbb C[G]^{(H)}_\xi = \{ f \in \mathbb C[G] \, : \, f(g h) = \xi(h) f(g) \quad  \forall h \in H\},
$$
and given $\xi, \xi' \in \mathcal{X}(H)$ there is a natural multiplication map
$$
	\mathbb C[G]^{(H)}_\xi \otimes \mathbb C[G]^{(H)}_{\xi'} \longrightarrow  \mathbb C[G]^{(H)}_{\xi + \xi'}.
$$
As in the case $\xi = 0$, the $G$-module $\mathbb C[G]^{(H)}_\xi$ is multiplicity free, and we denote by $V_{\lambda, \xi}$ its irreducible component of highest weight $\lambda$. Therefore, given $\xi, \xi' \in \mathcal{X}(H)$, it is well defined the product $V_{\lambda,\xi} V_{\lambda', \xi'} \subset \mathbb C[G]^{(H)}_{\xi+\xi'}$.

After discussing some non-degeneracy property of such multiplication and giving some examples, we describe the semigroup generated by the set of differences
$$\{\lambda + \lambda' - \mu  \ \, : \, \ V_{\mu, \xi + \xi'} \subset V_{\lambda, \xi} V_{\lambda', \xi'} \quad \exists \ \xi, \xi' \in \mathcal{X}(H)\}$$
in terms of the spherical roots of a suitable wonderful variety as follows. There is a canonical spherical subgroup of $G$ containing $H$, called the \textit{spherical closure} of $H$ and denoted $\overline H$, such that $G/\overline H$ admits a wonderful compactification $M$. Moreover $\mathcal{X}(H) = \mathcal{X}(\overline H)$, and for all $\xi \in \mathcal{X}(H)$ it holds $\mathbb C[G]^{(H)}_\xi = \mathbb C[G]^{(\overline H)}_\xi$. This allows us to reformulate the multiplication of $H$-semiinvariant functions in terms of sections of line bundles on $M$, and we prove the following fact.

\begin{proposition-var}[see Proposition \ref{prop: semigruppo-differenze}]	\label{prop-intro: differenze}
Let $\mathscr M$ be the semigroup generated by the set of differences
$$\{\lambda + \lambda' - \mu  \ \, : \, \ V_{\mu, \xi + \xi'} \subset V_{\lambda, \xi} V_{\lambda', \xi'} \quad \exists \ \xi, \xi' \in \mathcal{X}(H)\},$$
and denote by $\Sigma$ the set of spherical roots of $M$. Then $\mathscr{M} = \mathbb{N} \Sigma$.
\end{proposition-var}

\subsection*{Structure of the paper.}
The first part of the paper is entirely devoted to the proof
of Theorem~\ref{teo: modello-intro} and Lemma~\ref{lem: riduzione-intro}, 
whereas in the second part we have collected the various applications.

In Section~1 we fix the notation and recall some results about the wonderful
varieties and their line bundles. 
In Section~2 we define the low triples and prove Lemma~\ref{lem: riduzione-intro}. 
In Section~3 and in Section~4 we focus on the case of a model wonderful variety, 
first classifying the covering differences and then classifying the low fundamental triples
and studying the associated inclusions.

In Section~5 we introduce the comodel wonderful varieties and
prove the surjectivity of the multiplication for this class of varieties.  

In Section~6 we explain in general how the surjectivity
of the multiplication map can give information on
the normality of the closure of a spherical orbit in the
projective space of a simple $G$-module
(proving by-the-way Theorem~\ref{teo: normality-in-sps-intro}).
In Section~7 we prove Theorem~\ref{teo-orbite-intro}
and we use our results to deduce the normality and the non-normality
of the spherical nilpotent orbit closures.
In Section~8 we concentrate on the real model orbit of type $\mathsf E_8$.

In Section~9 we give the above-mentioned counterexample to the surjectivity of the multiplication 
in the case of a model wonderful variety of not simply connected type.
This leads us to discuss some general properties of the multiplication map and to prove Proposition \ref{prop-intro: differenze}.\\

\textit{Aknowledgments.} We would like to thank Domingo Luna, who suggested us some years ago to study the normality of spherical nilpotent orbit closures via the projective normality of wonderful varieties.

The paper was partially written during a staying of the second author at Friedrich-Alexander-Universit\"at Erlangen-N\"urnberg between March and October 2012, partially supported by a DAAD fellowship. He is grateful to the Emmy Noether Zentrum and especially to Friedrich Knop for warm hospitality; also, many thanks are due to Friedrich Knop for fruitful discussions.

\section{Generalities}

Let $G$ be a simply connected semisimple complex algebraic group.
Fix a maximal torus
$T$ of $G$ and a Borel subgroup $B \supset T$. For any group $K$ we
denote by $\mathcal{X}(K)$ the multiplicative characters of $K$. We denote
also by $\mathcal{X}(T)^+$ the set of dominant characters w.r.t.\ $B$ and if
$\lambda \in \mathcal{X}(T)^+$ we denote by $V(\lambda)$ an irreducible
representation of highest weight $\lambda$ and by $-\lambda^*$ the lowest
weight of $V(\lambda)$. We denote by $S$ the set of simple roots.

Let $M$ be a wonderful $G$-variety with (unique) closed $G$-orbit $Y$. We assume that the center $Z(G)$ acts trivially on $M$, and we will keep this assumption throughout the paper: all the wonderful varieties we will deal with will be $G_{\mathrm{ad}}$-varieties. 
By \cite{Lu1}, $M$ is \textit{spherical}, i.e.\ it possesses an open
$B$-orbit, say $B\cdot x_0 \subset G\cdot x_0 \subset M$.
We denote by $H$ the stabilizer of $x_0$ in $G$.

\subsection{Colors and spherical roots}

Since $B\cdot x_0$ is affine, $G\cdot x_0 \smallsetminus B\cdot x_0$ is a union of finitely many $B$-stable prime
divisors and we denote by $\Delta$ the set of their closures in $M$:
\[ \Delta = \{ D \subset M \, : \, D \text{ is a $B$-stable prime divisor, }
D \cap G\cdot x_0 \neq \varnothing \}.\]
The elements of $\Delta$ are called the \textit{colors} of $M$.

Denote by $B^-$ the opposite Borel subgroup of $B$ and let $y_0 \in
Y$ be the unique $B^-$-fixed point of $M$. 
The normal space of $Y$ in $M$ at $y_0$, 
$\mathrm T_{y_0} M/\mathrm T_{y_0} Y$, is a multiplicity-free $T$-module. 
Set
\[ \Sigma = \left\{ \textrm{$T$-weights of } 
\mathrm T_{y_0} M/\mathrm T_{y_0} Y \right\}:\] 
the elements of $\Sigma$ are called the \textit{spherical roots} of
$M$ and they naturally correspond to local equations of the
boundary divisors of $M$, which are $G$-stable. If $\sigma\in \Sigma$, we
denote by $M^\sigma$ the associated boundary divisor of $M$ such that
$\mathrm T_{y_0}M/\mathrm T_{y_0}M^\sigma$ is the 1-dimensional
$T$-module of weight $\sigma$.

The rank one wonderful varieties were classified by D.N.~Akhiezer \cite{Ak}, 
and we denote by $\Sigma(G) \subset \mathcal{X}(T)$ the finite set of all possible weights 
occurring as the spherical root of a rank one wonderful variety. 
These are called the \textit{spherical roots} of $G$. 
Every spherical root is either a positive root, or the sum of two orthogonal positive roots,  
we refer to \cite[Table 1]{BL} for a list of the spherical roots of $G$.

\subsection{Picard group and Cartan pairing} 

Recall that every line bundle on $M$ or on $Y$ has a unique
$G$--linearization.

We may identify $\Pic(Y)$ with a sublattice of $\mathcal{X}(T)$ and
$\Pic(G\cdot x_0)$ with $\mathcal{X}(H)$ (see \cite{KKV}): we identify $\mathcal L \in
\Pic(Y)$ with the character of $T$ acting on the fiber of $\mathcal L$ over
$y_0$, and we identify $\mathcal L \in \Pic(G\cdot x_0)$ with the character of
$H$ acting on the fiber over $x_0$.

Consider now the maps $\omega\colon \Pic(M) \longrightarrow \mathcal{X}(T)$ and $\xi\colon \Pic(M)
\longrightarrow \mathcal{X}(H)$ defined by the restriction to the closed and to the
open orbit. We may regard $\Pic(M)$ as a sublattice of $\mathcal{X}(T)
\times \mathcal{X}(H)$ by identifying $\mathcal L \in \Pic(M)$ with the couple
$(\omega(\mathcal L), \xi(\mathcal L))$ (see \cite{Br2}). Moreover, we have the following exact sequence 
\[ 0\longrightarrow \mathbb{Z}\Sigma \longrightarrow \Pic(M) \longrightarrow \mathcal{X}(H) \longrightarrow 0.\] 

Given $E \in \mathbb{Z} \Delta$, denote by $\mathcal L_E:=\mathcal{O}(E)$ the associated line bundle. As a group, $\Pic(M)$ is freely generated by the line bundles $\mathcal L_D$ with $D\in\Delta$ (see \cite[Proposition~2.2]{Br1}). For all $E\in\mathbb{Z}\Delta$, the associated
line bundle $\mathcal L_E$ is globally generated (resp.\ ample)
if and only if $E$ is a non-negative (resp.\ positive) combination of
colors. We set $\omega_E = \omega(\mathcal L_E)$, $\xi_E =
\xi(\mathcal L_E)$.

There exists a natural $\mathbb{Z}$-bilinear pairing (called the
\textit{Cartan pairing} of $M$) \[ c \colon \mathbb{Z}\Delta \times \mathbb{Z}\Sigma \longrightarrow
\mathbb{Z} \] which maps the couple $(D, \sigma)$ to the coefficient of
$[M^\sigma]$ along $[D]$. So, regarding $\mathbb{Z}\Sigma$ as a sublattice of
$\mathbb{Z}\Delta$, for $\sigma \in \mathbb{Z} \Sigma$ we have
\[ \sigma = \sum_{D\in \Delta} c(D,\sigma) D. \]

\subsection{Global sections and multiplication}\label{ssec:sezioniglobali}

We now recall the description of the space of global sections $\Gamma(M,\mathcal L)$ of a line bundle $\mathcal L$. 
Notice first that since $M$ is spherical the decomposition of
$\Gamma(M,\mathcal L)$ into simple $G$-modules is multiplicity free for all
$\mathcal L\in\Pic(M)$. If $E \in \mathbb{N}\Delta$ then $\mathcal L_E$ is generated by global
sections. In particular $\Gamma(M,\mathcal L_E)$ must contain a copy of
$V(\omega_E)$ (which is the space of sections of $\mathcal L_E$ on $Y$),
hence $\omega_E$ is dominant. We denote by $V_E$ the unique
simple $G$-submodule of $\Gamma(M,\mathcal L_E)$ of highest weight
$\omega_E$. Notice also that the image of $x_0$ in
$\mathbb{P}(\Gamma(M,\mathcal L_E)^*)$ is a point fixed by $H$. In particular, since
$BH \subset G$ is open, it follows that the \emph{space of spherical vectors}
\[ V(\omega_E^*)^{(H)}_{\xi_E} = \{ v \in V(\omega_E^*)
\, : \, hv = \xi_E(h)v \quad \forall h \in H \}
\] has dimension one and we denote by $h_E$ a generator of this line.

If $\gamma = \sum a_\sigma \sigma\in \mathbb{N}\Sigma$, we denote by $s^\gamma \in
\Gamma(M,\mathcal L_{M^\gamma})$ a section whose divisor is equal to
$M^\gamma= \sum a_\sigma M^\sigma$. Notice that this section is $G$-invariant.  
Recall, as defined in the introduction, that we say that $F
\leqslant_\Sigma E$ if $E-F \in \mathbb{N}\Sigma$. If $E\in \mathbb{N} \Delta$ and
$E\leqslant_\Sigma F$ the multiplication by $s^{F-E}$ induces a 
$G$-equivariant map from the sections of $\mathcal L_E$ to the sections of
$\mathcal L_F$, in particular we have $s^{F-E}V_E\subset
\Gamma(M,\mathcal L_F)$.

\begin{proposition}[{\cite[Proposition~2.4]{Br1}}]	\label{prop: decomposizione sezioni}
Let $F \in \mathbb{Z}\Delta$. Then 
\[
	\Gamma(M,\mathcal L_F) = \bigoplus_{E \in \mathbb{N} \Delta \, : \, E \leqslant_\Sigma F} s^{F-E} V_E.
\]
\end{proposition}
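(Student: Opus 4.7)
The plan is to exploit the sphericity of $M$ to make $\grG(M,\calL_F)$ multiplicity free, then classify its $B$-semi-invariants by their zero divisors and match them with the summands $s^{F-E}V_E$. Since $M$ is spherical, $\grG(M,\calL_F)$ decomposes multiplicity freely as a $G$-module, so each simple submodule is determined by its unique (up to scalar) highest weight vector, which is a $B$-semi-invariant section. Conversely, $M$ being complete gives $\grG(M,\calO_M)=\mC$, so any two nonzero $B$-semi-invariant sections with the same zero divisor differ by a scalar. Hence the $B$-semi-invariant lines in $\grG(M,\calL_F)$ correspond bijectively to the effective $B$-stable divisors of class $F$.

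The $B$-stable prime divisors of $M$ are precisely the colors $D\in\grD$ together with the boundary divisors $M^\grs$, $\grs\in\grS$, so every effective $B$-stable divisor has the form $E+M^\grg$ with $E\in\mN\grD$ and $\grg\in\mN\grS$; its class in $\Pic(M)\simeq\mZ\grD$ is $E+\grg$, and matching $F$ forces $\grg=F-E\in\mN\grS$, i.e.\ $E\leq_\grS F$. To realize each such pair by an explicit section I take the highest weight section $v_E$ of the unique simple submodule $V_E\subset\grG(M,\calL_E)$ of highest weight $\gro_E$. This submodule is characterized as the one mapping isomorphically onto $\grG(Y,\calL_E|_Y)=V(\gro_E)$ under restriction to $Y$, since by multiplicity freeness all other isotypic components lie in the kernel of the restriction; hence $v_E$ does not vanish on $Y$, and in particular on no boundary divisor $M^\grs\supset Y$, so its zero divisor is $E$ exactly. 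Then $s^{F-E}v_E$ is a $B$-semi-invariant section of $\calL_F$ with zero divisor $E+M^{F-E}$, and since $s^{F-E}$ is $G$-invariant, $s^{F-E}V_E$ is a simple $G$-submodule of $\grG(M,\calL_F)$ isomorphic to $V(\gro_E)$.

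Finally, the sum $\sum_{E\leq_\grS F} s^{F-E}V_E$ is direct because the highest weights $\gro_E$ are pairwise distinct as $E$ ranges over classes differing by elements of $\mZ\grS$: indeed $\gro|_{\mZ\grS}$ is injective, since the spherical roots are by construction the distinct (and $\mZ$-linearly independent) $T$-weights on $\mathrm T_{y_0}M/\mathrm T_{y_0}Y$. Together with the bijection established above, this forces the equality in the statement. The main technical obstacle I anticipate is precisely the $\mZ$-linear independence of the spherical roots (equivalently the injectivity of $\gro|_{\mZ\grS}$), which is a structural input about wonderful varieties rather than a formal consequence of the preceding setup; all other steps are bookkeeping around the dictionary between $B$-semi-invariants and effective $B$-stable divisors.
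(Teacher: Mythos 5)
The paper does not reproduce a proof of this proposition, citing Brion's \cite{Br1} instead; your argument follows the same standard approach (classifying the $B$-eigenlines of $\grG(M,\calL_F)$ via their zero divisors, which are effective $B$-stable divisors of class $F$, and identifying each with $s^{F-E}v_E$), and it is correct. The structural fact you flag --- the $\mZ$-linear independence of $\grS$ inside $\calX(T)$ --- is indeed a genuine input about wonderful varieties (the spherical roots are a lattice basis of the group of weights of $B$-semi-invariant rational functions on $M$), but note that you can sidestep it: once you know the $B$-eigenlines of $\grG(M,\calL_F)$ are exactly $\mC\, s^{F-E}v_E$ for $E\leq_\grS F$, and that distinct $E$'s give distinct eigenlines (their zero divisors $E+M^{F-E}$ differ), multiplicity-freeness forces each simple summand of $\grG(M,\calL_F)$ to be the $G$-module generated by one of these eigenlines, i.e.\ the corresponding $s^{F-E}V_E$; the desired decomposition --- and, as a by-product, the pairwise distinctness of the $\gro_E$'s --- then follows without assuming the independence of $\grS$ in advance.
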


If $E,F \in \mathbb{N}\Delta$, consider the multiplication of sections
\[
	m_{E,F} \colon \Gamma(M,\mathcal L_E) \otimes \Gamma(M,\mathcal L_F) \longrightarrow \Gamma(M,\mathcal L_{E+F}).
\]
and denote by $V_E V_F$ the image of $V_E \otimes V_F$. A way to translate the description of this map into a problem on
spherical vectors is the following.

\begin{lemma}[{\cite[Lemma~19]{CLM}}] \label{lemma: supporto moltiplicazione}
Let $D,E,F \in \mathbb{N}\Delta$ be such that $D \leqslant_\Sigma E+F$. Then
$s^{E+F-D} V_D \subset V_E V_F$ if and only if the projection of $h_E
\otimes h_F\in V(\omega^*_E) \otimes V(\omega^*_F)$ onto the isotypic
component of highest weight $\omega^*_D$ is non-zero.
\end{lemma}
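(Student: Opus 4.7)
The plan is to evaluate at the base point $x_0$ and then read off the resulting linear functional from its isotypic components.

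First I would show that, after compatible trivialisations coming from $\calL_E|_{x_0}\otimes\calL_F|_{x_0}\simeq\calL_{E+F}|_{x_0}$, the composition $\mathrm{ev}_{x_0}\circ m_{E,F}:V_E\otimes V_F\to\mC$ coincides with the functional $h_E\otimes h_F\in V(\gro_E^*)\otimes V(\gro_F^*)=(V_E\otimes V_F)^*$. Indeed, evaluation at $x_0$ on $V_E$ is a non-zero $H$-equivariant functional of weight $\xi_E$ (non-zero because $V_E$ can have no common zero on the open orbit, by $G$-equivariance), and such a functional is a scalar multiple of $h_E$; evaluation then commutes with the tensor product of fibers. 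Dualising the surjection $m_{E,F}:V_E\otimes V_F\twoheadrightarrow V_EV_F$, this tells us that $h_E\otimes h_F$ lies in the image of the injection $m_{E,F}^*:(V_EV_F)^*\hookrightarrow V(\gro_E^*)\otimes V(\gro_F^*)$.

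Next I would combine this with Proposition~\ref{prop: decomposizione sezioni} to write $V_EV_F=\bigoplus_{D\leq_\grS E+F}W_D$, where $W_D=s^{E+F-D}V_D$ if $s^{E+F-D}V_D\subset V_EV_F$ and $W_D=0$ otherwise. Since $\Gamma(M,\calL_{E+F})$ is multiplicity-free (here sphericity enters), the highest weights $\gro_D$ occurring for $D\leq_\grS E+F$ are pairwise distinct, so the non-zero summands $W_D^*\simeq V(\gro_D^*)$ are sent by $m_{E,F}^*$ into pairwise distinct isotypic components of $V(\gro_E^*)\otimes V(\gro_F^*)$. Hence the $V(\gro_D^*)$-isotypic part of the image of $m_{E,F}^*$ equals $W_D^*$. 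Decomposing $h_E\otimes h_F=\sum_D \tilde h_D$ along the image, with $\tilde h_D\in W_D^*$, we obtain $\pi_D(h_E\otimes h_F)=\tilde h_D$.

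Finally I would show $\tilde h_D\neq 0$ iff $W_D\neq 0$. For this, evaluate on $W_D=s^{E+F-D}V_D$: since $s^{E+F-D}$ is a $G$-invariant section of $\calL_{M^{E+F-D}}$ vanishing only on the boundary divisors, $s^{E+F-D}(x_0)\neq 0$; and $v\mapsto v(x_0)$ is a non-zero multiple of $h_D$. So $\mathrm{ev}_{x_0}$ is non-zero on every non-zero $W_D$, which is exactly $\tilde h_D\neq 0$. The main subtlety is the multiplicity-free step: without it, two distinct summands $W_D^*$ could share an isotypic component of the tensor product and the isotypic projection $\pi_D$ would no longer isolate $\tilde h_D$; this is where sphericity of $M$ is essential.
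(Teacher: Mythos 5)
Your proof is correct. Since the paper only cites this result from \cite{CLM} (Lemma~19) without reproducing a proof, there is no internal argument to compare against; but your argument is essentially the standard one for this kind of statement and is complete. The key steps all hold up: (a) $\mathrm{ev}_{x_0}$ restricted to $V_E$ is an $H$-semiinvariant functional of weight $\xi_E$, nonzero by $G$-equivariance, hence a nonzero multiple of $h_E$; (b) compatibility of trivialisations of the fibers at $x_0$ makes $\mathrm{ev}_{x_0}\circ m_{E,F}$ the tensor $h_E\otimes h_F$, so $h_E\otimes h_F$ lies in the image of $m_{E,F}^*$; (c) multiplicity-freeness of $\Gamma(M,\calL_{E+F})$ both lets you write $V_EV_F$ as a direct sum of the $s^{E+F-D}V_D$ it contains (any $G$-submodule of a multiplicity-free module is a sub-sum of the isotypic pieces) and guarantees that the highest weights $\gro_D$, $D\leq_\grS E+F$, are pairwise distinct, so the nonzero $W_D^*$ land in pairwise distinct isotypic components of $V(\gro_E^*)\otimes V(\gro_F^*)$; (d) the nonvanishing of $s^{E+F-D}(x_0)$ (the boundary misses the open orbit) together with (a) applied to $V_D$ shows $\mathrm{ev}_{x_0}$ is nonzero on every nonzero $W_D$, so $\tilde h_D\neq 0$ iff $W_D\neq0$. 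One small notational point: what you call $\tilde h_D\in W_D^*$ is really the image $m_{E,F}^*(p_D)$, where $p_D$ is the $W_D^*$-component of $\mathrm{ev}_{x_0}|_{V_EV_F}$ under $(V_EV_F)^*=\bigoplus W_D^*$; the injectivity of $m_{E,F}^*$ is what transfers $p_D\neq0$ into $\tilde h_D\neq 0$, and it would be worth saying so explicitly.
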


\subsection{Distinguished sets of colors}\label{ss:quoziente}

Let $M'$ be a wonderful $G$-variety together with a surjective
equivariant morphism $\phi \colon M \longrightarrow M'$ with connected fibers and
denote $\Delta_\phi \subset \Delta$ the set of colors which map dominantly
onto $M'$. Then the semigroup
\[ (\mathbb{N}\Sigma)/\Delta_\phi = \{\gamma \in \mathbb{N} \Sigma \, : \, c(D,\gamma) = 0 \quad
\forall D \in \Delta_\phi \}
\] is free and its basis, which we denote by $\Sigma/\Delta_\phi$,
coincides with the set of spherical roots of $M'$, while the set of
colors of $M'$ is identified with $\Delta \smallsetminus \Delta_\phi$ 
(see \cite[Proposition~3.3.2]{Lu2}).

Conversely, if $\Delta_0 \subset \Delta$, then there exists a wonderful
variety $M'$ (unique up to isomorphism) together with a surjective
equivariant morphism $\phi \colon M \longrightarrow M'$ with connected fibers if and
only if $\Delta_0$ is \textit{distinguished},
that is, there exists an element $D \in \mathbb{N}_{>0}\Delta_0$ such that
$c(D,\sigma) \geqslant 0$ for every $\sigma \in \Sigma$ 
(see \cite[Proposition~3.3.2]{Lu2} and \cite[Theorem~3.1]{Bra}).

If $\Delta_0 \subset \Delta$ is distinguished, then we say that the
associated wonderful variety $M'$ is the \textit{quotient} of $M$ by
$\Delta_0$, denoted by $M/\Delta_0$. 

Recall the following general fact.

\begin{proposition}
Let $X,Y$ be normal varieties and suppose that $\phi\colon X \longrightarrow Y$ is a
surjective proper morphism with connected fibers. If $\mathcal L \in
\Pic(Y)$, then $\Gamma(Y,\mathcal L) = \Gamma(X,\phi^*\mathcal L)$.
\end{proposition}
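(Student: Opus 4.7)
The plan is to reduce the statement to the identity $\phi_* \calO_X = \calO_Y$ via the projection formula, and then establish that identity using Stein factorization together with the normality hypothesis on $Y$.

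First I would observe that, once we know $\phi_*\calO_X = \calO_Y$, the projection formula gives
\[
\phi_*(\phi^*\calL) \isocan \calL \otimes_{\calO_Y} \phi_*\calO_X \isocan \calL,
\]
and taking global sections on both sides yields
\[
\Gamma(X,\phi^*\calL) = \Gamma(Y, \phi_*\phi^*\calL) = \Gamma(Y,\calL),
\]
where the first equality uses only that $\phi$ is (quasi-)compact and separated. So everything is reduced to showing $\phi_*\calO_X = \calO_Y$.

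To prove $\phi_*\calO_X = \calO_Y$ I would invoke Stein factorization for the proper morphism $\phi$: it factors as
\[
X \xrightarrow{\,f\,} Y' \xrightarrow{\,g\,} Y,
\]
where $Y' = \Spec_Y(\phi_*\calO_X)$, the map $f$ is proper with $f_*\calO_X = \calO_{Y'}$, and $g$ is finite. Because the fibers of $\phi$ are connected, the fibers of $g$ must be singletons, so $g$ is a finite bijective morphism onto $Y$.

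The main (and really only) subtle point is to conclude that $g$ is an isomorphism; this is exactly where the normality of $Y$ enters. Since $g$ is finite, birational onto its image $Y$, and $Y$ is normal, Zariski's Main Theorem (in the form asserting that a finite birational morphism to a normal variety is an isomorphism) shows that $g$ is an isomorphism. Consequently $\phi_*\calO_X = g_* f_* \calO_X = g_*\calO_{Y'} = \calO_Y$, completing the proof. The normality of $X$ is actually not needed once Stein factorization is available, but it ensures that $X$ is reduced and irreducible so the formation of $\phi_*\calO_X$ behaves as expected.
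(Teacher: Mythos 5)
The paper states this proposition as a recalled standard fact and gives no proof, so there is nothing in the source to compare against; I can only assess your argument on its own merits. Your reduction via the projection formula to the identity $\phi_*\calO_X=\calO_Y$, followed by Stein factorization and the normal form of Zariski's Main Theorem, is the standard proof and is correct. One tacit step worth making explicit: you pass from ``$g$ is finite bijective'' to ``$g$ is birational.'' This uses that the generic fibre of $g$, being a one-point finite scheme over $K(Y)$, corresponds to a field extension that is separable, hence of degree one; this is automatic in characteristic zero, which is the paper's standing hypothesis, but in positive characteristic a bijective finite morphism can be a nontrivial purely inseparable cover and the step would fail. Your closing remark is also accurate: once Stein factorization is invoked, the normality of $X$ plays no role (only that $X$ is integral, so $\phi_*\calO_X$ is a sheaf of domains and $Y'$ is integral); all the weight rests on the normality of $Y$.
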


If $E = \sum_{D\in\Delta} a_D D \in \mathbb{Z}\Delta$ we define $\supp (E)$,
the \emph{support} of $E$, as the set of colors $D$ such that $a_D\neq
0$.

\begin{corollary}	\label{cor: sezioni e quozienti}
Let $E \in \mathbb{N}\Delta$ and suppose that $\Delta_0$ is a distinguished
subset of $\Delta$ such that $\Delta_0 \cap \supp(E) = \varnothing$. Then
$\Gamma(M,\mathcal L_E) = \Gamma(M/\Delta_0,\mathcal L_E)$.
\end{corollary}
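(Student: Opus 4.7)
The plan is to realize $\calL_E$ on $M$ as the pullback, along the quotient morphism $M \lra M/\grD_0$, of the ``same'' line bundle on $M/\grD_0$ labelled by $E$, and then invoke the preceding proposition on global sections under a surjective proper morphism with connected fibers.

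Since $\grD_0$ is distinguished, we have the quotient morphism $\phi : M \lra M' := M/\grD_0$, which is surjective, $G$-equivariant, and has connected fibers. As recalled just above, the colors of $M'$ are canonically identified with $\grD \senza \grD_0$. The assumption $\Supp(E) \cap \grD_0 = \vuoto$ then lets us write $E = \sum_{D \in \grD \senza \grD_0} a_D D$ with $a_D \in \mN$, and this same expression defines a globally generated line bundle, which I will denote $\calL'_E$, on $M'$.

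The main step is the identification $\phi^* \calL'_E = \calL_E$ in $\Pic(M)$. By $\mZ$-linearity in $E$ it suffices to check $\phi^* \calL'_D = \calL_D$ for each single color $D \in \grD \senza \grD_0$ (where $D$ denotes the color of $M'$ on the left and the corresponding color of $M$ on the right). This compatibility is essentially built into the construction of the quotient of a wonderful variety by a distinguished set of colors in Luna's theory: using the descriptions of $\Pic(M)$ and $\Pic(M')$ as sublattices of $\calX(T) \times \calX(H)$ and of $\calX(T) \times \calX(H')$ obtained by restriction to the closed and to the open orbit, together with the natural compatibility of these restrictions with $\phi$, the pullback $\phi^*$ corresponds precisely to the natural inclusion of the color lattices $\mZ(\grD \senza \grD_0) \hookrightarrow \mZ\grD$. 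This single point is the only nontrivial ingredient; everything else will be formal.

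Once $\phi^* \calL'_E = \calL_E$ is established, applying the preceding proposition to $\phi$ and to the line bundle $\calL'_E$ on $M'$ gives
\[
\grG(M/\grD_0, \calL'_E) = \grG(M, \phi^* \calL'_E) = \grG(M, \calL_E),
\]
and identifying $\calL'_E$ with $\calL_E$ (they are both labelled by the same element $E \in \mN(\grD \senza \grD_0) \subset \mN\grD$) yields the corollary. The main obstacle is thus not a computation but the Picard-level bookkeeping in the third paragraph, which is standard in the structure theory of wonderful varieties.
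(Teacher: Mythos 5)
Your proposal is correct and is exactly the intended route: the paper states the corollary immediately after the proposition on sections under a proper surjection with connected fibers, leaving the deduction implicit, and the only content to supply is the identification $\phi^*\calL'_E = \calL_E$, which you correctly reduce to the color-by-color identification $\phi^*\calO_{M'}(D') = \calO_M(D)$ for $D \in \grD\senza\grD_0$ via the lattice picture of $\Pic$. One small remark for precision: the paper abuses notation by writing $\calL_E$ for both line bundles; your explicit introduction of $\calL'_E$ and the verification that $\phi^*$ carries $\mZ(\grD\senza\grD_0)$ into $\mZ\grD$ as the evident inclusion is the correct way to make the statement rigorous, and this fact is indeed standard in Luna's theory of quotients by distinguished subsets (it is the same identification the paper uses again just before Corollary~\ref{cor:localizzazione}).
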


\subsection{Parabolic induction: spherical roots and colors}

We describe now a standard way to construct a wonderful variety for the group
$G$ from a wonderful variety for a Levi subgroup of a parabolic subgroup of $G$.

Let $L$ be a proper Levi subgroup of $G$ which contains $T$. 
Let $Q$ be the parabolic subgroup
associated to $L$ and containing $B$, and let $Q^-$ be the opposite
parabolic subgroup. Denote by $R_Q$ and $R_Q^-$ the solvable radicals
and by $U_Q$ and $U_Q^-$ the unipotent radicals of $Q$ and
$Q^-$, respectively. 
Finally let $L^{\mathrm{ss}}$ be the semisimple part of $L$ and denote
by $L_{\mathrm{ad}}=L^{\mathrm{ss}}/Z(L^{\mathrm{ss}})=L/Z(L)$ the adjoint quotient of $L$.

Let $N$ be a wonderful variety for the group $L_{\mathrm{ad}}$. Although wonderful varieties have been defined for semisimple groups,
in this case it is convenient to look at $N$ as an $L$-variety and to
consider $L$-linearized line bundles on $N$. The definitions given
for a wonderful variety can be extended to this situation.

Extend
the action of $L$ on $N$ to $Q^-$, with $U^-_Q$ acting trivially, and
define the parabolic induction of $N$ as
\[ M = G \times_{Q^-} N. \] 
We have a projection $\pi\colon M\longrightarrow G/Q^-$, 
$N$ is the fiber over $Q^-$ and moreover it is the set of points of
$M$ fixed by $U^-_Q$. 
One can easily show the following properties.
\begin{enumerate}
\item $M$ is a wonderful $G$-variety.
\item The intersection of a subset of $M$ with $N$ induces a bijection
  from the set of $G$-stable subsets of $M$ to the set of $L$-stable subsets of
  $N$.  For every spherical root $\sigma$ of $M$ we denote by $N^\sigma$ the
  intersection $M^\sigma \cap N$.
\item The restriction of line bundles from $M$ to $N$ induces an
  isomorphism $\rho$ between the class groups of linearized line bundles
  $\Pic_G(M)$ and $\Pic_L(N)$.
\item We have an injective map $\varepsilon$ from the set $\Delta(N)$ of colors of $N$
  to the set $\Delta(M)$ of colors of $M$ given by
  $\varepsilon(D) = \overline{B\cdot  D}$ for all $D\in\Delta(N)$.
\item The exact sequence $0\longrightarrow \mathcal{X}(L/L^{\mathrm{ss}}) \longrightarrow \Pic_L(N) \longrightarrow
  \Pic_{L^{\mathrm{ss}}}(N) \longrightarrow 0$ has a natural splitting given by
  $\mathcal L_D \longmapsto \mathcal L_{\varepsilon(D)}\bigr|_N$ for every color $D \in \Delta(N)$.
\item If $D \in \Delta(M)\smallsetminus \varepsilon(\Delta(N))$ then the restriction
  of $\mathcal{O}(D)$ to $N$ is a trivial line bundle on $N$.
\item The set of colors $\varepsilon(\Delta(N))$ is distinguished 
  and $M/\varepsilon(\Delta(N)) \simeq G/Q^-$.
\item Since $L^{\mathrm{ss}}$ is simply connected we have $\Pic(N)\simeq
  \Pic_{L^{\mathrm{ss}}}(N)$.
\end{enumerate}

Denote
\[ \Sigma = \left\{ \textrm{$T$-weights of } \mathrm T_{y_0}
N/\mathrm T_{y_0} (L\cdot y_0) \right\}, \] 
where we recall that $y_0$
is the point of $M$ fixed by $B^-$, hence $L\cdot y_0$ is the unique
closed orbit of $N$. The set $\Sigma$ is in correspondence with the
set of $L$-stable divisors and in particular $\sigma$ is the $T$-weight
of $\mathrm T_{y_0} N/\mathrm T_{y_0} N^\sigma$. 
Moreover, we have that $\Sigma$ equals the set of spherical roots of $M$
and every element of $\Sigma$ is a sum of simple roots of $L$. 
  
By the above properties (3) and (4) for every color $D$ in $\Delta(N)$ we have
a canonical choice of a linearization of the associated line bundle 
and we have a natural decomposition
\[ \Pic_L(N)\simeq \mathcal{X}(L/L^{\mathrm{ss}}) \oplus \bigoplus_{D\in \Delta(N)}
\mathbb{Z} [D].
\] 
For all $\sigma\in\Sigma$, $[N^\sigma] = \sum_{D\in \Delta(M)} c(D,\sigma)\rho([D])$. 
If $\mathcal L,\mathcal L'\in \Pic_L(N)$ we define $\mathcal L \geqslant_{\Sigma} \mathcal L'$ if,
using the additive notation, 
$\mathcal L-\mathcal L'=\sum a_\sigma \mathcal{O}(N^\sigma)$ with $a_\sigma\geqslant0$ for all $\sigma$, 
similarly to what we have done in Section~\ref{ssec:sezioniglobali}. 
In this way Proposition~\ref{prop: decomposizione sezioni} holds for $N$ without any
change.

It is easy to see when a $G$-wonderful variety $M$ can be obtained by
parabolic induction. Let $P^-$ be the stabilizer of $y_0$
and let $S^p$ the set of simple roots that are in the set of roots of
$P^-$. Assume that $S'=S^p\cup \supp_S \Sigma \neq S$ and let $L$ be
the standard Levi subgroup associated to $S'$. Let $Q, Q^-, R_Q, R_Q^-,
U_Q, U_Q^-, L^{\mathrm{ss}}, L_{\mathrm{ad}}$ be defined as above. 
Set $\widetilde \Delta = \{ D \in \Delta \, : \, \text{ if } P_\alpha \cdot D \neq D \text{ then } \alpha \in S' \}$,
then $\widetilde \Delta$ is a distinguished set and $M/\widetilde \Delta =
G/Q^-$. Moreover, $N=M^{U_Q^-}$ is a wonderful variety for $L_{\mathrm{ad}}$ 
and $M$ is obtained by parabolic induction from $N$ as above. 

\subsection{Parabolic induction: global sections}

Let now $M$ be obtained by parabolic induction from $N$ as above.
Let $\mathcal{M}$ be a line bundle on $M$ and denote by $\mathcal{N}$ its
restriction to $N$. We want to compare the sections of $\mathcal{M}$ and
$\mathcal{N}$. The restriction of sections induces a map $r_\mathcal{M}\colon \Gamma(M,\mathcal{M})\longrightarrow
\Gamma(N,\mathcal{N})$. Notice that $U\cdot N$ is a dense subset of $M$,
hence the restriction of $r_\mathcal{M}$ to $\Gamma(M,\mathcal{M})^U$ is
injective. We first describe the kernel of $r_\mathcal{M}$.

Let $\mathcal{M}=\mathcal L_F$, then $\Gamma (M,\mathcal{M})=\bigoplus s^{F-E} V_E$
where the sum is over all $E\in \mathbb{N} \Delta$ such that $E \leqslant_\Sigma F$.  For each $E\in \mathbb{N} \Delta$ set $W_E = V_E^{U_Q}$. This is an
irreducible $L$-submodule with the same highest weight as $V_E$.
Let $I_E$ be the $L$-stable complement of $W_E$ in $V_E$.

\begin{lemma}\label{lem:zeri}
We have
\[ \ker r_\mathcal{M} = \bigoplus_{E\in \mathbb{N} \Delta\, : \, E\leqslant_\Sigma F}
s^{F-E} I_E. \]
\end{lemma}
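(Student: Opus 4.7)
The plan is to combine the decomposition of Proposition~\ref{prop: decomposizione sezioni} with a dimension count coming from the parabolic induction identification $M=G\times_{Q^-}N$, and to conclude via an $L$-weight comparison. Throughout let $S_L\subset S$ denote the simple roots of $L$.

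First I would take $U_Q$-invariants in $\Gamma(M,\calL_F)=\bigoplus_{E\leq_\Sigma F} s^{F-E}V_E$: since $s^{F-E}$ is $G$-invariant and $V_E^{U_Q}=W_E$, this gives
\[ \Gamma(M,\calL_F)^{U_Q}=\bigoplus_{E\leq_\Sigma F} s^{F-E}W_E. \]
On the other hand, since $M=G\times_{Q^-}N$ and $U_Q\cdot Q^-$ is the big Bruhat cell of $G$, a Frobenius reciprocity argument (using $\Gamma(M,\calL_F)\isocan\mathrm{Ind}_{Q^-}^G\Gamma(N,\calN)$ and evaluation at the identity) will give an $L$-equivariant isomorphism $\Gamma(M,\calL_F)^{U_Q}\xrightarrow{\sim}\Gamma(N,\calN)$ induced by $r_\calM$. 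This already forces $r_\calM$ to be surjective and yields the dimension equality
\[ \dim\ker r_\calM=\sum_{E\leq_\Sigma F}(\dim V_E-\dim W_E)=\sum_{E\leq_\Sigma F}\dim I_E. \]

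Next I would show the inclusion $\bigoplus_E s^{F-E}I_E\subset\ker r_\calM$; by the previous dimension equality this would then be an equality. For $w\in I_E$ one computes $r_\calM(s^{F-E}w)=(s^{F-E}|_N)\cdot w|_N$, and $s^{F-E}|_N$ is a nonzero $L$-invariant section of $\calO_N(\sum a_\sigma N^\sigma)$, hence a nonzerodivisor on $\Gamma(N,\calN)$ since $N$ is irreducible. So it is enough to prove, for every $E\in\mN\Delta$, the inclusion $I_E\subset\ker r_{\calL_E}$.

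The final ingredient is an $L$-weight comparison. The isomorphism established above identifies $\Gamma(N,\calL_E|_N)$ with a multiplicity-free $L$-module whose set of $L$-highest weights is exactly $\{\gro_{E'}:E'\leq_\Sigma E\}$; since each $\sigma\in\Sigma$ is a nonnegative combination of roots in $S_L$, each such $\gro_{E'}$ has the form $\gro_E-\sum_{\gra\in S_L}m_\gra\gra$. In contrast, every irreducible $L$-summand of $I_E$ has highest weight $\mu=\gro_E-\sum_{\gra\in S}n_\gra\gra$ with $\mu\neq\gro_E$, and I would argue that at least one $n_\gra$ with $\gra\in S\senza S_L$ is positive: otherwise a $U_L$-invariant weight-$\mu$ vector $v\in V_E$ would be annihilated by every root subgroup of $U_Q$ (each shift $\mu+\gra'$ with $\gra'$ a positive root of $U_Q$ would lie outside the weight polytope of $V_E$, since $\gro_E-(\mu+\gra')$ has a strictly negative coefficient on some element of $S\senza S_L$), hence $v$ would be $U$-invariant, a scalar multiple of the highest weight vector, contradicting $\mu\neq\gro_E$. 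Linear independence of $S$ then makes the two families of weight shifts disjoint, and multiplicity-freeness of $\Gamma(N,\calL_E|_N)$ forces $r_{\calL_E}(I_E)=0$. The main obstacle is precisely this last $L$-weight disjointness step; once it is in place, the dimension count closes the argument.
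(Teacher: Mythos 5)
Your final paragraph — the $L$-weight comparison showing $I_E\subset\ker r_{\calL_E}$ — is essentially the paper's argument, and you actually spell out the step that the paper only asserts (namely, that an $L$-highest weight vector $v\in I_E$ of weight $\mu=\gro_E-\sum_{\gra\in S}n_\gra\gra$ must have $n_\gra>0$ for some $\gra\in S\senza S'$, since otherwise $v$ would be killed by all of $U_Q$, hence $U$-invariant, hence proportional to the highest weight vector of $V_E$, which lies in $W_E$). The reduction via "$s^{F-E}|_N$ is a nonzerodivisor" is also fine.

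The gap is in the step you flag as the starting point. You want an isomorphism $\Gamma(M,\calL_F)^{U_Q}\xrightarrow{\sim}\Gamma(N,\calN)$ "by Frobenius reciprocity and evaluation at the identity." Frobenius gives you injectivity of $r_\calM$ on $U_Q$-invariants (density of $U_Q\cdot N$ in $M$), but it does \emph{not} give surjectivity: the section of $G\times_{Q^-}\Gamma(N,\calN)$ over the open cell $U_Q\cdot Q^-$ determined by a given $w\in\Gamma(N,\calN)$ need not extend to all of $G/Q^-$. For a minimal example, $\mathrm{Ind}_{B^-}^{\mathrm{SL}_2}\mC_{-1}=\Gamma(\mP^1,\calO(-1))=0$, while the target of evaluation is $1$-dimensional. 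To salvage the isomorphism one must compare the $L$-module constituents of $\Gamma(M,\calL_F)^{U_Q}=\bigoplus s^{F-E}W_E$ with those of $\Gamma(N,\calN)$, using the $N$-version of Proposition~\ref{prop: decomposizione sezioni} and multiplicity-freeness — but at that point the dimension count you are aiming for has essentially been established by other means. The paper avoids the surjectivity question entirely: it only uses the injectivity of $r_\calM$ on $\Gamma(M,\calM)^U$, together with the observation that any $B\cap L$-highest weight vector in $\bigoplus s^{F-E}W_E$ is automatically $B$-highest in $\Gamma(M,\calM)$ (each $W_E$ is irreducible over $L$ and generated by the $B$-highest weight vector of $V_E$, while the $s^{F-E}$ are $G$-invariant), so that $\ker r_\calM\cap\bigoplus s^{F-E}W_E=0$. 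If you replace your dimension count with this injectivity argument, your proof goes through.
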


\begin{proof}By the above discussion it is enough to prove that 
$r_{\mathcal L_E}(I_E)=0$. Let $v$ be a highest
  weight vector (for the action of $L$) in $I_E$ of weight $\lambda$.
  Then $\omega_E-\lambda=\sum_{\alpha\in S} a_\alpha \alpha$ with $a_\alpha \in \mathbb{N}$.  If $S'$ is the set of simple roots for $L$ notice that, since
  $v \in I_E$, there exists $\alpha \in S\smallsetminus S'$ such that
  $a_\alpha \neq 0$. On the other hand, by the generalization of 
  Proposition~\ref{prop: decomposizione sezioni} for $N$ discussed in the previous
  section, the weights of the highest weight vectors in $\Gamma (N,\mathcal L_E)$ are
  of the form $\omega_E -\beta$ with $\beta \in \mathbb{N} S'$. In particular
  we must have $r_{\mathcal L_E}(v)=0$.
\end{proof}

Since the restriction commutes with the multiplication we recover the following.

\begin{proposition}[{\cite[Proposition~2.9]{CM}}] \label{prop: ind-par}
For all $E,F \in \mathbb{N}\Delta$ and $\gamma \in \mathbb{N}\Sigma$ such that $E+F - \gamma \in \mathbb{N} \Delta$, $s^\gamma \, V_{E+F-\gamma} \subset V_E V_F$ if and only if $s^\gamma \, W_{E+F-\gamma} \subset W_E W_F$.
\end{proposition}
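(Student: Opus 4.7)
The plan is to reduce each implication to a statement about a single highest-weight vector and then to combine Lemma~\ref{lem:zeri} with the injectivity of the restriction $r:\Gamma(M,\calL_{E+F})\lra\Gamma(N,\calN_{E+F})$ on $U$-fixed sections, already observed in the paragraph preceding Lemma~\ref{lem:zeri} via the density of $U\cdot N$ in $M$. To set up, let $v$ denote the $B$-highest weight vector of $V_{E+F-\grg}$: as $v$ is $U$-fixed and $U_Q\subset U$, we have $v\in V_{E+F-\grg}^{U_Q}=W_{E+F-\grg}$, and $v$ is also the $B\cap L$-highest weight vector of $W_{E+F-\grg}$. By $G$-multiplicity-freeness of $\Gamma(M,\calL_{E+F})$, the $G$-submodule $V_EV_F$ contains $s^\grg V_{E+F-\grg}$ if and only if it contains $s^\grg v$; analogously, since $\Gamma(N,\calN_{E+F})$ is $L$-multiplicity-free and $s^\grg$ is $G$-invariant (so restricts to itself on $N$), the $L$-submodule $W_EW_F$ contains $s^\grg W_{E+F-\grg}$ if and only if it contains $r(s^\grg v)=s^\grg r(v)$.

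The forward implication now follows by applying $r$ to an expression $s^\grg v=\sum_i a_i b_i$ with $a_i\in V_E$, $b_i\in V_F$: Lemma~\ref{lem:zeri} yields $r(V_E)=W_E$ and $r(V_F)=W_F$, hence $r(s^\grg v)\in W_EW_F$.

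For the converse, assume $r(s^\grg v)\in W_EW_F$. Since $L$ is reductive, the $L$-equivariant multiplication $W_E\otimes W_F\lra W_EW_F$ admits an $L$-equivariant splitting; applying it to the $U_L$-fixed vector $r(s^\grg v)$ (with $U_L=U\cap L$) produces a $U_L$-invariant preimage $t=\sum_i w_i\otimes w_i'\in(W_E\otimes W_F)^{U_L}$. Regard each $w_i\in W_E\subset V_E\subset\Gamma(M,\calL_E)$ and each $w_i'\in W_F\subset V_F\subset\Gamma(M,\calL_F)$, and form $z=\sum_i w_iw_i'\in V_EV_F\subset\Gamma(M,\calL_{E+F})$. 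By definition of $W_E$ and $W_F$ the elements $w_i,w_i'$ are all $U_Q$-fixed, so $z$ is $U_Q$-fixed; since $t$ is $U_L$-fixed and the multiplication on $M$ is $L$-equivariant, $z$ is also $U_L$-fixed, whence $z\in\Gamma(M,\calL_{E+F})^U$. Both $z$ and $s^\grg v$ lie in $\Gamma(M,\calL_{E+F})^U$ and have the same restriction to $N$ by construction, so the injectivity of $r$ on $U$-invariants forces $z=s^\grg v$, proving $s^\grg v\in V_EV_F$. The main subtlety lies in producing the $U_L$-invariant preimage $t$: without this refinement the lift $z$ would only be $U_Q$-fixed, and injectivity of restriction on $U_Q$-invariants is not available---only the full $U$-invariance, guaranteed by the density of $U\cdot N$ in $M$, allows one to transport the identity from $N$ back to $M$.
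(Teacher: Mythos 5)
Your argument is correct, but the backward implication is substantially over-engineered, and the closing paragraph gives a factually wrong justification for the extra work. You assert that restriction is not injective on $U_Q$-invariants, so a $U_L$-fixed lift must be manufactured to upgrade to full $U$-invariance. In fact $U\cdot N = U_Q\cdot N$: writing $U = U_Q U_L$ with $U_L = U\cap L$, and using that $N$ is $L$-stable so that $U_L\cdot N\subset N$, one gets $U\cdot N = U_Q(U_L\cdot N)\subset U_Q\cdot N\subset U\cdot N$. Concretely, $U_Q\cdot N$ is precisely the preimage of the big cell $U_QQ^-/Q^-$ under $M = G\times_{Q^-}N\to G/Q^-$, hence open dense. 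Equivalently, Lemma~\ref{lem:zeri} already shows $\ker r_{\calL_F}=\bigoplus s^{F-E}I_E$, which meets $\Gamma(M,\calL_F)^{U_Q}=\bigoplus s^{F-E}W_E$ trivially, so $r$ is injective on $U_Q$-invariants. The $L$-equivariant splitting and the passage from $U_Q$- to $U$-invariance therefore do no work.

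More to the point, the backward direction needs no injectivity statement at all once the product $W_EW_F$ is read inside $\Gamma(M,\calL_{E+F})$, which is where the $W$'s live by definition: $W_E\subset V_E$ and $W_F\subset V_F$ give $W_EW_F\subset V_EV_F$ outright, and since $v$ is $U$-fixed (hence $U_Q$-fixed) one has $v\in W_{E+F-\grg}$, so $s^\grg v\in s^\grg W_{E+F-\grg}\subset W_EW_F\subset V_EV_F$; multiplicity-freeness of $\Gamma(M,\calL_{E+F})$ then yields $s^\grg V_{E+F-\grg}\subset V_EV_F$. If one instead identifies the $W$'s with their images in $\Gamma(N,\cdot)$, the injectivity of $r$ on $U_Q$-invariants noted above transports the same conclusion, still without any splitting. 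Your forward implication is essentially the intended one: restriction commutes with multiplication, Lemma~\ref{lem:zeri} gives $r(V_E)=r(W_E)$, and multiplicity-freeness on both sides reduces the check to a highest weight vector. That is exactly what the paper's one-line justification is pointing at.
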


In particular, using the property (6) of the previous section, we have the following. 

\begin{corollary}\label{cor: moduli ortogonali}
For all $D\in\mathbb{N}(\Delta\smallsetminus\widetilde\Delta)$ and $E\in\mathbb{N}\Delta$, $V_D V_E=V_{D+E}$.
\end{corollary}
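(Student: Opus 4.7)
The plan is to reduce the claim to the fiber $N$ of the parabolic induction via Proposition~\ref{prop: ind-par}, exploiting the triviality of $\calL_D|_N$ for $D\in\Delta\senza\widetilde\Delta$ coming from property~(6) of the previous subsection.

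First I would verify that for every $D\in\mN(\Delta\senza\widetilde\Delta)$ the $L$-module $W_D$ is one-dimensional. Since property~(6) is multiplicative, $\calL_D|_N$ is a trivial line bundle on $N$, so $\dim\Gamma(N,\calL_D|_N)=1$. Applying Lemma~\ref{lem:zeri} with $\calM=\calL_D$ (so only the summand $s^0V_D=V_D$ of $\Gamma(M,\calL_D)$ is relevant) shows that the restriction $V_D\to\Gamma(N,\calL_D|_N)$ has kernel exactly $I_D$, hence is injective on $W_D$; combined with $W_D\neq 0$ this forces $\dim W_D=1$. Let $w_D$ span $W_D$: then $L$ acts on $w_D$ through the character $\omega_D$.

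Next I would deduce $W_DW_E=W_{D+E}$. Indeed $W_DW_E=w_D\cdot W_E$ is a copy of $W_E$ twisted by the character $\omega_D$, hence an irreducible $L$-submodule of $\Gamma(N,\calL_{D+E}|_N)$ of highest weight $\omega_D+\omega_E=\omega_{D+E}$. Sphericity of $N$ under $L$ makes the decomposition
$$\Gamma(N,\calL_{D+E}|_N)=\bigoplus_{F\leq_\Sigma D+E}s^{D+E-F}W_F$$
multiplicity-free, so the non-zero irreducible $L$-submodule $W_DW_E$ must coincide with the unique summand of highest weight $\omega_{D+E}$, namely $W_{D+E}$.

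To conclude I would invoke Proposition~\ref{prop: ind-par}. For every $\gamma\in\mN\Sigma$ with $\gamma\neq 0$ and $\gamma\leq_\Sigma D+E$, the non-zero subspace $s^\gamma W_{D+E-\gamma}$ lies in a direct summand distinct from $W_{D+E}=W_DW_E$, hence $s^\gamma W_{D+E-\gamma}\not\subset W_DW_E$; by Proposition~\ref{prop: ind-par} also $s^\gamma V_{D+E-\gamma}\not\subset V_DV_E$. Since the product of highest weight vectors of $V_D$ and $V_E$ is non-zero and lies in $V_{D+E}$, this gives $V_DV_E=V_{D+E}$. The only mildly delicate step is the one-dimensionality of $W_D$, which however drops out immediately from property~(6) combined with Lemma~\ref{lem:zeri}; everything else is bookkeeping within a multiplicity-free $L$-decomposition.
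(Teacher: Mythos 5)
Your proof is correct and is essentially the argument the paper leaves implicit in the phrase ``using property~(6)'': the triviality of $\calL_D|_N$ forces $\dim W_D = 1$, multiplicity-freeness of the $L$-decomposition on $N$ then gives $W_D W_E = W_{D+E}$, and Proposition~\ref{prop: ind-par} transfers this to the $G$-level identity $V_D V_E = V_{D+E}$. You have merely spelled out the intermediate steps (via Lemma~\ref{lem:zeri} to see that restriction to $N$ is injective on $W_D$) which the paper takes for granted.
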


\section{Projective normality and the covering relation}

\subsection{The covering relation}

Let $\{\omega_\alpha:\alpha\in S\}$ be the set of fundamental weights
w.r.t.\ the simple roots $S$.

For all $\lambda = \sum k_\alpha \omega_\alpha \in \mathcal{X}(T)$, denote by
$\supp(\lambda)$ the set of $\alpha\in S$ such that $k_\alpha\neq 0$ and
define its positive part $\lambda^+$, resp.\ its negative
  part $\lambda^-$, as the dominant weights
\[ \lambda^+ = \sum_{k_\alpha > 0} k_\alpha \omega_\alpha \qquad \text{and }
\qquad \lambda^- = \lambda^+-\lambda. \] 
If $\lambda\in\mathcal{X}(T)^+$, define also the height of $\lambda$ as the number
$\alt(\lambda) = \sum_{\alpha \in S} k_\alpha$.

Suppose that $\lambda$ and $\mu$ are dominant weights with $\lambda < \mu$
(w.r.t.\ the usual dominance order) and suppose that there is no
dominant weight $\nu$ such that $\lambda < \nu < \mu$: then one says
that $\mu$ covers $\lambda$ and we call $\mu - \lambda$ a
covering difference in $\mathcal{X}(T)^+$. Notice that an element $\gamma
\in \mathbb{N} S$ is a covering difference in $\mathcal{X}(T)^+$ if and only if
$\gamma^+$ covers $\gamma^-$. Although the
following proposition is an immediate consequence of \cite[Theorem~2.6]{St}, 
here we give an easy independent proof.

\begin{proposition}\label{prop: altezza 2}
If $\gamma \in \mathbb{N} S$ is a covering difference in $\mathcal{X}(T)^+$, then
$\alt(\gamma^+) \leqslant 2$.
\end{proposition}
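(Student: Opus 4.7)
The plan is to argue by contrapositive: assuming $\mathrm{ht}(\grg^+)\geq 3$, I build a dominant weight $\nu$ with $\grg^+-\nu$ and $\nu-\grg^-$ both in $\mN S\setminus\{0\}$, contradicting the covering hypothesis. Any pair $(\mu,\grl)$ of dominant weights with $\mu-\grl=\grg$ writes uniquely as $(\grg^++\rho,\grg^-+\rho)$ for some $\rho=\sum_\gra\rho_\gra\omega_\gra$ with $\rho_\gra\geq 0$ (compare $\omega_\gra$-coefficients), so it is enough to produce $\nu_0$ for the canonical pair $(\grg^+,\grg^-)$ and then take $\nu=\nu_0+\rho$. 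Writing $\grg=\sum n_\gra\gra$, $\grg^+=\sum k_\gra\omega_\gra$, $I^+=\{\gra:k_\gra\geq 1\}$, the identity $k_\gra=2n_\gra-\sum_{\grb\sim\gra}n_\grb\,|\langle\grb,\gra^\vee\rangle|$ and the non-positivity of the off-diagonal terms give $n_\gra\geq 1$ whenever $k_\gra\geq 1$.

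If some $k_\gra\geq 2$, I take $\nu_0=\grg^+-\gra$: its $\omega_\gra$-coefficient is $k_\gra-2\geq 0$, the other coefficients only grow, and $\grg-\gra\in\mN S\setminus\{0\}$ because $n_\gra\geq 1$ while $\grg=\gra$ would force $\grg^+=2\omega_\gra$ of height $2$. So I may assume all $k_\gra\leq 1$, hence $|I^+|\geq 3$. If $\Supp(\grg)$ is disconnected in the Dynkin diagram I decompose $\grg=\grg'+\grg''$ with supports in different components and take $\nu_0=(\grg')^-+(\grg'')^+$: disjointness of the Dynkin supports gives dominance, and both $\nu_0-\grg^-=\grg''$ and $\grg^+-\nu_0=\grg'$ are nonzero in $\mN S$. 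If $\Supp(\grg)$ is connected I pick $\gra,\grb\in I^+$ either Dynkin-adjacent or, in the antichain sub-case, a pair at minimum Dynkin-tree distance in $I^+$: then the unique Dynkin-tree path $P$ between them contains no other element of $I^+$, so any third $\grd\in I^+$ lies off $P$. Tree-connectedness of $\Supp(\grg)$ forces $P\subseteq\Supp(\grg)$, so setting $\eta:=\sum_{x\in P}x$ gives $\grg-\eta\in\mN S$; I take $\nu_0=\grg^+-\eta$.

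Dominance of $\nu_0$ is then verified pointwise. At an endpoint $\gra$ of $P$ the new $\omega_\gra$-coefficient equals $k_\gra-2-\langle\grd_1,\gra^\vee\rangle=-1+|\langle\grd_1,\gra^\vee\rangle|\geq 0$. At an internal node $\grd_i$ of $P$ the coefficient is $k^+_{\grd_i}-(c_{i-1}+2+c_{i+1})$ with $c_{i\pm 1}=\langle\grd_{i\pm 1},\grd_i^\vee\rangle\leq -1$, hence $\geq 0$. At any node $\epsilon$ off $P$ the coefficient is the $\omega_\epsilon$-coefficient of $\grg^+$ (already $\geq 0$) plus $-\sum_{x\in P}\langle x,\epsilon^\vee\rangle\geq 0$. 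Finally $\eta\neq\grg$ because either $\grd\in\Supp(\grg)\setminus P$ (antichain case) or $\grg=\gra+\grb$ would again force $\mathrm{ht}(\grg^+)\leq 2$ (adjacent case). Hence $\nu_0$ is dominant and lies strictly between $\grg^-$ and $\grg^+$, as required.

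The main difficulty lies in the connected antichain sub-case: one must simultaneously use the tree-connectedness of $\Supp(\grg)$ (to place $P$ inside the support) and the minimum-distance choice of $(\gra,\grb)$ (to keep a third $I^+$-point off $P$), and then verify dominance uniformly across simply-laced and non-simply-laced types. Fortunately the elementary Cartan-matrix inequalities $|\langle\grd_1,\gra^\vee\rangle|\geq 1$ and $c_{i\pm 1}\leq -1$ are all that is needed to make the verification type-free.
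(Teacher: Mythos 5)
Your proof is correct and follows the same contrapositive strategy as the paper: assuming $\alt(\grg^+)\geq 3$, exhibit a dominant weight strictly between $\grg^-$ and $\grg^+$. The reduction to the pair $(\grg^+,\grg^-)$, the observation that $k_\gra\geq 1$ forces $n_\gra\geq 1$, and the first step (subtract a single simple root when some $k_\gra\geq 2$) match the paper exactly. In the remaining step the paper asserts one can pick two elements of $\Supp(\grg^+)$ whose connecting segment in the Dynkin diagram is of type $\sfA$ and subtracts that segment; you instead subtract the unique Dynkin-tree path $P$ between a minimum-distance (or adjacent) pair in $I^+$ and check dominance of $\grg^+-\sum_{x\in P}x$ pointwise from the Cartan inequalities $|\langle\grd_1,\gra^\vee\rangle|\geq 1$ and $c_{i\pm 1}\leq -1$. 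This makes the argument uniform over simply-laced and non-simply-laced types without leaning on the (true but unverified) existence of a type-$\sfA$ segment, and you correctly invoke tree-connectedness of $\Supp(\grg)$ to get $P\subseteq\Supp(\grg)$, which is exactly what is needed for $\grg^+-\eta\geq\grg^-$ and which the paper leaves implicit. You also dispose explicitly of the disconnected-support case and of the degenerate possibility $\grg=\eta$, again points the paper's terse proof skips. One small nitpick: the identity $k_\gra=2n_\gra-\sum_{\grb\sim\gra}n_\grb|\langle\grb,\gra^\vee\rangle|$ holds only when $\langle\grg,\gra^\vee\rangle\geq 0$, since $k_\gra$ is a coefficient of the positive part; but you only use it where $k_\gra\geq 1$, so the conclusion $n_\gra\geq 1$ is unaffected. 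Overall this is a more careful rendering of the same argument.
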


\begin{proof}
Let $\gamma \in \mathbb{N} S$ be a covering difference and suppose that $\alpha
\in S$ is such that $\langle \gamma^+, \alpha^\vee \rangle \geqslant 2$: then
$\gamma^+ - \alpha \in \mathcal{X}(T)^+$, hence we must have $\gamma = \alpha$ and
$\alt(\gamma^+) = 2$. Hence we may assume that $\langle \gamma^+,
\alpha^\vee \rangle \leqslant 1$ for every $\alpha \in S$. Suppose that
$\alt(\gamma^+) \geqslant 3$, (up to reindexing the simple roots) we
can take $\alpha_i, \alpha_j \in \supp_S(\gamma^+)$ with $i<j$ such that
$\{\alpha_i,\dots, \alpha_j\}$ generates an irreducible subsystem of
type $\mathsf{A}$: then $\gamma^+ - (\alpha_i +\dots + \alpha_j) \in \mathcal{X}(T)^+$ and
it follows $\gamma = \alpha_i +\dots + \alpha_j$ and $\alt(\gamma^+)
= 2$ (a contradiction).
\end{proof}

We now consider the covering relation in the more general context of
wonderful varieties. Let $M$ be a wonderful variety with set of
spherical roots $\Sigma$ and with set of colors $\Delta$. For all $E =
\sum_{D \in \Delta} k_D D \in \mathbb{Z}\Delta$, define its \textit{positive
  part} $E^+$ and its \textit{negative part} $E^-$ as
\[ E^+ = \sum_{k_D > 0} k_D D \qquad \text{and } \qquad E^- = E^+-E.\] 
If $E\in\mathbb{N}\Delta$, define the \textit{height} of $E$ as the number $\alt(E) =
\sum_{D \in \Delta} k_D$. On the other hand, for all $\gamma = \sum_{\sigma
  \in \Sigma} a_\sigma \sigma \in \mathbb N \Sigma$ define its
$\Sigma$-\textit{height} as $\alt_\Sigma(\gamma) = \sum_{\sigma \in
  \Sigma} a_\sigma$.

Let $E$ and $F$ be in $\mathbb{N} \Delta$ with $E <_\Sigma F$ and suppose that
there is no $D \in \mathbb{N}\Delta$ such that $E <_\Sigma D <_\Sigma F$: then we
say that $F$ covers $E$ and we call $F - E$ a
covering difference in $\mathbb{N} \Delta$. Again, $\gamma \in \mathbb{N} \Sigma$
is a covering difference in $\mathbb{N} \Delta$ if and only $\gamma^+$ covers
$\gamma^-$.

\begin{remark}\label{rem:finiti}
Notice that there are only finitely many covering differences.
Indeed, consider the lattice $X=\{(D,E)\in \mathbb{Z}\Delta\times \mathbb{Z}\Delta\, : \, E-D\in
\mathbb{Z}\Sigma\}$ and the finitely generated monoid
$X^+=\{(D,E)\in\mathbb{N}\Delta\times\mathbb{N}\Delta\, : \, D-E\in \mathbb{N}\Sigma\}$. If
$\gamma$ is a covering difference then $(\gamma^+,\gamma^-)$ is an
indecomposable element of $X^+$.

Finally the number of indecomposable elements in $X^+$ is finite and
can be controlled as follows: let $\ell_1,\dots,\ell_t$ be the
half-lines generating the cone $X^+_\mathbb{Q}$, let $v_i=(D_i,E_i)$ be a
generator of $\ell_i\cap X^+$ and let $n_i=\alt(D_i)+\alt(E_i)$. Then if
$(D,E)$ is indecomposable $\alt(D)+\alt(E)\leqslant \sum_i n_i$.
\end{remark}

As already said in the introduction, 
in the case of the wonderful compactification of a non-Hermitian symmetric space, there
exists a root system $\Phi_\Sigma$ (the \textit{restricted root system})
which is generated by the spherical roots and such that $\Delta$ is
naturally identified with the set of fundamental weights of
$\Phi_\Sigma$ and the pairing between $\Sigma$ and $\Delta$ is the Cartan
pairing of $\Phi_\Sigma$. Although $\Sigma$ is always the basis of a root
system $\Phi_\Sigma$, in the general case it is not possible to identify
$\Delta$ with the fundamental weights of $\Phi_\Sigma$: in particular, it
may happen that $\mathbb{Z}\Sigma \subset \mathbb{Z}\Delta$ is not a sublattice of
finite index and that the semigroup of dominant weights of
$\Phi_\Sigma$ contained in $\mathbb{Z} \Sigma$ is not even contained in $\mathbb{N}\Delta$. However, one can
consider the property of Proposition~\ref{prop: altezza 2} without
modifications in this context:

\begin{twoht}
If $\gamma\in\mathbb{N}\Sigma$ is a covering difference in $\mathbb{N}
  \Delta$, then $\alt(\gamma^+) \leqslant 2$.
\end{twoht}

Notice that by the above discussion the property (2-ht) holds if $M$ is
the wonderful compactification of a non-Hermitian symmetric space. 
In the case of the wonderful compactifications of Hermitian symmetric spaces 
the same argument works without serious complications. 
In the subsequent section we show
that it is true also in the case of a model wonderful variety (of
simply connected type). We have also checked many other examples and,
as far as we know, it is possible that it holds for all wonderful
varieties.

In analogy with the case of a root system, we say that a nonzero element $D \in \mathbb{N}\Delta$
is \textit{minuscule} if it is minimal in $\mathbb{N}\Delta$ w.r.t.\ the partial order $\leqslant_\Sigma$.

\subsection{Projective normality}

The notion of low triple has been introduced in \cite{CM} 
in the case of a symmetric wonderful variety. Here we
use the same terminology with a slightly weaker definition.

\begin{definition}	\label{dfn: low triples}
Let $M$ be a wonderful variety with set of spherical roots $\Sigma$ and
with set of colors $\Delta$. Let $D,E,F \in \mathbb{N}\Delta$ be such
that $F \leqslant_\Sigma D+E$. The triple $(D,E,F)$ is called a \textit{low
  triple} if:
\begin{itemize}
\item[] for all $D' \leqslant_\Sigma D$ and $E' \leqslant_\Sigma E$ such that $F
  \leqslant_\Sigma D' + E'$, it holds $D' = D$ and $E' = E$.
\end{itemize}
The triple $(D,E,F)$ is called a
\textit{fundamental triple} if $D,E \in \Delta$.
\end{definition}

Notice that if $(D,E,F)$ is a low triple, then $s^{D+E-F} V_F \subset
\Gamma(M,\mathcal L_D)\Gamma(M,\mathcal L_E)$ if and only if $s^{D+E-F} V_F \subset
V_D V_E$.

\begin{lemma}	\label{lem: proiettiva normalita}
Let $M$ be a wonderful variety and let $n$ be such that $\alt(\gamma^+) \leqslant n$
for every covering difference $\gamma$. If 
\begin{equation}\label{P2}
s^{D+E-F} V_F \subset V_D V_E,
\end{equation}
for all low triples $(D,E,F)$ with $\alt(D+E)\leqslant n$, then 
the multiplication map
\[ m_{D,E} \colon \Gamma(M,\mathcal L_D) \otimes \Gamma(M,\mathcal L_E)
\longrightarrow \Gamma(M,\mathcal L_{D+E})
\]
is surjective for all $D,E \in \mathbb{N}\Delta$.
\end{lemma}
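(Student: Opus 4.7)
The plan is a two-step argument: first use Proposition~\ref{prop: decomposizione sezioni} and multiplicity-freeness to translate surjectivity of $m_{D,E}$ into a condition on isotypic components and reduce to low triples; then show that the covering-difference bound forces every low triple to have small height, so that the hypothesis suffices.

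\smallskip

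\emph{Reformulation and reduction to low triples.} By Proposition~\ref{prop: decomposizione sezioni}, $\grG(M,\calL_{D+E})=\bigoplus_{F\leq_\grS D+E}s^{D+E-F}V_F$, and each summand is a $G$-isotypic component by multiplicity-freeness of the spherical action. The image of $m_{D,E}$ equals $\sum_{D'\leq_\grS D,\,E'\leq_\grS E}s^{D+E-D'-E'}V_{D'}V_{E'}$. Matching isotypic components, $m_{D,E}$ is surjective if and only if for every $F\leq_\grS D+E$ there exists a pair $(D',E')$ with $D'\leq_\grS D$, $E'\leq_\grS E$, $F\leq_\grS D'+E'$, and $s^{D'+E'-F}V_F\subset V_{D'}V_{E'}$. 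Taking such a pair $(D',E')$ minimal in the product order (which exists by the well-foundedness discussed around Remark~\ref{rem:finiti}), one obtains a low triple $(D',E',F)$ by definition. Hence the lemma reduces to showing $s^{D+E-F}V_F\subset V_D V_E$ for every low triple $(D,E,F)$.

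\smallskip

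\emph{Low triples have small height.} The hypothesis covers low triples with $\alt(D+E)\leq n$. To complete the proof I would show that no low triple has $\alt(D+E)>n$. Suppose for contradiction that $(D,E,F)$ is such a low triple, and pick $\grh\in\mN\grD$ covered by $D+E$ along a saturated $\leq_\grS$-chain from $D+E$ down to $F$, so that $F\leq_\grS\grh<_\grS D+E$. Then $\grd=D+E-\grh$ is a covering difference in $\mN\grS$, whence $\alt(\grd^+)\leq n$; by construction $D+E-\grd=\grh\in\mN\grD$, equivalently $D+E\geq\grd^+$ componentwise in $\grD$. The key step is to produce a splitting $\grd=\grd_1+\grd_2$ with $\grd_i\in\mN\grS$ and $D-\grd_1,\,E-\grd_2\in\mN\grD$: the pair $(D-\grd_1,E-\grd_2)$ would then differ from $(D,E)$, be $\leq_\grS$-below it, and have sum $\grh\geq_\grS F$, contradicting the low-triple property.

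\smallskip

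\emph{Main obstacle.} The main obstacle is the construction of the splitting $\grd=\grd_1+\grd_2$. The low-triple hypothesis forces $D-\grs\notin\mN\grD$ and $E-\grs\notin\mN\grD$ for every $\grs\in\Supp(\grd)$, so no single-spherical-root split works; in fact $D-\grd$ and $E-\grd$ must themselves lie outside $\mN\grD$. Existence of a suitable non-trivial split in $\mN\grS$ is a delicate combinatorial question, hinging on the covering-difference bound $\alt(\grd^+)\leq n$, the componentwise inequality $D+E\geq\grd^+$, and the explicit form of the Cartan pairing $c(D,\grs)$ through which the positive and negative parts of each $\grs\in\Supp(\grd)$ interact with $D$ and $E$. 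This is the technical heart of the lemma and is the step in which the assumption $\alt(D+E)>n$ is exploited essentially.
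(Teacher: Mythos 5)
Your first paragraph is essentially correct: by Proposition~\ref{prop: decomposizione sezioni} and multiplicity-freeness, surjectivity of $m_{D,E}$ amounts to the inclusion $s^{D+E-F}V_F\subset\grG(M,\calL_D)\grG(M,\calL_E)$ for every $F\leq_\grS D+E$, and passing to a $\leq_\grS$-minimal pair $(D',E')$ with $D'\leq_\grS D$, $E'\leq_\grS E$, $F\leq_\grS D'+E'$ reduces this to low triples, for which inclusion in $\grG(M,\calL_D)\grG(M,\calL_E)$ is equivalent to inclusion in $V_DV_E$ (as noted right after Definition~\ref{dfn: low triples}).

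The second step is where the argument fails. You try to show that no low triple has $\alt(D+E)>n$; this is false, and the splitting $\grd=\grd_1+\grd_2$ that you flag as the ``main obstacle'' genuinely does not exist in general, rather than merely being hard to construct. Take the model wonderful variety of $\SL(3)$, so $\grD=\{D_1,D_2\}$, $\grS=\{\grs_1\}$ with $\grs_1=D_1+D_2$, hence $n=2$. Both $2D_1$ and $D_2$ are $\leq_\grS$-minimal (e.g.\ $2D_1-\grs_1=D_1-D_2\notin\mN\grD$), so $(2D_1,D_2,D_1)$ is a low triple with $\alt(D+E)=3>n$. The relevant covering difference is $\grd=\grs_1$, whose only splittings $(\grs_1,0)$ and $(0,\grs_1)$ give $2D_1-\grs_1\notin\mN\grD$ and $D_2-\grs_1\notin\mN\grD$; no pair strictly below $(2D_1,D_2)$ can dominate $D_1$. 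Were your claim correct the lemma would be vacuous: its whole content is exactly the low triples of height above $n$, and they do occur.

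The paper handles these by a double induction, on $\alt_\grS(D+E-F)$ and then on $\alt(D+E)$, and never attempts a non-existence argument. For a low triple of height $>n$ one picks $F_1$ covered by $D+E$ with $F_1\geq_\grS F$; the covering-difference bound forces $\Supp(F_1)\cap\Supp(D+E)\neq\vuoto$ (disjointness would give $D+E=(D+E-F_1)^+$, of height $\leq n$). One chooses a color $D_0$ in this intersection, subtracts it from $D$ or $E$ to get $(D_1,E_1)$ with $\alt(D_1+E_1)=\alt(D+E)-1$, and picks a $\leq_\grS$-minimal $F_2$ with $F-D_0\leq_\grS F_2\leq_\grS F_1-D_0$. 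Then $(D_0,F_2,F)$ is again a low triple with $\alt_\grS(D_0+F_2-F)<\alt_\grS(D+E-F)$, and one factors $s^{D+E-F}V_F = s^{D_1+E_1-F_2}\bigl(s^{D_0+F_2-F}V_F\bigr) \subset s^{D_1+E_1-F_2}V_{D_0}V_{F_2} \subset \grG(M,\calL_{D_1})\grG(M,\calL_{E_1})V_{D_0} \subset \grG(M,\calL_D)\grG(M,\calL_E)$. The chain passes through a low triple with the same $F$ but smaller $\alt_\grS$, together with a triple with a different third entry, rather than through a pair strictly below $(D,E)$ still dominating $F$ --- which, as the $\SL(3)$ example shows, need not exist.
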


\begin{proof} For any $D\in \mathbb{Z}\Delta$ let $\Gamma_D =\Gamma(M,\mathcal L_D)$.
For any triple $(D,E,F)$ with $F \leqslant_\Sigma D+E$ we want to show that $s^{D+E-F}V_F \subset
\Gamma_D\Gamma _E$. We proceed by induction first on $\alt_\Sigma
(D+E-F)$ and then on $\alt(D+E)$. If $\alt_\Sigma (D+E-F)=0$ the claim is trivial.

If $(D,E,F)$ is not a low triple then there exist $D'\leqslant_\Sigma D$
and $E'\leqslant_\Sigma E$ such that $F\leqslant_\Sigma D'+E'$ and $\alt_\Sigma(D'+E'-F)
< \alt_\Sigma(D+E-F)$. Hence the claim is true for $(D ' ,E ' ,F)$, and
\[
s^{D+E-F}V_F = s^{D-D'+E-E'}s^{D'+E'-F}V_F\subset s^{D-D'+E-E'}\Gamma _{D '}\Gamma _{E '} \subset 
\Gamma_D\Gamma _E.
\]

If $(D,E,F)$ is a low triple and $\alt(D+E)\leqslant n$ then the claim is
true by assumption.

Assume now that $\alt(D+E)> n$ and that $(D,E,F)$ is a low triple. Let
$F_1 \in \mathbb{N}\Delta$ be an element covered by $D+E$ such that $F_1 \geqslant_\Sigma
F$. Since $\alt(D+E)> \alt(\gamma^+)$ for every covering difference $\gamma \in \mathbb{N}\Sigma$,
it follows that $\supp(F_1) \cap \supp(D+E) \neq \varnothing$. Fix $D_0 \in \supp(F_1) \cap
\supp(D+E)$ and set 
\[ (D_1,E_1) = \begin{cases} (D-D_0,\,E) & \text{ if } D_0 \in
  \supp(E)\\ (D,\,E-D_0) & \text{ otherwise} \end{cases}\]
Choose $F_2$ in $\mathbb{N}\Delta$ minimal w.r.t.\ $\leqslant_\Sigma$ such that $F - D_0 \leqslant_\Sigma
F_2 \leqslant_\Sigma F_1-D_0$.  Notice that $\alt_\Sigma(D_0+F_2-F) \leqslant \alt_\Sigma(F_1-F)<
\alt_\Sigma(D+E-F)$.  Moreover $(D_0,F_2,F)$ is a low triple; indeed if there exist
$F_2'\leqslant_\Sigma F_2$ and $D_0'\leqslant_\Sigma D_0$ in $\mathbb{N}\Delta$ such that $F\leqslant_\Sigma D_0'+F_2'$ then
\[ F \leqslant_\Sigma D_0' + F_2' \leqslant_\Sigma D_0' + D_1 + E_1 \leqslant_\Sigma D + E. \]
Since $(D,E,F)$ is a low triple, we get $D_0'=D_0$, while since $F_2$ is minimal we get $F_2'=F_2$.
Hence, by induction, it follows that $s^{D_0+F_2-F}V_F\subset V_{D_0}V_{F_2}$.

Notice that we have $\alt_\Sigma (D_1+E_1-F_2) \leqslant \alt_\Sigma
(D+E-F)$ and that $\alt(D_1+E_1)=\alt(D+E)-1$, hence we can apply the
inductive hypothesis to $(D_1,E_1,F_2)$. So
\[
	s^{D+E-F} V_F = s^{D_1+E_1-F_2} s^{D_0+F_2-F} V_F \subset
	s^{D_1+E_1-F_2} V_{F_2}V_{D_0} \subset \Gamma_{D_1}\Gamma_{E_1}V_{D_0}\subset \Gamma_D\Gamma_E.	
\]
\end{proof}

Together with Remark~\ref{rem:finiti}, the lemma implies that to
prove the surjectivity of $m_{D,E}$ for all $D,E\in \mathbb{N}\Delta$ it is
enough to check a finite number of cases. In particular if the property
(2-ht) holds, it is enough to check the above inclusion \eqref{P2}  
only for the low fundamental triples.

In Section~\ref{subsect: non projnorm} we show that there exist
wonderful varieties possessing low fundamental triples $(D,E,F)$ such
that $s^{D+E-F} V_F \not \subset V_D V_E$: in particular the
multiplication of sections of line bundles on a wonderful variety is
not necessarily surjective.

\section{The covering relation for model wonderful varieties}\label{sec: coperture modello}

As said in the introduction, a model homogeneous space for a
reductive group $G$ (not necessarily simply connected) is a quasi
affine $G$-homogeneous space whose coordinate ring is a model representation of
$G$: each irreducible representation of $G$ appears exactly with
multiplicity one. By \cite{Lu3}, for every group $G$
there exists a wonderful variety $M$ such that for every model homogenous
space $G/H_0$ there exists a point in $M$ whose stabilizer is
equal to $N_G(H_0)$ and, viceversa, if $H$ is the stabilizer of a point in $M$
and we set $H_0$ to be the intersections of the kernels of the
multiplicative characters of $H$ then $G/H_0$ is a model homogeneous space.
The variety $M$ is called the model wonderful variety of $G$.

Notice that $H_0$ has no characters, hence the set of 
spherical vectors (i.e., the eigenvectors of $H$) in a simple $G$-module $V$,
which is a subspace of dimension at most one,
can be also characterized as the set of vectors in $V$ fixed by $H_0$.

The description of the model wonderful varieties of an almost simple group $G$ is given in \cite{Lu3}.
For type $\mathsf{B}$ there are two different model wonderful varieties,
according as $G$ is isomorphic to the special orthogonal group
or to the spin group. In all the other cases the model wonderful variety of
$G$ is the same as the model wonderful variety of the adjoint group of $G$. 

In this section we describe the covering relation for a model
wonderful variety $M$ of simply connected type, and we prove that
the property (2-ht) holds in this case. For model wonderful varieties of
simply connected type the set of colors $\Delta$ is in bijection, via
$\omega$, with the set of fundamental weights, and the set of spherical
roots $\Sigma$ is the set of the sums $\alpha+\beta$ with 
$\alpha, \beta$ non-orthogonal simple roots.

Clearly, in order to study the multiplication of sections of line
bundles and the partial order of dominant weights in the case of a
model wonderful variety, we may reduce to the case where $G$ is almost simple.
Moreover, if $\gamma=\sum_{\alpha\in S} a_\alpha \alpha \in \mathbb{Z}\Sigma$
we denote by $\supp_S(\gamma)$ the set of simple roots $\alpha$
such that $a_\alpha\neq 0$. We have the following lemma, whose proof is immediate.

\begin{lemma}
Let $\gamma$ be a covering difference in $\mathbb{N}\Delta$ w.r.t.\ $\Sigma$, then
$\supp_S(\gamma)$ is a connected subset of $S$ and $\gamma$ is a
covering difference for the model variety of the simply connected
group associated to the root subsystem generated by $\supp_S(\gamma)$.
\end{lemma}

Therefore, we only classify the covering
differences $\gamma$ such that $\supp_S(\gamma)=S$.
In the following two sections, $r$ denotes the semisimple rank of
the group and $\Delta=\{D_1,\dots,D_r\}$ is the set of colors,
labelled in such a way that $\omega_{D_i}$ is the fundamental weight $\omega_i$.
For simplicity we also set $D_i=0$ for all $i\leqslant0$ and for all $i>r$. 
Similarly we denote $\Sigma = \{\sigma_1, \ldots, \sigma_{r-1}\}$
(see the following subsections for the definition of the single spherical roots $\sigma_i$),
and we set $\sigma_i = 0$ for all $i \leqslant 0$ and for all $i \geqslant r$.

For the rest of the section, $\gamma$ will be a covering difference in $\mathbb{N} \Delta$ w.r.t. $\Sigma$, and we set
$$\gamma = \sum_{i=1}^{r-1} a_i \sigma_i = \sum_{i=1}^r c_i D_i.$$
For convenience we also set $a_i = c_j = 0$ for all $i,j\leqslant 0$,
for all $i \geqslant r$ and for all $j > r$.

\subsection{Type $\mathsf A_r$ ($r\geqslant2$)} \label{ss: coperture A}

Let $\Sigma = \{\sigma_1,\dots, \sigma_{r-1}\}$ be the set
of spherical roots where $\sigma_i = \alpha_i + \alpha_{i+1}$.

\begin{proposition} \label{prop: coperture A}
Let $\gamma\in\mathbb{N}\Sigma$ be a covering difference in $\mathbb{N}\Delta$ with
$\supp_S(\gamma) = S$. Then either
\begin{enumerate}
\item $r$ is even and $\gamma=\sum_{i=1}^{r/2}\sigma_{2i-1}=D_1+D_r$, or
\item $r$ is odd and $\gamma=\sum_{i=1}^{r-1}\sigma_i=D_2+D_{r-1}$.
\end{enumerate}
\end{proposition}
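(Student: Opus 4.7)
The plan is to proceed in three steps. First, using the type-$\sfA$ identity $\gra_i = 2\gro_i - \gro_{i-1} - \gro_{i+1}$, one obtains $\grs_i = \gra_i + \gra_{i+1} = -D_{i-1} + D_i + D_{i+1} - D_{i+2}$ (with the convention $D_j = 0$ for $j \notin \{1,\dots,r\}$). A direct telescoping computation then shows that the element $\grg_0$ displayed in the statement equals $D_1 + D_r$ in the even case and $D_2 + D_{r-1}$ in the odd case; in particular $\grg_0 \in \mN\grD$, so $\grg_0^- = 0$, and a glance at the $\gra_j$-coefficients (which equal $a_{j-1} + a_j$, with $a_0 = a_r = 0$) confirms that $\grg_0$ has full $S$-support.

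Second, to show that $\grg_0$ is itself a covering difference: since $\grg_0^- = 0$, one needs only to rule out, for every proper non-empty subset $I$ of $\{i \tc (\grg_0)_i = 1\}$, that $\nu = \sum_{i \in I} \grs_i$ lies in $\mN\grD$. Its $D_j$-coefficient is $[j-1 \in I] + [j \in I] - [j+1 \in I] - [j-2 \in I]$, and a short case analysis---according to whether $1 \notin I$, or $r-1 \notin I$, or both endpoints lie in $I$ but $I$ has an interior gap---produces a negative $D_j$-coefficient in each case. For example, if $\min I > 1$ then $D_{\min I - 1}$ has coefficient $-1$; if $\{1,r-1\} \subset I$ but $I$ is proper, then the smallest index $k_0$ not in $I$ yields coefficient $-1$ at $D_{k_0}$.

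Third, for uniqueness, let $\grg = \sum a_i \grs_i$ be any covering difference with full $S$-support, and split into two cases. If $a_i \geq (\grg_0)_i$ for every $i$, then either $\grg = \grg_0$ or $\nu = \grg^- + \grg_0 \in \mN\grD$ is strictly intermediate between $\grg^-$ and $\grg^+$ (since $\nu - \grg^- = \grg_0 \in \mN\grS \senza \{0\}$ and $\grg^+ - \nu = \grg - \grg_0 \in \mN\grS \senza \{0\}$), contradicting the covering property by Lemma~\ref{lem: proiettiva normalita}-type reasoning; hence $\grg = \grg_0$. Otherwise $a_{i_0} = 0$ for some $i_0$ with $(\grg_0)_{i_0} = 1$, and full $S$-support forces $2 \leq i_0 \leq r - 2$ and $a_{i_0 - 1}, a_{i_0 + 1} \geq 1$. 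The goal is to exhibit some $j$ with $a_j \geq 1$ such that $\grs_j^- = D_{j-1} + D_{j+2}$ is bounded above by $\grg^-$, so that $\nu = \grg^- + \grs_j \in \mN\grD$ is again a strict intermediate, contradicting the covering property.

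This last case is the main technical obstacle. Writing $t_j := a_{j-1} + a_j - a_{j-2} - a_{j+1}$ for the coefficient of $D_j$ in $\grg$, a direct manipulation gives the identity $t_j + t_{j+1} = 2a_j - a_{j-2} - a_{j+2}$; applied at $j = i_0$, it yields $t_{i_0} + t_{i_0 + 1} = -(a_{i_0 - 2} + a_{i_0 + 2}) \leq 0$, forcing $\grg^-$ to carry mass at $D_{i_0}$ or $D_{i_0+1}$. Iterating this kind of sign estimate and using the boundary constraints $a_1, a_{r-1} \geq 1$ produces the needed $\grs_j$. As a sample of the argument, for $r = 6$ and $i_0 = 3$, assuming to the contrary $t_3 \geq 0$ and $t_4 \geq 0$ sums to $0 \geq a_1 + a_5$, contradicting full support; the general case is an elaboration of this kind of sign analysis, exploiting that the $\sigma_j^-$ for $j = 1$ and $j = r - 1$ are the single divisors $D_3$ and $D_{r-2}$, so that having mass in $\grg^-$ near the boundary suffices.
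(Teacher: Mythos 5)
Your Steps 1 and 2 are essentially sound. A minor imprecision in Step 2: the negative coefficient need not occur at $D_{k_0}$ where $k_0$ is the smallest index not in $I$. For instance, with $r=7$ (odd case) and $I=\{1,4,5,6\}$, the $D_2$-coefficient is $+1$ while the $D_3$-coefficient is $-2$; a negative coefficient always appears somewhere, so the conclusion of Step~2 still holds, but the stated location is wrong. Step 3, Case A is correct: if $a_i \geq (\grg_0)_i$ for all $i$ and $\grg \neq \grg_0$, then $\grg^- + \grg_0$ is a strict intermediate.

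The genuine gap is in Step 3, Case B. You propose to find a \emph{single} index $j$ with $a_j \geq 1$ and $\grs_j^- = D_{j-1} + D_{j+2} \leq \grg^-$, and you assert this follows by ``iterating'' the identity $t_j + t_{j+1} = 2a_j - a_{j-2} - a_{j+2}$ together with $a_1,a_{r-1}\geq 1$. Since $\grs_j^-$ is a single divisor only when $j=1$ (giving $D_3$) or $j=r-1$ (giving $D_{r-2}$), your construction requires that $\grg^-$ have mass at two indices three apart, or at $D_3$, or at $D_{r-2}$. None of these need occur. Take $r=9$ and
\[
\grg \;=\; 3\grs_1 + \grs_2 + 4\grs_3 + 0\cdot\grs_4 + 3\grs_5 + 3\grs_6 + 2\grs_7 + 2\grs_8,
\]
so $a_4=0$ and $\grg$ has full $S$-support (the $\gra_i$-coefficients $a_{i-1}+a_i$ are $3,4,5,4,3,6,5,4,2$). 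Then $\grg = 2D_1 + 2D_3 - 4D_5 + 4D_6 + D_8$, so $\grg^- = 4D_5$ is concentrated at the single divisor $D_5$, which is neither $D_3$ nor $D_{r-2}=D_7$; and for $2\leq j\leq 7$ the two divisors $D_{j-1},D_{j+2}$ are distinct, so $\grs_j^- \not\leq \grg^-$. Hence there is no single $j$ for which your construction applies. (Of course $\grg$ is indeed not a covering difference: $\grg^- + \grs_1 + \grs_3 = D_1+D_4+3D_5 \in \mN\grD$ is a strict intermediate, as is $\grg^- + \grs_1+\grs_2+\grs_3 = D_2+D_3+3D_5$.) The paper's proof avoids this precisely by adding \emph{partial sums} $\sum_{i=1}^{j-2}\grs_i$ (or, when $a_2=0$, sums $\sum_{i=1}^{(k-1)/2}\grs_{2i-1}$) rather than single spherical roots, keyed to the first index where the $D_j$-coefficient becomes negative; without such a mechanism your Case B does not close, and the single-$\grs_j$ sign analysis cannot be repaired without essentially reintroducing it.
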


\begin{proof}
Clearly, $a_1>0$. Assume first that $a_2>0$. Take the maximum integer $k$ such that
$a_i>0$ for all $i \leqslant k$. Then $c_j\geqslant0$ for all $3 \leqslant j
\leqslant k+2$ otherwise $\gamma^- + \sum_{i=1}^{j-2} \sigma_i \in \mathbb{N} \Delta$.

If $k$ were $< r-1$, then $c_{k+1}+c_{k+2}=-a_{k-1}-a_{k+3}$
would be $<0$, a contradiction. Therefore, $\supp_\Sigma(\gamma)=\Sigma$.

Now, since $\gamma^-+\sum_{i=1}^{r-1}\sigma_i\in\mathbb{N}\Delta$, $\gamma$ must
necessarily be equal to $\sum_{i=1}^{r-1}\sigma_i$ which is a covering
difference if and only if $r$ is odd: indeed, if $r$ is even,
$\sum_{i=1}^{r/2}\sigma_{2i-1}\in\mathbb{N}\Delta$.

Assume now that $a_2=0$. Take the maximum odd integer $k$ such that
$a_i=0$ for all even $i < k$. Then $c_k\geqslant0$ otherwise
$\gamma^-+\sum_{i=1}^{(k-1)/2}\sigma_{2i-1}\in\mathbb{N}\Delta$. Furthermore,
$c_{k-1}\geqslant0$ otherwise, reasoning as above, $a_j>0$ for all
$j\geqslant k$ and $\gamma^-+\sum_{i=k}^{r-1}\sigma_i\in\mathbb{N}\Delta$.

If $k$ were $< r-1$, then $c_{k-1}+c_k=-a_{k+1}$ would be $<0$,
a contradiction. Therefore, $a_i>0$ iff $i$ is odd, and $r$ is even.

Now, since $\gamma^-+\sum_{i=1}^{r/2}\sigma_{2i-1}\in\mathbb{N}\Delta$, $\gamma$ must
necessary be equal to $\sum_{i=1}^{r/2}\sigma_{2i-1}$ which is a
covering difference.
\end{proof}

\subsection{Type $\mathsf B_r$ ($r\geqslant2$)} \label{ss: coperture B}

Let $\Sigma = \{\sigma_1,\dots, \sigma_{r-1}\}$ be the set
of spherical roots where $\sigma_i = \alpha_i + \alpha_{i+1}$.

\begin{proposition}	\label{prop: coperture B}
Let $\gamma \in \mathbb{N}\Sigma$ be a covering difference in $\mathbb{N}\Delta$ with
$\supp_S(\gamma)=S$. Then $r$ is even and $\gamma =
\sum_{i=1}^{r/2}\sigma_{2i-1}=D_1$.
\end{proposition}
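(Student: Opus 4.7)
The plan is to adapt the strategy of Proposition~\ref{prop: coperture A}. Writing $\grg = \sum_{i=1}^{r-1} a_i \grs_i = \sum_{j=1}^r c_j D_j$, one computes
\[
c_1 = a_1 - a_2, \qquad c_j = -a_{j-2} + a_{j-1} + a_j - a_{j+1} \ (2 \leq j \leq r-2),
\]
\[
c_{r-1} = -a_{r-3} + a_{r-2} + a_{r-1}, \qquad c_r = -2 a_{r-2},
\]
exploiting the fact that only $\grs_{r-2} = -D_{r-3} + D_{r-2} + D_{r-1} - 2 D_r$ involves $D_r$, while $\grs_{r-1} = -D_{r-2} + D_{r-1}$. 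The hypothesis $\Supp_S(\grg) = S$ gives $a_1, a_{r-1} \geq 1$. I would first establish the telescope identities $\grs_1 + \grs_3 + \cdots + \grs_{2m-1} = D_1 + D_{2m} - D_{2m+1}$, which together with $\grs_{r-1}$ yield $\sum_{i \text{ odd}} \grs_i = D_1$ when $r$ is even, and $\grs_1 + \cdots + \grs_m = D_2 + D_m - D_{m+2}$ for $m \leq r - 3$, with the boundary version $\grs_1 + \cdots + \grs_{r-1} = D_2 + D_{r-1} - 2 D_r$.

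I would then run the same case analysis on $a_2$. If $a_2 > 0$, take $k$ maximal with $a_1, \ldots, a_k > 0$; the intermediate $\grg^- + \grs_1 + \cdots + \grs_{j-2}$ forces $c_j \geq 0$ for $3 \leq j \leq k+2$, and $c_{k+1} + c_{k+2} = -a_{k-1} - a_{k+3}$ (using $a_{k+1} = 0$) yields a contradiction unless $k = r - 1$ (the case $k = r - 2$ being excluded by $a_{r-1} \geq 1$). If $k = r - 1$, then $\grg = D_2 + D_{r-1} - 2 D_r$ and the odd-support $b = (1, 0, 1, 0, \ldots)$, which telescopes to $D_1$ for $r$ even or $D_1 + D_{r-1} - 2 D_r$ for $r$ odd, produces an intermediate $\grg^- + \sum b_i \grs_i \in \mN \grD$, so no covering arises. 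If $a_2 = 0$, take $k$ maximal odd with $a_i = 0$ for every even $i < k$; full support forces $a_j \geq 1$ for each odd $j < k$, and the intermediate $\grg^- + \grs_1 + \grs_3 + \cdots + \grs_{k-2} = \grg^- + D_1 + D_{k-1} - D_k$ forces $c_k \geq 0$. A parallel argument using $\sum_{i=k}^{r-1} \grs_i$ forces $c_{k-1} \geq 0$, and then $c_{k-1} + c_k = -a_{k+1}$ yields $k = r - 1$, hence $a_i > 0$ iff $i$ is odd and $r$ must be even. A final application of the odd-support intermediate shows $a_i = 1$ for every odd $i$, so $\grg = \sum_{i \text{ odd}} \grs_i = D_1$.

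The hardest step will be the subcase $a_2 = 0$ with $a_{r-2} = 0$, where the intermediate $\sum_{i=k}^{r-1} \grs_i = -D_{k-1} + D_{k+1} + D_{r-1} - 2 D_r$ contains a $-2 D_r$-term that $\grg^-$ cannot absorb (since $[\grg^-]_r = 2a_{r-2} = 0$); it must be replaced by the alternative $\sum_{i=k}^{r-3} \grs_i + \grs_{r-1} = -D_{k-1} + D_{k+1} + D_{r-3} - D_{r-2}$, built from the telescope $\grs_{r-3} + \grs_{r-1} = -D_{r-4} + D_{r-3}$ which avoids $\grs_{r-2}$ altogether and hence lives in $\mZ\{D_1, \ldots, D_{r-1}\}$. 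Finally, I would verify that $\grg = D_1 = \sum_{i \text{ odd}} \grs_i$ is indeed a covering for $r$ even by direct computation: for any proper non-empty $I \subsetneq \{1, 3, \ldots, r-1\}$, the telescope shows that $D_1 - \sum_{i \in I} \grs_i$ has coefficients at $D_{2k}$ and $D_{2k+1}$ of opposite signs depending only on whether $2k-1$ and $2k+1$ lie in $I$, so non-negativity of both forces the indicator of membership in $I$ to be constant along odd indices, whence $I$ must be empty or $\{1, 3, \ldots, r-1\}$.
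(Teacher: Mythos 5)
The telescope identities and the treatment of the case $a_2>0$ are correct (and in particular the final verification that $D_1$ covers $0$, while not strictly required by the statement, is right). The gap is in the case $a_2=0$, at the step ``$c_{k-1}\geq 0$.''

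You propose to use the intermediate $\grg^-+\sum_{i=k}^{r-1}\grs_i$. But for $\grg^-+\gamma_0$ to sit strictly between $\grg^-$ and $\grg^+$ you also need $\grg-\gamma_0\in\mN\grS$, i.e.\ $a_i\geq 1$ for every $k\leq i\leq r-1$. At this point in the argument you only control $a_k$, $a_{k+1}$ and $a_{r-1}$; nothing forces, say, $a_{k+3}$ or any further even-indexed $a_i$ to be positive, and a $\grg$ with a zero somewhere in that range is entirely compatible with the hypotheses accumulated so far. The subcase you flag ($a_{r-2}=0$, $-2D_r$ term) is thus only one symptom of a broader problem, and the proposed replacement $\sum_{i=k}^{r-3}\grs_i+\grs_{r-1}$ does not fix it: it still needs $a_i\geq 1$ for $k\leq i\leq r-3$, and moreover it introduces a $-D_{r-2}$ that now requires $c_{r-2}\leq -1$, which is also not established. (For instance with $r=10$, $k=5$ and $(a_1,\dots,a_9)=(1,0,1,0,2,1,3,0,1)$ one has $c_4=-1<0$, $a_8=0$, and $c_8=+1$, so both your intermediates fail; of course this $\grg$ is not a covering difference, but your argument does not see that.)

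The paper's proof avoids the dichotomy on $a_2$ altogether. It sets $k$ to be the maximal index with $r-k$ odd and $c_k>0$ and observes that, since $c_i\leq 0$ for every $i>k$ with $r-i$ odd, the defining relations give a descending chain
$$a_{r-1}\leq a_{r-3}-a_{r-2}\leq a_{r-5}-a_{r-4}\leq\cdots\leq a_k-a_{k+1},$$
whence $a_j\geq a_{r-1}>0$ for every $j\geq k$ with $r-j$ odd (forcing $k\geq 1$). These are exactly the coefficients one needs to subtract the alternating sum $\gamma_0=\grs_k+\grs_{k+2}+\cdots+\grs_{r-1}=-D_{k-1}+D_k$, so $\grg^+-\gamma_0\in\mN\grD$ and $\grg_0\leq_\grS\grg$, and the covering hypothesis forces $\grg=\grg_0$, hence $k=1$ and $r$ even. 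The chain of inequalities is the mechanism that supplies the missing lower bounds on the $a_i$'s; some replacement for it is indispensable, and a case split on $a_2$ alone cannot produce it.
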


\begin{proof}
Clearly, $a_{r-1}>0$. Suppose that $k<r$ is such that $r-k$ is odd and $c_i \leqslant 0$ for
every $k < i <r$ with $r-i$ odd. Then we have the inequalities (where
$a_i=0$ if $i\leqslant0$)
\[ a_{r-1} \leqslant a_{r-3} - a_{r-2} \leqslant a_{r-5} - a_{r-4} \leqslant\ldots
\leqslant a_{k+2} - a_{k+3} \leqslant a_{k} - a_{k+1}
\] and it follows that $a_j \geqslant a_{r-1} > 0$ for every $j \geqslant k$
with $r-j$ odd. In particular, $k$ must be $>0$.

Let $k$ be maximal with $r-k$ odd and $c_k>0$. Denote $\gamma_0 =
\sum_{i= 0}^{(r-k-1)/2} \sigma_{k+2i} = -D_{k-1} + D_k$: then $\gamma_0
\leqslant_\Sigma \gamma$ and $\gamma^+ - \gamma_0 \in \mathbb{N}\Delta$, hence we must have
$\gamma = \gamma_0$, $k=1$ and $r$ even.
\end{proof}

\subsection{Type $\mathsf C_r$ ($r\geqslant3$)}\label{ss: coperture C}

Let $\Sigma = \{\sigma_1,\dots, \sigma_{r-1}\}$ be the set
of spherical roots where $\sigma_i = \alpha_i + \alpha_{i+1}$.

\begin{proposition} \label{prop: coperture C}
Let $\gamma \in \mathbb{N}\Sigma$ be a covering difference in $\mathbb{N}\Delta$ with
$\supp_S(\gamma)=S$. Then $\gamma = \sum_{i=1}^{r-1}\sigma_i=D_2$.
\end{proposition}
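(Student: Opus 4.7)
Write $\grg = \sum_{i=1}^{r-1} a_i \grs_i = \sum_j c_j D_j$. A direct computation from the Cartan matrix of type $\sfC_r$ yields explicit formulas for the $c_j$ in terms of the $a_i$: the generic range $j \leq r-2$ matches type $\sfA$, while $c_{r-1} = a_{r-2} - a_{r-3}$ and $c_r = a_{r-1} - a_{r-2}$ reflect the presence of the long root $\gra_r$. A telescoping computation then shows that
\[
\tau_k := \grs_k + \grs_{k+1} + \cdots + \grs_{r-1} = -D_{k-1} + D_{k+1} \quad (1 \leq k \leq r-1),
\]
so in particular $\tau_1 = \sum_i \grs_i = D_2 \in \mN\grS \cap \mN\grD$, as required by the statement. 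The full-support hypothesis forces $a_1 \geq 1$ and $a_{r-1} \geq 1$, and I would split on whether $a_2 > 0$ or $a_2 = 0$.

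\textbf{Case $a_2 > 0$.} Here I would imitate the type-$\sfA$ proof. Let $k$ be maximal with $a_i > 0$ for all $i \leq k$; then $k \geq 2$. For $3 \leq j \leq k+2$ the initial partial sums $\rho_{j-2} := \grs_1 + \cdots + \grs_{j-2} = D_2 + D_{j-2} - D_j$ satisfy $\rho_{j-2} \leq_\grS \grg$, and if $c_j \leq -1$ the element $\grg^- + \rho_{j-2}$ would be a strict intermediate between $\grg^-$ and $\grg^+$ lying in $\mN\grD$, contradicting covering; hence $c_j \geq 0$. A direct computation using $a_{k+1} = 0$ gives $c_{k+1} + c_{k+2} = -a_{k-1} - a_{k+3} < 0$ whenever $k \leq r-4$, contradicting the non-negativity just established; the two boundary cases $k = r-3$ (where $c_{r-1} = -a_{r-3} < 0$) and $k = r-2$ (where $a_{k+1} = a_{r-1} \geq 1$, contradicting the choice of $k$) are excluded analogously. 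Therefore $k = r-1$ and every $a_i \geq 1$, so $\grg_0 := \tau_1 = D_2 \leq_\grS \grg$ with $\grg^- + D_2 \in \mN\grD$, and since $\grg_0 \neq 0$ the covering property forces $\grg = D_2$.

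\textbf{Case $a_2 = 0$.} This case should turn out to be impossible, and is the main obstacle: the straightforward candidate $\grg_0 = D_2$ is inadmissible here and must be replaced by a whole family of telescoping tails. Let $k_0$ be the minimal index such that $a_i \geq 1$ for all $i \geq k_0$; since $a_{r-1} \geq 1$ and $a_2 = 0$, we have $3 \leq k_0 \leq r-1$ and $a_{k_0 - 1} = 0$. For each $k \in [k_0, r-1]$ the tail $\tau_k$ lies in $\mN\grS$, satisfies $\tau_k \leq_\grS \grg$ (because $a_i \geq 1$ for $i \geq k_0$), and differs from $\grg$ (since $\grg$ has $a_1 \geq 1$ whereas $\tau_k$ does not); if $c_{k+1} \geq 1$ then $\grg^+ - \tau_k \in \mN\grD$ gives a strict intermediate, violating covering. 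Hence $c_j \leq 0$ for all $j \in [k_0+1, r]$. In terms of the second-difference $\delta_j := a_j - a_{j-1}$, the formulas above read $c_j = \delta_{j-1} - \delta_{j+1}$ in the generic range and $c_{r-1} = \delta_{r-2}$, $c_r = \delta_{r-1}$; the sign condition forces $\delta$ to be non-decreasing in each parity class on $[k_0, r-1]$, while the top values $\delta_{r-1}$ and $\delta_{r-2}$ are both $\leq 0$. It follows that $\delta_{k_0} \leq 0$, contradicting $\delta_{k_0} = a_{k_0} - 0 \geq 1$, which completes the proof.
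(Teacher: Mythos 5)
Your proof is correct, but it takes a genuinely different route from the paper's. The paper treats all of type $\sfC_r$ in one stroke: it sets $k = \max\{j : c_j > 0\}$, derives from $c_j \leq 0$ for $j > k$ the descending chain $a_{k-1} \geq a_k \geq \cdots \geq a_{r-1} > 0$, then subtracts the tail $\sum_{i=k-1}^{r-1} \grs_i = D_k - D_{k-2}$ from $\grg^+$ to produce an intermediate element of $\mN\grD$ unless $\grg$ equals that tail, at which point full support forces $k=2$. You instead transplant the two-case split on $a_2 > 0$ versus $a_2 = 0$ from the type $\sfA$ proof (Proposition \ref{prop: coperture A}). In the first case your argument parallels the type $\sfA$ one; in the second case, where the type $\sfA$ technique does not transfer (because of the asymmetry of $\sfC_r$ and because here there is no surviving covering difference to exhibit), you introduce a new idea: rewriting $c_j$ in terms of the first differences $\delta_j = a_j - a_{j-1}$ as $c_j = \delta_{j-1} - \delta_{j+1}$ in the generic range and $c_{r-1}=\delta_{r-2}$, $c_r=\delta_{r-1}$, then propagating the sign condition along parity chains to force $\delta_{k_0} \leq 0$, against $\delta_{k_0}=a_{k_0}\geq 1$. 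The paper's argument is shorter and uniform; yours is more laborious but makes the analogy with type $\sfA$ explicit and isolates a monotonicity phenomenon that is invisible in the paper's proof.
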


\begin{proof}
Clearly, $a_{r-1}>0$. Let $k$ be maximal such that $c_k>0$.
Then we have the following inequalities (for $k<r$)
\[a_{k-1} \geqslant a_k \geqslant\ldots \geqslant a_{r-1}.\]
It follows that $a_i>0$ for every $k-1 \leqslant i < r$. Therefore, $\gamma^+
- \sum_{i=k-1}^{r-1} \sigma_{i} = \gamma^+ + c_{k-2} - c_k \in
\mathbb{N} \Delta$ and we get $k=2$ with $\gamma = \sum_{i=1}^{r-1}
\sigma_i$.
\end{proof}

\subsection{Type $\mathsf D_r$ ($r\geqslant4$)} \label{ss: coperture D}

Let $\Sigma = \{\sigma_1,\dots, \sigma_{r-1}\}$ be the set
of spherical roots where $\sigma_i = \alpha_i + \alpha_{i+1}$ if
$i<r-1$ and $\sigma_{r-1} = \alpha_{r-2}+\alpha_r$.

\begin{proposition} \label{prop: coperture D}
Let $\gamma \in \mathbb{N}\Sigma$ be a covering difference in $\mathbb{N}\Delta$ with
$\supp_S(\gamma)=S$. Then $r$ is odd and
$\gamma=\sum_{i=1}^{(j-1)/2}\sigma_{2i-1}+\sum_{i=(j+1)/2}^{(r-3)/2}2\sigma_{2i-1}\;+\sigma_{r-2}+\sigma_{r-1}=D_1-D_{j-1}+D_j$
for $j$ odd $\leqslant r-2$.
\end{proposition}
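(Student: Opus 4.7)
My plan is to follow the general pattern of the preceding propositions for types $\sfA$, $\sfB$, $\sfC$. I first write $\grg = \sum_{i=1}^{r-1} a_i \grs_i = \sum_{i=1}^r c_i D_i$. The expansion of the spherical roots in fundamental weights gives $c_{r-1} = -a_{r-3} + a_{r-2} - a_{r-1}$ and $c_r = -a_{r-3} - a_{r-2} + a_{r-1}$, so $c_{r-1} + c_r = -2a_{r-3}$ and $c_{r-1} - c_r = 2(a_{r-2} - a_{r-1})$, while for $2 \leq i \leq r-4$ one has $c_i = -a_{i-2} + a_{i-1} + a_i - a_{i+1}$ as in type $\sfA$. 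The hypothesis $\Supp_S(\grg) = S$ forces $a_1, a_{r-2}, a_{r-1} > 0$, since $\gra_1$, $\gra_{r-1}$, $\gra_r$ each appear in exactly one spherical root.

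The first and most delicate step will be to establish the \emph{balance at the branch}: $a_{r-2} = a_{r-1}$. If instead $a_{r-2} > a_{r-1}$, then $c_{r-1} > c_r$, and I would exhibit a proper $\grg' \in \mN\grS$ with $0 < \grg' <_\grS \grg$ such that $\grg^+ - \grg' \in \mN\grD$, contradicting the covering property. The natural candidate is an alternating segment containing $\grs_{r-2}$ but not $\grs_{r-1}$, possibly adjusted with further terms closer to the leaf $\gra_1$, chosen precisely to decrease only those fundamental-weight coordinates of $\grg^+$ that are strictly positive. The symmetric case $a_{r-2} < a_{r-1}$ is handled in the same way.

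Once $a_{r-2} = a_{r-1}$ is established, $c_{r-1} = c_r = -a_{r-3} \leq 0$, and the combination $\grs_{r-2} + \grs_{r-1}$ contributes only $-2\gro_{r-3} + 2\gro_{r-2}$ to the fundamental-weight expansion. Treating this pair as a single ``doubled'' spherical root reduces the problem to a type $\sfA$-like analysis on the chain $\gra_1, \ldots, \gra_{r-2}$, which I would carry out analogously to Proposition~\ref{prop: coperture A}: a case split on whether $a_2 = 0$ or $a_2 > 0$, together with the positivity constraints on the interior $c_k$ coming from the covering condition, forces $a_i = 0$ for even $i$ in the interior, determines the odd $a_i$ up to the single discrete parameter $j$ at which the coefficient pattern transitions from $a_{2i-1} = 1$ to $a_{2i-1} = 2$, and compels $r$ to be odd so that the odd indices reach both $\gra_1$ and the branch simultaneously. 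Finally, for each odd $j \leq r-2$, I would verify directly that the element $\grg_j$ in the statement is indeed a covering difference.

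The main obstacle will be the balance step: the fork in the Dynkin diagram couples the constraints near $\gra_{r-1}, \gra_r$ in a way absent in the types $\sfA, \sfB, \sfC$, and selecting a witness $\grg'$ contradicting $a_{r-2} \neq a_{r-1}$ requires a careful analysis depending on the signs of the interior coefficients $c_k$; after this balance condition is secured, the remaining argument runs parallel to the type $\sfA$ case already treated.
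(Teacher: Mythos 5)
There is a genuine gap: the ``balance'' step $a_{r-2}=a_{r-1}$, which you single out as the key obstacle, is left as a plan rather than executed, and the strategy you sketch for it runs into a concrete obstruction. You want to exhibit $\grg'\in\mN\grS$ with $0<\grg'<_\grS\grg$ and $\grg^+-\grg'\in\mN\grD$, taking $\grg'$ to be an alternating segment containing $\grs_{r-2}$ but not $\grs_{r-1}$, ``adjusted with further terms closer to the leaf $\gra_1$.'' But every such chain $\grs_{r-2k}+\grs_{r-2k+2}+\cdots+\grs_{r-4}+\grs_{r-2}$ equals $-D_{r-2k-1}+D_{r-2k}+D_{r-1}-D_r$, so it carries a net $+D_{r-1}$ and can be removed from $\grg^+$ only if $c_{r-1}\geq 1$; however $c_{r-1}=-a_{r-3}+a_{r-2}-a_{r-1}$ is $\leq 0$ as soon as $a_{r-3}\geq a_{r-2}-a_{r-1}$. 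To kill the stray $D_{r-1}$ you must add $\grs_{r-1}$ (defeating the point) or $\grs_{r-3}$; but $\grs_{r-3}+\grs_{r-2}=-D_{r-4}+2D_{r-2}-2D_r$ requires $c_{r-2}\geq 2$, and that, by the very first covering test with $\grs_{r-2}+\grs_{r-1}$, already forces $\grg=\grs_{r-2}+\grs_{r-1}$, whose support is not all of $S$. So the ``interior sign'' caveat you raise is not a technicality: the natural candidates are blocked outright, and it is not clear the balance step can serve as an independent first lemma.

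The paper's proof sidesteps the balance question entirely. It never compares $a_{r-2}$ with $a_{r-1}$: the witnesses subtracted at every stage have the form (an alternating, possibly doubled, chain $\sum\grs_{r-2i}$) $+(\grs_{r-2}+\grs_{r-1})$, and since $\grs_{r-2}+\grs_{r-1}=-2D_{r-3}+2D_{r-2}$ has no $D_{r-1}$ or $D_r$ component, the positivity check collapses to the cumulative sums $\sum_{i\leq k}c_{r-2i}$. Balance appears only as a consequence of the final closed form $\grg=D_1-D_{j-1}+D_j$, not as a lemma proved en route. Note also that your second step is not a recitation of Proposition~\ref{prop: coperture A}: even once $a_{r-2}=a_{r-1}$ is known, $\grs_{r-3}$ still carries $-D_{r-1}-D_r$, so $c_{r-3}=-a_{r-5}+a_{r-4}+a_{r-3}-2a_{r-2}$, with a doubled last term that has no counterpart in the type $\sfA$ formula; the chain induction would need to be redone, not merely cited.
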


\begin{proof}
Clearly, we have the inequalities $a_{r-2} > 0$ and $a_{r-1}>0$.
One has $c_{r-2}\leqslant1$, otherwise 
$\gamma^+-(-2D_{r-3}+2D_{r-2})\in\mathbb{N}\Delta$,
thus $\gamma=-2D_{r-3}+2D_{r-2}=\sigma_{r-2}+\sigma_{r-1}$,
but $\supp_S(\gamma)\neq S$.
Since $c_{r-2}=-a_{r-4} + a_{r-3} + a_{r-2} + a_{r-1}$ we get $a_{r-4}>0$ 
and moreover, if $c_{r-2}\leqslant0$, $a_{r-4}>1$. 

We go on this way step-by-step. Let $k$ be $<(r-1)/2$.
Assume $a_{r-2i}>0$ for all $i\leqslant k$ 
and moreover if, for some $j<k$, $c_{r-2i}\leqslant0$ for all $i\leqslant j$
then $a_{r-2i-2}>1$ for all $i\leqslant j$. 
One has $\sum_{i=1}^k c_{r-2i}\leqslant 1$, 
otherwise there would exist $1\leqslant i_1\leqslant i_2\leqslant k$ such that
$\gamma^+-(-D_{r-2i_2-1}+D_{r-2i_2}-D_{r-2i_1-1}+D_{r-2i_1})\in\mathbb{N}\Delta$,
thus
$\gamma=-D_{r-2i_2-1}+D_{r-2i_2}-D_{r-2i_1-1}+D_{r-2i_1}=\sum_{i=i_1+1}^{i_2}\sigma_{r-2i}+\sum_{i=2}^{i_1}2\sigma_{r-2i}\;+\sigma_{r-2}+\sigma_{r-1}$,
but $\supp_S(\gamma)\neq S$. Since
\[
	\sum_{i=1}^k c_{r-2i}=-a_{r-2k-2} + a_{r-2k-1} + a_{r-2} + a_{r-1},
\]
we get $a_{r-2k-2}>0$
and moreover, if $c_{r-2i}\leqslant0$ for all $i\leqslant k$,
$a_{r-2k-2}>1$.

Therefore, we have that $r$ is odd, $\sum_{i=1}^{(r-1)/2} c_{r-2i}>1$ and
$\gamma=-D_{r-2i_2-1}+D_{r-2i_2}-D_{r-2i_1-1}+D_{r-2i_1}$ with $1\leqslant
i_1\leqslant i_2\leqslant (r-1)/2$. Since $\supp_S(\gamma)=S$, we must have $i_2=(r-1)/2$.
\end{proof}

\subsection{Type $\mathsf E_r$ ($6\leqslant r\leqslant8$)} \label{ss: coperture E}

Let $\Sigma = \{\sigma_1,\dots, \sigma_{r-1}\}$ be the set
of spherical roots where $\sigma_1=\alpha_1+\alpha_3$, $\sigma_2=\alpha_2+\alpha_4$ and, for $i\geqslant3$, $\sigma_i = \alpha_i + \alpha_{i+1}$.

\begin{proposition} \label{prop: coperture E}
Let $\gamma\in\mathbb{N}\Sigma$ be a covering difference in $\mathbb{N}\Delta$ with
$\supp_S(\gamma) = S$. 

If $r=6$,
\begin{itemize}
\item[(1)] either $\gamma=\sigma_1+\sigma_2+\sigma_3+\sigma_4+\sigma_5=D_4-D_2$ 
\item[(2)] or $\gamma=2\sigma_1+2\sigma_2+\sigma_3+\sigma_4+2\sigma_5=D_1+D_6$.
\end{itemize}

If $r=7$,
\begin{itemize}
\item[(3)] $\gamma=\sigma_1+\sigma_2+\sigma_4+\sigma_5+\sigma_6=D_1+D_6-D_3$.
\end{itemize}

If $r=8$, 
\begin{itemize}
\item[(4)] either $\gamma=\sigma_1+\sigma_2+\sigma_3+\sigma_4+2\sigma_5+\sigma_6+\sigma_7=D_6-D_2$ 
\item[(5)] or $\gamma=2\sigma_1+2\sigma_2+\sigma_3+\sigma_4+2\sigma_5+\sigma_7=D_1+D_8-D_7$.
\end{itemize}
\end{proposition}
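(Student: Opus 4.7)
The plan is to mimic the strategy of Propositions~\ref{prop: coperture A}--\ref{prop: coperture D} in the exceptional cases, exploiting the fact that $r \in \{6,7,8\}$ is small enough to permit a finite case analysis.

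First I would make explicit the change of basis between $\Sigma$ and $\Delta$. Using the Cartan matrix of $\mathsf E_r$ together with the identification $\omega_{D_j} = \omega_j$, each $\sigma_i$ is expressed as a $\mZ$-linear combination of the colors (for example in $\mathsf E_6$ one finds $\sigma_1 = D_1 + D_3 - D_4$, $\sigma_2 = D_2 - D_3 + D_4 - D_5$, $\sigma_3 = -D_1 - D_2 + D_3 + D_4 - D_5$, $\sigma_4 = -D_2 - D_3 + D_4 + D_5 - D_6$, $\sigma_5 = -D_4 + D_5 + D_6$). Writing a candidate $\gamma = \sum_i a_i \sigma_i = \sum_j c_j D_j$, each $c_j$ becomes an explicit linear function of the $a_i$. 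The condition $\mathrm{Supp}_S(\gamma) = S$ forces $a_1, a_2, a_{r-1} > 0$, as the simple roots $\alpha_1, \alpha_2, \alpha_r$ at the three tips of the Dynkin diagram appear in exactly one spherical root each.

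Next, following the pattern of the earlier proofs, I would construct \emph{small witnesses}: elements $\gamma_0 \in \mN\Sigma$ with $\gamma_0 \leq_\Sigma \gamma$ and $\gamma^+ - \gamma_0 \in \mN\Delta$. Since such a $\gamma^+ - \gamma_0$ would sit between $\gamma^-$ and $\gamma^+$ in the $\leq_\Sigma$-order, the minimality of $\gamma$ forces $\gamma = \gamma_0$. Applying this principle to the natural small elements of $\mN\Sigma$ (the exceptional analogues of the summands $D_1$, $D_2$, $D_1+D_r$ and the partial sums $\sum_i \sigma_i$ used in types $\mathsf A$--$\mathsf D$) yields a system of inequalities on the $a_i$ that starts from the three boundary conditions $a_1,a_2,a_{r-1}>0$ and propagates inward, bounding all $a_i$ and reducing the enumeration to finitely many explicit possibilities, which will be precisely the ones listed.

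Finally, for each $r \in \{6,7,8\}$ I would verify by direct substitution that the listed elements expand in $\Delta$ as stated and that they genuinely cover their negative parts, i.e.\ that no $D' \in \mN\Delta$ satisfies $\gamma^- <_\Sigma D' <_\Sigma \gamma^+$. The main obstacle is the branching at node~$4$: because two arms of the diagram meet there, the step-by-step propagation of inequalities used in types $\mathsf A$--$\mathsf D$ must be replaced by a coupled analysis that simultaneously tracks the contributions of the main chain and of the short arm carrying $\sigma_2$ (this is what produces the two distinct families in $\mathsf E_6$ and $\mathsf E_8$, and the asymmetric shape $D_1+D_6-D_3$ in $\mathsf E_7$). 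As however $r \le 8$, the resulting case analysis is finite and tractable; in line with the remark in the introduction that exceptional group computations in this paper are checked on a computer, a computer algebra enumeration over $\mN\Sigma$ can provide a uniform completion of the proof.
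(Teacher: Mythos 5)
Your strategy is the right one and matches the paper's: express $\gamma=\sum a_i\sigma_i=\sum c_jD_j$, note that $a_1,a_2,a_{r-1}>0$ since $\alpha_1,\alpha_2,\alpha_r$ each appear in a unique spherical root, and then use the minimality principle (if $0\neq\gamma_0\leq_\Sigma\gamma$ and $\gamma^+-\gamma_0\in\mN\Delta$, then $\gamma=\gamma_0$) to pin $\gamma$ down. Your change-of-basis expressions for the $\sigma_i$ in $\mathsf E_6$ are correct, and the observation that the branching at node $4$ is what distinguishes the exceptional types from $\mathsf A$--$\mathsf D$ is apt.

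However, there is a real gap: you never carry out the case analysis, which is the actual content of the proposition. The nontrivial work is in discovering the right small witnesses $\gamma_0$ (the paper's proof uses specific, non-obvious elements such as $2\sigma_1+\sigma_2+\sigma_4$, $\sigma_1+\sigma_2+\sigma_3+\sigma_4+\sigma_5$, $\sigma_1+\sigma_3+\sigma_4+\sigma_5+\sigma_6$, $\sigma_2+\sigma_3+2\sigma_5+2\sigma_7$, etc.) and in the interleaved sign analysis of the $c_j$ that propagates positivity of the $a_i$ inward from the three tips; referring to ``the exceptional analogues of the summands used in types $\mathsf A$--$\mathsf D$'' does not specify them. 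Simply asserting that the analysis is ``finite and tractable'' is not a proof.

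The fallback to computer enumeration is a genuinely different route than the paper takes for this statement. The paper's use of the computer is confined to Section~4, where it verifies the representation-theoretic inclusions $s^{D+E-F}V_F\subset V_DV_E$; the covering-difference classification in Section~3, including the $\mathsf E_r$ cases, is proved by hand. Your computer approach could in principle work, but you would need to address termination: a priori the coefficients $a_i$ of a covering difference could be unbounded, so a naive enumeration over $\mN\Sigma$ does not obviously stop. Remark~\ref{rem:finiti} gives exactly the bound you would need (covering differences correspond to indecomposable lattice points of a rational cone, with height bounded by the heights of the extremal ray generators), but you neither invoke it nor supply an equivalent a priori bound. Without that, the proposed computer check is not yet a proof.
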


\begin{proof}
Suppose $r=6$. Clearly, $a_1,a_2,a_5$ are $>0$. Moreover, $c_1+c_4+c_6=a_2$.

If $c_2$ were $> 0$, then $c_1,c_4,c_6$ would all be $\leqslant0$: indeed either $\gamma^+- \sigma_2$, or $\gamma^+ - (\sigma_1 +\sigma_2)$ or $\gamma^+ - (\sigma_2 +\sigma_5)$ would be in $\mathbb{N} \Delta$, contradicting $\supp_S(\gamma) = S$ (we will use such an argument repeatedly in the rest of the proof). Therefore,
$c_2\leqslant0$ and $a_3+a_4>0$. By symmetry, we can assume $a_3>0$.

Assume $c_4>0$. Then $c_3\leqslant0$. If $a_4$ were zero, $c_2+c_3$ would be
equal to $a_1$ (which is $>0$). Therefore, $a_4>0$ and $\gamma=D_4-D_2$.

Assume now $c_4\leqslant0$. We can also assume $c_6>0$ (by symmetry, if
$a_4>0$). Then both $c_3$ and $c_5$ are $\leqslant0$, hence $a_4$ is
$>0$. Since $c_2+c_3+c_4+c_5=0$, all $c_i=0$, for $2\leqslant i\leqslant
5$. This implies $a_1,a_2,a_5\geqslant2$, $c_6\leqslant1$ (since
$(2\sigma_1+\sigma_2+\sigma_4)^+=2D_1$) and $c_1>0$. Therefore,
$\gamma=D_1+D_6$.

Suppose $r = 7$. Clearly, $a_1,a_2,a_6$ are $>0$. Moreover, $c_1+c_4+c_6=a_2+a_6$.

Assume $a_3=0$. We get $c_1>0$. Then both $c_2$ and $c_3$ are $\leqslant0$, hence $a_4$ is $>0$. Then $c_7\leqslant0$, hence $a_5>0$. Moreover, $c_4\leqslant0$ (since $(\sigma_1+\sigma_2+\sigma_4)^+=D_1+D_4$), $c_1\leqslant1$ (since $(2\sigma_1+\sigma_2+\sigma_4)^+=2D_1$) and $c_6>0$. Therefore, $\gamma=D_1+D_6-D_3$.

Assume now $a_3>0$. If $c_4$ were $>0$, then both $c_2$ and $c_3$ would be $\leqslant0$, hence $a_4>0$. This would imply $c_7\leqslant0$, hence $a_5>0$, which is impossible (since $(\sigma_1+\sigma_2+\sigma_3+\sigma_4+\sigma_5)^+=D_4$). Then $c_4\leqslant0$.

If $c_1$ were $>0$, then both $c_2$ and $c_3$ would be $\leqslant0$, hence $a_4>0$. This would imply $c_7\leqslant0$ hence $a_5>0$, and $c_1\leqslant1$ (since $(2\sigma_1+\sigma_2+\sigma_4)^+=2D_1$) hence $c_6>0$, which is impossible (since $(\sigma_1+\sigma_3+\sigma_4+\sigma_5+\sigma_6)^+=D_1+D_6$). Then $c_1\leqslant0$.

If $c_6$ were $>0$, we would get $c_7\leqslant0$, $a_5>0$ and $c_6\leqslant1$,
which is impossible (since $c_1+c_4+c_6=a_2+a_6\geqslant2$).

Suppose $r = 8$. Clearly, $a_1,a_2,a_7$ are $>0$. Moreover, $c_1+c_4+c_6+c_8=a_2$.

If $c_4$ were $>0$, we would get $c_2\leqslant0$, $a_3+a_4>0$, actually
$a_3>0$ ($a_3=0$ would imply $c_1>0$, but also $a_4>0$ and
$c_1\leqslant0$), hence $c_3\leqslant0$, $a_4>0$, $c_4\leqslant1$ and $c_1\leqslant0$,
therefore $a_5>0$ (since $a_2+a_4-a_5=c_1+c_4\leqslant1$), which is
impossible (since $(\sigma_1+\sigma_2+\sigma_3+\sigma_4+\sigma_5)^+=D_4$). Then
$c_4\leqslant0$.

Assume $c_6>0$. Then $a_5>0$ (since $a_6>0$ implies $c_7\leqslant0$), hence
both $c_2$ and $c_5$ are $\leqslant0$.

If $a_3$ were zero, we would get $c_1>0$, $a_4>0$, $a_6>0$ (since
$a_5+a_6=c_2+c_5\leqslant0$), which is impossible (since
$(\sigma_1+\sigma_3+\sigma_4+\sigma_5+\sigma_6)^+=D_1+D_6$). Then $a_3>0$, hence
$c_3\leqslant0$ and $a_4>0$.

If $a_6$ were zero, since $c_2+c_3+c_4+c_5=-a_6$, all $c_i$ would be
zero, for $2\leqslant i\leqslant 5$. This would imply $a_5\geqslant2$ (since
$-2a_2+a_5=c_2+c_5=0$), on the other hand $c_8$ would be $>0$, which
is impossible (since $(\sigma_2+\sigma_3+2\sigma_5+\sigma_7)^+=D_6+D_8$). Then
$a_6>0$, hence $c_7\leqslant0$ and $a_5\geqslant2$. Therefore, $\gamma=D_6-D_2$.

Assume now $c_6\leqslant0$ and $c_1>0$. Then both $c_2$ and $c_3$ are
$\leqslant0$, hence $a_4>0$ and $c_5\leqslant0$. If $a_6$ were $>0$, we would get
$c_7\leqslant0$, $a_5>a_6$, $a_3>0$ (since $-2a_3+a_5-a_6=c_2+c_5\leqslant0$),
this would imply $a_1\geqslant2$ hence $c_1\leqslant1$ (since
$(2\sigma_1+\sigma_2+\sigma_4)^+=2D_1$) and on the other hand $a_2\geqslant2$
hence $c_8>0$ (since $c_1+c_4+c_6+c_8=a_2$), but this is impossible
(since
$(2\sigma_1+2\sigma_2+\sigma_3+\sigma_4+2\sigma_5+\sigma_7)^+=D_1+D_8$). Then
$a_6=0$, hence $a_5>0$ and $c_8>0$. Furthermore, since $c_i=0$ for all
$2\leqslant i\leqslant 5$, $a_3>0$ and $a_1,a_2,a_5\geqslant2$. Therefore,
$\gamma=D_1+D_8-D_7$.

Finally, assume both $c_1$ and $c_6$ $\leqslant0$. Recall that $c_4\leqslant0$,
therefore $c_8\geqslant a_2>0$. Futhermore, $a_3>0$, $c_7\leqslant0$, $a_5>0$,
$c_2\leqslant0$, $c_3\leqslant0$ and $a_4>0$. Then $a_2\geqslant2$, hence $c_8\geqslant2$,
$a_7\geqslant2$ and $a_5\geqslant2$, which is impossible (since
$(\sigma_2+\sigma_3+2\sigma_5+2\sigma_7)^+=2D_8$).
\end{proof}

\subsection{Type $\mathsf F_4$} \label{ss: coperture F}

Let $\Sigma = \{\sigma_1, \sigma_2, \sigma_3\}$ be the set
of spherical roots where $\sigma_i = \alpha_i + \alpha_{i+1}$.

\begin{proposition} \label{prop: coperture F}
Let $\gamma\in\mathbb{N}\Sigma$ be a covering difference in $\mathbb{N}\Delta$ with
$\supp_S(\gamma) = S$. Then either
\begin{enumerate}
\item $\gamma=\sigma_1+\sigma_2+2\sigma_3=D_4$, or 
\item $\gamma=\sigma_1+\sigma_3=D_1+D_4-D_3$.
\end{enumerate}
\end{proposition}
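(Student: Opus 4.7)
The plan is to follow the same template as Propositions~\ref{prop: coperture A}--\ref{prop: coperture E}: compute the Cartan pairing explicitly, turn the support hypothesis into lower bounds on the $a_i$, and rule out each non-candidate configuration by exhibiting $\gamma_0 \in \mN\Sigma$ with $0 <_\Sigma \gamma_0 <_\Sigma \gamma$ and $\gamma^+ - \gamma_0 \in \mN\Delta$. From the $\mathsf F_4$ Cartan matrix one computes
$$\sigma_1 = D_1 + D_2 - 2D_3, \qquad \sigma_2 = -D_1 + D_2 - D_4, \qquad \sigma_3 = -D_2 + D_3 + D_4,$$
so that writing $\gamma = a_1\sigma_1 + a_2\sigma_2 + a_3\sigma_3 = \sum_{i=1}^{4} c_i D_i$ yields
$$c_1 = a_1-a_2, \qquad c_2 = a_1+a_2-a_3, \qquad c_3 = a_3-2a_1, \qquad c_4 = a_3-a_2,$$
together with the identities $c_1+c_2+c_3 = 0$ and $c_2+c_4 = a_1$. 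Since $\alpha_1$ appears only in $\sigma_1$ and $\alpha_4$ only in $\sigma_3$, the hypothesis $\Supp_S(\gamma) = S$ forces $a_1, a_3 \geq 1$.

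If $a_2 = 0$, then $c_1 = a_1 \geq 1$ and $c_4 = a_3 \geq 1$. Observing that $\sigma_1+\sigma_3 = D_1 - D_3 + D_4$, a direct check shows that $\gamma^+ - (\sigma_1+\sigma_3) \in \mN\Delta$ for every such $\gamma$; since $\sigma_1+\sigma_3 <_\Sigma \gamma$ is strict unless $(a_1, a_3) = (1, 1)$, all other triples are excluded. The remaining possibility $a_1 = a_3 = 1$ gives $\gamma = \sigma_1+\sigma_3 = D_1 - D_3 + D_4$, which is case (2); a direct check that $\gamma^+ - \sigma_1, \gamma^+ - \sigma_3 \notin \mN\Delta$ confirms it is a covering difference.

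If $a_2 \geq 1$, the pivotal identity is $\sigma_1 + \sigma_2 + 2\sigma_3 = D_4$. When $a_2 \geq a_3$ (so $c_4 \leq 0$), the formula yields $c_2 \geq a_1 \geq 1$ and one verifies that $\gamma^+ - \sigma_2 \in \mN\Delta$; since $a_1 \geq 1$ makes $\sigma_2 <_\Sigma \gamma$ strict, such $\gamma$ are not covering differences. When $a_2 < a_3$ (forcing $a_3 \geq 2$ and $c_4 \geq 1$), one instead has $\gamma^+ - D_4 \in \mN\Delta$ and $\sigma_1+\sigma_2+2\sigma_3 \leq_\Sigma \gamma$, strictly unless $(a_1, a_2, a_3) = (1, 1, 2)$. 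This last triple produces $\gamma = D_4$, case (1), which is verified to be a covering difference by enumerating the ten nontrivial elements $\gamma_0 \in \mN\Sigma$ with $0 <_\Sigma \gamma_0 <_\Sigma \sigma_1+\sigma_2+2\sigma_3$ and checking that $D_4 - \gamma_0 \notin \mN\Delta$ in each.

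The main obstacle is the sub-case analysis in $a_2 \geq 1$: unlike in the classical types, here $c_4$ can take either sign under the support constraint, so one must split according to the sign of $c_4$ and choose a different auxiliary $\gamma_0$ (either $\sigma_2$ or $\sigma_1+\sigma_2+2\sigma_3$) in each branch. The individual checks are routine, but ensuring that no configuration is missed is what requires care.
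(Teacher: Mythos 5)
Your proof is correct, and it follows the same template as the paper: compute the Cartan pairing explicitly, deduce $a_1,a_3\geq 1$ from the support hypothesis, and exclude all non-candidates by exhibiting a suitable $\gamma_0\in\mN\Sigma$. The only real difference is one of economy: the paper first observes that $\sigma_2^+=D_2$ forces $c_2\leq 0$ for any covering difference, and then uses $c_2+c_4=a_1>0$ to conclude $c_4>0$, hence $a_3>a_2$; this makes your sub-case $a_2\geq a_3$ (where you bring in $\gamma_0=\sigma_2$) vacuous from the start and collapses your two-branch analysis for $a_2\geq 1$ into a single line. Your more explicit enumeration (including the ten-element check that $D_4$ is actually a covering difference, which the paper leaves implicit) is more laborious but equally valid.
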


\begin{proof}
Clearly, $a_1,a_3>0$.
Since $\sigma_2^+=D_2$, it follows $c_2\leqslant0$.
Then $c_4>0$ (since $c_2+c_4=a_1$), hence $a_3>a_2$.
If $a_2>0$, then $\gamma=D_4$, whereas if $a_2=0$,
then $c_1>0$ and $\gamma=D_1+D_4-D_3$.
\end{proof}

\subsection{Type $\mathsf G_2$} \label{ss: coperture G}

Let $\Sigma = \{\sigma\}$ be the set
of spherical roots where $\sigma = \alpha_1 + \alpha_2$.
The proof of the following proposition is trivial.

\begin{proposition} \label{prop: coperture G}
Let $\gamma\in\mathbb{N}\Sigma$ be a covering difference in $\mathbb{N}\Delta$ with
$\supp_S(\gamma) = S$. Then $\gamma=\sigma=D_2-D_1$.
\end{proposition}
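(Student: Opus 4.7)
The plan relies on the fact that in the $\sfG_2$ case $\grS$ has a single element, so any $\grg \in \mN\grS$ is of the form $\grg = a\grs$ for some integer $a\geq 0$; moreover the hypothesis $\Supp_S(\grg)=S$ is automatic as soon as $a\geq 1$, since $\grs=\gra_1+\gra_2$ involves both simple roots. Hence the whole question reduces to determining which positive integers $a$ produce a covering difference in $\mN\grD$.

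The first step is to translate $\grs$ into the color basis. Since the variety is of simply connected type, the map $\gro$ identifies $\grD=\{D_1,D_2\}$ with the pair of fundamental weights, and a one-line computation with the Cartan matrix of $\sfG_2$ in Bourbaki's labelling (i.e.\ $\gra_1=2\gro_1-\gro_2$ and $\gra_2=-3\gro_1+2\gro_2$) gives $\grs=-D_1+D_2$, so that $\grg^+=aD_2$ and $\grg^-=aD_1$.

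The second and last step is to rule out $a\geq 2$. For any such $a$, the element $(a-1)D_1+D_2=\grg^-+\grs$ lies in $\mN\grD$, and with respect to $\leq_\grS$ it sits strictly between $\grg^-$ and $\grg^+$, the two differences being $\grs$ and $(a-1)\grs$ respectively (both non-zero). This contradicts the definition of covering difference, forcing $a=1$ and hence $\grg=\grs=D_2-D_1$, in which case no strict intermediate element exists because the only elements of $\mN\grD$ of the form $D_1+k\grs$ with $k\geq 0$ are $D_1$ and $D_2$ themselves.

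I do not foresee any real obstacle: the argument is essentially two lines, mirroring on a much smaller scale the pattern used for the other exceptional types in this section. The only point requiring care is the sign convention that identifies $\grs^+$ with $D_2$ rather than $D_1$, which is fixed by the standard labelling of $\sfG_2$ roots used throughout the paper.
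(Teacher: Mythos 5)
Your proof is correct, and it supplies exactly the short argument the paper omits by declaring the proof trivial. The translation $\sigma = \alpha_1+\alpha_2 = -D_1 + D_2$ is right (for model wonderful varieties of simply connected type the Cartan pairing does coincide with the fundamental-weight decomposition, as one can cross-check against the $\mathsf F_4$ computations in the same section), the observation that $\gamma = a\sigma$ with $a\geq 1$ automatically has full support is right, and the intermediate element $(a-1)D_1 + D_2$ for $a\geq 2$ does the job of ruling out all non-trivial multiples of $\sigma$. Nothing to add.
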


\section{Projective normality of model wonderful varieties}\label{sec: low model}

In this section we prove the following.

\begin{theorem}	\label{teo: model}
Let $M$ be a model wonderful variety of simply connected type. The multiplication of global sections
\[
m_{\mathcal L,\mathcal L'} \colon \Gamma(M,\mathcal L) \otimes
\Gamma(M,\mathcal L') \longrightarrow \Gamma(M,\mathcal L \otimes
\mathcal L')
\]
is surjective for all globally generated line bundles $\mathcal
L,\mathcal L' \in \Pic(M)$.
\end{theorem}

Following Lemma~\ref{lem: proiettiva normalita}, we first
classify all the low fundamental triples for
the model wonderful varieties of simply connected type, and
prove that these triples all satisfy the condition \eqref{P2}.
As in the previous section, we can restrict ourselves to the case of
an almost simple group $G$. As shown in the proof of Theorem~\ref{teo: model}
at the end of the section, it is enough to consider only low fundamental triples $(D,E,F)$
such that $\supp_S(D+E-F)=S$.

We keep the notation of the previous section. We denote by
$H$ the stabilizer of a point $x_0$ in the open $G$-orbit of $M$ and by
$H_0$ the intersection of the kernels of the multiplicative characters
of $H$.

\subsection{Type $\mathsf{A}_r$}

\begin{lemma}	\label{lemma: triple fondamentali piatte A}
Let $(D_p,D_q,F)$ be a low fundamental triple with $\supp_S(D_p+D_q-F)
= S$. Then $F=0$ and $p+q = r+1$. If moreover $r$ is odd, then $p$ and $q$ are
even.
\end{lemma}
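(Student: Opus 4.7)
The plan is to exploit the full-support hypothesis $\Supp_S(\gamma)=S$ for $\gamma:=D_p+D_q-F=\sum_{i=1}^{r-1}a_i\sigma_i\in\mN\grS$ to pin down the coefficients $a_i$. (The low-triple hypothesis itself plays no essential role here: in type $\sfA_r$, the only $D'\in\mN\grD$ with $D'\leq_\grS D_p$ is $D_p$ itself, since $\omega_p-\omega_{p'}$ belongs to the root lattice $\mZ S$ only when $p\equiv p'\pmod{r+1}$, which forces $p=p'$ in the range $1\leq p,p'\leq r$.) Because $\sigma_i=\alpha_i+\alpha_{i+1}$, the coefficient of $\alpha_k$ in $\gamma$ is $a_{k-1}+a_k$ (with $a_0=a_r=0$), so the full-support condition immediately gives $a_1,a_{r-1}\geq 1$.

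First I would derive $F=0$ by a height count. Expanding $\sigma_i=\omega_i+\omega_{i+1}-\omega_{i-1}-\omega_{i+2}$ (with $\omega_0=\omega_{r+1}=0$) shows $\alt(\sigma_i)=\delta_{i,1}+\delta_{i,r-1}$, hence $\alt(\gamma)=a_1+a_{r-1}\geq 2$. But $\alt(\gamma)=\alt(D_p+D_q)-\alt(F)=2-\alt(F)$, so $\alt(F)=0$, forcing $F=0$ and $a_1=a_{r-1}=1$.

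Second, I would obtain $p+q=r+1$ via a telescoping identity. Let $c_k$ denote the coefficient of $\omega_k$ in $\gamma$; the expansion of $\sigma_i$ yields
\[
\sum_{k=1}^K c_k = a_1+a_{K-1}-a_{K+1},\qquad K=1,\ldots,r,
\]
with the convention $a_i=0$ for $i\leq 0$ or $i\geq r$. Since $\gamma=D_p+D_q$ forces $c_k=\delta_{kp}+\delta_{kq}$, this rewrites as $a_{K+1}-a_{K-1}=1-[p\leq K]-[q\leq K]$. Summing over $K=1,\ldots,r-1$, the left-hand side telescopes to $a_{r-1}+a_r-a_0-a_1=0$, while the right-hand side equals $p+q-r-1$. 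Therefore $p+q=r+1$.

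For the parity claim when $r$ is odd, assume WLOG $p\leq q$ and iterate the recursion starting from $a_0=0$, $a_1=1$: one checks $a_k=\lceil k/2\rceil$ for $k\leq p$, then $a_k$ oscillates between $\lceil p/2\rceil$ and $\lfloor p/2\rfloor$ throughout the plateau $p\leq k\leq q$, and finally $a_{q+2l}=a_q-l$ in the range $q\leq k\leq r$. Since $r$ odd implies $q-p=r+1-2p$ is even, $a_q=\lceil p/2\rceil$; setting $l=(p-1)/2$, the required value $a_r=0$ becomes $\lceil p/2\rceil=(p-1)/2$, which fails precisely when $p$ is odd. Hence $p$, and thus $q=r+1-p$, must be even. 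The main burden will be the careful bookkeeping in this last step through the three regimes, in particular the boundary cases $p=1$, $p=q$, and $q=r$ where one regime collapses.
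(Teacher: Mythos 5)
Your proof is correct, and it takes a genuinely different and more elementary route than the paper's. The paper first classifies the covering differences (Proposition~\ref{prop: coperture A}) and then traces a chain of coverings $F = F_n <_\grS F_{n-1} <_\grS\cdots<_\grS F_0 = D_p+D_q$, reading off the conclusion from the shape of each covering step; you instead work directly with the $\Sigma$-coefficients $a_i$ of $\gamma$, extracting all constraints from the change of basis $\sigma_i = -D_{i-1}+D_i+D_{i+1}-D_{i+2}$. Your height identity $\alt_\grD(\sigma_i)=\delta_{i,1}+\delta_{i,r-1}$ immediately gives $F=0$ and $a_1=a_{r-1}=1$; the telescoped partial-sum formula $\sum_{k=1}^K c_k = a_1+a_{K-1}-a_{K+1}$ is correct and gives $p+q=r+1$ by summing over $K=1,\dots,r-1$; and the strand-by-parity analysis of the recursion $a_{K+1}=a_{K-1}+1-[p\leq K]-[q\leq K]$ correctly rules out $p$ odd when $r$ is odd (I checked the boundary cases $p=1$, $p=q$, $q=r$ that you flagged, and they do work). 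The advantage of your approach is that it bypasses the classification of covering differences entirely, at the cost of a somewhat fiddly three-regime case analysis at the end. One minor imprecision: your parenthetical argument that low-ness is automatic only treats $D'=D_{p'}\in\grD$, whereas $D'$ ranges over $\mN\grD$; to close this you need the same height count to first observe that $D_p-D'\in\mN\Sigma$ forces $\alt(D')\leq\alt(D_p)=1$, hence $D'\in\grD\cup\{0\}$, and only then invoke the weight-lattice-mod-root-lattice argument (and note $D'=0$ is impossible since $\omega_p\notin\mZ S$). This is not used in the rest of your proof, so it does not affect correctness, but as stated the parenthetical has a small gap.
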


\begin{proof}
Notice that every fundamental triple is low: indeed, 
by Proposition~\ref{prop: coperture A}, for every $\eta\in\mathbb{N}\Sigma$ covering
difference in $\mathbb{N}\Delta$ one has $\alt(\eta^+) = 2$, hence every
color is minimal in $\mathbb{N}\Delta$ w.r.t.\ $\leqslant_\Sigma$. Therefore, we only
need to compute the fundamental triples $(D_p,D_q,F)$ with
$\supp_S(D_p+D_q-F) = S$.

Take a sequence
\[
	F = F_n <_\Sigma F_{n-1} <_\Sigma\ldots <_\Sigma F_0 = D_p + D_q
\] such that $F_i \in \mathbb{N}\Delta$ and $\gamma_i = F_{i-1} - F_i$ is a
        covering difference for every $i \leqslant n$. 
By Proposition~\ref{prop: coperture A}, it follows that if $\gamma_i^+ =
        F_{i-1}=D_s+D_t$ (with $1\leqslant s\leqslant t\leqslant r$) then $\gamma_i^- =
        F_i=D_{s-j}+D_{t+j}$ (with $j$ equal to 1 or 2). Therefore,
        $\supp_S(\gamma_n)=\supp_S(D_p+D_q-F) =S$, $F = \gamma_n^- = 0$
        and $q= r+1-p$.

If $r$ is odd, all the covering differences $\gamma_i= F_{i-1} - F_i$
are of the type~(2) of Proposition~\ref{prop: coperture A}, then $p$ is even.
\end{proof}

\begin{proposition}\label{prp:triplemodelloAdispari}
Let $r$ be odd and let $(D,E,F)$ be a low fundamental triple with
$\supp_S(D+E-F)=S$. Then $ s^{D+E-F}V_F\subset V_D V_E $.
\end{proposition}

\begin{proof}
By the previous lemma, $(D,E,F)=(D_p,D_{r+1-p},0)$ and $p$ is even (as
well as $r+1-p$).

Set $\Delta_{\mathrm{odd}} = \{D_i \in \Delta \, : \, i \text{ is odd}\}$; the
subset $\Delta_{\mathrm{odd}} \subset \Delta$ is distinguished and the
quotient $M' = M/\Delta_{\mathrm{odd}}$ is a symmetric wonderful variety
(with spherical roots $\alpha_{2k-1}+2\alpha_{2k}+\alpha_{2k+1}$).

By Corollary~\ref{cor: sezioni e quozienti} together with the
surjectivity of the multiplication map in the symmetric case
\[
      \Gamma(M,\mathcal L_D) \Gamma(M,\mathcal L_E) =
        \Gamma(M',\mathcal L_D)\Gamma(M',\mathcal L_E) = \Gamma(M',\mathcal L_{D+E}) =
        \Gamma(M,\mathcal L_{D+E}). 
\]
\end{proof}

\begin{proposition}\label{prp:triplemodelloApari}
Let $r$ be even and let $(D,E,F)$ be a low fundamental triple with
$\supp_S(D+E-F) = S$. Then $s^{D+E-F}V_F \subset V_D V_E$.
\end{proposition}

\begin{proof}
By Lemma~\ref{lemma: triple fondamentali piatte A}, we have $F=0$ and
$V(\omega_E) \simeq V(\omega_D)^*$, hence $V(\omega_D)^* \otimes V(\omega_E)^*
\simeq \mathrm{End}(V(\omega_D))$.

If $r$ is even, the stabilizer $H$ of a point in the open
$G$-orbit of $M$ is the normalizer in $G$ of $\Sp(r)$ and in
particular is reductive (see \cite{Lu3}).  Therefore, the
one-dimensional $H$-submodules of $V(\omega_D)^*$ and of $V(\omega_E)^*$
associated respectively with $D$ and $E$ are dual to each other, hence
we may choose the $H$-eigenvectors $h_D \in V(\omega_D)^*$ and $h_E \in
V(\omega_E)^* \simeq V(\omega_D)$ in such a way that $h_D(h_E) = 1$. If we
complete $h_E$ and $h_D$ to dual bases $\{h_E, v_1,\dots, v_n\}
\subset V(\omega_D)$ and $\{h_D, v_1^*,\dots, v_n^*\} \subset
V(\omega_D)^*$, then the identity $\mathrm{Id} \in
\mathrm{End}(V(\omega_D))$ is a $G$-invariant element which, as tensor,
is described as follows:
\[ \mathrm{Id} = h_D \otimes h_E + \sum_{i=1}^n v_i^* \otimes v_i.\] 
Therefore, $h_D \otimes h_E$ has a non-zero projection on the
isotypic component of highest weight zero and by 
Lemma~\ref{lemma: supporto moltiplicazione} 
we get that $s^{D+E}V_0 \subset V_D V_E$.
\end{proof}

\subsection{Type $\mathsf{B}_r$}

\begin{proposition}
There are no low triples $(D,E,F)$ with $\sigma_{r-1} \in \supp_{\Sigma}(D+E-F)$.
\end{proposition}

\begin{proof}
Set
\[ \Delta^{\mathrm{even}}=\{D_i \in \Delta \, : \, r-i \text{ is even}\}\]
\[ \Delta^{\mathrm{odd}}=\{D_i \in \Delta \, : \, r-i \text{ is odd}\}\]

If $F<_\Sigma D+E$ with $\sigma_{r-1} \in \supp_{\Sigma}(D+E-F)$, then
$\supp(D+E)\cap\Delta^{\mathrm{odd}}\neq\varnothing$. Indeed, if $\supp(D+E)$ were included
in $\Delta^{\mathrm{even}}$, take a sequence
\[ F = F_n <_\Sigma F_{n-1} <_\Sigma\ldots <_\Sigma F_0 = D+E
\] of coverings in $\mathbb{N}\Delta$: every covering difference
$\gamma_i=F_{i-1}-F_i$ would necessarily be of the type~(2)  
of Proposition~\ref{prop: coperture A}, hence
$\sigma_{r-1}\notin\supp_\Sigma(D+E-F)$.
 
Let $k < r$ be the maximum such that $D_k \in \supp(E+F) \cap
\Delta^{\mathrm{odd}}$. Set $D+E-F=\gamma = \sum_{i=1}^{r-1} a_i \sigma_i$. Since $c(\gamma,
D_i ) \leqslant 0$ for every $k < i <r$ with $r-i$ odd, as in the proof of
Proposition~\ref{prop: coperture B} it follows that $a_j \geqslant a_{r-1}
> 0$ for every $j \geqslant k$ with $r-j$ odd.

Denote $\gamma_0 = \sum_{i= 0}^{(r-k-1)/2} \sigma_{k+2i} = - D_{k-1} +D_k$ and 
$F'= D+E-\gamma_0$. Then $F' \in \mathbb{N}\Delta$ and $F \leqslant_\Sigma F' <_\Sigma D+E$. Set
\[ (D',E') = \left\{\begin{array}{cc} (D - \gamma_0,E) & \text{ if } D_k
\in \supp(D)\\ (D,E - \gamma_0) & \text{ otherwise}
		\end{array}\right.:
\] then $D' \leqslant_\Sigma D$, $E'\leqslant_\Sigma E$ and $F \leqslant_\Sigma D'+E'
<_\Sigma D+E$, hence $(D,E,F)$ is not a low triple.
\end{proof}

\subsection{Type $\mathsf{C}_r$}

\begin{proposition}
There are no low triples $(D,E,F)$ with $\sigma_{r-1} \in \supp_{\Sigma}(D+E-F)$.
\end{proposition}

\begin{proof}
Let $D,E,F \in \mathbb{N}\Delta$ with $F <_\Sigma D+E$ and $\sigma_{r-1} \in \supp_\Sigma(D+E-F)$. 
Denote $k \leqslant r$ the maximum such that $D_k \in \supp(D+E)$. 
Reasoning as in the proof of Proposition~\ref{prop: coperture C}, 
it follows that $\sigma_i \in \supp_\Sigma(D+E-F)$ for every $k-1 \leqslant i < r$. 
Therefore, if we set $\gamma_0 = \sum_{i=k-1}^{r-1} \sigma_{i} = -D_{k-2} + D_k$ and $F'= D+E-\gamma_0$, 
then $F' \in \mathbb{N}\Delta$ and $F \leqslant_\Sigma F' < D+E$. Set
\[
	(D',E') = \left\{\begin{array}{cc}
			(D - \gamma_0,E) & \text{ if } D_k \in \supp(E)\\
			(D,E - \gamma_0) & \text{ otherwise}
		\end{array}\right.:
\]
then $D' \leqslant_\Sigma D$, $E'\leqslant_\Sigma E$ and $F \leqslant_\Sigma D'+E' <_\Sigma D+E$, hence $(D,E,F)$ is not a low triple.
\end{proof}

\subsection{Type $\mathsf{D}_r$}

\begin{lemma}	\label{lemma: triple fondamentali piatte D}
Let $(D_p,D_q,F)$ be a low fundamental triple with $\supp_S(D_p+D_q-F) = S$. 
Then $p,q,r$ are odd, $p,q\leqslant r-2$ and either
\begin{enumerate}
\item $p+q \leqslant r-1$ with $F = D_{p+q-2}$, or
\item $p+q = r+1$ with $F = D_{r-1} + D_r$.
\end{enumerate}
\end{lemma}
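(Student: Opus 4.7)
My plan is first to observe that in this setting every fundamental triple is automatically low, and then to classify the admissible $F$ directly. For the first point, I will verify that no covering difference in $\mN\grD$ has positive part equal to a single color: inspecting the covering differences arising from $A$-type and $D$-type sub-Dynkin subdiagrams of $D_r$ (via Propositions~\ref{prop: coperture A} and~\ref{prop: coperture D} restricted to these subsystems), each $\grg^+$ is either the sum of two distinct colors or twice a single color. It follows, by a first-step argument applied to any saturated chain $D_p >_\grS F_1 >_\grS\dots$, that every color $D_p$ is minimal for $\leq_\grS$, so any fundamental triple $(D_p,D_q,F)$ is automatically low. The task thus reduces to determining all $F \in \mN\grD$ with $\grg := D_p+D_q-F \in \mN\grS$ satisfying $\Supp_S(\grg) = S$.

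Writing $\grg = \sum_{i=1}^{r-1} c_i \grs_i$ with all $c_i > 0$ (the full-support requirement), and expressing each $\grs_i$ in the color basis $\{D_1,\ldots,D_r\}$---using $\grs_j = -D_{j-1}+D_j+D_{j+1}-D_{j+2}$ for $j$ away from the fork, together with the modified expansions of $\grs_{r-3}, \grs_{r-2}, \grs_{r-1}$ involving the spin colors $D_{r-1}, D_r$---turns the condition $F \in \mN\grD$ into a system of non-negativity inequalities on the $c_i$. Inspecting the coefficients of the spin colors in $F$ yields a dichotomy: either the fork contributions cancel (which forces $c_{r-2} = c_{r-1}$ and leaves $F$ free of spin colors), or they contribute exactly $D_{r-1}+D_r$ to $F$ (which requires $|c_{r-2}-c_{r-1}| = 1$). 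In the second sub-case neither $p$ nor $q$ can equal $r-1$ or $r$, by a direct inspection of the inequalities.

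Propagating the inequalities inward from the fork to the other boundary at $D_1$ then forces $r$ to be odd, $p$ and $q$ to be odd and $\leq r-2$, and fixes the $c_i$'s into rigid staircase patterns analogous to those in Proposition~\ref{prop: coperture D}. The first sub-case yields $F = D_{p+q-2}$ with $p+q \leq r-1$; the second yields $F = D_{r-1}+D_r$ with $p+q = r+1$. The main obstacle will be the careful propagation of the system of inequalities through the linear part of the Dynkin diagram for arbitrary $r$---in particular, the exclusion of all intermediate candidate positions at which $F$ could a~priori have a nonzero coefficient---mirroring the case analysis already carried out in the proof of Proposition~\ref{prop: coperture D}.
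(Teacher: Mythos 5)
Your opening observation is correct and matches the paper: since every covering difference $\grg$ satisfies $\alt(\grg^+)=2$, no color can cover a smaller element, so every $D_i$ is minimal in $(\mN\grD,\leq_\grS)$ and hence every fundamental triple is automatically low. The reduction to classifying $F\in\mN\grD$ with $\grg:=D_p+D_q-F\in\mN\grS$ and $\Supp_S(\grg)=S$ is also correct.

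However, there is a genuine gap at the next step. You write $\grg = \sum_{i=1}^{r-1} c_i\grs_i$ and identify the hypothesis $\Supp_S(\grg)=S$ with the condition ``all $c_i>0$''. These are \emph{not} equivalent: $\Supp_S$ is the support in \emph{simple} roots $\gra_j$, not in spherical roots $\grs_i$. Since $\grs_i = \gra_i+\gra_{i+1}$ (resp.\ $\gra_{r-2}+\gra_r$) has nonnegative coefficients, one has $\Supp_S(\grg)=\bigcup_{c_i>0}\Supp_S(\grs_i)$, so the full-support condition only forces $c_1,c_{r-2},c_{r-1}>0$ and, for each interior vertex $\gra_j$ with $2\leq j\leq r-3$, at least one of $c_{j-1},c_j$ to be positive --- it does \emph{not} force all $c_i>0$. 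In fact the very triples in the statement violate your hypothesis: take $\sfD_5$ and $(D_p,D_q,F)=(D_1,D_1,0)$ (i.e.\ case (1) with $p=q=1$, $F=D_0=0$); then $\grg = 2D_1 = 2\omega_1$ has $\Supp_S(\grg)=S$, but its unique decomposition is $\grg=2\grs_1+\grs_3+\grs_4$, so $c_2=0$. If you run your inequality system with $c_2>0$, the constraints coming from the $D_2$- and $D_3$-coefficients of $F$ force $c_2\leq 0$, and you would wrongly conclude this triple does not occur. Indeed, even the full-support covering differences classified in Proposition~\ref{prop: coperture D} have vanishing even-indexed $c_i$'s, so the assumption ``all $c_i>0$'' systematically excludes exactly the cases you need. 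Relatedly, your announced dichotomy on the spin-color coefficients of $F$ is internally inconsistent with that assumption: under $c_{r-3}>0$, the coefficients of $D_{r-1}$ and $D_r$ in $F$ sum to $2c_{r-3}>0$, so the branch ``$F$ free of spin colors'' cannot occur at all.

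The paper avoids this entirely by working with a saturated chain $F=F_n<_\grS\cdots<_\grS F_0=D_p+D_q$ in $\mN\grD$ and invoking only the classification of covering differences (Propositions~\ref{prop: coperture A} and~\ref{prop: coperture D}): each step $F_{i-1}-F_i$ has a controlled shape regardless of how $\grg$ decomposes into spherical roots, and parity/position bookkeeping on the $F_i$'s gives the result. If you want to keep a direct coefficient-based argument, you must replace ``all $c_i>0$'' by the correct conditions ($c_1,c_{r-2},c_{r-1}>0$ and $c_{j-1}+c_j>0$ for $2\leq j\leq r-3$) and redo the propagation allowing interior $c_i$ to vanish --- which is doable but substantially more delicate than sketched.
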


\begin{proof}
Notice that, as in type $\mathsf{A}$, every fundamental triple is low. 
Therefore, we only need to compute the fundamental triples $(D_p,D_q,F)$ with $\supp_S(D_p+D_q-F) = S$. 
Assume $p\leqslant q$.

Take a sequence
\[
	F = F_n <_\Sigma F_{n-1} <_\Sigma\ldots <_\Sigma F_0 = D_p + D_q
\]
such that $F_i \in \mathbb{N}\Delta$ and $\gamma_i = F_{i-1} - F_i$ is a covering difference for every $i \leqslant n$. 
Recall the classification of covering differences of Propositions~\ref{prop: coperture A} and~\ref{prop: coperture D}.

(1) $p,q\leqslant r-2$. If $q$ were equal to $r-1$ then $(r-1)-p$ should be non-zero and odd, thus $\gamma_1=-D_{p-1}+D_p+D_{r-1}-D_r$ and $F_1=D_{p-1}+D_r$, which is impossible since the distance between the vertices $r$ and $p-1$ is even. By symmetry, the same argument works if $q=r$.

(2) $q-p$ is even. If $q-p$ were odd, there would exist $i$ such that $F_i$ is $D_{p'}+2D_{q'}$ with $q'$ equal to $r-1$ or $r$ and $(r-1)-p'$ even, which is impossible. 

(3) $r-p$ is even (as well as $r-q$). Here again if $r-p$ and $r-q$ were odd, there would exist $i$ such that $F_i$ is $D_{p'}+2D_{q'}$ with $q'$ equal to $r-1$ or $r$ and $(r-1)-p'$ even. 

Therefore, there exists $i$ such that $\gamma_j$ is of the type~(2) of Proposition~\ref{prop: coperture A} for every $j<i$ and 
$\gamma_i$ is either of type~(1) of Proposition~\ref{prop: coperture A} or of type $\mathsf{D}$. 

In the first case, $F_{i-2}=D_{p'}+D_{q'}$ with $q'=r-2$ and
$p'+q'=p+q$, $F_{i-1}=D_{p'-2}+2D_{q''}$ where $q''$ is equal to $r-1$
or $r$. Then necessarily $p'-2=1$ and $F_i=F_n=D_{r-1}+D_r$.

In the second case, $F_{i-1}=D_{p'}+D_{q'}$ with $p'+q'=p+q$ and
$F_i=D_{p'-1}+D_{q'-1}$. Then $\gamma_j$ is of the type~(2) of Proposition~\ref{prop: coperture A} 
for every $j>i$ until
$F_{n-1}=D_{p'-1-2k}+D_{q'-1+2k}$ where $p'-1-2k=2$ ($k=n-1-i$) and
$F_n$ is equal to either $D_{q'-1+2k+2}$ if $q'-1+2k\leqslant r-5$ or
$D_{r-1}+D_r$ if $q'-1+2k=r-3$.
\end{proof}

\begin{proposition}
Let $(D,E,F)$ be a low fundamental triple with $\supp_S(D+E-F) = S$,
then $s^{D+E-F}V_F \subset V_D V_E$.
\end{proposition}

\begin{proof}
We need an explicit computation.  

Denote by $U=\mathbb{C}^{2r}$ the first fundamental representation of $G$
(that is, the standard representation of $\SO(2r)$) and fix $b \in \mathsf{S}^2U$
a $G$-invariant non-degenerate symmetric 2-form. If $W \subset U$
is a maximal isotropic subspace, we get then a decomposition $U = W
\oplus W^*$. Fix a non-zero vector $e_0 \in W$ and consider the
corresponding decomposition $U = V \oplus \mathbb{C}\, e_0 \oplus V^* \oplus
\mathbb{C}\,   e_0^*$, where $V \subset W$ is a complement of the line $\mathbb{C}\,   e_0$
and where $e_0^* \in W^*$ is defined by $e_0^*(e_0) = 1$,
$e_0^*\bigr|_V = 0$.

Then (see \cite{Lu3}) the Lie algebra $\mathfrak h_0$ of $H_0$ can be described as
\[
\mathfrak h_0 = \mathfrak{sp}(V) \oplus V^* \oplus \mathrm{Skew}(V,V^*),
\]
where $\mathrm{Skew}(V,V^*) \subset \Hom(V,V^*)$ denotes the subspace 
of skew-symmetric linear maps and where $\mathfrak h_0$ is embedded in 
$\mathfrak{so}(U)$ as follows (here we denote by 
$u=(v,\lambda e_0, \psi, \mu e^*_0)$ a generic element in 
$U = V \oplus \mathbb{C}\,   e_0 \oplus V^* \oplus \mathbb{C}\,   e^*_0$):
\begin{itemize}
	\item[-]	if $f \in \mathfrak{sp}(V)$, then $f(u) = f(v) + f\cdot  \psi$,
	\item[-]	if $\phi \in V^*$, then $\phi(u) = \phi(v)e_0 - (\lambda+\mu)\phi + \phi(v) e_0^*$,
	\item[-]	if $\Phi \in \mathrm{Skew}(V,V^*)$, then $\Phi(u) = \Phi(v)$.
\end{itemize}

As already mentioned at the beginning of Section~3 every simple $G$-module possesses 
a unique $\mathfrak{h}_0$-invariant element, up to scalars. In particular, if we denote 
$h_1 = e_0 - e^*_0 \in U$ and $h_2 \in \mathsf{\Lambda}^2 V^*$ a 
$\mathfrak{sp}(V)$-invariant non-degenerate 2-form, then $h_1 \in U^{\mathfrak h_0}$ and 
$h_2 \in (\mathsf{\Lambda}^2U)^{\mathfrak h_0}$. In this way we may describe the $\mathfrak h_0$-invariant 
vectors in every exterior power $\mathsf{\Lambda}^i U$ with $i \leqslant r-1$. Set indeed 
\[
	h_i = \left\{ \begin{array}{cl}
			h_2^{\wedge k} & \text{ if $i = 2k$ is even} \\
			h_1 \wedge h_2^{\wedge k} & \text{ if $i = 2k+1$ is odd} 
		\end{array} \right.:
\]
then $h_i \in (\mathsf{\Lambda}^i U)^{\mathfrak h_0}$. 

Set $\omega_0 = 0$ and recall that if $i \geqslant 0$ then
\[
	\mathsf{\Lambda}^i U = \left\{ \begin{array}{cl}
						V(\omega_i) & \text{ if $i \leqslant r-2$} \\
						V(\omega_{r-1} + \omega_r) & \text{ if $i = r-1$}
				\end{array} \right.
\]
To conclude the proof, by Lemma~\ref{lemma: supporto
moltiplicazione}, we only need to show that, if $i,j$ are
odd integers with $i+j \leqslant r+1$, then there exists an
equivariant projection $\pi \colon \mathsf{\Lambda}^i U \otimes \mathsf{\Lambda}^j U \longrightarrow
\mathsf{\Lambda}^{i+j-2}U$ such that $\pi(h_i \otimes h_j) \neq 0$. Define
$\pi$ as follows:
\[
	\pi\big((u_1 \wedge\dots \wedge u_i) \otimes (w_1 \wedge\dots \wedge w_j)\big) = \sum_{h,k}(-1)^{h+k} b(u_h,w_k)
	u_1 \wedge\dots \wedge \hat{u_h} \wedge\dots \wedge \hat{w_k} \wedge\dots \wedge w_j.
\]
Suppose that $i,j$ are odd with $i+j \leqslant r+1$ and set $k =
(i+j-2)/2$. Notice that $\pi(h_i \otimes h_j) = b(h_1, h_1)
h_2^{\wedge k} + q$, where $q \in \mathsf{\Lambda}^{i+j-2} U$ is linearly
independent with $h_2^{\wedge k}$: since $b(h_1,h_1) \neq 0$
and since $h_2^{\wedge k} \neq 0$, it follows then
$\pi(h_i\otimes h_j) \neq 0$.
\end{proof}

\subsection{Type $\mathsf{E}_r$} \label{ss: computer}

\begin{lemma}	\label{lemma: triple fondamentali piatte E}
The low fundamental triples $(D,E,F)$ with $\supp_S(D+E-F) = S$ are the following:
\begin{itemize}
\item[-] If $r=6$: $(D_1,D_3,D_2)$ $(D_1,D_5,D_3)$ $(D_1,D_6,0)$ $(D_3,D_6,D_5)$ $(D_5,D_6,D_2)$,
\item[-] If $r=7$: $(D_1,D_6,D_3)$ $(D_6,D_6,D_2+D_7)$,
\item[-] If $r=8$: $(D_1,D_1,D_2)$ $(D_1,D_5,2D_2)$ $(D_1,D_7,D_3)$ $(D_1,D_8,D_7)$ $(D_3,D_8,D_5)$ $(D_5,D_8,D_2+D_7)$ $(D_7,D_8,D_2)$.
\end{itemize}
\end{lemma}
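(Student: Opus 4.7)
The plan is a direct case analysis in the spirit of Lemmas~\ref{lemma: triple fondamentali piatte A} and~\ref{lemma: triple fondamentali piatte D}. I would fix $r \in \{6,7,8\}$ and, for each unordered pair $(D_p, D_q)$ with $p \leq q$, first enumerate the $F \in \mN\grD$ such that $\grg := D_p + D_q - F$ lies in $\mN\grS$ and satisfies $\Supp_S(\grg) = S$, then select among these the triples $(D_p, D_q, F)$ that are low.

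For the enumeration step, write $\grg = \sum a_i \sigma_i = \sum c_i D_i$ where the $c_i$ are computed from the $a_i$ via the Cartan pairing of type $\sfE_r$. The requirement $F \in \mN\grD$ becomes $c_i \leq n_i$ for all $i$, where $n_i$ is the coefficient of $D_i$ in $D_p + D_q$, and $\Supp_S(\grg) = S$ forces specific $a_i$ to be positive. I would solve this finite linear feasibility problem by bounding the $a_i$ step by step, exactly as in the proof of Proposition~\ref{prop: coperture E}. Equivalently, admissible $\grg$ are concatenations of covering differences, whose complete list for the diagram $\sfE_r$ and its proper connected subdiagrams is provided by Propositions~\ref{prop: coperture A}, \ref{prop: coperture D} and~\ref{prop: coperture E}; thus the list of candidate $\grg$ is finite and explicit.

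For each candidate $(D_p, D_q, F)$ I would then verify the low property. Since $D_p, D_q$ are single colors, this amounts to checking that there is no nonzero $\eta \in \mN\grS$ with $\grg - \eta \in \mN\grS$ and either $D_p - \eta \in \mN\grD$ or $D_q - \eta \in \mN\grD$. For each color, the set of such $\eta$ with $D_p - \eta \in \mN\grD$ is a short list obtained from the coverings issuing from $D_p$, so the check is routine.

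The main obstacle is simply the size of the case analysis, particularly for $r = 8$ where many candidates must be discarded and seven retained. I do not see a conceptual shortcut beyond the classification of covering differences already established, and indeed the authors indicate that the exceptional-type verifications in this paper are carried out with computer assistance; I would expect the practical proof to amount to a short enumeration checked against the claimed list.
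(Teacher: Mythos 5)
The paper declines to give a proof here, stating only that after Proposition~\ref{prop: coperture E} it is ``a quite long but trivial case-by-case computation,'' so there is nothing concrete to compare against; your outline is a faithful rendering of the computation the authors had in mind, and it is correct. Two small points worth recording. First, your criterion for lowness is right and worth stating cleanly: since $D_p,D_q$ are single colors, $(D_p,D_q,F)$ with $\grg=D_p+D_q-F$ fails to be low precisely when there is some nonzero $\eta\in\mN\grS$ with $\grg-\eta\in\mN\grS$ and $D_p-\eta\in\mN\grD$ or $D_q-\eta\in\mN\grD$ (if a pair $(\eta_1,\eta_2)\neq(0,0)$ witnesses non-lowness, take $\eta$ to be whichever is nonzero, and $\grg-\eta=(\grg-\eta_1-\eta_2)+\eta_{\text{other}}\in\mN\grS$). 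Second, the phrase ``in the spirit of'' Lemmas~\ref{lemma: triple fondamentali piatte A} and~\ref{lemma: triple fondamentali piatte D} slightly underplays what is new in type $\sfE$: in those two lemmas every fundamental triple is automatically low because each single color is $\leq_\grS$-minimal (all covering differences in types $\sfA$ and $\sfD$ have $\alt(\grg^+)=2$), whereas in type $\sfE$ Proposition~\ref{prop: coperture E} produces covering differences such as $D_4-D_2$ ($\sfE_6$) and $D_6-D_2$ ($\sfE_8$), so certain colors are not minimal and the lowness check is genuinely necessary; you do carry it out, but it deserves to be flagged as the step that has no analogue in the $\sfA$ and $\sfD$ cases.
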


By making use of Proposition~\ref{prop: coperture E}, the proof of this lemma is
a quite long but trivial case-by-case computation, which we do not
report here.

\begin{proposition}
Let $(D,E,F)$ be a low fundamental triple with $\supp_S(D+E-F) = S$, then $s^{D+E-F}V_F \subset V_D V_E$.
\end{proposition}

Let us first deal with the triple $(D_1,D_6,0)$ of $\mathsf{E}_6$. The set
$\Delta_0 = \{D_2,D_3,D_4,D_5\}$ is distinguished and the quotient $M' =
M/\Delta_0$ is a symmetric wonderful variety (with spherical roots
$\{2\alpha_1+\alpha_2+2\alpha_3+2\alpha_4+\alpha_5,\alpha_2+\alpha_3+2\alpha_4+2\alpha_5+2\alpha_6$). Therefore,
we can conclude by Corollary~\ref{cor: sezioni e quozienti} and the
surjectivity of the multiplication map in the symmetric case.

For all the other triples of Lemma~\ref{lemma: triple fondamentali piatte E}
we have to use Lemma~\ref{lemma: supporto moltiplicazione}.
Since the dimension of the involved
representations is quite high, we have used the computer and, more
precisely, GAP \cite{gap}, a software for computations which contains
built-in functions to construct and deal with representations of
simple Lie algebras (see also \cite{dG}).

Although the dimension of some of the involved representations is very
high, we have succeded to make the computation accessible with a
currently available home computer. A quite convenient tool is the
quadratic Casimir operator $c$, which acts as the scalar
$(\lambda+2\rho,\lambda)$ on every irreducible representation
$V(\lambda)$. Let $(D,E,F)$ be a low fundamental triple, once the
irreducible representations $V_D^*$ and $V_E^*$ are constructed, and
the vectors $h_D$ and $h_E$ are explicitly found, there is no need to construct
the whole tensor product $V_D^*\otimes V_E^*$ 
in order to project $h_D\otimes h_E$ onto the eigenspace of $c$ relative
to $(\omega_F^*+2\rho,\omega_F^*)$.  
Moreover, in all our cases there exists only one isotypical component in $V_D^*\otimes V_E^*$
of eigenvalue $(\omega_F^*+2\rho,\omega_F^*)$ with respect to $c$.

Sometimes for $\mathsf{E}_8$ the dimension is so high that it is even quite
costly to construct the irreducible representation $V_D^*$ itself
($V_{D_5}$ has dimension 146,325,270). Here we use a further
escamotage. The set $\Delta_0 = \{D_1,D_4,D_6,D_8\}$ is distinguished
and the quotient $M' = M/\Delta_0$ is the parabolic induction of a
symmetric wonderful $\SL(8)$-variety with spherical roots
$\{\alpha_1+2\alpha_3+\alpha_4,\alpha_4+2\alpha_5+\alpha_6,\alpha_6+2\alpha_7+\alpha_8\}$. This
means that, if $D$ equals $D_3$, $D_5$ or $D_7$, then $h_D$ can be
choosen to be the $\Sp(8)$-invariant vector in the simple
$\SL(8)$-submodule $W_D\subset V_D$ of highest weight
$\omega_D$. Furthermore, $V_{D_1}$ and $V_{D_8}$ are still accessible
(the former has dimension 3875 and the latter is the adjoint
representation), $V_{D_7}\subset\mathsf{\Lambda}^2 V_{D_8}$,
$V_{D_5}\subset\mathsf{\Lambda}^4 V_{D_8}$, $V_{D_3}\subset\mathsf{\Lambda}^2V_{D_1}$,
and respectively $W_{D_7}\subset\mathsf{\Lambda}^2W_{D_8}$,
$W_{D_5}\subset\mathsf{\Lambda}^4W_{D_8}$,
$W_{D_3}\subset\mathsf{\Lambda}^2W_{D_1}$. Therefore, the vectors $h_D$ can be
explicitly determined if $D$ equals $D_1$, $D_3$, $D_5$, $D_7$ or
$D_8$, and notice that this is enough to treat all the above triples.

\subsection{Type $\mathsf{F}_4$} 

\begin{lemma}	\label{lemma: triple fondamentali piatte F}
The only low fundamental triple $(D,E,F)$ with $\supp_S(D+E-F) = S$ is $(D_1,D_4,D_3)$.
\end{lemma}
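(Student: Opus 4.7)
The approach is to parameterize $\gamma := D_p + D_q - F \in \mN\grS$ and enumerate the resulting triples. Using the $\sfF_4$ Cartan matrix one finds $\grs_1 = D_1 + D_2 - 2D_3$, $\grs_2 = -D_1 + D_2 - D_4$, and $\grs_3 = -D_2 + D_3 + D_4$. Writing $\gamma = a_1\grs_1 + a_2\grs_2 + a_3\grs_3$ and expanding in the simple-root basis, the condition $\Supp_S(\gamma) = S$ becomes simply $a_1, a_3 \geq 1$, with $a_2 \geq 0$ automatic. Given this, $(D_p, D_q, F)$ is determined by the triple $(a_1, a_2, a_3)$ and the data of which fundamental colors appear as $D_p, D_q$, subject to $F = D_p + D_q - \gamma \in \mN\grD$.

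As a preparatory step I would tabulate every nontrivial reduction $D_j \mapsto D_j - \eta \in \mN\grD$ with $\eta \in \mN\grS\senza\{0\}$. A direct solve-the-inequalities computation (in the same style as the proof of Proposition~\ref{prop: coperture F}) gives: $D_1$ is minuscule; $D_3 - (\grs_2 + \grs_3) = D_1$ is the only nontrivial reduction of $D_3$; $D_4 - (\grs_1 + \grs_2 + 2\grs_3) = 0$ is the only nontrivial reduction of $D_4$; and $D_2$ has exactly three nontrivial reductions, namely $D_2 - \grs_2 = D_1 + D_4$, $D_2 - (\grs_1 + \grs_2 + \grs_3) = D_3$, and $D_2 - (\grs_1 + 2\grs_2 + 2\grs_3) = D_1$. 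A triple $(D_p, D_q, F)$ fails to be low precisely when one of these $\eta$'s applies to $D_p$ or $D_q$ and satisfies $\eta \leq \gamma$ in $\mN\grS$.

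I would then split on $a_2$. If $a_2 \geq 1$, then $\grs_2 \leq \gamma$, so the reduction $D_2 \mapsto D_1 + D_4$ immediately kills any triple with $D_2 \in \{D_p, D_q\}$. In the remaining sub-case $D_2 \notin \{D_p, D_q\}$, non-negativity of the $D_2$-coefficient of $F$ forces $a_3 \geq a_1 + a_2 \geq 2$, and non-negativity of the $D_4$-coefficient then forces $D_4 \in \{D_p, D_q\}$; in each resulting possibility $\{D_p, D_q\} \in \{\{D_4\}, \{D_1, D_4\}, \{D_3, D_4\}\}$ the reduction $D_4 \mapsto 0$ via $\grs_1 + \grs_2 + 2\grs_3 \leq \gamma$ disqualifies the triple (the inequality holds precisely because $a_1, a_2 \geq 1$ and $a_3 \geq 2$ in this branch).

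When $a_2 = 0$, $\gamma = a_1 D_1 + (a_1 - a_3) D_2 + (a_3 - 2a_1) D_3 + a_3 D_4$, so $\gamma^+$ contains $a_1 D_1 + a_3 D_4$; fitting this inside $D_p + D_q$ forces $a_1, a_3 \leq 2$, and inspecting the four cases $(a_1, a_3) \in \{1,2\}^2$ shows only $(1,1)$ survives (the other three produce a $\gamma^+$ with strictly more than two fundamental colors counted with multiplicity). This leaves $\gamma = \grs_1 + \grs_3 = D_1 - D_3 + D_4$, hence $\{D_p, D_q\} = \{D_1, D_4\}$ and $F = D_3$. Lowness of $(D_1, D_4, D_3)$ is then immediate: $D_1$ is minuscule so $D'=D_1$, and the only candidate reduction of $D_4$ requires $\grs_1 + \grs_2 + 2\grs_3 \leq \grs_1 + \grs_3$ which fails, while $E' = 0$ would demand $D_3 \leq_\grS D_1$, contradicting $D_1 <_\grS D_3$. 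The main bookkeeping risk is the second branch of the $a_2 \geq 1$ analysis, but the constraints $a_1, a_3 \geq 1$ together with $a_3 \geq a_1 + a_2$ keep the enumeration very short.
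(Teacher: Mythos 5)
Your proof is correct and is exactly the explicit enumeration the paper leaves implicit with ``trivial, after Proposition~\ref{prop: coperture F}'': you parameterize $\gamma = a_1\grs_1 + a_2\grs_2 + a_3\grs_3$, tabulate the full reduction poset below each fundamental color (I verified the coordinates of the $\grs_i$ and all four tables), and discharge the $a_2 \geq 1$ branch by the reductions $D_2 \mapsto D_1+D_4$ and $D_4 \mapsto 0$, and the $a_2 = 0$ branch by bounding $\alt(\gamma^+) \leq 2$. The concluding lowness check for $(D_1, D_4, D_3)$ is also correct.
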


The proof is trivial, after Proposition~\ref{prop: coperture F}.

\begin{proposition}
$s^{D_1+D_4-D_3}V_{D_3} \subset V_{D_1} V_{D_4}$.
\end{proposition}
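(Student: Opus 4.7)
The plan is to apply Lemma~\ref{lemma: supporto moltiplicazione}, which translates the inclusion $s^{D_1+D_4-D_3}V_{D_3} \subset V_{D_1}V_{D_4}$ into the statement that the projection of the spherical vector $h_{D_1} \otimes h_{D_4} \in V(\gro_1^*) \otimes V(\gro_4^*)$ onto the isotypic component of highest weight $\gro_3^*$ is non-zero. Since all fundamental representations of $\mathsf F_4$ are self-dual, this is a projection problem inside $V(\gro_1) \otimes V(\gro_4)$, where $V(\gro_1)$ is the $52$-dimensional adjoint representation and $V(\gro_4)$ is the $26$-dimensional representation.

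First I would recall, from Luna's description \cite{Lu3}, the structure of $H_0$ for the model wonderful variety of $\mathsf F_4$, which provides the unique (up to scalar) $\mathfrak h_0$-fixed line in any irreducible $G$-module. Using GAP \cite{gap}, as was done for the $\mathsf E_r$ cases, I would construct explicitly the two representations $V_{D_1}$ and $V_{D_4}$ and determine the vectors $h_{D_1}$ and $h_{D_4}$ as generators of the respective $\mathfrak h_0$-fixed lines.

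Next, rather than constructing the entire tensor product $V(\gro_1) \otimes V(\gro_4)$ of dimension $1352$ and decomposing it into irreducible components, I would exploit the quadratic Casimir operator $c$, which acts as the scalar $(\grl+2\grs,\grl)$ on $V(\grl)$, where $\grs$ denotes the half-sum of the positive roots. It then suffices to project $h_{D_1} \otimes h_{D_4}$ onto the $c$-eigenspace with eigenvalue $(\gro_3+2\grs, \gro_3)$ and to check that the result is non-zero; as no other component of $V(\gro_1) \otimes V(\gro_4)$ shares this eigenvalue, this is enough.

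The main obstacle is the explicit identification of the $\mathfrak h_0$-invariant vectors $h_{D_1}$ and $h_{D_4}$, which requires unpacking Luna's description of the generic stabilizer and solving the corresponding linear system; once this is in hand, the Casimir projection is immediate. Since the dimensions involved are much smaller than in the $\mathsf E_8$ computation already carried out above, no further technical escamotage analogous to the one used for $\mathsf E_8$ via the distinguished quotient $M/\{D_1,D_4,D_6,D_8\}$ should be needed here, and the verification is expected to be routine on a present-day computer.
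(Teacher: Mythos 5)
Your proposal is correct and takes essentially the same approach as the paper, whose entire proof of this proposition is the one-line statement ``This can easily be checked via computer,'' relying on the same Lemma~\ref{lemma: supporto moltiplicazione} plus Casimir-projection strategy described earlier in that section for the $\sfE_r$ cases. Your additional observation that no other summand of $V(\gro_1)\otimes V(\gro_4)$ shares the Casimir eigenvalue of $V(\gro_3)$ is also correct (the decomposition is $V(\gro_1+\gro_4)\oplus V(\gro_3)\oplus V(\gro_4)$ with pairwise distinct eigenvalues), which tightens the paper's hedge that the Casimir trick is ``typically'' sufficient.
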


This can easily be checked via computer (as explained above).

\subsection{Type $\mathsf{G}_2$}

\begin{proposition}
There are no low triples $(D,E,F)$ with $F \neq D+E$.
\end{proposition}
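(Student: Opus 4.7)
The plan is to exploit the very simple structure of the couple $(\Sigma,\Delta)$ in type $\sfG_2$: there is a single spherical root $\sigma$, and by Proposition~\ref{prop: coperture G} it can be written in terms of colors as $\sigma = D_2 - D_1$. In particular $D_1 <_\Sigma D_2$, and the partial order $\leq_\Sigma$ on $\mN\Delta$ is governed entirely by shifting $D_2$'s into $D_1$'s via $\sigma$.

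Given a triple $(D,E,F)$ with $F \leq_\Sigma D+E$, I would write $D = a_1 D_1 + a_2 D_2$, $E = b_1 D_1 + b_2 D_2$, and $D+E-F = k\sigma$ for some $k \in \mN$. The assumption is $k \geq 1$. Since $F \in \mN\Delta$, the coefficient of $D_2$ in $F$, namely $a_2 + b_2 - k$, must be non-negative, so $a_2 + b_2 \geq k \geq 1$. Up to exchanging $D$ and $E$ we may assume $a_2 \geq 1$.

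Now I would set $D' := D - \sigma = (a_1+1)D_1 + (a_2-1)D_2$, which belongs to $\mN\Delta$ precisely because $a_2 \geq 1$, and satisfies $D' <_\Sigma D$. Taking $E' := E$, one has $(D',E') \neq (D,E)$, while $D' + E - F = (k-1)\sigma \in \mN\Sigma$, so $F \leq_\Sigma D' + E'$. This exhibits the pair $(D',E')$ violating the minimality condition in Definition~\ref{dfn: low triples}, hence $(D,E,F)$ is not low.

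There is essentially no obstacle here: the argument is a direct two-line computation once one uses the explicit expression $\sigma = D_2 - D_1$ provided by Proposition~\ref{prop: coperture G}. The triviality reflects the fact that the model wonderful variety of $\sfG_2$ has rank one in the relevant sense (a single spherical root), so whenever $D+E$ strictly dominates $F$ in the order $\leq_\Sigma$, one can always subtract $\sigma$ from the summand carrying a $D_2$ without leaving $\mN\Delta$.
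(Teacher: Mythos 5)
Your argument is correct and is exactly the sort of two-line computation the paper has in mind when it declares the proof trivial: with a single spherical root $\sigma = D_2 - D_1$, any strict inequality $F <_\Sigma D+E$ forces a positive $D_2$-coefficient in $D$ or $E$, from which $\sigma$ can be subtracted while staying in $\mN\Delta$, contradicting minimality. The paper offers no written proof, so there is nothing to compare beyond noting that your reasoning is the expected one.
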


The proof is trivial.

\subsection{Projective normality of model wonderful varieties}\label{ssec:proofA}

We are now ready to prove that the multiplication of sections
on a model wonderful variety of simply connected type is surjective.

A \textit{localization} of a wonderful variety $M$ is a $G$-stable subvariety of $M$, 
which is a wonderful $G$-variety by itself.
Notice that we have a bijective correspondence between localizations of $M$ and subsets of $\Sigma$. 
More precisely, for all subsets $\Sigma'$ of $\Sigma$, 
the intersection of the boundary divisors $M^\sigma$ for $\sigma\in\Sigma'$ gives a wonderful variety
with $\Sigma\smallsetminus\Sigma'$ as set of spherical roots.

\begin{proof}[Proof of Theorem~\ref{teo: model}]
By the classification of the covering differences given in the previous section,
it follows that the model wonderful varieties of simply connected type
satisfy the property (2-ht). Hence by Lemma~\ref{lem: proiettiva normalita}
it is enough to show the inclusion $s^{D+E-F} V_F \subset V_D V_E$
for all the low fundamental triples $(D,E,F)$.
Here we only need to show that we can reduce to the low fundamental triples $(D,E,F)$ such that $\supp_S(D+E-F) = S$,
whose case has been proved above in this section.

Let $(D,E,F)$ be a low fundamental triple, denote $\gamma = D+E-F$
and $S' = \supp_S(\gamma)$ and suppose that $S' \neq S$.
Let $Q$ be the parabolic subgroup associated to $S'$
and set $G' = Q/R_Q$. Since $(D,E,F)$ is a low triple,
it follows that $S'$ is connected, hence $G'$ is an almost simple group.
Consider the localization $M'$ of $M$ with $\Sigma'=\{\sigma\in\Sigma\, : \,\supp_S(\sigma)\subset S'\}$
as set of spherical roots.
Then $M'$ is the parabolic induction of the model wonderful $G'$-variety $N$ of simply connected type. 

The restrictions $\Pic(M) \longrightarrow \Pic(M')$ and $\Pic(M') \longrightarrow \Pic(N)$
identify $\Pic(N)$ with a sublattice of $\Pic(M)$.
More precisely, if $\widetilde{\Delta} \subset \Delta$ is the set of colors $\widetilde{D}$ such that
$\omega_{\widetilde{D}} = \omega_\alpha$ for some $\alpha \in S'$,
then $\Pic(N)$ is identified with $\mathbb{Z}\widetilde{\Delta}$. Moreover,
an element of $\mathbb{Z}\widetilde{\Delta}$ induces a globally generated line bundle on $N$
if and only if its coefficients are non negative.
If $\widetilde{D} \in \mathbb{N}\widetilde{\Delta}$, set $W_{\widetilde{D}} = V_{\widetilde{D}}^{U_Q}$.

Notice that $D,E \in \widetilde{\Delta}$ and consider the difference
$\widetilde F = D+E-\gamma \in \mathbb{Z} \widetilde{\Delta}$ (where $\gamma$ is regarded as sum of spherical roots of $N$
hence as an element of $\mathbb{Z} \widetilde{\Delta}$). Since $(D,E,F)$ is a low triple in $\mathbb{N}\Delta$, it follows that
$(D,E,\widetilde F)$ is a low triple in $\mathbb{N}\widetilde{\Delta}$.
Since $\supp_{S'}(\gamma) = S'$, we have that ${s^\gamma} W_{\widetilde F} \subset W_D W_E$, therefore
Proposition~\ref{prop: ind-par} implies that
$s^\gamma V_F \subset V_D V_E$ with respect to the multiplication map
$m'_{D,E}\colon\Gamma(M',\mathcal L_D) \otimes \Gamma(M',\mathcal L_E) \longrightarrow \Gamma(M',\mathcal L_{D+E})$.
This concludes the proof, since $m'_{D,E}$ is just the restriction of the multiplication
map $m_{D,E}\colon\Gamma(M,\mathcal L_D) \otimes \Gamma(M,\mathcal L_E) \longrightarrow \Gamma(M,\mathcal L_{D+E})$.
\end{proof}

Let $M$ be a wonderful variety and let $N$ be a quotient of $M$.
Then the pull-back of line bundles identifies $\Pic(N)$ with a sublattice of $\Pic(M)$
and a line bundle $\mathcal L \in \Pic(N)$ is generated by global sections if and only
if its pull-back (which we still denote by $\mathcal L$) is. 
Moreover, by Corollary~\ref{cor: sezioni e quozienti} we have $\Gamma(N,\mathcal L) = \Gamma(M,\mathcal L)$.
It follows that if $\mathcal L, \mathcal L' \in \Pic(N)$ are generated by global sections
and if the multiplication $\Gamma(M,\mathcal L) \otimes \Gamma(M,\mathcal L')
\longrightarrow \Gamma(M,\mathcal L\otimes \mathcal L')$ is surjective, then the multiplication
$\Gamma(N, \mathcal L) \otimes \Gamma(N, \mathcal L') \longrightarrow \Gamma(N,\mathcal L\otimes\mathcal L')$
is also surjective.

Let now $N$ be a localization of $M$ and let $\mathcal L, \mathcal L' \in \Pic(M)$
be generated by global sections. Then the restriction of sections to $N$ is surjective,
therefore if the multiplication $\Gamma(M,\mathcal L) \otimes \Gamma(M,\mathcal L')
\longrightarrow \Gamma(M,\mathcal L\otimes \mathcal L')$ is surjective,
the multiplication $\Gamma(N, \mathcal L\bigr|_N) \otimes \Gamma(N, \mathcal L'\bigr|_N)
\longrightarrow \Gamma(N,\mathcal L\bigr|_N \otimes \mathcal L'\bigr|_N)$ is also surjective.

If moreover $M$ is a model wonderful variety of simply connected type, 
then by the description of the restriction $\omega\colon \Pic(M ) \longrightarrow \mathcal{X}(T)$ (see \cite[Lemma~30.24]{Ti}) 
we get an isomorphism $\Pic(M) \simeq \mathcal{X}(T)$, 
which identifies the semigroup of globally generated (resp.\ ample) line bundles on $M$ 
with $\mathcal{X}(T)^+$ (resp.\ with the semigroup of regular dominant weights). 
If $N$ is any localization of $M$, then $\omega$ factors through $N$ and we get an isomorphism $\Pic(M) \simeq \Pic(N)$, 
which identifies the globally generated (resp.\ the ample) line bundles on $M$ and on $N$. 
Up to replacing $\mathcal{X}(T)$ with some sublattice generated by fundametal weights, 
the same is true whenever $M'$ is the quotient of a localization of a model wonderful variety of simply connected type 
and $N'$ is a localization of $M'$. 
Therefore, proceeding inductively, we get the following corollary of Theorem~\ref{teo: model}.

\begin{corollary}	\label{cor:localizzazione}
Let $N$ be a wonderful variety obtained from a model wonderful variety
of simply connected type via operations of localization
and quotient by distinguished sets of colors.
Then the multiplication map
\[ m_{\mathcal L,\mathcal L'}\colon\Gamma(N,\mathcal L) \otimes \Gamma(N,\mathcal L')\longrightarrow \Gamma(N,\mathcal L\otimes \mathcal L') \] 
is surjective for all globally generated line bundles $\mathcal L,\mathcal L' \in \Pic(N)$.
\end{corollary}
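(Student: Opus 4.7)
The plan is a straightforward induction on the number of operations (quotient by a distinguished set of colors, or localization) needed to build $N$ from a model wonderful variety $M_0$ of simply connected type. The base case $N = M_0$ is exactly Theorem~A, so all the content is in showing that surjectivity of multiplication of globally generated line bundles is preserved by each of the two operations.

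For the inductive step I would treat the two operations separately, packaging the observations already spelled out in the two paragraphs preceding the statement. In the quotient case $N = N'/\grD_0$, the pull-back under $\pi\colon N' \to N$ identifies $\Pic(N)$ with a sublattice of $\Pic(N')$, preserves global generation, and by Corollary~\ref{cor: sezioni e quozienti} gives canonical equalities $\grG(N,\calL) = \grG(N', \pi^*\calL)$ that commute with multiplication; hence surjectivity on $N'$ descends immediately to surjectivity on $N$. In the localization case $N \subset N'$, the restriction of sections $\grG(N',\calL) \to \grG(N, \calL\ristretto_N)$ is surjective (a standard fact for wonderful varieties, readable off from the decomposition of Proposition~\ref{prop: decomposizione sezioni}), so to deduce surjectivity on $N$ from the inductive hypothesis on $N'$ it is enough to lift any pair of globally generated line bundles on $N$ to a pair of globally generated line bundles on $N'$.

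The only genuinely delicate point, and the main thing one must verify, is precisely this lifting property: at every stage of the induction the restriction $\Pic(N') \to \Pic(N)$ to a localization $N \subset N'$ must be a bijection that preserves the subsemigroup of globally generated classes. This is false for a general wonderful variety, but as recorded in the paragraph before the statement it persists under both operations starting from a model wonderful variety of simply connected type. I would check this persistence combinatorially, using the explicit description of $\grD$ and $\grS$ given in Section~3 (where $\grD$ corresponds to the fundamental weights and $\grS$ to sums of pairs of non-orthogonal simple roots): a localization only discards a subset of $\grS$ while leaving $\grD$ and the cone $\mN\grD$ of globally generated classes unchanged, and a quotient by a distinguished $\grD_0$ removes a subset of colors without introducing new relations among the remaining ones. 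With this persistence verified, both inductive steps apply at every stage and the induction closes.
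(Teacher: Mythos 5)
Your argument is correct and follows essentially the same route the paper takes: induct on the number of operations, use Corollary~\ref{cor: sezioni e quozienti} to descend surjectivity along a quotient, use surjectivity of restriction of sections to descend it along a localization, and isolate as the one nontrivial point the claim that at every stage the restriction $\Pic(N')\to\Pic(N)$ along a localization is a bijection matching the globally generated cones. The paper asserts this last point without a detailed proof and relies on it implicitly in the same way you do, so your proposal is faithful to the paper's line of reasoning.
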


\section{Projective normality of comodel wonderful varieties}\label{sec:comodello}

Motivated by an application which we illustrate below in Section~\ref{sec:modelloreale}, 
here we study the surjectivity of the multiplication of sections in another class of wonderful varieties, 
which we call {\it comodel}. 

Let $G$ be simply connected of simply-laced type, and fix $T$ and $B$ as usual. 
Let $M$ be a model wonderful variety of $G$, 
with set of colors $\Delta$ (in bijection with the set of fundamental weights), 
set of spherical roots $\Sigma$ and Cartan pairing $c\colon \Delta\times \Sigma\to\mathbb Z$.
Let $G^\vee$ be a simply connected group whose root system is isomorphic to $\Phi_\Sigma$, 
the root system generated by $\Sigma$. 
Once fixed $T^\vee$ and $B^\vee$, 
its set of simple roots $S^\vee$ is thus in bijective correspondence with $\Sigma$.
From \cite{Bra} and \cite{BP} it follows that

\begin{theorem}	\label{teo: comodello-esistenza}
There exists a wonderful $G^\vee$-variety $M^\vee$   
whose set of spherical roots $\Sigma^\vee$ is equal
to the set of simple roots $S^\vee$ of $G^\vee$,
its set of colors $\Delta^\vee$ is in bijective correspondence with $\Delta$  
and, under these correspondences, its Cartan pairing
$c^\vee\colon\Delta^\vee\times\Sigma^\vee\to\mathbb Z$ 
equals the Cartan pairing $c$ of the model wonderful $G$-variety $M$.
\end{theorem}

We say that the wonderful variety $M^\vee$ of the previous theorem
is a \textit{comodel wonderful variety} of $G^\vee$, and the type of $G$
is called the {\it cotype} of $M^\vee$.

Forgetting the model wonderful variety, 
in the following $M$ is a comodel wonderful variety of $G$, 
$H$ denotes the stabilizer of a point $x_0$ in the open orbit of $M$,
and $H_0$ the kernel of its multiplicative characters; 
$\mathfrak{h}_0$ denotes the Lie algebra of $H_0$ in the Lie algebra 
$\mathfrak{g}$ of $G$. 

The comodel wonderful varieties correspond to the following cases in \cite{Bra},
an explicit description of the corresponding subgroups $H$ can be found in \cite{BP}.
\begin{itemize}
\item[-] Cotype $\mathsf A_{2m}$: type $\mathsf A_{m-1}\times\mathsf A_m$, case S-5.
\item[-] Cotype $\mathsf A_{2m+1}$: type $\mathsf A_m\times\mathsf A_m$, case S-4.
\item[-] Cotype $\mathsf D_{2m}$: type $\mathsf A_{m-1}\times\mathsf D_m$, case S-10.
\item[-] Cotype $\mathsf D_{2m+1}$: type $\mathsf A_{m-1}\times\mathsf D_{m+1}$, case S-11.
\item[-] Cotype $\mathsf E_6$: type $\mathsf A_5$, case S-50.
\item[-] Cotype $\mathsf E_7$: type $\mathsf A_6$, case S-49.
\item[-] Cotype $\mathsf E_8$: type $\mathsf D_7$, case S-58.
\end{itemize}

In this section we prove the following.

\begin{theorem}\label{teo: comodello-proj-norm}
Let $M$ be a comodel wonderful variety 
and let $\mathcal L$, $\mathcal L'$ be line bundles
generated by global sections. 
Then the multiplication map $m_{\mathcal L,\mathcal L'}$ is surjective.
\end{theorem}

Let $M$ be a comodel wonderful variety, we enumerate the colors
of $M$ as in the corresponding model wonderful variety. 
Similarly, we denote by $h_i\in V_{D_i}^*$ the $H$-semiinvariant associated to
$D_i$. This vector is invariant under $H_0$.

More explicitly, set the map $\omega \colon \Pic(M) \longrightarrow \mathcal{X}(T)$ as follows
(if the Dynkin diagram of $G$ has two connected components,
we distinguish with a superscript the fundamental weights coming
from the different components).
\begin{itemize}
\item[-] Cotype $\mathsf A_{2m}$: $\omega(D_1)=\omega'_1$, 
$\omega(D_{2i})=\omega_i+\omega'_i$ for $i<m$,
$\omega(D_{2i-1})=\omega_{i-1}+\omega'_i$ for $i>1$,
$\omega(D_{2m})=\omega'_m$.
\item[-] Cotype $\mathsf A_{2m+1}$: $\omega(D_1)=\omega_1$, 
$\omega(D_{2i})=\omega_i+\omega'_i$,
$\omega(D_{2i-1})=\omega_i+\omega'_{i-1}$ for $1<i<m+1$,
$\omega(D_{2m+1})=\omega'_m$.
\item[-] Cotype $\mathsf D_{2m}$: $\omega(D_1)=\omega_1$, 
$\omega(D_{2i})=\omega_i+\omega'_i$ for $i<m-1$,
$\omega(D_{2i-1})=\omega_i+\omega'_{i-1}$ for $1<i<m$,
$\omega(D_{2m-2})=\omega_{m-1}+\omega'_{m-1}+\omega'_m$,
$\omega(D_{2m-1})=\omega'_{m-1}$, $\omega(D_{2m})=\omega'_m$.
\item[-] Cotype $\mathsf D_{2m+1}$: $\omega(D_1)=\omega'_1$, 
$\omega(D_{2i})=\omega_i+\omega'_i$ for $i<m$,
$\omega(D_{2i-1})=\omega_{i-1}+\omega'_i$ for $1<i<m$,
$\omega(D_{2m-1})=\omega_{m-1}+\omega'_m+\omega'_{m+1}$,
$\omega(D_{2m})=\omega'_m$, $\omega(D_{2m+1})=\omega'_{m+1}$.
\item[-] Cotype $\mathsf E_6$: $\omega(D_1)=\omega_2$, 
$\omega(D_2)=\omega_3$, $\omega(D_3)=\omega_2+\omega_5$,
$\omega(D_4)=\omega_1+\omega_3+\omega_5$,
$\omega(D_5)=\omega_1+\omega_4$, $\omega(D_6)=\omega_4$.
\item[-] Cotype $\mathsf E_7$: $\omega(D_1)=\omega_3$, 
$\omega(D_2)=\omega_4$, $\omega(D_3)=\omega_3+\omega_6$,
$\omega(D_4)=\omega_2+\omega_4+\omega_6$,
$\omega(D_5)=\omega_2+\omega_5$, $\omega(D_6)=\omega_1+\omega_5$,
$\omega(D_7)=\omega_1$.
\item[-] Cotype $\mathsf E_8$: $\omega(D_1)=\omega_3$, 
$\omega(D_2)=\omega_4$, $\omega(D_3)=\omega_3+\omega_6$,
$\omega(D_4)=\omega_2+\omega_4+\omega_6$,
$\omega(D_5)=\omega_2+\omega_5$, $\omega(D_6)=\omega_1+\omega_5$,
$\omega(D_7)=\omega_1+\omega_7$, $\omega(D_8)=\omega_7$.
\end{itemize}

Since the Cartan pairing of the comodel wonderful varieties is the same as 
that of the model varieties, the classification of the covering differences is also the same 
and the property (2-ht) holds. As shown in Section \ref{ssec: proof of teo comodello}, 
in order to apply Lemma~\ref{lem: proiettiva normalita} 
it is enough to test the surjectivity on the same low fundamental triples
arising in Section~\ref{sec: low model} in the model case.  

In the computations below we use the following conventions. We denote
by $e_1,\dots,e_n$ the standard basis of $V=\mathbb{C}^n$ and by
$\varphi_1,\dots,\varphi_n$ the dual basis. We also denote by
$e_{i_1i_2\ldots i_k}$ the vector 
$e_{i_1}\wedge e_{i_2}\wedge\dots \wedge e_{i_k} \in \mathsf{\Lambda}^kV$ and similarly for $\varphi_{i_1i_2\ldots i_k}$.  
On $V\oplus V^*$ is defined the symmetric bilinear form
$(u,\varphi)\cdot(v,\psi)=\varphi(v)+\psi(u)$. A pairing between $\mathsf{\Lambda}^k V$
and $\mathsf{\Lambda}^{n-k} V$ is defined by $\langle x,y\rangle=\frac{x\wedge
  y}{e_{1\ldots n}}$ and we denote by $\gamma_k\colon\mathsf{\Lambda}^k V\longrightarrow
\mathsf{\Lambda}^{n-k}V^*$ the associated map. We also identify $V_1\otimes V_2$ with
$\Hom(V_1^*,V_2)$ in the usual way, and if $V_1=V_2=V$ we identify
$\mathsf S^2 V$ and $\mathsf{\Lambda}^2 V$ with symmetric and antisymmetric linear maps
in $\Hom(V^*,V)$.

Starting with the vector space $V$ we construct a new vector
space $W=V\oplus V^*\oplus Z$ as the direct sum of $V$, $V^*$ and 
another piece $Z$ that is zero, one or two dimensional. In
particular the dual of $W$ can be identified with $V^*\oplus V\oplus
Z^*$.  Once a basis of $Z$ is fixed, say $z_1,\dots,z_t$, we 
denote the dual basis of
$e_1,\dots,e_n,\varphi_1,\dots,\varphi_n,z_1,\dots,z_t$ by
$e^*_1,\dots,e^*_n,\varphi^*_1,\dots,\varphi^*_n,z^*_1,\dots,z^*_t$.

For a vector space $U$ we have contraction maps 
$\kappa^{ij}_U\colon \mathsf{\Lambda}^i U\otimes \mathsf{\Lambda}^j U^* \longrightarrow 
\mathsf{\Lambda}^{i-1}U \otimes \mathsf{\Lambda}^{j-1}U^*$ given by 
\begin{align*}
\kappa^{ij}_U & (
u_1\wedge\dots \wedge u_i \otimes 
\varphi_1\wedge\dots\wedge \varphi_j) = \\
& \sum_{k,l}(-1)^{k+l} \varphi_l(u_k) 
u_1\wedge\dots \wedge\hat u_k\wedge\dots \wedge u_i \otimes
\varphi_1\wedge\dots \wedge\hat \varphi_l\wedge\dots \wedge \varphi_j.
\end{align*} 
In particular we set $\kappa_U=\kappa^{2\,1}_U$.

Similarly, for a vector space $U$ with a symmetric bilinear form $(\,,\,)$ 
we have
$\tilde\kappa^{ij}_U\colon \mathsf{\Lambda}^i U\otimes \mathsf{\Lambda}^j U \longrightarrow 
\mathsf{\Lambda}^{i+j-2}U$ such that 
\begin{align*}
\tilde\kappa^{ij}_U & (
u_1\wedge\dots\wedge u_i \otimes 
v_1\wedge\dots\wedge v_j) = \\
& \sum_{k,l}(-1)^{k+l} (u_k,v_l) 
u_1\wedge\dots \wedge\hat u_k\wedge\dots \wedge u_i \wedge
v_1\wedge\dots \wedge\hat v_l\wedge\dots \wedge v_j.
\end{align*} 

\subsection{Cotype $\mathsf{A}_r$}

We test the triples of Lemma~\ref{lemma: triple fondamentali piatte A}:
$(D_p,D_{r+1-p},0)$ where, if $r$ is odd, $p$ is even.

If $r$ is odd, the set $\Delta_{\mathrm{odd}}$ of odd-indexed colors is distinguished 
and the quotient is a symmetric wonderful variety, so the surjectivity follows as in 
Proposition~\ref{prp:triplemodelloAdispari}.
 
If $r$ is even, $H$ is reductive and the surjectivity follows as in 
Proposition~\ref{prp:triplemodelloApari}.

\subsection{Cotype $\mathsf{D}_r$}

We test the triples of Lemma~\ref{lemma: triple fondamentali piatte D}:
$(D_p,D_q,F)$ where $p,q,r$ are odd, $p,q\leqslant r-2$ and either
\begin{itemize}
\item $p+q \leqslant r-1$ with $F = D_{p+q-2}$, or
\item $p+q = r+1$ with $F = D_{r-1} + D_r$.
\end{itemize}

Let $m>1$ and set $r=2m+1$. 
Let $V =\mathbb{C}^m$, and set 
$W=V\oplus \mathbb C\,e\oplus V^*\oplus\mathbb C\,\varepsilon$.
On $W$ is defined a quadratic form such that $V$ and $V^*$ are orthogonal
to $e$ and $\varepsilon$, moreover $(e,\varepsilon)=1$ and $(e,e)=(\varepsilon,\varepsilon)=0$. 
Let $G=\SL (V)\times \Spin(W)$. 
We have $\mathfrak{h}_0=\mathfrak{sl}(V)\oplus \mathsf{\Lambda}^2 V\oplus V$, 
which is embedded as a subalgebra of $\mathfrak{g}$ as follows.
First notice that we have a natural immersion of $\SL (V)$ in $\Spin (W)$,
so that $\mathfrak{sl}(V)$ can be included diagonally in $\mathfrak{g}$. 
The action of $\mathsf{\Lambda}^2 V$ on $W$ is zero on $V$,
$e$ and $\varepsilon$, while its action on $V^*$ is given by the
identification of $\mathsf{\Lambda}^2 V$ with the antisymmetric maps from $V^*$ to
$V$. The action of $V$ on $W$ given as follows: if $v,u\in V$ and 
$\varphi\in V^*$ then 
\[
v\cdot  u =0 \qquad v\cdot  e =v 
\qquad v\cdot  \varphi = -\varphi(v)(e+\varepsilon) \qquad v\cdot  \varepsilon = v.
\]  

Let now $\tilde D_i=  D_i$ for $i$ odd or for $i\neq 2m$ while let $\tilde D_{2m}=D_{2m}+D_{2m+1}$.
We have 
\[
V_{\tilde D_{2i}}^*= \mathsf{\Lambda}^i V^* \otimes \mathsf{\Lambda}^i W \qquad 
V_{\tilde D_{2i+1}}^*= \mathsf{\Lambda}^i V^* \otimes \mathsf{\Lambda}^{i+1} W .
\]
The $\mathfrak{h}_0$-invariants in $V_{\tilde D_{2i}}^*$ are generated by the vector 
$h_{2i}$ corresponding to the identity element in $\mathsf{\Lambda}^i V^* \otimes \mathsf{\Lambda}^i V\subset 
\mathsf{\Lambda}^i V^* \otimes \mathsf{\Lambda}^i W$. Similarly 
the $\mathfrak{h}_0$-invariants in $V_{\tilde D_{2i+1}}^*$ are generated by the vector 
$h_{2i+1}=h_{2i}\wedge {(e-\varepsilon)}$. So that we have
\[
h_{2i}=\sum_{j_1<\ldots<j_i} \varphi_{j_1\ldots j_i} \otimes e_{j_1\ldots j_i}
\quad\text{ and }\quad
h_{2i+1}=\sum_{j_1<\ldots<j_i}  \varphi_{j_1\ldots j_i} \otimes (e_{j_1\ldots j_i}\wedge (e-\varepsilon)).
\]
Finally if $p=2t+1$ and $q=2s+1$ then the projection 
$ \Phi\colon V_{\tilde D_{p}}^*\otimes V_{\tilde D_{q}}^*\longrightarrow V_{\tilde D_{p+q-2}}^* $
is given by
\[\Phi \left(
(x\otimes w) \otimes (y\otimes z)
\right) = 
x\wedge y \otimes \tilde\kappa^{t+1\,s+1}_W(w\otimes z)
\]
for $x\in \mathsf{\Lambda}^t V^*$, $y\in \mathsf{\Lambda}^s V^*$, 
$w\in\mathsf{\Lambda}^{t+1}W$ and $z\in\mathsf{\Lambda}^{s+1}W$.
A direct computation shows that 
\[
\Phi(h_p\otimes h_q)=(-1)^{t+s+1}2 \binom{t+s}{t} h_{p+q-2}.
\]

\subsection{Cotype $\mathsf{E}_6$}

Let $V=\mathbb{C}^3$ and $W=V\oplus V^*$. Let $\mathfrak{g} = \mathfrak{sl}(W)$ and
$\mathfrak{h}_0 = \mathfrak{sl}(V)\oplus \mathsf{S}^2 V$ where the action of
$\mathfrak{sl}(V)$ is the natural one and the action of $\mathsf{S}^2V$ is given
by $b\cdot (v,\varphi)= (b(\varphi),0)$ while the action on $W^*=V^*\oplus V$
is given by $b\cdot (\varphi,v)= (0,-b(\varphi))$. 

We analyze the triples of Lemma~\ref{lemma: triple fondamentali piatte E}: 
$(D_{1},D_{3},D_{2})$,
$(D_{1},D_{5},D_{3})$, $(D_{1},D_{6},0)$, $(D_{3},D_{6},D_{5})$,
$(D_{5},D_{6},D_{2})$. 

The set of colors $\{D_2,D_3,D_4,D_5\}$ is distinguished and 
the associated quotient is a symmetric wonderful variety, 
so the third triple follows as in 
Proposition~\ref{prp:triplemodelloAdispari}.
Therefore, by symmetry it is enough to analyze the last
two triples. We need to compute $h_3$, $h_5$, $h_6$.

We have $V_{D_6}^*=\mathsf{\Lambda}^2 W$. 
Looking at the action of $\mathfrak{sl}(V)$ we find only 
one invariant in $V\otimes V^*\subset \mathsf{\Lambda}^2 W$ 
corresponding to the identity in $\End(V^*)$.
Hence we get $h_6=e_1\wedge \varphi_1+e_2\wedge \varphi_2+e_3\wedge \varphi_3$.

The representation $V_{D_5}^*$ is contained in $\mathsf{\Lambda}^2 W \otimes W^*$
and it is the kernel of $\kappa_W$. 
It contains two invariants under the action of
$\mathfrak{sl}(V)$: $x \in \mathsf{\Lambda}^2 V\otimes V\simeq \End(V)$ and $y\in
\mathsf{\Lambda}^2 V^* \otimes V^*\simeq \End(V^*)$. Notice that $\mathsf{\Lambda}^2 W
\otimes W^*\simeq V_{D_5}^*\oplus W$ so both $x,y \in
V_{D_5}^*$. Moreover the action of $\mathsf{S}^2V$ on $x$ is clearly trivial
while it is not on $y$. So we have
$ h_5 = e_{12} \otimes \varphi^*_3 - e_{13} \otimes \varphi^*_2 + e_{23} \otimes \varphi^*_1 $.

Similarly we have $V_{D_3}^*=\ker\kappa_{W^*}\subset \mathsf{\Lambda}^2 W^* \otimes
W$ and $ h_3 = \varphi^*_{12} \otimes e_3 - \varphi^*_{13} \otimes e_2 +
\varphi^*_{23} \otimes e_1 $.

\subsubsection{Analysis of the triple $(D_{5},D_{6},D_{2})$.} 

We have $V_{D_2}^*=\mathsf{\Lambda}^3 W$.
Consider the map $\Phi\colon\mathsf{\Lambda}^2 W \otimes \mathsf{\Lambda}^2 W \otimes W^* \longrightarrow \mathsf{\Lambda}^3 W$ given by
\[
\Phi (x\otimes y \otimes \varphi) = \kappa_W(x\otimes \varphi) \wedge y
\] for $x,y\in \mathsf{\Lambda}^2W$ and $\varphi\in W^*$. 
A direct computation shows that $\Phi(h_6\otimes h_5)=3e_{123} \neq 0$.

\subsubsection{Analysis of the triple $(D_{3},D_{6},D_{5})$.} 

Consider the map $\Phi\colon\mathsf{\Lambda}^2 W \otimes
\mathsf{\Lambda}^2 W^* \otimes W \longrightarrow \mathsf{\Lambda}^2W \otimes W^*$ given by
\[ 
\Phi \big((x\wedge y) \otimes (\varphi\wedge \psi) \otimes
w\big) = 
 (\kappa_W(x\wedge y\otimes\varphi)\wedge w) \otimes \psi 
-(\kappa_W(x\wedge y\otimes\psi)\wedge w) \otimes \varphi
\] 
for $x,y,w\in W$ and $\varphi,\psi\in W^*$. 
A direct computation shows that $\Phi(h_6\otimes h_3)=2\,h_5$.

\subsection{Cotype $\mathsf{E}_7$}

Let $V=\mathbb{C}^3$ and set $W=V\oplus V^*\oplus \mathbb{C}\,e$,
$\mathfrak{h}_0 = \mathfrak{sl} (V) \oplus \mathsf{S}^2 V \oplus \mathsf{\Lambda}^2 V \oplus V$ and
$\mathfrak{g}=\mathfrak{sl}(W)$. Recall that we identify $\mathsf{S}^2 V \oplus \mathsf{\Lambda}^2 V$
with the decomposition of $\Hom(V^*,V)$ into symmetric and
antisymmetric matrices. We have an action of $\mathfrak{h}_0$ on $W$ given as
follows. Let $(a,b,\omega,u)\in \mathfrak{h}_0$ and $(v,\varphi,\lambda e)\in W$
then
\begin{align*}
a\cdot v &= a(v) & b\cdot v &= 0 & \omega\cdot v &= 0 & u\cdot v &= 0 \\ 
a\cdot \varphi &= -a^t(\varphi) & b\cdot \varphi &= b(\varphi) & \omega\cdot \varphi &= \omega(\varphi) 
& u\cdot \varphi &= 0 \\ 
a\cdot e &= 0 & b\cdot e &= 0 & \omega\cdot e &= \gamma(\omega) & u\cdot e &= u
\end{align*}
where $\gamma=\gamma_2\colon\mathsf{\Lambda}^2V \longrightarrow V^*$ is defined as above.
This defines an immersion of $\mathfrak{h}_0$ into $\mathfrak{sl}(W)$ whose image
is closed under the Lie bracket. 
Indeed, if $(a,b,\omega,u) \in \mathfrak{h}_0$ and $(a',b',\omega',u')\in \mathfrak{h}_0$ 
then:
\begin{align*}
 [a,a']&=aa'-a'a; & 
 [a,b] & = ab +b a^t; & 
 [a,\omega] &= a\omega+\omega a^t; & 
 [a,u] & = a(u); & 
 [b,b']& = 0; \\
 [b,\omega] & = b(\gamma(\omega)); & 
 [b,u] & = 0; & 
 [\omega,\omega'] & = 0; &
 [\omega,u] & = 0; &
 [u,u'] & = 0.
\end{align*}

It is also useful to write the action on $W^*$ which is given as follows.
We have $W^*=V^*\oplus V \oplus \mathbb{C}\,e^*$, then
\begin{align*}
a\cdot \varphi &= -a^t(\varphi) & b\cdot \varphi &= -b(\varphi) &  
\omega\cdot \varphi &= \omega(\varphi) & u\cdot \varphi &= -\varphi(u) e^* \\
a\cdot v &= a(v) & b\cdot v &= 0 & \omega\cdot v &= -\langle\omega,v\rangle e^* 
& u\cdot v &= 0 \\
a\cdot e^* &= 0 & b\cdot e^* &= 0 & \omega\cdot e^* &= \gamma(\omega) & u\cdot e^* &= u 
\end{align*}

The triples of Lemma~\ref{lemma: triple fondamentali piatte E} 
are $(D_1,D_6,D_3)$ and $(D_6,D_6,D_2+D_7)$.

\subsubsection{Computation of $h_1$.}

The representation associated to $D_1$ is 
$\mathsf{\Lambda}^4 W$. Looking at the action of $\mathfrak{sl}(V)$ we get three
invariants: the vector $x\in \mathsf{\Lambda}^2 V \otimes \mathsf{\Lambda}^2 V^*$
corresponding to the identity in $\End(\mathsf{\Lambda}^2 V)$, the vector
$y=e_{123}\wedge e \in \mathsf{\Lambda}^3 V \otimes \mathbb{C}\,e$ and the vector 
$z=\varphi_{123}\wedge e \in \mathsf{\Lambda}^3 V^* \otimes \mathbb{C}\, e$. Hence the
invariant is a linear combination of these vectors.  A small
computation shows that
\[
h_1=2y-x= 2e_{123}\wedge e - 
e_{12}\wedge \varphi_{12} -
e_{13}\wedge \varphi_{13} -
e_{23}\wedge \varphi_{23}.
\]

\subsubsection{Computation of $h_6$.}

The representation associated to $D_6$ is the kernel of the map
$\kappa_W$ in $\mathsf{\Lambda}^2 W \otimes W^*$. In $\mathsf{\Lambda}^2 W \otimes W^*$
there are five invariant vectors under
$\mathfrak{sl}(V)$:  $x_1\in \mathsf{\Lambda}^2 V \otimes V$, $x_2\in \mathsf{\Lambda}^2 V^*
\otimes V^*$, $y_1\in \big(V\wedge \mathbb{C}\,e\big) \otimes V^*$, $y_2\in
\big(V^*\wedge \mathbb{C}\,e\big) \otimes V$ and $z\in \big(V\wedge
V^*\big)\otimes \mathbb{C}\,e^*$.  
A small computation shows that 
\[
h_6 = 2x_1+z=
2\,e_{12}\otimes \varphi_3^* -
2\,e_{13}\otimes \varphi_2^* +
2\,e_{23}\otimes \varphi_1^* +
(e_1 \wedge \varphi_1) \otimes e^* +
(e_2 \wedge \varphi_2) \otimes e^* +
(e_3 \wedge \varphi_3) \otimes e^* .
\]

\subsubsection{Analysis of the triple $(D_1,D_6,D_3)$.}

The representation associated to the color $D_3$ is the kernel of the
wedge product in $W \otimes \mathsf{\Lambda}^4 W \longrightarrow \mathsf{\Lambda}^5 W$. 
We consider the map 
$\Phi\colon \mathsf{\Lambda}^4 W \otimes \mathsf{\Lambda}^2 W \otimes W^* \longrightarrow W \otimes \mathsf{\Lambda}^4 W$
given by
\[
\Phi\big(u\otimes (v_1\wedge v_2) \otimes \varphi)  = 
 v_1\otimes (\kappa^{4\,1}_W(u\otimes \varphi)\wedge v_2) 
-v_2\otimes (\kappa^{4\,1}_W(u\otimes \varphi)\wedge v_1).
\]
A direct computation shows that 
\[
\Phi(h_1\otimes h_6)= -4 \big(
e_1 \otimes (e_{123}\wedge \varphi_1) +
e_2 \otimes (e_{123}\wedge \varphi_2) +
e_3 \otimes (e_{123}\wedge \varphi_3) 
\big)\neq 0.
\]

\subsubsection{Analysis of the triple $(D_6,D_6,D_2+D_7)$.}

The representation $V_{D_2+D_7}^*$ associated to the color $D_2+D_7$ is
the kernel of the map $\kappa^{3\,1}_W\colon\mathsf{\Lambda}^3 W \otimes W^* \longrightarrow \mathsf{\Lambda}^2 W$.
We consider the map $\Psi\colon (\mathsf{\Lambda}^2 W \otimes W^*)\otimes (\mathsf{\Lambda}^2
W \otimes W^*) \longrightarrow \mathsf{\Lambda}^3 W \otimes W^* $ given by
\[
\Psi
\big(
(u \otimes \varphi) \otimes (v \otimes \psi) \big)= 
(u \wedge \kappa_W(v\otimes\varphi)) \otimes \psi. 
\] 
A direct computation shows that
\[
\Psi(h_6\otimes h_6)= - 6 \, e_{123}\otimes e^* \in V_{D_2+D_7}^*.
\]

\subsection{Cotype $\mathsf{E}_8$} \label{ssec:cotipoE8}

Let $V=\mathbb{C}^4$, and set $W = V \oplus \mathsf{\Lambda}^2 V \oplus V^* =X\oplus Y
\oplus Z$.  On $W$ it is defined a non-degenerate symmetric bilinear
form such that $V\oplus V^*$ is orthogonal to $\mathsf{\Lambda}^2 V$ and such that
restricted to $V\oplus V^*$ and to $\mathsf{\Lambda}^2 V$ is the one introduced at
the beginning of this section.

Let $A$ be the kernel of the wedge product $V\otimes \mathsf{\Lambda}^2 V\longrightarrow \mathsf{\Lambda}^3 V$ and set 
$\mathfrak{h}_0 = \mathfrak{sl}(V) \oplus A \oplus \mathsf{\Lambda}^2 V$.  There is an action
of $\mathfrak{h}_0$ on $W$ given as follows. Let $(a,b,\alpha)\in \mathfrak{h}_0$ and
$(v,\omega,\varphi)\in W$ then
\begin{align*}
a\cdot v &= a(v) & b\cdot v &= 0 & \alpha\cdot v &= 0 \\
a\cdot \omega &= a(\omega) & b\cdot \omega &= \gamma(b\otimes\omega) & \alpha\cdot \omega &= 0 \\
a\cdot \varphi &= -a^t(\varphi) & b\cdot \varphi &= \delta(b\otimes\varphi) & \alpha\cdot \varphi &= 
\kappa_V(\alpha\otimes\varphi) \\
\end{align*}
where $\gamma,\delta$ are defined as follows. 
\begin{align*}
 \gamma &\colon V\otimes \mathsf{\Lambda}^2 V \otimes \mathsf{\Lambda}^ 2V \longrightarrow V &
 \gamma (v\otimes \alpha\otimes \alpha')&= (\alpha,\alpha')\,v \\ 
 \delta &\colon V\otimes \mathsf{\Lambda}^2 V \otimes V^* \longrightarrow \mathsf{\Lambda}^2 V &
 \delta (v\otimes \alpha\otimes \varphi)&= \varphi(v)\,\alpha 
\end{align*}
The action of
$\mathfrak{h}_0$ on $W$ defines an immersion of $\mathfrak{h}_0$ into $\mathfrak{so}(W)$,
whose image is closed under the Lie bracket and more explicitly
for $a,a'\in \mathfrak{sl}(V)$, $b,b'\in A$ and $\alpha,\alpha'\in \mathsf{\Lambda}^2 V$ we have:
\begin{align*}
 [a,a']&=aa'-a'a  &  [a,b]&= a(b) & [a,\alpha]&=a(\alpha) \\
 [b,b']&= -\zeta(b\otimes b') & [b,\alpha]&= 0 & [\alpha,\alpha']&=0
\end{align*}
where $\zeta$ is the map 
\[
\zeta\colon (V\otimes \mathsf{\Lambda}^2 V) \otimes (V\otimes \mathsf{\Lambda}^2 V)\longrightarrow \mathsf{\Lambda}^2 V
\qquad \zeta\left((u\otimes \alpha)\otimes(u'\otimes \alpha')\right)=
(\alpha,\alpha')\, u\wedge u'.
\]

The triples of Lemma~\ref{lemma: triple fondamentali piatte E} are
$(D_1,D_1,D_2)$,
$(D_1,D_5,2D_2)$,
$(D_1,D_7,D_3)$,
$(D_1,D_8,D_7)$,
$(D_3,D_8,D_5)$,
$(D_5,D_8,D_2 + D_7)$ and
$(D_7,D_8,D_2)$. 

\subsubsection{Computation of $h_1$.}

The representation associated to $D_1$ is 
$\mathsf{\Lambda}^3 W$. Looking at the action of $\mathfrak{sl}(V)$ we get two
invariants: $x\in \mathsf{\Lambda}^2 X \otimes Y\simeq \mathsf{\Lambda}^2 V \otimes \mathsf{\Lambda}^2 V$
and $y\in Y\otimes \mathsf{\Lambda}^2 Z \simeq \mathsf{\Lambda}^2 V \otimes \mathsf{\Lambda}^2 V^*$. We notice now that 
$X$ is invariant also by the action of $A$ and $\mathsf{\Lambda}^2 V$. Indeed if $b \in A$ then 
$b\cdot x \in \mathsf{\Lambda}^3 X \simeq \mathsf{\Lambda}^3 V$, in particular we get a $\mathfrak{sl}(V)$-equivariant map
from $A$ to $\mathsf{\Lambda}^3 V$, which must be zero. A similar argument proves that 
$\alpha\cdot x =0$ for $\alpha\in\mathsf{\Lambda}^2V$.
Finally, it is easy to check that if $b=e_1\wedge e_{12}$, which belongs to $A$, then $b\cdot y \neq 0$. Hence
\[
 h_1 = x =
 e_1 \wedge e_2 \wedge e_{34} - e_1 \wedge e_3 \wedge e_{24} + e_1 \wedge e_4 \wedge e_{23} +
 e_2 \wedge e_3 \wedge e_{14} - e_2 \wedge e_4 \wedge e_{13} + e_3 \wedge e_4 \wedge e_{12}.
\]

\subsubsection{Computation of $h_3, h_5, h_7$.}

Let $P$ be the parabolic of $\Spin (W)$ defined by $g(V)\subset
V$. Notice that $H\subset P$. Let $U$ be the unipotent radical of $P$,
$L$ its Levi subgroup, and $L^{\mathrm{ss}}$ its semisimple part. Notice that
$L^{\mathrm{ss}} \simeq \SL (4)\times \SL (4)$. Let $T\subset G$ the maximal
torus of elements acting diagonally on $W$ with respect to the basis
$e_1,\dots,e_4,
e_{12},e_{13},e_{14},e_{23},-e_{24},e_{34},\varphi_4,\dots,\varphi_1$ and
$B\subset G$ the subgroup of elements whose action on $W$ is upper
triangular with respect to this basis. The natural action of $\SL(V)$ on $W$
induces an embedding $\SL(V)\longrightarrow L$
that on diagonal elements takes the form 
\[(t_1,t_2,t_3,t_4)\longmapsto 
(t_1,t_2,t_3,t_4,
t_1t_2,
t_1t_3,
t_1t_4,
t_2t_3,
t_2t_4,
t_3t_4,
t_4^{-1},
t_3^{-1},
t_2^{-1},
t_1^{-1}).
\]

The set of colors $\{D_1,D_4,D_6,D_8\}$ is distinguished, and 
$K_0=H_0U$ is the intersection of the kernels of the multiplicative characters of the stabilizer $K$ of a point in the open orbit of the quotient.  Therefore, $h_3, h_5, h_7$ must be $K_0$-invariant vectors in $V_{D_i}^*$. Let $W_i=(V_{D_i}^*)^U$. This is an
irreducible representation of $L$ of the same highest weight as
$V_{D_i}^*$: $\omega_3+\omega_7$ for $i=3$, $\omega_2+\omega_5$ for $i=5$, and
$\omega_1+\omega_6$ for $i=7$. When we restrict these representations to $\SL(V)$ we get
\[
W_3 \simeq \mathsf{\Lambda}^3 V \otimes V \qquad 
W_5 \simeq \mathsf{\Lambda}^2 V \otimes \mathsf{\Lambda}^2 V \qquad 
W_7 \simeq V \otimes \mathsf{\Lambda}^3 V
\] 
in particular there is an invariant element under $\mathfrak{h}_0$. 

For notational convenience, here and below, set 
\[
e_5=e_{12}, e_6=e_{13}, e_7=e_{14}, 
\varphi_7=e_{23}, \varphi_6=-e_{24}, \varphi_5=e_{34}.
\]
Let $U\subset W$ be the subspace spanned by $e_1,\dots,e_7$,
so that $W$ becomes $U\oplus U^*$.

Now we need to describe the spin representations.
Consider the whole exterior algebra $\mathsf{\Lambda} U^*$. 
It decomposes into odd and even degree parts 
$\mathsf{\Lambda}^\mathrm{odd} U^*\oplus\mathsf{\Lambda}^\mathrm{even}U^*$.
Since the $G$-action we are going to define is not the natural one, 
we stress the difference by using a different notation: 
set $\psi_{i_1\ldots i_k}=\varphi_{i_1\ldots i_k}$.  
Define the map $\sigma\colon W\otimes \mathsf{\Lambda} U^*\longrightarrow\mathsf{\Lambda} U^*$ such that
\[
\sigma(e_i\otimes \psi_{i_1\ldots i_k})=
\kappa^{k1}_{U^*}(\varphi_{i_1\ldots i_k}\otimes e_i)
\]
\[
\sigma(\varphi_i\otimes \psi_{i_1\ldots i_k})=
\varphi_i\wedge \varphi_{i_1\ldots i_k},
\]
and more generally the map 
$\sigma^n\colon \otimes^nW\otimes \mathsf{\Lambda} U^*\longrightarrow\mathsf{\Lambda} U^*$ such that
\[
\sigma^n(w_1\otimes\dots\otimes w_n\otimes y)=
\sigma(w_1\otimes\sigma(\dots\otimes\sigma(w_n\otimes y)\dots))
\]
which we can restrict to $\mathsf{\Lambda}^nW$ 
if we think $w_1\wedge\dots\wedge w_n$ 
as the corresponding antisymmetric tensor, 
with coefficient $\frac1{n!}$. 

To get the spin representations we can just take the map $\sigma^2$,
indeed notice that $\mathsf{\Lambda}^2W$ identifies with $\mathfrak{so}(W)$ through
$w_1\wedge w_2=(w_2, )w_1-(w_1, )w_2$.
We thus have that the vector $\psi_{i_1\ldots i_k}$ has weight 
\[\frac12\left(\sum_{i\not\in\{i_1,\dots,i_k\}}\varepsilon_i
-\sum_{i\in\{i_1,\dots,i_k\}}\varepsilon_i\right),\]
$V(\omega_6)\simeq \mathsf{\Lambda}^\mathrm{odd}U^*$ and
$V(\omega_7)\simeq \mathsf{\Lambda}^\mathrm{even}U^*$.

We get the following expressions of the $H_0$-invariants:
\[h_3=
 e_{123}\otimes\psi_{65}
+e_{124}\otimes\psi_{75}
+e_{134}\otimes\psi_{76}
-e_{234}\otimes\psi_\emptyset,
\]
\[h_5=
 e_{12}\otimes e_{1234}\wedge\varphi_5
+e_{13}\otimes e_{1234}\wedge\varphi_6
+e_{14}\otimes e_{1234}\wedge\varphi_7
+e_{23}\otimes e_{12347}
-e_{24}\otimes e_{12346}
+e_{34}\otimes e_{12345},
\]
\[h_7=
 e_1\otimes\psi_{765}
-e_2\otimes\psi_5
-e_3\otimes\psi_6
-e_4\otimes\psi_7.
\]

\subsubsection{Computation of $h_8$.} \label{ssec:comodelloh8}

The representation $V_{D_8}^*$ is the spin representation of highest
weight $\omega_6$. 
By direct computation one can show that the only $H_0$-invariant is given by
\[
h_8=
\psi_1 + \psi_{762} - \psi_{753} + \psi_{654}.
\]

\subsubsection{Analysis of the triple $(D_1,D_1,D_2)$.} 

The representation associated with $D_2$ is $\mathsf{\Lambda}^4 W$. 
The $H_0$-invariant $h_2$ is $e_{1234}$. 
Indeed, the set of colors $\Delta\smallsetminus\{D_2\}$ is distinguished and 
the quotient is homogeneous, hence a $H$-semiinvariant in $V_{D_2}^*$ 
must be $P$-semiinvariant.

Here we get \[\tilde\kappa^{3\,3}_W(h_1\otimes h_1)=3h_2.\]

\subsubsection{Analysis of the triple $(D_1,D_5,2D_2)$.} 

Consider the map $\Phi\colon\mathsf{\Lambda}^3W\otimes\mathsf{\Lambda}^2W\otimes\mathsf{\Lambda}^5W
\longrightarrow \mathsf{\Lambda}^4W\otimes\mathsf{\Lambda}^4W$ such that
\begin{align*}
\Phi \big((w_1\wedge w_2\wedge w_3)\otimes x \otimes y\big) & = (w_1\wedge w_2\wedge x)\otimes \kappa^{5\,1}_W(y\otimes w_3)+
\\
& \; -(w_1\wedge w_3\wedge x)\otimes \kappa^{5\,1}_W(y\otimes w_2)
 +(w_2\wedge w_3\wedge x)\otimes \kappa^{5\,1}_W(y\otimes w_1).
\end{align*}
We have
\[\Phi(h_1\otimes h_5)=6\,h_2\otimes h_2.\]

\subsubsection{Analysis of the triple $(D_1,D_7,D_3)$.}

Consider the map $\Phi\colon\mathsf{\Lambda}^3W\otimes W\otimes\mathsf{\Lambda}^\mathrm{odd}U^*
\longrightarrow \mathsf{\Lambda}^3W\otimes \mathsf{\Lambda}^\mathrm{even}U^*$ such that
\begin{align*}
\Phi & \big((w_1\wedge w_2\wedge w_3)\otimes w \otimes \psi\big) =\\
& (w_1\wedge w_2\wedge w)\otimes \sigma(w_3\otimes\psi)
 -(w_1\wedge w_3\wedge w)\otimes \sigma(w_2\otimes\psi)
 +(w_2\wedge w_3\wedge w)\otimes \sigma(w_1\otimes\psi).
\end{align*}
We get
\[\Phi(h_1\otimes h_7)=3h_3.\]

\subsubsection{Analysis of the triple $(D_1,D_8,D_7)$.} 

Consider the map $\Phi\colon\mathsf{\Lambda}^3W\otimes \mathsf{\Lambda}^\mathrm{odd}U^*
\longrightarrow W\otimes \mathsf{\Lambda}^\mathrm{odd}U^*$ such that
\begin{align*}
\Phi & \big((w_1\wedge w_2\wedge w_3)\otimes \psi\big) =\\
& w_1\otimes \sigma^2((w_2\wedge w_3)\otimes\psi)
 -w_2\otimes \sigma^2((w_1\wedge w_3)\otimes\psi)
 +w_3\otimes \sigma^2((w_1\wedge w_2)\otimes\psi).
\end{align*}
We get
\[\Phi(h_1\otimes h_8)=-3h_7.\]

\subsubsection{Analysis of the triple $(D_3,D_8,D_5)$.} 

On $\mathsf{\Lambda}^6W$ there is a symmetric bilinear form $(\,,\,)$ 
naturally induced by the given form on $W$. On the other hand, 
$V(\omega_6)$ and $V(\omega_7)$ are dual to each other,
so there is a natural non-degenerate pairing $\langle\,,\,\rangle$.
Consider the map 
$\Psi\colon\mathsf{\Lambda}^\mathrm{odd}U^*\otimes \mathsf{\Lambda}^\mathrm{even}U^*
\longrightarrow \mathsf{\Lambda}^6W$ such that
\[\big(u,\Psi(\psi\otimes\psi')\big)=\langle\sigma^6(u)\psi,\psi'\rangle,\]
and the map
$\Phi\colon\mathsf{\Lambda}^3W\otimes 
\mathsf{\Lambda}^\mathrm{odd}U^*\otimes\mathsf{\Lambda}^\mathrm{even}U^*
\longrightarrow \mathsf{\Lambda}^2W\otimes \mathsf{\Lambda}^5W$ such that
\begin{align*}
\Phi & \big((w_1\wedge w_2\wedge w_3)\otimes \psi\otimes\psi'\big) =\\
 & (w_1\wedge w_2)\otimes 
  \tilde\kappa^{2\,1}_W(\Psi(\psi\otimes\psi')\otimes w_3)\\
-& (w_1\wedge w_3)\otimes 
  \tilde\kappa^{2\,1}_W(\Psi(\psi\otimes\psi')\otimes w_2)\\
+& (w_2\wedge w_3)\otimes 
  \tilde\kappa^{2\,1}_W(\Psi(\psi\otimes\psi')\otimes w_1)
\end{align*}
We get
\[\Phi(h_3\otimes h_8)=h_5.\]

\subsubsection{Analysis of the triple $(D_5,D_8,D_2+D_7)$.} 

Consider the map $\Phi\colon\mathsf{\Lambda}^2W\otimes\mathsf{\Lambda}^5W
\otimes \mathsf{\Lambda}^\mathrm{odd}U^*
\longrightarrow \mathsf{\Lambda}^4W\otimes W\otimes\mathsf{\Lambda}^\mathrm{odd}U^*$ such that
\begin{align*}
\Phi & \big((w_1\wedge w_2)\otimes(z_1\wedge\dots\wedge z_5)\otimes 
\psi\big) =\\
 &\sum_i(-1)^{i+1}(z_1\wedge\dots\wedge\hat z_i\wedge\dots\wedge z_5)
  \otimes w_2\otimes \sigma^2((z_i\wedge w_1)\otimes\psi)\\
-&\sum_i(-1)^{i+1}(z_1\wedge\dots\wedge\hat z_i\wedge\dots\wedge z_5)
  \otimes w_1\otimes \sigma^2((z_i\wedge w_2)\otimes\psi)
\end{align*}
We get
\[\Phi(h_5\otimes h_8)=-3\,h_2\otimes h_7.\]

\subsubsection{Analysis of the triple $(D_7,D_8,D_2)$.}

Consider the map $\Phi\colon W
\otimes \mathsf{\Lambda}^\mathrm{odd}U^*\otimes \mathsf{\Lambda}^\mathrm{odd}U^*
\longrightarrow \mathsf{\Lambda}^\mathrm{odd}U^*\otimes \mathsf{\Lambda}^\mathrm{even}U^*$ 
such that
\[
\Phi(w\otimes \psi \otimes \psi') =
\sigma(w\otimes\psi')\otimes\psi.\]
Here we get that $\Phi(h_7\otimes h_8)$ 
is a $U$-invariant vector of weight $\omega_4$.

\subsection{Projective normality of comodel wonderful varieties.}	\label{ssec: proof of teo comodello}

\begin{proof}[Proof of Theorem~\ref{teo: comodello-proj-norm}.]
As in the case of the model wonderful varieties, 
by Lemma~\ref{lem:  proiettiva normalita}, 
we are reduced to study the low fundamental triples.

Recall that in the model case we have classified,
for every $G$ of connected Dynkin type,
the low fundamental triples $(D,E,F)$ of the model wonderful $G$-variety 
(of simply connected type) such that $\supp_S(D+E-F)=S$.

In the comodel case, these correspond to 
the low fundamental triples $(D,E,F)$
of the comodel wonderful varieties of connected Dynkin cotype 
such that $\mathrm{Cosupp}(D+E-F)=\Delta$, where 
\[\mathrm{Cosupp}\,\gamma=\{D\in\Delta\, : \,\langle\alpha^\vee,\omega_D\rangle\neq0
\mbox{ for some }\alpha\in\supp_S\gamma\}.\]

Let us go back to the model case.
Let $S$ be the set of simple roots. 
Let $M$ be a model wonderful variety with set of colors $\Delta$.
Recall that for every low fundamental triple $(D,E,F)$, 
$\supp_S(D+E-F)=S'$ is connected. 
Hence $(D,E,F)$ corresponds to a low fundamental triple $(D,E,\widetilde F)$,
of a model wonderful variety $N$
(with set of colors $\widetilde\Delta\subset\Delta$) 
of connected Dynkin type (with $S'$ as set of simple roots) 
such that $M$ is parabolic induction of $N$,
with $D,E\in\widetilde\Delta$, $D+E-F=D+E-\widetilde F$ 
and $\supp_{S'}(D+E-\widetilde F)=S'$.

This can be translated into the comodel case. 
Every low fundamental triple $(D,E,F)$,  
of a comodel wonderful variety $M$ (with set of colors $\Delta$),
corresponds to a low fundamental triple $(D,E,\widetilde F)$,
of a comodel wonderful variety $N$ of connected Dynkin cotype 
(with set of colors $\widetilde\Delta\subset\Delta$) 
such that $M$ is parabolic induction of $N$,
with $D,E\in\widetilde\Delta$, $D+E-F=D+E-\widetilde F$ 
and $\mathrm{Cosupp}(D+E-\widetilde F)=\widetilde \Delta$.

Since we have already checked the surjectivity 
for all such low fundamental triples,
we can conclude by applying Proposition~\ref{prop:  ind-par} 
as in the proof of Theorem~\ref{teo: model} (see Section~\ref{ssec:proofA}). 
\end{proof}

\section{On the normality of spherical orbit closures \\in simple projective spaces}

Let $M$ be a wonderful $G$-variety 
with set of spherical roots $\Sigma$ and set of colors $\Delta$. 
Denote by $H$ the stabilizer of a point $x_0$ in the open $G$-orbit. 

If $D$ is in $\mathbb{N}\Delta$ and $h_D \in
(V_D^*)_{\xi_D}^{(H)}$ is the associated $H$-eigenvector, consider
the orbit closure
\[
	X_D = \overline{G\cdot [h_D]} \subset \mathbb{P}(V_D^*),
\]
which is a simple possibly non-normal spherical variety (recall that a spherical variety is called \textit{simple} if it contains a unique closed orbit). 
We have a natural morphism $\phi_D \colon M \longrightarrow X_D$ such
that $\phi_D^* \mathcal{O}(1) = \mathcal L_D$.

By \cite[Corollary~7.6]{Kn} and \cite[Corollary~2.4.2.2]{BL},
every spherical orbit in a simple projective space always admits a wonderful
compactification $M$, so it is of the shape $G\cdot [h_D]$ for some $D \in \mathbb{N} \Delta$. As a consequence of the results of the previous sections, here we 
show that under some special assumptions on $M$
the variety $X_D$ is always normal.

The variety $X_D$ was studied by G.~Pezzini in \cite{Pe} when $D$ is ample, 
that is, $D\in\mathbb{N}_{>0} \Delta$. 
Under this assumption, either $X_D$ is isomorphic to
$M$ or it is not even normal. In case $X_D \simeq M$, then $M$ is
called \textit{strict}: this is equivalent to the conditions $H =
N_G(H)$ and $\Sigma \cap S = \varnothing$.
There are essentially two main classes of examples of strict wonderful varieties:
the adjoint symmetric wonderful varieties and the model wonderful varieties.

When $D$ is not ample, the variety $X_D$ was then studied in
\cite{Ma} in the symmetric case and in \cite{Ga} in general. 
More precisely, the orbit structure of $X_D$ and that of its normalization $\widetilde{X}_D$ were analyzed. 
In particular, it was proved that the normalization $\widetilde{X}_D \longrightarrow X_D$
is always bijective if $M$ is adjoint symmetric or if $G$ is of simply
laced type and $M$ is strict, while the main counterexamples where
bijectivity fails, in the strict case, arise with the model
wonderful varieties for groups of not simply laced type.

We now consider a special class of big divisors on $M$.
We say that $D \in \mathbb{N}\Delta$ is a \textit{faithful divisor} on $M$ if $\phi_D$
restricts to an open embedding of $G/H$ in $\mathbb{P}(V_D^*)$, i.e.\ if $H$ equals the stabilizer of $[h_D]$.
The formalism of distinguished sets of colors allows to characterize combinatorially
the faithful divisors. For simplicity, we restrict to the case
of a strict wonderful variety.

\begin{proposition} [see {\cite[Proposition~2.4.3]{BL}}] \label{prop: fedelta caso stretto}
Let $M$ be a strict wonderful variety and let $D \in \mathbb{N} \Delta$.
Then $D$ is faithful if and only if
every distinguished subset of $\Delta$ intersects $\supp(D)$.
\end{proposition}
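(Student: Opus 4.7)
The plan is to prove the two directions separately, leveraging the bijective correspondence between distinguished subsets $\grD_0 \subset \grD$ and surjective $G$-equivariant morphisms $M \to M/\grD_0$ with connected fibers onto a wonderful $G$-variety.

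For the direction ``$D$ faithful $\Rightarrow$ every distinguished subset meets $\Supp(D)$'', I would argue by contrapositive. Suppose $\grD_0 \subset \grD \senza \Supp(D)$ is a nonempty distinguished subset and let $\pi\colon M \to M' := M/\grD_0$ be the corresponding quotient. Since $\Supp(D) \cap \grD_0 = \vuoto$, the class $[\calL_D]$ lies in the sublattice $\Pic(M') \subset \Pic(M)$, so $\calL_D$ is the pullback of a line bundle on $M'$; moreover by Corollary~\ref{cor: sezioni e quozienti} the spaces of global sections coincide, $\grG(M, \calL_D) = \grG(M', \calL_D)$. The submodule $V_D$ then embeds equivariantly in both, and by uniqueness of the spherical line, the vector $h_D \in V_D^*$ coincides with the spherical vector associated with $(G, H'')$, where $H'' := \Stab_G(\pi(x_0))$. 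Consequently $\phi_D$ factors as $\phi_D = \phi'_D \circ \pi$, so $\Stab_G([h_D]) \supseteq H''$; since $\grD_0 \neq \vuoto$, the morphism $\pi$ contracts positive-dimensional subvarieties, forcing $H \subsetneq H''$, and $D$ is not faithful.

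For the converse direction, suppose $D$ is not faithful, set $H' := \Stab_G([h_D]) \supsetneq H$, and the goal is to construct a nonempty distinguished subset of $\grD$ disjoint from $\Supp(D)$. The strategy is to show that the spherical homogeneous space $G/H'$ admits a wonderful completion $M'$ (using crucially the strictness of $M$, namely $H = N_G(H)$ together with $\grS \cap S = \vuoto$, to verify the required combinatorial conditions on $H'$), and that the inclusion of stabilizers extends to a $G$-equivariant surjection $\pi\colon M \to M'$ with connected fibers (by applying the general extension theorem for rational maps between wonderful varieties). By the bijection recalled above, $\pi$ then corresponds to a distinguished subset $\grD_0 \subset \grD$, which is nonempty precisely because $H \subsetneq H'$ makes $\pi$ non-birational on the open orbit. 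Finally, $\phi_D$ factors through $\pi$ by equivariance and density, so $\calL_D$ is pulled back from $M'$; this means $[\calL_D] \in \Pic(M')$, equivalently $\Supp(D) \cap \grD_0 = \vuoto$, as required.

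The main obstacle lies entirely in the converse direction: one needs to verify that $G/H'$ admits a wonderful completion and that the induced $\pi$ has connected fibers, in particular ruling out the possibility that $\phi_D$ is merely a finite étale cover on the open orbits (a situation in which the Stein factorization would not produce a nontrivial contraction). This is precisely the point where the full strength of the strict condition $H = N_G(H)$ is used, possibly in combination with the classification of spherical subgroups and a uniqueness argument for spherical vectors to exclude finite non-trivial extensions $H \subsetneq H'$ sharing the line $[h_D]$.
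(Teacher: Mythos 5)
The first direction (contrapositive: a nonempty distinguished $\grD_0$ disjoint from $\Supp(D)$ forces $D$ non-faithful) is correct: the factorization $\phi_D = \phi'_D\circ\pi$ through the quotient $M\to M/\grD_0$ does give $\Stab_G([h_D])\supseteq\Stab_G(\pi(x_0))\supsetneq H$, and all the steps you invoke (Corollary~\ref{cor: sezioni e quozienti}, the identification of $\phi_D(x_0)$ with $[h_D]$) go through.

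The converse direction, however, contains a genuine gap which you acknowledge but do not fill, and you also misplace where strictness is needed. The existence of a wonderful completion $M'$ of $G/H'$ does \emph{not} require strictness: as the paper itself notes immediately before the Proposition, every spherical orbit in a simple projective space admits a wonderful compactification by \cite[Corollary~7.6]{Kn} and \cite[Corollary~2.4.2.2]{BL}, with no hypothesis on $M$. The real issue is showing that the $G$-morphism $\pi\colon M\to M'$ extending $G/H\to G/H'$ (which does exist, by pushing forward colored cones, since $M$ is toroidal) has \emph{connected} fibers — equivalently, that one cannot have $(H')^0\subset H\subsetneq H'$ with $H'/H$ a nontrivial finite group. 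If that finite case could occur, then by the argument you sketch $\pi$ would contract no boundary divisor and no color, hence be a finite morphism of degree $|H'/H|>1$ between wonderful varieties, and the Stein factorization would \emph{not} produce a nontrivial distinguished subset: your bijection with distinguished subsets simply wouldn't apply. Ruling out this case is exactly where $H=N_G(H)$ (together with $\grS\cap S=\vuoto$) must be used, via the relationship between finite extensions of spherical subgroups and the "doubling" of spherical roots in Luna's theory, and you have not given that argument. "Possibly in combination with the classification of spherical subgroups and a uniqueness argument for spherical vectors" names the ingredients without proving the step, and the uniqueness of the spherical vector $h_D$ alone does not rule out a finite extension: $h_D$ is simultaneously the unique $H$-eigenvector of its character and the unique $H'$-eigenvector of the extended character, which is perfectly consistent with $H\subsetneq H'$.

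Note also that the proposition is quoted from \cite[Proposition~2.4.3]{BL} and is not proved in this paper; the argument there is carried out in the combinatorial framework of spherical systems, where faithfulness is characterized combinatorially and the finite-extension issue is handled through the classification. Your geometric approach is a reasonable alternative, but as written it establishes only one of the two directions.
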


Let $D \in \mathbb{N}\Delta$, denote $\widetilde{A}(D) = \bigoplus_{n \in \mathbb{N}}
\Gamma(M,\mathcal L_{nD})$ and denote $A(D) \subset \widetilde A(D)$ the subalgebra
generated by $V_D \subset \Gamma(M,\mathcal L_D)$.
Let $\widetilde X_D$ be the image of $M$ in
$\mathbb{P}\big(\Gamma(M,\mathcal L_D)^*\big)$ via the morphism $\widetilde\phi_D$
associated to the complete linear system of $D$.
Then $\widetilde{A}(D)$ is identified with the projective
coordinate ring of $\widetilde X_D$, whereas $A(D)$
is identified with the projective coordinate ring of $X_D$.
Notice that we have a natural projection $\beta_D\colon\widetilde X_D\longrightarrow X_D$
such that $\phi_D = \widetilde\phi_D \circ \beta_D$.
Since $M$ is smooth, $\widetilde{A}(D)$ is an integrally closed algebra,
therefore $\widetilde X_D$ is normal. Moreover, we have the following.

\begin{proposition} [see {\cite[Proposition~2.1]{CDM}}] \label{prop: algebra integrale}
The algebra $\widetilde A(D)$ is integral over $A(D)$.
\end{proposition}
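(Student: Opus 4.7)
My plan is to reduce the statement to $B$-eigenvectors of $\wt A(D)$ and then establish integrality via a geometric finiteness argument, with an algebraic fallback.

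As a preliminary reduction, both $A(D)$ and $\wt A(D)$ are finitely generated graded $\mC$-algebras: the former by construction as generated in degree $1$ by $V_D$, the latter because $\calL_D$ is globally generated on the projective variety $M$ and its normalized section ring is therefore finitely generated. Integrality of $\wt A(D)$ over $A(D)$ is thus equivalent to $\wt A(D)$ being a finite $A(D)$-module. Since the inclusion $A(D)\subset\wt A(D)$ is $G$-equivariant, its integral closure inside $\wt A(D)$ is a $G$-stable graded subalgebra; by complete reducibility of rational $G$-modules, it suffices to verify integrality on $B$-eigenvectors. Applying Proposition~\ref{prop: decomposizione sezioni}, any $B$-eigenvector $f\in \wt A(D)_n$ is of the form $f = s^{nD-E}\, v$ with $E\in\mN\grD$, $E\leq_\grS nD$, and $v\in V_E$ a highest-weight vector; the case $E = nD$ is trivial since then $f\in V_{nD}\subset A(D)_n$, so attention focuses on the strictly lower pieces $E <_\grS nD$.

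I would then recast the question geometrically. The inclusion $V_D\hookrightarrow\Gamma(M,\calL_D)$ induces a linear projection $\mP(\Gamma(M,\calL_D)^*)\dashrightarrow\mP(V_D^*)$, and if $\wt X_D$ avoids the center of this projection, then the restriction $\wt X_D\lra X_D$ is a projective morphism with finite fibers, hence finite; this translates directly into integrality of $\wt A(D)$ over $A(D)$. Checking avoidance of the center amounts to the orbit-by-orbit nonvanishing property that on every $G$-orbit of $M$, some section in $V_D\subset\Gamma(M,\calL_D)$ is not identically zero. On the open $G$-orbit this is immediate from $h_D\neq 0$; for boundary orbits, one would argue by restriction of sections and the parabolic induction structure of wonderful varieties, combined with Proposition~\ref{prop: decomposizione sezioni} applied to the corresponding localizations of $M$, in order to produce nonzero restrictions of elements of $V_D$.

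The main obstacle is precisely this last orbit-by-orbit nonvanishing on boundary $G$-orbits: it is a global property of the subspace $V_D\subset\Gamma(M,\calL_D)$ that is not guaranteed by purely local data and requires a careful use of the combinatorics of the colors and spherical roots. Should this geometric route encounter difficulty on some boundary orbit, the algebraic fallback is to produce, for each $B$-eigenvector $f = s^{nD-E}v$, a monic polynomial over $A(D)$ vanishing at $f$. The natural candidate is $f^N = s^{N(nD-E)}\, v^N$, where $v^N$ is a highest-weight vector in $V_{NE}\subset\Gamma(M,\calL_{NE})$, reducing the problem to the asymptotic inclusion $s^{N(nD-E)}V_{NE}\subset A(D)_{nN}$ for $N$ large. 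This is strictly weaker than the termwise surjectivity considered in Lemma~\ref{lem: proiettiva normalita}, and, in view of Remark~\ref{rem:finiti} on the finiteness of covering differences, should be approachable by an induction along the partial order $\leq_\grS$.
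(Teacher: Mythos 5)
Your geometric route is the right one, and it is essentially how one proves this (and how \cite{CDM} does): show that the linear system $V_D \subset \Gamma(M,\calL_D)$ is base-point-free, so that $\wt X_D$ avoids the center of the projection $\mP(\Gamma(M,\calL_D)^*) \dashrightarrow \mP(V_D^*)$; then $\grb_D$ is a morphism of projective varieties whose fibers are simultaneously projective and contained in an affine chart, hence finite, and finiteness of $\grb_D$ gives integrality of $\wt A(D)$ over $A(D)$. The preliminary reduction to $B$-eigenvectors is harmless but unused along this route, and the algebraic fallback via $f^N \in A(D)_{nN}$ is left entirely unsubstantiated (the proposed induction is not carried out and is not clearly any easier than the original statement), so the proposal stands or falls with the base-point-freeness claim.

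There, however, lies the genuine gap: you frame the nonvanishing of $V_D$ on each boundary orbit as an open combinatorial problem requiring parabolic induction and localization, whereas it follows from a short, standard argument that you should have spotted. The base locus $\mathrm{Bs}(V_D)$ is a closed $G$-stable subset of $M$ (because $V_D$ is a $G$-submodule and $\calL_D$ is $G$-linearized), so if it were nonempty it would contain the unique closed orbit $Y$. Hence one only needs $V_D$ to generate $\calL_D$ along $Y$. Now consider the $G$-equivariant restriction $\Gamma(M,\calL_D)\to \Gamma(Y,\calL_D|_Y)$. It is nonzero because $\calL_D$ is globally generated; and it kills every summand $s^{D-E}V_E$ with $E<_\grS D$ in the decomposition of Proposition~\ref{prop: decomposizione sezioni}, since $s^{D-E}$ is a product of canonical sections of $\calO(M^\grs)$ and vanishes on $Y=\bigcap_{\grs}M^\grs$. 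Therefore the restriction factors through $V_D$, and being a nonzero $G$-map between the irreducible modules $V_D$ and $\Gamma(Y,\calL_D|_Y)\cong V(\gro_D)$, it is an isomorphism. By Borel--Weil the full space of sections of the globally generated $\calL_D|_Y$ on the flag variety $Y$ generates it, so $V_D$ has no base points on $Y$, hence none on $M$. This closes the gap; no orbit-by-orbit analysis and no fallback are needed.
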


It follows that $\beta_D \colon \widetilde X_D \longrightarrow X_D$ is the normalization if and only if
it is birational. Clearly this is the case
if $D$ is minuscule or faithful. On the other hand, $\beta_D$ is not necessarily birational:
if $M$ is the model wonderful variety of $\mathsf{C}_4$,
then $A(D_2) \subset \widetilde A(D_2)$ don't have the same quotient field.
However, we have the following.

\begin{proposition} \label{prop: caso stretto simply laced}
Suppose that $M$ is adjoint symmetric or that $G$ is simply laced and $M$ is
strict. Then $\widetilde A(D)$ and $A(D)$ have the same quotient field.
\end{proposition}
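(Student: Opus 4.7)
The plan is to show that the finite morphism $\beta_D\colon\wt X_D \to X_D$ is birational, which by the discussion preceding the statement is equivalent to the equality of the quotient fields of $A(D)$ and $\wt A(D)$. The strategy is to reduce to the case where $D$ is a faithful divisor: in that case $\phi_D$ is birational on the open orbit, and since $\phi_D = \beta_D\circ\wt\phi_D$ with $\wt\phi_D$ surjective, this forces $\beta_D$ to be birational as well.

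First I would let $\Delta_0 \subset \Delta$ be the maximal distinguished subset disjoint from $\Supp(D)$ and replace $M$ by the wonderful quotient $M' = M/\Delta_0$. By Corollary~\ref{cor: sezioni e quozienti} one has $\Gamma(M,\calL_{nD}) = \Gamma(M',\calL_{nD})$ for every $n\geq 0$, so the rings $A(D)$ and $\wt A(D)$, the varieties $X_D$ and $\wt X_D$, and the morphisms $\phi_D$, $\wt\phi_D$, $\beta_D$ all remain unchanged when $M$ is replaced by $M'$. By maximality of $\Delta_0$, and using that the union of $\Delta_0$ with any distinguished subset of $\Delta(M')$ is itself a distinguished subset of $\Delta$, no distinguished subset of $\Delta(M')$ can be disjoint from $\Supp(D)$. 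Provided that $M'$ still lies in the class under consideration, Proposition~\ref{prop: fedelta caso stretto} applied to $M'$ then gives that $D$ is faithful on $M'$, which concludes the proof by the reduction of the first paragraph.

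The main obstacle is therefore to check that $M'$ is still in the good class after quotienting. In the adjoint symmetric case this is well known: a quotient of an adjoint symmetric wonderful variety by a distinguished set of colors is again adjoint symmetric, its restricted root system being a root subsystem of the original one. For $M$ strict with $G$ simply laced, strictness of $M'$ has to be verified combinatorially: the condition $\Sigma(M')\cap S = \varnothing$ follows immediately from $\Sigma(M')\subset\mN\Sigma(M)$ together with the fact that, under the simply laced strict hypothesis, every element of $\Sigma(M)$ is a sum of at least two non-orthogonal simple roots; the more delicate condition $H(M') = N_G(H(M'))$ requires a combinatorial analysis of the spherical system of $M'$, and I expect it to amount to ruling out the phenomena responsible for the $\sfC_4$ counterexample mentioned just before the statement, phenomena that are specific to the non-simply laced setting.
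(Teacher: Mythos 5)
Your argument takes essentially the same route as the paper's: reduce to the case of a faithful divisor by passing to a wonderful quotient $M'$ on which the spaces $\Gamma(\cdot,\calL_{nD})$ are preserved, and then conclude that $\grb_D$ is birational. The paper phrases the reduction by letting $M'$ be the wonderful compactification of $G\cdot[h_D]$ (so that $D$ is faithful on $M'$ by construction) and then invoking the claim that, under the given hypotheses, ``every morphism $\phi\colon M\to M'$ to a second wonderful variety $M'$ has connected fibers,'' i.e.\ $M'$ is automatically a quotient of $M$ by a distinguished set of colors. You instead set $M'=M/\grD_0$ for $\grD_0$ the maximal distinguished subset disjoint from $\Supp(D)$, and then deduce faithfulness from Proposition~\ref{prop: fedelta caso stretto} together with the maximality of $\grD_0$ and composition of quotients. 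Granted the underlying technical fact, these two quotients coincide, so the arguments are morally identical.

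That underlying fact — in your formulation, that the quotient $M'$ is again strict (equivalently, that its generic stabilizer is self-normalizing) — is precisely what the paper's proof also relies on and disposes of with ``using the formalism of the distinguished sets of colors, one can see that\dots'' without a detailed argument. You are right to flag it as the crux, and your partial treatment is sound: the condition $\grS(M')\cap S=\vuoto$ does follow from $\grS(M')\subset\mN\grS(M)$ and $|\Supp_S(\grs)|\geq 2$ for all $\grs\in\grS(M)$ in the simply-laced strict case, while the self-normalization condition is genuinely the delicate point (and is indeed exactly what fails for the model variety of $\sfC_4$). So your proposal is correct in approach and at roughly the same level of completeness as the paper's own proof; you simply make explicit the combinatorial verification that the paper treats as known. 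Do note that for the adjoint symmetric case your claim that quotients are again adjoint symmetric is a bit strong — what is needed (and what holds, cf.\ \cite[Theorem~2.6]{CDM}) is only the self-normalization of the generic stabilizer of the quotient, not that the quotient be itself an adjoint symmetric variety.
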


\begin{proof}
Let $\Delta_D \subset \Delta$ be the maximal distinguished subset
which does not intersect $\supp(D)$, and denote by $M'$ the quotient of $M$ by $\Delta_D$ and
by $M''$ the wonderful compactification of $G\cdot [h_D]$.
By \cite[Corollary~3.7]{Ga} and \cite[Remark~3.8]{Ga}, $M'$ and $M''$ have the same dimension,
and we have $M'\neq M''$ if and only if some spherical root of $M''$
is the double of a spherical root of $M'$.

On the other hand, by the quotient construction, the spherical roots of $M'$ are contained in $\mathbb{N}\Sigma$. 
By the description of the possible spherical roots occurring in a symmetric variety or in a strict variety for a simply laced $G$, 
it follows that $\mathbb{N}\Sigma$ contains no elements of $\Sigma(G)$ whose double is still in $\Sigma(G)$ (see \cite[Table~1]{BL}).
Therefore, $M'$ and $M''$ must have the same spherical roots and we get $M' = M''$, 
and by Corollary~\ref{cor: sezioni e quozienti} it follows that $\Gamma(M, \mathcal L_{nD}) \simeq \Gamma(M', \mathcal L_{nD})$ for every $n\geqslant 0$. 
Therefore, we are reduced to the case of a faithful divisor, and the claim follows.
\end{proof}

In the adjoint symmetric case, the above proposition was proved in \cite[Theorem~2.6]{CDM}.

If $D_1,\dots, D_m \in \mathbb{N}\Delta$, consider the variety
\[ X_{D_1,\dots, D_m} = \overline{G\cdot ([h_{D_1}] \times\dots \times
  [h_{D_m}])} \subset \mathbb{P}(V_{D_1}^*) \times\dots \times
\mathbb{P}(V_{D_m}^*)
\] and denote by $ \phi_{D_1,\dots, D_m} \colon M \longrightarrow X_{D_1,\dots, D_m}$
        the map such that $\phi_{D_1,\dots, D_m}(x) =
        (\phi_{D_1}(x),\dots, \phi_{D_m}(x))$.

\begin{proposition}	[{\cite[Proposition~1.2]{BGMR}}]	\label{prop: riduzione supporto}
Let $M$ be a wonderful variety and let $D,E \in \mathbb{N}\Delta$.
\begin{itemize}
	\item[i)] If $\supp(\omega_D) \cap \supp(\omega_E) = \varnothing$, then
          $X_{D+E} \simeq X_{D, E}$.
	\item[ii)] If $M$ is strict and $\supp(D) = 
          \{D_1,\dots, D_m\}$, then $X_D \simeq X_{D_1,\dots , D_m}$. In
          particular, if $M$ is strict, $X_D \simeq X_E$ if and
          only if $\supp(D) = \supp(E)$.
\end{itemize}
\end{proposition}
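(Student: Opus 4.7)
The plan for part (i) is to construct a $G$-equivariant morphism $\psi\colon X_{D,E}\lra X_{D+E}$ and verify it is an isomorphism. Consider the $G$-equivariant projection $\pi\colon V_D^*\otimes V_E^* \lra V_{D+E}^*$ onto the Cartan component of highest weight $\gro_D^* + \gro_E^*$, composed with the Segre embedding $\mP(V_D^*)\times\mP(V_E^*)\incluso \mP(V_D^*\otimes V_E^*)$. The element $h_D\otimes h_E$ is an $H$-semiinvariant of weight $\xi_{D+E}$ and $V_{D+E}^*$ carries a unique line of such semiinvariants, so $\pi(h_D\otimes h_E)=c\,h_{D+E}$ for some $c\in\mC$. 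The first step is to show $c\neq 0$, equivalent by Lemma~\ref{lemma: supporto moltiplicazione} to $V_{D+E}\subset V_DV_E$: this holds because the product $v_D^+\cdot v_E^+ \in \grG(M,\calL_{D+E})$ of highest weight vectors is non-zero on the open $B$-orbit of $M$, and thus generates a copy of $V_{D+E}$ inside $V_DV_E$.

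Once $\psi$ is well-defined, $G$-equivariant, and sends $([h_D],[h_E])$ to $[h_{D+E}]$, it is automatically surjective, and it then suffices to compare the stabilizers, since a bijective equivariant morphism between the two spherical orbit closures gives the desired isomorphism. The inclusion $\Stab_G([h_D])\cap \Stab_G([h_E]) \subset \Stab_G([h_{D+E}])$ is immediate from the definition of $\psi$. For the reverse inclusion, both stabilizers contain $H$ and their quotients by $H$ are governed by characters extending $\xi_D$, $\xi_E$ and $\xi_D+\xi_E$ respectively; under the hypothesis $\Supp(\gro_D)\cap\Supp(\gro_E)=\vuoto$, I expect these character extensions to decouple (the relevant Levi components associated with $\Supp(\gro_D)$ and $\Supp(\gro_E)$ commute), forcing any $g$ stabilizing $[h_{D+E}]$ to stabilize $[h_D]$ and $[h_E]$ individually.

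For part (ii), the plan is to show that under strictness the stabilizer $\Stab_G([h_E])$ depends only on $\Supp(E)$. Let $\grD_0(E)$ be the union of all distinguished subsets of $\grD$ contained in $\grD\senza\Supp(E)$, which is again distinguished. By Corollary~\ref{cor: sezioni e quozienti}, $\phi_E$ factors through the quotient $M'=M/\grD_0(E)$; by construction every distinguished subset of $\grD(M')$ meets $\Supp(E)$, so by Proposition~\ref{prop: fedelta caso stretto} the divisor $E$ is faithful on $M'$, and $\Stab_G([h_E])$ equals the stabilizer in $G$ of the image of $x_0$ in $M'$, which depends only on $\Supp(E)$. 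Applied to $D$, to $D_1+\dots+D_m$, and to the product configuration, this yields $\Stab_G([h_D]) = \Stab_G([h_{D_1+\dots+D_m}]) = \bigcap_i \Stab_G([h_{D_i}])$, and the isomorphism $X_D\isocan X_{D_1,\dots,D_m}$ then follows from the surjective equivariant Segre-type morphism $X_{D_1,\dots,D_m}\lra X_D$ built as in part (i). The main technical obstacle will lie in part (i), in making precise the claim that disjoint supports of $\gro_D$ and $\gro_E$ force the character extensions on $\Stab_G([h_{D+E}])/H$ to split as $\xi_D$ and $\xi_E$, and in part (ii), in rigorously identifying the intersection $\bigcap_i \Stab_G([h_{D_i}])$ with the stabilizer of the image of $x_0$ in $M/\grD_0(D_1+\dots+D_m)$.
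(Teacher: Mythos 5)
Your proposal takes a genuinely different and more laborious route than the paper, and it contains a real gap that undermines the conclusion of both parts. The paper's proof of (i) is essentially a one-liner: it invokes Lemma~1.1 of [BGMR], which asserts that when $\Supp(\gro_D)\cap\Supp(\gro_E)=\vuoto$ the composition of the Segre embedding with the Cartan projection gives a \emph{closed embedding} $\mP(V_D^*)\times\mP(V_E^*)\incluso\mP(V_{D+E}^*)$ of the \emph{ambient} projective spaces. Restricting a closed embedding to the closed subvariety $X_{D,E}$ automatically gives an isomorphism onto its image, and since the map is $G$-equivariant and sends $([h_D],[h_E])$ to $[h_{D+E}]$, that image is exactly $X_{D+E}$. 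No discussion of stabilizers, bijectivity, or normality is needed.

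Your proposal never establishes a closed embedding. Instead you aim to show that the induced morphism $\psi\colon X_{D,E}\to X_{D+E}$ is bijective (which you attempt via a stabilizer comparison) and then assert that ``a bijective equivariant morphism between the two spherical orbit closures gives the desired isomorphism.'' This last step is false in general: $X_{D+E}$ need not be normal, and a bijective proper morphism onto a non-normal variety is typically not an isomorphism (compare the normalization of a cuspidal curve). Indeed the very point of the paper's subsequent discussion (Propositions~6.4--6.6) is that $X_D$ can fail to be normal, so one cannot rely on normality to upgrade bijections to isomorphisms. On top of this, your stabilizer-decoupling argument is only sketched and, as you yourself flag, left unjustified. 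In part (ii) the argument is further gapped: you apply Proposition~\ref{prop: fedelta caso stretto} to the quotient $M'=M/\grD_0(E)$, but that proposition requires $M'$ to be strict, and quotients of strict wonderful varieties are not strict in general (see the discussion at the end of Section~9, and Example~\ref{ex: moltiplicazione non-sfer-closed}, where a quotient of a strict model variety is not spherically closed). Finally, as in (i), even with correct stabilizers you would only get bijectivity, not an isomorphism. The missing ingredient throughout is the closed-embedding lemma from [BGMR], which is what makes the paper's argument go through without any appeal to normality.
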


\begin{proof}
i) Since $\supp(\omega_D) \cap \supp(\omega_E) = \varnothing$, by 
  \cite[Lemma~1.1]{BGMR} we have a closed equivariant embedding $\psi_{D,E} \colon
\mathbb{P}(V^*_D) \times \mathbb{P}(V^*_E) \longrightarrow \mathbb{P}(V^*_{D+E})$, and since
$\psi_{D,E}([h_D],[h_E]) = [h_{D+E}]$ the isomorphism
$X_{D+E} \simeq X_{D,E}$ follows.

ii) Since $M$ is strict, by the description of the restriction $\omega\colon
\Pic(M) \longrightarrow \mathcal{X}(T)$ (see \cite[Lemma~30.24]{Ti}) it follows that $\supp(D) \cap
\supp(E) = \varnothing$ if and only if $\supp(\omega_D) \cap \supp(\omega_E) =
\varnothing$. Therefore, by \cite[Lemma~1.1]{BGMR} we have a closed
equivariant embedding $\psi_D \colon \mathbb{P}(V^*_{D_1}) \times\dots \times
\mathbb{P}(V^*_{D_m}) \longrightarrow \mathbb{P}(V^*_D)$ such that $\psi_D([h_{D_1}],\dots,
   [h_{D_m}]) = [h_D]$ and it follows $X_D \simeq X_{D_1,\dots,
     D_m}$.
\end{proof}

Assume now that the multiplication of sections is surjective
for every couple of globally generated line bundles.
In particular $\widetilde A(D)$ is generated by its degree one component $\Gamma(M,\mathcal L_D)$
and it follows that $A(D) = \widetilde A(D)$ if and only if $\Gamma(M,\mathcal L_D)=V_D$
if and only if $D$ is minuscule or $D=0$. Since $\widetilde A(D)$ is integrally closed,
we get the following proposition which we need for later use.

\begin{proposition} \label{prop: cono1}
Let $M$ be a wonderful variety and suppose that the multiplication of
sections is surjective for every couple of globally generated line bundles.
Let $D \in \mathbb{N}\Delta$, $D\neq0$.
\begin{itemize}
	\item[i)] If $D$ is minuscule, then $X_D$ is projectively normal.
	\item[ii)] If $\beta_D$ is birational and $X_D$ is projectively normal,
	then $D$ is minuscule.
\end{itemize}
\end{proposition}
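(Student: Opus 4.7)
The plan is to derive both implications directly from the observation stated immediately before the proposition, together with Proposition~\ref{prop: algebra integrale}. The crucial point I will exploit is that the surjectivity hypothesis on multiplication makes $\wt A(D)$ generated in degree one by $\Gamma(M,\calL_D)$, whereas $A(D)$ is by definition generated in degree one by $V_D \subset \Gamma(M,\calL_D)$; hence $A(D) = \wt A(D)$ if and only if $V_D = \Gamma(M,\calL_D)$, and by Proposition~\ref{prop: decomposizione sezioni} this holds precisely when there is no $E \in \mN\grD$ with $E <_\grS D$, i.e.\ when $D$ is minuscule.

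For part (i), I would assume $D$ minuscule and invoke the preceding equivalence to get $A(D) = \wt A(D)$. Since $M$ is smooth the algebra $\wt A(D)$ is integrally closed, so $A(D)$ is integrally closed as well, which is exactly the statement that $X_D$ is projectively normal.

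For part (ii), I would use birationality of $\beta_D : \wt X_D \lra X_D$ to conclude that the graded inclusion $A(D) \hookrightarrow \wt A(D)$ induces an isomorphism on fields of fractions. By Proposition~\ref{prop: algebra integrale} the extension is integral, so $\wt A(D)$ must coincide with the integral closure of $A(D)$ in their common fraction field. Projective normality of $X_D$ says that $A(D)$ is already integrally closed, so the inclusion $A(D) \subseteq \wt A(D)$ is forced to be an equality; the initial equivalence then yields that $D$ is minuscule.

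I do not anticipate any substantive obstacle: all the heavy lifting has been done in the preceding discussion and in Propositions~\ref{prop: decomposizione sezioni} and~\ref{prop: algebra integrale}. The only step that merits care is the extraction of the equality $A(D) = \wt A(D)$ from the projective normality hypothesis in part (ii), which is why the birationality of $\beta_D$ is needed in order to identify $\wt A(D)$ with the normalization of $A(D)$ inside a common fraction field.
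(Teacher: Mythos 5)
Your proposal is correct and follows essentially the same route as the paper, which proves the proposition in the paragraph immediately preceding its statement: the surjectivity hypothesis forces $\wt A(D)$ to be generated in degree one, so $A(D)=\wt A(D)$ iff $V_D=\Gamma(M,\calL_D)$ iff $D$ is minuscule, and since $\wt A(D)$ is integrally closed both implications follow (part (ii) via Proposition~\ref{prop: algebra integrale} and birationality of $\grb_D$). You merely spell out the details, particularly for part (ii), that the paper leaves implicit.
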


If $D_1,\dots, D_m \in \mathbb{N}\Delta$ and if $V_1,\dots, V_m$ are
$G$-modules of sections such that $V_{D_i} \subset V_i \subset
\Gamma(M,\mathcal L_{D_i})$, consider the associated morphisms $\phi_{V_i} \colon
M \longrightarrow \mathbb{P}(V_i^*)$ and denote $X_{V_i} = \phi_{V_i}(M)$.
Also, we denote by
\[X_{V_1, \dots,V_m} \subset \mathbb{P}(V^*_1) \times \dots \times \mathbb{P}(V^*_m)\]
the image of $M$ via the map $\phi_{V_1,\dots,V_m}(x)=(\phi_{V_1}(x), \dots, \phi_{V_m}(x))$ and by
\[X_{V_1 \otimes \dots \otimes V_m} \subset \mathbb{P}(V^*_1 \otimes \dots \otimes V^*_m)\]
the image of $X_{V_1,\dots,V_m}$ via the Segre embedding.

\begin{lemma}	\label{lemma: coordinate proiettive}
Let $D_1, \dots, D_m \in \mathbb{N}\Delta$ and denote $D = \sum_{i=1}^m
D_i$. If $V_1, \dots, V_m$ are $G$-modules such that
$V_{D_i} \subset V_i \subset \Gamma(M,\mathcal L_{D_i})$, then the projective
coordinate ring of $X_{V_1 \otimes \dots \otimes V_m}$ is the subalgebra
$A(V_1,\dots, V_m) \subset \widetilde{A}(D)$ generated by the product $V_1
\cdots V_m \subset \Gamma(M,\mathcal L_D)$.
\end{lemma}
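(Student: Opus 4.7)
The plan is to identify the morphism $M \lra X_{V_1\otimes\cdots\otimes V_m}$ explicitly as the one given by the multiplication of sections, and then to read off the homogeneous coordinate ring of its image.

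First I would observe that the Segre embedding $\mP(V_1^*)\times\cdots\times\mP(V_m^*) \hookrightarrow \mP(V_1^*\otimes\cdots\otimes V_m^*)$ pulls $\calO(1)$ back to $\calO(1)\boxtimes\cdots\boxtimes\calO(1)$, whose space of global sections is canonically identified with $V_1\otimes\cdots\otimes V_m$. Pulling back via $\phi_{V_1,\dots,V_m}\colon M\lra\mP(V_1^*)\times\cdots\times\mP(V_m^*)$, one gets $\phi_{V_1,\dots,V_m}^*(\calO(1)\boxtimes\cdots\boxtimes\calO(1)) = \calL_{D_1}\otimes\cdots\otimes\calL_{D_m} = \calL_D$, and the induced map of global sections $V_1\otimes\cdots\otimes V_m\lra \Gamma(M,\calL_D)$ is precisely the multiplication of sections, whose image is $V_1\cdots V_m$. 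Composing with the Segre embedding, it follows that the morphism $M\lra X_{V_1\otimes\cdots\otimes V_m}$ is the morphism associated to the linear system $V_1\cdots V_m\subset\Gamma(M,\calL_D)$, and that the pullback of $\calO(1)$ along this map is $\calL_D$.

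Next I would invoke the following general fact. Let $Y$ be an irreducible variety, $\calL\in\Pic(Y)$, and $W\subset\Gamma(Y,\calL)$ a base-point-free subspace defining $\phi\colon Y\lra\mP(W^*)$ with image $X = \phi(Y)$. Since $\phi$ is dominant and $Y$ is irreducible, the pullback $\phi^*\colon \Gamma(X,\calO_X(n))\lra \Gamma(Y,\calL^{\otimes n})$ is injective for every $n\geq 0$. Therefore the homogeneous coordinate ring of $X$, namely the image of $\mathrm{Sym}(W)\lra\bigoplus_n \Gamma(X,\calO_X(n))$, is identified with the subalgebra of $\bigoplus_n \Gamma(Y,\calL^{\otimes n})$ generated in degree one by the image of $W$ in $\Gamma(Y,\calL)$.

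Applying this to $Y = M$, $\calL = \calL_D$, $W = V_1\otimes\cdots\otimes V_m$ and $X = X_{V_1\otimes\cdots\otimes V_m}$, the image of $W$ in $\Gamma(M,\calL_D)$ is $V_1\cdots V_m$, so the projective coordinate ring of $X_{V_1\otimes\cdots\otimes V_m}$ is the subalgebra of $\wt A(D) = \bigoplus_n\Gamma(M,\calL_{nD})$ generated by $V_1\cdots V_m$, which is by definition $A(V_1,\dots,V_m)$. The argument is a bookkeeping exercise rather than a computation, and the only subtle point is the injectivity of the pullback on sections, which is immediate from the irreducibility of $M$.
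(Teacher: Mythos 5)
Your proof is correct and follows essentially the same approach as the paper: observe that $\phi^*_{V_1\otimes\cdots\otimes V_m}\calO(1)=\calL_D$ and that the induced map on degree-one sections is the multiplication map $V_1\otimes\cdots\otimes V_m\to\Gamma(M,\calL_D)$ with image $V_1\cdots V_m$, then conclude by identifying the homogeneous coordinate ring of the image with the subalgebra of $\wt A(D)$ generated by that image. The paper states these two observations and says "the lemma follows"; you have simply written out the standard fact (injectivity of pullback on sections of the restricted $\calO(n)$ along a dominant map from an irreducible variety) that justifies the conclusion.
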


\begin{proof}
Consider the map $\phi_{V_1 \otimes \dots \otimes V_m}\colon M\to X_{V_1 \otimes \dots \otimes V_m}$.
The lemma follows by noticing that $\phi^*_{V_1 \otimes \dots \otimes
V_m} \mathcal{O}(1) = \mathcal L_D$ and that $\phi^*_{V_1 \otimes \dots
\otimes V_m} \colon V_1 \otimes \dots \otimes V_m \longrightarrow \Gamma(M,\mathcal L_D)$
is the multiplication map.
\end{proof}

\begin{proposition}
Let $M$ be a wonderful variety and suppose that the multiplication of
sections is surjective for every couple of globally generated line bundles.
Let $D_1, \dots, D_m \in \mathbb{N} \Delta$ and denote
$\Gamma_i = \Gamma(M,\mathcal L_{D_i})$. Then the variety
$X_{\Gamma_1 \otimes \dots \otimes \Gamma_m} \subset \mathbb{P}(\Gamma^*_1 \otimes \dots \otimes \Gamma^*_m)$ is projectively normal.
\end{proposition}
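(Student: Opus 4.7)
The plan is to identify the homogeneous coordinate ring of $X_{\grG_1 \otimes \dots \otimes \grG_m}$ with the whole section ring $\wt A(D)$, where $D = \sum_{i=1}^m D_i$, and then invoke the fact (noted just before Proposition~\ref{prop: algebra integrale}) that $\wt A(D)$ is integrally closed because $M$ is smooth. Projective normality is exactly the assertion that the homogeneous coordinate ring is integrally closed, so this identification will finish the proof.

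First, I would apply Lemma~\ref{lemma: coordinate proiettive} with $V_i = \grG_i$: it identifies the projective coordinate ring of $X_{\grG_1 \otimes \dots \otimes \grG_m}$ with the graded subalgebra $A(\grG_1,\dots,\grG_m) \subset \wt A(D)$ generated in degree one by the image of the multiplication map $\grG_1 \otimes \dots \otimes \grG_m \lra \grG(M,\calL_D)$, i.e.\ by the product $\grG_1 \cdots \grG_m$. The statement thus reduces to proving $A(\grG_1,\dots,\grG_m) = \wt A(D)$.

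Second, I would exploit the surjectivity hypothesis iteratively. Since each $\calL_{D_i}$ is globally generated (because $D_i \in \mN\grD$) and any partial sum $D_1+\dots+D_k$ still lies in $\mN\grD$, surjectivity of $m_{D_1,D_2}$ gives $\grG_1\grG_2 = \grG(M,\calL_{D_1+D_2})$; then surjectivity of $m_{D_1+D_2,D_3}$ upgrades this to $\grG_1\grG_2\grG_3 = \grG(M,\calL_{D_1+D_2+D_3})$; and by induction $\grG_1 \cdots \grG_m = \grG(M,\calL_D)$. An identical grouping-and-iterating argument, using surjectivity of $m_{kD,D}$ for each $k\geq 1$, yields $(\grG_1 \cdots \grG_m)^n = \grG(M,\calL_{nD})$ for every $n \geq 0$. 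Therefore $A(\grG_1,\dots,\grG_m)$ coincides with $\wt A(D)$ in every graded component, so the two rings are equal.

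Finally, as $\wt A(D)$ is integrally closed by smoothness of $M$, the homogeneous coordinate ring of $X_{\grG_1 \otimes \dots \otimes \grG_m}$ is integrally closed, which is exactly projective normality. There is no real obstacle: the proposition is a direct formal consequence of the multiplicative surjectivity hypothesis combined with Lemma~\ref{lemma: coordinate proiettive} and the normality of $\wt X_D$.
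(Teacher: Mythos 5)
Your proof is correct and follows exactly the paper's route: apply Lemma~\ref{lemma: coordinate proiettive} to identify the homogeneous coordinate ring with $A(\grG_1,\dots,\grG_m)$, then use the surjectivity hypothesis twice (once to get $\grG_1\cdots\grG_m = \grG(M,\calL_D)$, once to get that $\wt A(D)$ is generated in degree one) to conclude $A(\grG_1,\dots,\grG_m)=\wt A(D)$, which is integrally closed by smoothness of $M$. The paper packages the second step as the remark preceding the proposition that $\wt A(D)$ is generated by its degree one component; you have simply spelled out the iteration.
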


\begin{proof}
Denote $D = \sum_{i=1}^m D_i$. Since the
multiplication of sections is surjective, $\Gamma_1 \cdots \Gamma_m = \Gamma(M,\mathcal L_D)$,
hence by the previous lemma $A(\Gamma_1, \dots, \Gamma_m) = \widetilde{A}(D)$
and $X_{\Gamma_1 \otimes \dots \otimes \Gamma_m}$ is a projectively normal variety.
\end{proof}

\begin{corollary}
Let $M$ be a wonderful variety and suppose that the multiplication of
sections is surjective for every couple of globally generated line bundles.
\begin{itemize}
	\item[i)] Let $D,E \in \mathbb{N}\Delta$ be such that $\supp(\omega_D)
          \cap \supp(\omega_E) = \varnothing$. If $X_D, X_E$ are normal, then
          $X_{D+E}$ is normal as well.
	\item[ii)] If $M$ is strict, then $X_D$ is normal for all
          $D\in \mathbb{N}\Delta$ if and only if it is normal for all $D \in
          \Delta$.
\end{itemize}
\end{corollary}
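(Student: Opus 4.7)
My plan for part (i) is to apply Proposition~\ref{prop: riduzione supporto}(i) to obtain $X_{D+E}\simeq X_{D,E}$, thereby reducing to showing that $X_{D,E}$ is normal. I will then compare the projective coordinate ring of $X_{D,E}$ in its Segre embedding in $\mP(V_D^*\otimes V_E^*)$ with the full section ring $\wt A(D+E)$. By Lemma~\ref{lemma: coordinate proiettive} (applied with $V_i=V_{D_i}$), the former is $R = \bigoplus_n V_D^n\cdot V_E^n$, which is a graded subring of $\wt A(D+E)$.

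The central step is to show that $R_n = \wt A(D+E)_n$ for all $n$ sufficiently large. Under the assumption that $\beta_D$ is birational, normality of $X_D$ is equivalent to $V_D^n = \Gamma(M,\calL_{nD})$ for $n\gg 0$ (intrinsic normality of a projective variety amounts to projective normality of some high Veronese re-embedding), and similarly for $X_E$. Using the hypothesis that $m_{\calL_{nD},\calL_{nE}}$ is surjective, I get
$$R_n = V_D^n\cdot V_E^n = \Gamma(M,\calL_{nD})\cdot \Gamma(M,\calL_{nE}) = \Gamma(M,\calL_{n(D+E)}) = \wt A(D+E)_n$$
for $n$ sufficiently large. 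Since $R$ and $\wt A(D+E)$ then agree in a cofinal set of degrees, they share the same Proj, namely $\wt X_{D+E}$. Because multiplication surjectivity forces $\wt A(D+E)$ to be generated in degree one, $\wt X_{D+E}$ coincides with the image of $M$ in $\mP(\Gamma(M,\calL_{D+E})^*)$, which is the Stein factorization of this morphism and hence normal (as $M$ is smooth). Therefore $X_{D,E}\simeq \wt X_{D+E}$ is normal.

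For part (ii) only the ``if'' direction is nontrivial. For $D\in\mN\grD$ with $\Supp(D) = \{D_1,\ldots,D_m\}$, Proposition~\ref{prop: riduzione supporto}(ii) gives $X_D\simeq X_{D_1,\ldots,D_m}\simeq X_{D_1+\cdots+D_m}$. Under strictness, distinct colors have disjoint weight-supports (via \cite[Lemma~30.24]{Ti}, as already invoked in the proof of Proposition~\ref{prop: riduzione supporto}), so I can apply part~(i) iteratively: from $X_{D_1}, X_{D_2}$ normal with $\Supp(\gro_{D_1})\cap\Supp(\gro_{D_2})=\vuoto$ I obtain $X_{D_1+D_2}$ normal; then $\Supp(\gro_{D_1+D_2})=\Supp(\gro_{D_1})\cup\Supp(\gro_{D_2})$ is disjoint from $\Supp(\gro_{D_3})$, so $X_{D_1+D_2+D_3}$ is normal, and so on, until I reach $X_{D_1+\cdots+D_m}\simeq X_D$.

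The main obstacle will be the step identifying $V_D^n$ with $\Gamma(M,\calL_{nD})$ for large $n$, which tacitly uses birationality of $\beta_D$; this condition can fail, as in the model wonderful variety of type $\sfC_4$ where $A(D_2)\subsetneq\wt A(D_2)$ have different quotient fields. Handling this pathology cleanly would require a finer interpretation of intrinsic normality of $X_D$ when $\beta_D$ is a nontrivial finite cover, but in the applications envisaged (strict or adjoint symmetric settings covered by Proposition~\ref{prop: caso stretto simply laced}) this issue does not arise.
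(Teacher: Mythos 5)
Your argument is essentially the paper's own proof, restated at the level of graded rings. The paper identifies $X_{D,E}$ with $X_{\grG_D,\grG_E} \simeq X_{\grG_D\otimes\grG_E}$ using normality of $X_D, X_E$, with the latter variety projectively normal by the preceding Proposition, and then closes with $X_{D+E}\simeq X_{D,E}$ via Proposition~\ref{prop: riduzione supporto}~i). You reach the same conclusion by showing that the Veronese-type subring $R=\bigoplus_n V_D^n V_E^n$ of $\wt A(D+E)$ agrees with $\wt A(D+E)$ in all large degrees, so that $\mathrm{Proj}\,R=\wt X_{D+E}$ is normal; this is the ring-theoretic content of the paper's identification. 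Your handling of part~ii), iterating i) over $\Supp(D)$ and using that distinct colors of a strict wonderful variety have disjoint weight-supports (so that $\Supp(\gro_{D_1+\cdots+D_k})$ stays disjoint from $\Supp(\gro_{D_{k+1}})$), is exactly what the paper's ``follows straightforwardly from i)'' must mean.

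The caveat you raise is well-spotted, and it applies to the paper's own wording just as much as to your reconstruction. The paper's step ``since $X_D$ and $X_E$ are normal, we have that $X_{D,E}\simeq X_{\grG_D,\grG_E}$'' amounts to asserting $X_D\simeq\wt X_D$ and $X_E\simeq\wt X_E$. Since $\grb_D:\wt X_D\to X_D$ is a finite morphism with normal source, the normality of $X_D$ yields $X_D\simeq\wt X_D$ precisely when $\grb_D$ is birational --- the same hypothesis you need in order to pass from intrinsic normality of $X_D$ to the equality $V_D^n=\grG(M,\calL_{nD})$ for $n\gg 0$. As the paper itself records around Proposition~\ref{prop: caso stretto simply laced}, $\grb_D$ need not be birational for a general wonderful variety (the model wonderful variety of type $\sfC_4$ with $D=D_2$ being the example given), so the corollary as stated tacitly carries this assumption. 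In the situations where the paper actually invokes the corollary (adjoint symmetric, or simply-laced strict) birationality holds by Proposition~\ref{prop: caso stretto simply laced}, so the downstream applications are unaffected; your explicit flagging of the hypothesis is the only respect in which your write-up goes beyond the paper's.
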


\begin{proof}
i) Denote $\Gamma_D = \Gamma(M,\mathcal L_D)$ and $\Gamma_E = \Gamma(M,\mathcal L_E)$:
by the previous proposition, we have that $X_{\Gamma_D,\Gamma_E} \simeq
X_{\Gamma_D \otimes \Gamma_E}$ is a normal variety. On the other hand
since $X_D$ and $X_E$ are normal, we have that $X_{D,E} \simeq
X_{\Gamma_D,\Gamma_E}$, while by Proposition~\ref{prop: riduzione
  supporto}.i we have that $X_{D+E} \simeq X_{D,E}$.

ii) Since $M$ is strict, it follows by the description of 
$\omega \colon \Pic(M) \longrightarrow \mathcal{X}(T)$ that $\supp(\omega_D) \cap
\supp(\omega_E) = \varnothing$ if and only if $\supp(D) \cap \supp(E) =
\varnothing$. Therefore the claim follows straightforwardly from i).
\end{proof}

\begin{corollary} \label{cor:normality-in-sps}
Suppose that $M$ is a symmetric variety with reduced root system of
type $\mathsf{A}$ or that it is a model wonderful variety for a connected
semisimple group of type $\mathsf{A} \mathsf{D}$. Then $X_D$ is normal for all
$D\in \mathbb{N}\Delta$.
\end{corollary}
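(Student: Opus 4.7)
The plan is to reduce to the case of a single color via part (ii) of the preceding corollary and then invoke Proposition~\ref{prop: cono1}(i) after verifying that every color is minuscule in $\mN\grD$ with respect to $\leq_\grS$. First I would check the hypotheses of the preceding corollary: in both situations $M$ is strict (the adjoint symmetric wonderful varieties and the model wonderful varieties form the two main classes of strict wonderful varieties recalled earlier), and the multiplication of global sections of globally generated line bundles is surjective (by \cite{CM} in the adjoint symmetric case of type $\sfA$, by Theorem~A in the model case of type $\sfA\sfD$). Part (ii) of the preceding corollary then reduces the problem to showing that $X_D$ is normal for each single color $D\in\grD$.

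The key step is to prove that every color $D\in\grD$ is minuscule. In the symmetric case, $\grD$ is identified with the set of fundamental weights of the restricted root system $\Phi_\grS$ of type $\sfA$; all fundamental weights of type $\sfA$ are minuscule in the usual sense, so the standard comparison between the classical dominance order and $\leq_\grS$ shows that every color is minimal in $\mN\grD$. In the model case one reduces to $G$ almost simple of type $\sfA_r$ or $\sfD_r$ and argues by contradiction: if a color $D_i$ covered some $E\in\mN\grD$, then $\gamma:=D_i-E\in\mN\grS$ would be a covering difference, and since $D_i$ has coefficient $1$ on itself and $0$ on every other color, the positive part $\gamma^+$ would be forced to lie in $\{0,D_i\}$. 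The case $\gamma^+=0$ would produce a nonzero element of $\mN\grS\cap(-\mN\grD)$; applying the restriction $\omega$ would give a nonzero element of the non-negative cone spanned by positive simple roots which is also anti-dominant, an impossibility in simply-laced type $\sfA\sfD$ by a direct inspection of the Cartan matrix. The case $\gamma^+=D_i$ is excluded by the classification in Section~3: Propositions~\ref{prop: coperture A} and~\ref{prop: coperture D}, applied also to subdiagrams of type $\sfA\sfD$ to cover the situation $\Supp_S(\gamma)\subsetneq S$, show that the positive part of every covering difference is of the shape $D_p+D_q$ or $2D_1$, hence never a single color of coefficient one.

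Once every color is minuscule, Proposition~\ref{prop: cono1}(i) yields that $X_D$ is projectively normal, hence normal, for each $D\in\grD$, which together with part (ii) of the preceding corollary completes the proof. The principal obstacle is the minuscule verification for the model wonderful variety of type $\sfD$: the middle fundamental weights of $\sfD$ are not minuscule in the classical root-system sense, and one must exploit the fact that $\mN\grS$ is a strict sub-semigroup of the positive-root semigroup to rule out any non-minuscule relation among single colors.
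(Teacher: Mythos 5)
Your proof is correct and follows essentially the same route as the paper: both arguments observe (via Propositions~\ref{prop: coperture A}, \ref{prop: coperture D} in the model case and the usual type-$\sfA$ root-system facts in the symmetric case) that every covering difference $\grg$ satisfies $\alt(\grg^+)=2$, conclude that each color is minuscule, invoke Proposition~\ref{prop: cono1}(i), and finish with part~(ii) of the preceding corollary. Your explicit two-case analysis of $\grg^+\in\{0,D_i\}$ is a slightly more verbose way of saying the same thing (and indeed $\grg^+=0$ is impossible for any wonderful variety, since $\mN\grS\cap(-\mN\grD)=\{0\}$), so the arguments coincide in substance.
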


\begin{proof}
By the description of the covering relation, it follows that under
the assumptions on $M$ it holds $\alt(\gamma^+) = 2$
for every covering difference $\gamma$ in $\mathbb{N}\Delta$.
Therefore every $D \in \Delta$ is minuscule and $\Gamma(M,\mathcal L_D) = V_D$.
Therefore, $X_D$ is projectively normal for all
$D \in \Delta$ and it follows by the previous corollary that $X_D$ is normal
for all $D \in \mathbb{N}\Delta$.
\end{proof}

\section{On the normality of cones and nilpotent orbit closures}	\label{sec:orbite}

Following the same approach as \cite{CDC} and \cite{CDM},
we can apply Theorem~\ref{teo: model} to study the normality of cones over model varieties.
In particular, as pointed out by Luna some years ago,
we can apply our theory to study the normality of the closure
of spherical nilpotent orbits in the Lie algebra of $G$.

Let $M$ be a wonderful variety with set of colors $\Delta$ and
set of spherical roots $\Sigma$ and
assume that the multiplication of sections is surjective
for every couple of globally generated line bundles.
Let $D\in \mathbb{N}\Delta$ and denote by $C_D \subset V_D^*$ the cone over the
variety $X_D$ introduced in the previous section.
Analogously, denote by $\widetilde{C}_D \subset \Gamma(M,\mathcal L_D)^*$
the cone over the variety $\widetilde X_D$.
Then the coordinate ring of $C_D$ is identified with $A(D)$,
whereas that of $\widetilde C_D$ is identified with $\widetilde A(D)$,
which is an integrally closed ring. This yields a map $\alpha_D \colon \widetilde C_D \longrightarrow C_D$
that is birational if and only if $\beta_D \colon \widetilde X_D \longrightarrow X_D$ is birational.
As already recalled in the previous section, $\widetilde A(D)$ is the integral closure
of $A(D)$ if and only if $\alpha_D$ is birational, whereas $A(D) = \widetilde A(D)$
if and only if $D$ is minuscule or $D=0$.

In the case of the model wonderful varieties of simply connected type, 
after the results of Section~\ref{sec: coperture modello}, 
we have the following classification of minuscule weights, where $a,b,c \in \mathbb{N}$.

\begin{itemize}
\item[{\it Case}] $\mathsf{A}_r$, $r$ even: $D_1, D_2, \dots, D_r$, $aD_1$,
  $aD_r$, $a D_1 +D_d$ with $d$ odd, $D_m +bD_r$ with $m$ even;
\item[{\it Case}] $\mathsf{A}_r$, $r$ odd: $D_1, D_2, \dots, D_r$, $aD_1$,
  $aD_r$, $aD_1 +D_d$ with $d$ odd, $D_m + bD_r$ with $m$ odd,
  $aD_1 +bD_r$;
\item[{\it Case}] $\mathsf{B}_r$, $r$ even: $aD_r$, $D_m +aD_r$ with $m$
  even;
\item[{\it Case}] $\mathsf{B}_r$, $r$ odd: $aD_1 +bD_r$, $aD_1 +D_m +bD_r$
  with $m$ odd;
\item[{\it Case}] $\mathsf{C}_r$: 
$a D_1$;
\item[{\it Case}] $\mathsf{D}_r$, $r$ even: $D_1, D_2, \dots, D_r$,
$aD_1 +bD_{r-1} +cD_r$, $a D_1 +D_m+bD_{r-1}  +cD_r$ with $m$ odd;
\item[{\it Case}] $\mathsf{D}_r$, $r$ odd: $D_1, D_2, \dots, D_r$,
$aD_{r-1} +bD_r$, $D_m+aD_{r-1}+bD_r$ with $m$ even;
\item[{\it Case}] $\mathsf{E}_6$: $D_1$, $D_6$, $aD_2$, $aD_2 +D_3$,
  $aD_2+D_5$;
\item[{\it Case}] $\mathsf{E}_7$: $D_1$, $D_6$, $aD_2 +bD_7$, $aD_2+D_3 +bD_7 $,
  $aD_2+D_5 +bD_7 $;
\item[{\it Case}] $\mathsf{E}_8$: $D_1$, $D_8$, $aD_2$, $aD_2 +D_3$,
  $aD_2 +D_5$, $aD_2+D_7$;
\item[{\it Case}] $\mathsf{F}_4$: $aD_1$;
\item[{\it Case}] $\mathsf{G}_2$: $aD_1$.
\end{itemize}

Let $\mathfrak{g}$ be the Lie algebra of $G$, as a general reference about nilpotent orbits in $\mathfrak g$ see \cite{Col}. If $\mathfrak{g}$ is not simple, then its nilpotent orbits are products of the nilpotent orbits of its
simple factors, and so are their closures.
Therefore we may assume that $\mathfrak{g}$ is simple.

Let $e\in\mathfrak{g}$ be a non-zero nilpotent element and let $\mathcal{O}$ be its adjoint orbit. By the Jacobson-Morozov theorem there exists an $\mathfrak{sl}(2)$-triple of the form $(e,h,f)$. Choose a
maximal toral subalgebra $\mathfrak{t}$ of $\mathfrak{g}$ containing $h$ and a Borel
subalgebra $\mathfrak{b}$ containing $\mathfrak{t}$ and $e$ and such that $\alpha(h) \geqslant 0$
for every $\alpha \in S$, where we denote by $S = \{\alpha_1, \dots, \alpha_r\}$
the set of simple roots defined by the choice of $\mathfrak{t}$ and $\mathfrak{b}$. The string
$(\alpha_1(h), \dots, \alpha_r(h))$ is called the Kostant--Dynkin diagram
of $\mathcal{O}$ and it uniquely determines the orbit $\mathcal{O}$.
Moreover, every $\alpha_i(h)$ is 0, 1 or 2. Let $\theta$ be
the highest root corresponding to the choice of $S$ and define the \textit{height} of
$\mathcal{O}$ as $\height(\mathcal{O})=\theta(h)$. The height does not depend
on the various choices we have made (see \cite[\S~3.5]{Col}); furthermore,
$\mathcal{O}$ is spherical if and only if $\height(\mathcal{O})\leqslant3$,
and this last condition is equivalent to say that
it has height equal to $2$ or to $3$, see \cite{Pa}.

By making use of the projective normality of the symmetric wonderful 
varieties, in \cite{CDM} it has been proved that the closure $\overline{\mathcal{O}}$ is normal
if $\height(\mathcal{O})=2$, which is originally due to W.~Hesselink \cite{He}. 
We now study the normality of $\overline{\mathcal{O}}$ in the case of $\height(\mathcal{O}) =3$
(see \cite[Table 2]{Pa}) by making use of the projective
normality of the model wonderful varieties.

Denote by $\mathcal{U} \simeq G/H$ the orbit of the line $[e] \in \mathbb{P}(\mathfrak{g}) = \mathbb{P}(V(\theta))$,
namely the image of $\mathcal{O}$ via the natural projection.
As every spherical orbit in the projective space of a simple $G$-module,
$\mathcal{U}$ possesses a wonderful compactification, which we denote by $M_\mathcal{O}$.
In \cite{BCF} we can find a description of the stabilizer of $[e]$
as well as the associated Luna diagram. 
In particular, $M_\mathcal{O}$ turns out to be a strict wonderful variety, 
and in particular the restriction of line bundles
to the closed orbit $\Pic(M_\mathcal{O}) \longrightarrow \mathcal{X}(B)$ is always 
injective. Therefore, we may regard $\theta$ as an element of $\mathbb{N}\Delta$
and we have $\overline{\mathcal{U}} = X_\theta$ and $\overline{\mathcal{O}} = C_\theta$.

In order to study the normality of $\overline{\mathcal{O}}$, we prove the following.

\begin{theorem}\label{teo:orbite}
Let $\mathcal{O} \subset \mathfrak{g}$ be a spherical nilpotent orbit and let $M_\mathcal{O}$
be the associated wonderful variety. Then the multiplication map
\[ m_{\mathcal L,\mathcal L'}\colon\Gamma(M_\mathcal{O},\mathcal L) \otimes \Gamma(M_\mathcal{O},\mathcal L')
\longrightarrow \Gamma(M_\mathcal{O},\mathcal L\otimes \mathcal L')\] 
is surjective for all globally generated line bundles $\mathcal L,\mathcal L'$ on $M_\mathcal{O}$.
\end{theorem}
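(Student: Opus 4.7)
The plan is to argue case by case over the eleven spherical nilpotent orbits of height $3$ listed before the statement, matching each associated wonderful variety $M_\calO$ to a class for which surjectivity of the multiplication has already been established in the paper.

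First I would read off from \cite{BCF} the Luna data (spherical roots, colors, and Cartan pairing) of $M_\calO$ in each of the eleven cases, together with any parabolic induction structure. The key observation to look for is that, in each case, $M_\calO$ should be obtainable from either a model wonderful variety of simply connected type or a comodel wonderful variety via a finite sequence of the two canonical operations: localization (intersecting boundary divisors) and quotient by a distinguished set of colors. Once this is done, the surjectivity of multiplication on globally generated line bundles for $M_\calO$ follows by Theorem~A (respectively Theorem~\ref{teo:comodello}) together with the inductive reduction argument of Corollary~\ref{cor:localizzazione}, whose proof carries over verbatim to chains of operations starting from a comodel variety rather than a model one.

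Concretely, the classical cases (I)--(IV) of types $\sfB$ and $\sfD$ should reduce via parabolic induction (Section~1.5--1.6) to a lower-rank model wonderful variety of the Levi subgroup supporting the spherical roots, so that Theorem~A applies. The cases (V)--(IX) of types $\sfE$ and the case (X) of type $\sfF_4$ should be identified as localization-quotients of the comodel wonderful varieties of cotype $\sfE_6$, $\sfE_7$, $\sfE_8$ (or of type $\sfF_4$, treated already in Section~\ref{sec: low model}) handled in Section~\ref{sec:comodello}. The $\sfG_2$ case (XI) is essentially immediate: $M_\calO$ has rank at most $1$, so there are no non-trivial covering differences and no low fundamental triples to test.

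The main obstacle I anticipate is the exceptional $\sfE$-type cases, where the identification of $M_\calO$ as a localization-quotient of a comodel wonderful variety is not transparent from the Luna datum and requires careful bookkeeping of distinguished subsets of colors. Should a direct identification fail in some case, the fallback is Lemma~B: one would first verify the property (2-ht) for $M_\calO$ by an explicit classification of its covering differences along the lines of Section~3, then classify the (few) low fundamental triples and verify the inclusions $s^{D+E-F}V_F \subset V_D V_E$ directly, using computer algebra (GAP) in the high-dimensional exceptional cases exactly as in Section~\ref{sec: low model}.
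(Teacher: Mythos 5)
Your overall strategy (case-by-case reduction plus Lemma~B/GAP fallback) matches the paper's, but the primary plan in your proposal would fail at several stages, and you underestimate how often the fallback is the actual proof.

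The main error is the claim that the exceptional cases V--X arise as localization-quotients of \emph{comodel} wonderful varieties. Comodel varieties play no role in Section~7 at all; they are introduced in Section~\ref{sec:comodello} purely for the $K$-orbit of the split real form of $\sfE_8$ treated in Section~\ref{sec:modelloreale}. For the nilpotent orbits $\calO$, the varieties $M_\calO$ in cases VII and IX \emph{are} model wonderful varieties, and in cases VI and VIII they are quotients of localizations of model wonderful varieties (Corollary~\ref{cor:localizzazione}). Cases V and X do not reduce cleanly to any previously treated class: the paper classifies their covering differences and low fundamental triples directly, reduces most triples by parabolic induction to symmetric, model, or ``case IV'' varieties, and checks the one stubborn triple $(D_1,D_2,D_4)$ by computer.

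The second error is the claim that the classical cases I--IV reduce via parabolic induction to lower-rank model wonderful varieties of a Levi. This is true for I and III (model varieties outright) and IV with $m$ even (quotient of a localization of a model variety), but cases II and IV in general involve wonderful $\mathrm{Spin}(k)$-varieties whose last spherical root is a \emph{doubled} root ($\grs_s = 2(\gra_{s+1}+\dots)$). These are not model varieties, nor localizations or quotients thereof, and the paper has to treat them as a genuinely new family: it reproves the covering-difference classification (analogous to Proposition~\ref{prop: coperture D}), reclassifies the low fundamental triples, and verifies the inclusions $s^{D+E-F}V_F\subset V_DV_E$ by an explicit exterior-algebra contraction computation. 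This is a substantial piece of the proof that your primary plan would simply miss; your ``fallback'' is not a fallback for these cases but the entire content.

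Your treatment of the $\sfG_2$ case is fine in conclusion (there are no low fundamental triples $(D,E,F)$ with $D+E-F\neq 0$) though the reason is not that there are no non-trivial covering differences---there is one, namely $\grs=D_2-D_1$---but rather that it has $\alt(\grs^+)=1$ so no fundamental triple is affected.

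In summary: the architecture of your argument is correct, but you would quickly discover upon attempting it that (a) comodel varieties are irrelevant here, and (b) at least four of the eleven cases cannot be identified as localization-quotients of model varieties and require essentially the full Lemma~B machinery, including a new classification of covering differences for a family not treated in Section~3.
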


Notice that by construction $\theta$ is identified
with a faithful divisor on $M_\mathcal{O}$.
Therefore the normality (resp.\ the non-normality) of $\overline{\mathcal{O}} = C_\theta$ follows
by noticing case-by-case that $\theta$ is minuscule (resp.\ non-minuscule) in $\mathbb{N}\Delta$,
and applying Proposition~\ref{prop: cono1}.
Given a nilpotent orbit $\mathcal{O}$ of height 3, we summarize the results
on the normality of its closure in Table \ref{tab:norm-nilp}.
Given a spherical nilpotent orbit $\mathcal{O}$, there we write in the second column its Kostant-Dynkin diagram, in the third column its Bala-Carter label (see \cite[\S 8.4]{CM}), in the fourth column the corresponding case in \cite{BCF} and in the fifth column the normality of $\overline{\mathcal{O}}$.
In particular we have the following.

\begin{table}\caption{Nilpotent orbits of height 3}\label{tab:norm-nilp}
\begin{center}
\begin{tabular}{|c||c|c|c|c|c|}
\hline
& type of $G$ & $\begin{array} {c} \text{Kostant-Dynkin}\\ \text{diagram} \end{array}$ & $\begin{array} {c} \text{partition/}\\ \text{Bala-Carter label} \end{array}$ & case in \cite{BCF} & norm. \\ \hline \hline
I & $\mathsf{B}_{2n+1}$ & $(10\ldots01)$ & $(3, 2^{2n})$ & (13) & no \\ \hline
II & $\begin{array} {c} \mathsf{B}_{2n+m+1}\\ \scriptsize\text{$(m > 0)$} \end{array}$ & $(10 \ldots 01 \underbrace{0\ldots 0}_m)$ & $(3, 2^{2n},1^{2m})$ & (18) & yes \\ \hline
III & $\mathsf{D}_{2n+2}$ & $(10\ldots011)$ & $(3, 2^{2n},1) $ & (41) & yes \\ \hline
IV & $\begin{array} {c} \mathsf{D}_{2n+m+2}\\ \scriptsize\text{$(m > 0)$} \end{array}$ & $(10\ldots01 \underbrace{0\ldots 0}_{m+1})$ & $(3, 2^{2n},1^{2m+1})$ & (43) & yes \\ \hline
V & $\mathsf{E}_6$ & $(000100)$ & $\mathsf{A}_1$ & (53) & yes \\ \hline
VI & $\mathsf{E}_7$ & $(0010000)$ & $(3\mathsf{A}_1)'$ & (54) & yes \\ \hline
VII & $\mathsf{E}_7$ & $(0100001)$ & $4\mathsf{A}_1$ & (51) & yes \\  \hline
VIII & $\mathsf{E}_8$ & $(00000010)$ & $3\mathsf{A}_1$ & (52) & yes \\ \hline
IX & $\mathsf{E}_8$ & $(01000000)$ & $4\mathsf{A}_1$ & (51) & yes \\ \hline
X & $\mathsf{F}_4$ & $(0100)$ & $\mathsf{A}_1 + \widetilde{\mathsf{A}}_1 \phantom{\Big|}$ & (60) & yes \\ \hline
XI & $\mathsf{G}_2$ & $(10)$ & $\widetilde{\mathsf{A}}_1 \phantom{\Big|}$ & (66) & no \\
\hline
\end{tabular}
\end{center}
\end{table}

\begin{corollary}	[see {\cite[Theorem~5.1]{Cos}} or {\cite[Table 2]{Pa}}]	\label{cor: normality-sno}	
Let $\mathcal{O} \subset \mathfrak{g}$ be a spherical nilpotent orbit with $\height(\mathcal{O}) = 3$. Then the closure $\overline{\mathcal{O}}$ is always normal but in cases $\mathrm{I}$ and $\mathrm{XI}$ of Table \ref{tab:norm-nilp}.
\end{corollary}

The rest of the section is devoted to the proof of Theorem~\ref{teo:orbite}
and Corollary~\ref{cor: normality-sno}.
Since the case $\height(\mathcal{O}) = 2$ was already considered in \cite{CDM},
we consider only the case $\height(\mathcal{O}) = 3$.

\subsection{Cases $\mathrm{I}$, $\mathrm{III}$, $\mathrm{VII}$, $\mathrm{IX}$,$\mathrm{XI}$: model orbits}

By the description in \cite{BCF}, in these cases we have that $M_\mathcal{O}$
is the model wonderful variety of the simply connected group $G$.
Notice that $\theta$ is always
minuscule in $\mathbb{N} \Delta$ but in the cases $\mathsf{B}_{2n+1}$ and $\mathsf{G}_2$:
it follows that $\overline{\mathcal{O}}$ is normal in the cases
$\mathrm{III}, \mathrm{VII}, \mathrm{IX}$, whereas it
is not normal in the cases $\mathrm{I}, \mathrm{XI}$.

\subsection{Cases $\mathrm{IV}$ ($m$ even), $\mathrm{VI}$, $\mathrm{VIII}$:
localization of model wonderful varieties}

In this case $M_\mathcal{O}$ is not a model wonderful variety, however it
is a quotient of a localization of a model wonderful variety
and we still may proceed as in the case of a model orbit
thanks to Corollary~\ref{cor: sezioni e quozienti} and Corollary~\ref{cor:localizzazione}.
In order to conclude the argument, we now describe which localizations and quotients
we have to take into account in each of the considered cases.
We denote by $M$ the model wonderful variety of $G$.

\subsubsection{Case $\mathrm{IV}$ ($m$ even).}

Let $N$ be the boundary divisor corresponding to the
spherical root $\sigma_{2n+1} = \alpha_{2n+1}+\alpha_{2n+2}$.
Then $M_\mathcal{O}$ is the quotient of $N$ by the distinguished subset of colors
$\Delta' = \{ D_{2n+2}, D_{2n+3}, \dots, D_{2n+m+2} \}$.
Since $\theta = D_2$ is minuscule in $\mathbb{N} \Delta$, it follows
that $\overline{\mathcal{O}}$ is normal.

\subsubsection{Case $\mathrm{VI}$.}

Let $N$ be the boundary divisor corresponding to the
spherical root $\sigma_{3}=\alpha_{3}+\alpha_{4}$.
Then $M_\mathcal{O}$ is the quotient of $N$ by the distinguished subset
$\Delta'=\{D_2,D_5,D_7 \}$. Since $\theta = D_1$ is minuscule in $\mathbb{N} \Delta$,
it follows that $\overline{\mathcal{O}}$ is normal.

\subsubsection{Case $\mathrm{VIII}$.}

Let $N$ be the boundary divisor corresponding to the
spherical root $\sigma_{6}=\alpha_{6}+\alpha_{7}$.
Then $M_\mathcal{O}$ is the quotient of $N$ by the distinguished subset
$\Delta'=\{D_2,D_3,D_4,D_5 \}$.
Since $\theta=D_8$ is minuscule in $\mathbb{N} \Delta$, it follows that $\overline{\mathcal{O}}$ is normal.

\subsection{Cases $\mathrm{II}$, $\mathrm{IV}$}

In these cases we need to prove the surjectivity of the multiplication
for other classes of wonderful varieties. 
Let $G = \Spin(k)$, let $r$ be the semisimple rank of $G$ 
(i.e., $k=2r+1$ if $k$ odd or $k=2r$ if $k$ even)
and let $2\leqslant s \leqslant (k-3)/2$.
Consider the wonderful variety $M$ corresponding in \cite{BCF} to the case (18)
when $k$ is odd and to the case (43) when $k$ is even.

Its spherical roots and colors are given as follows:
$\Sigma =\{\sigma_1,\dots,\sigma_s\}$ and
$\Delta=\{D_1,\dots,D_{s+1}\}$, where
$\sigma_i = \alpha_i+\alpha_{i+1} = D_i+D_{i+1}-D_{i-1}-D_{i+2}$ for
$i=1,\dots,s-1$ and
\[
	\sigma_s = \left\{
	\begin{array}{ll}
		2(\alpha_{s+1}+\dots + \alpha_r) = 2D_{s+1} - 2D_s
		& \text{ if $k$ is odd} \\
		2(\alpha_{s+1}+\dots + \alpha_{r-2}) + \alpha_{r-1}+\alpha_r = 2D_{s+1} - 2D_s
		& \text{ if $k$ is even}
	\end{array}	\right.
\]
Notice that the Cartan pairing of $M$ does not depend on the parity of $k$.
Also, notice that $\omega(D_i)=\omega_i$ for $i=1,\dots,s+1$.

First we need to classify the covering differences for $M$.
We omit the proof of the following proposition,
which is essentially the same as that of Proposition~\ref{prop: coperture D}.

\begin{proposition}
Let $\gamma\in\mathbb N\Sigma$ be a covering difference in $\mathbb N\Delta$ with $\supp_S(\gamma)=S$.
Then $s$ is even and
$\gamma=\sum_{i=1}^{(j-1)/2}\sigma_{2i-1}+\sum_{i=(j+1)/2}^{s/2}2\sigma_{2i-1}\;+\sigma_s=D_1+D_j-D_{j-1}
$
for $j$ odd $\leqslant s+1$.
\end{proposition}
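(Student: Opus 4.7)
The plan is to follow the proof of Proposition~\ref{prop: coperture D}, replacing the role of the fork $\{\grs_{r-2},\grs_{r-1}\}$ of type $\sfD$ by the single spherical root $\grs_s = 2D_{s+1} - 2D_s$; note that the sum $\grs_{r-2} + \grs_{r-1} = -2D_{r-3} + 2D_{r-2}$ used in the original argument is structurally the same sort of combination as $\grs_s$ here.

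Write $\grg = \sum_{i=1}^s a_i \grs_i = \sum_{j=1}^{s+1} c_j D_j$. The condition $\Supp_S(\grg) = S$ forces immediately $a_s > 0$ (the only spherical root involving $\gra_{s+1},\ldots,\gra_r$), $a_{s-1} > 0$ (the only one still involving $\gra_s$), and $a_1 > 0$. A direct computation gives $c_{s+1} = -a_{s-1} + 2a_s$ and $c_s = -a_{s-2} + a_{s-1} - 2a_s$, while $c_j = -a_{j-2}+a_{j-1}+a_j-a_{j+1}$ for $j \leq s-1$ (with $a_0=0$).

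The first step is to show $c_{s+1} \leq 1$: if $c_{s+1} \geq 2$ then $\grg^+ - \grs_s$ still lies in $\mN\grD$ and satisfies $\grg^- <_\grS \grg^+ - \grs_s <_\grS \grg^+$, unless $\grg = \grs_s$, which is excluded by $\Supp_S(\grs_s) \subsetneq S$. This yields $a_{s-1} \geq 2a_s - 1 \geq 1$, with the strict improvement $a_{s-1} \geq 2$ when $c_{s+1} \leq 0$. One then iterates exactly as in Proposition~\ref{prop: coperture D}: assuming $a_{s-2i+1} > 0$ for every $i \leq k$ (with the analogous strict improvement conditional on the signs of the relevant $c$'s), one shows
\[
\sum_{i=0}^{k} c_{s+1-2i} \;\leq\; 1,
\]
the point being that a violation would allow one to subtract from $\grg^+$ an element of the form $(-D_{s-2i_2}+D_{s-2i_2+1}) + (-D_{s-2i_1}+D_{s-2i_1+1})$ (with the second summand replaced by $\grs_s$ when $i_1=0$) and still land in $\mN\grD$, yielding either a strict interpolation between $\grg^-$ and $\grg^+$, contradicting that $\grg^+$ covers $\grg^-$, or an equality $\grg = \cdots$ whose support misses part of $S$. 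Telescoping the displayed inequality then forces the next odd-indexed coefficient $a_{s-2k-1}$ to be positive, so the inductive hypothesis propagates.

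The iteration exhausts the odd indices $s-1, s-3, \ldots, 1$ in exactly $s/2$ steps, forcing $s$ to be even, and the unique value of $j$ at which the cumulative sum first reaches $2$ singles out the covering difference $\grg = D_1 + D_j - D_{j-1}$ with $j$ odd $\leq s+1$; one then checks by direct substitution that this $\grg$ equals the displayed combination $\sum_{i=1}^{(j-1)/2}\grs_{2i-1}+\sum_{i=(j+1)/2}^{s/2}2\grs_{2i-1}+\grs_s$. The main bookkeeping nuisance is the coefficient $2$ in $\grs_s = 2D_{s+1} - 2D_s$ (in place of the $1$'s enjoyed by $\grs_{r-2},\grs_{r-1}$ in type $\sfD$), which slightly shifts the inductive inequalities at the boundary but leaves the overall strategy unchanged.
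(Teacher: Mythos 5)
Your plan to mimic the proof of Proposition~\ref{prop: coperture D}, with $\grs_s = 2D_{s+1}-2D_s$ playing the role of $\grs_{r-2}+\grs_{r-1}=-2D_{r-3}+2D_{r-2}$, is exactly the route the paper intends (the paper omits this proof, asserting it is essentially the same as that of Proposition~\ref{prop: coperture D}), and your first step as well as the overall telescoping strategy are right. However, two of the displayed formulas are off, and the first would, if used literally, break the induction. The formula $c_j=-a_{j-2}+a_{j-1}+a_j-a_{j+1}$ is valid only for $j\leq s-2$: for $j=s-1$ the term $-a_s$ must be dropped, because $\grs_s=2D_{s+1}-2D_s$ contributes nothing to $D_{s-1}$, so that $c_{s-1}=-a_{s-3}+a_{s-2}+a_{s-1}$. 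This matters, since the correct telescoping gives $\sum_{i=0}^{k}c_{s+1-2i}=2a_s+a_{s-2k}-a_{s-2k-1}$ for $k\geq1$, and the bound $\leq1$ then yields $a_{s-2k-1}\geq 2a_s-1\geq 1>0$; with your version the $2a_s$ degrades to a single $a_s$ and the same bound only gives $a_{s-2k-1}\geq0$, which does not propagate the inductive hypothesis.

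The second slip is the parenthetical instruction to replace the second summand by $\grs_s$ when $i_1=0$. For $0=i_1<i_2$ the element $(-D_{s-2i_2}+D_{s-2i_2+1})+(-D_s+D_{s+1})$ already equals $\grs_{s+1-2i_2}+\grs_{s+3-2i_2}+\cdots+\grs_{s-1}+\grs_s\in\mN\grS$ with no replacement needed, whereas substituting $\grs_s=2(-D_s+D_{s+1})$ doubles that contribution and takes you out of $\mN\grS$. The replacement by $\grs_s$ is only appropriate in the diagonal case $i_1=i_2=0$, which is precisely the preliminary step $c_{s+1}\leq1$ you had already treated separately. With these two local corrections the argument goes through as you describe.
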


In particular, it follows that
every covering difference for $M$ satisfies the property (2-ht).
Therefore by Lemma~\ref{lem: proiettiva normalita},
in order to prove the surjectivity of the multiplication of sections
of globally generated line bundles on $M$,
we are reduced to the study of the low fundamental triples.
Here, as in Lemma~\ref{lemma: triple fondamentali piatte D}, we have the following.

\begin{lemma}\label{lemma: triple fondamentali piatte II}
Let $(D_p,D_q,F)$ be a low fundamental triple with $\supp_S(D_p+D_q-F) = S$. 
Then $s$ is even, $p,q$ are odd, $p+q\leqslant s+2$ and $F = D_{p+q-2}$.
\end{lemma}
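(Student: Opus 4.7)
The plan is to follow the same strategy as in the proof of Lemma~\ref{lemma: triple fondamentali piatte D}. First I observe that, by combining the preceding Proposition with Proposition~\ref{prop: coperture A}, every covering difference $\eta \in \mN\grS$ in $\mN\grD$ satisfies $\alt(\eta^+) \leq 2$, so every color is minimal under $\leq_\grS$ and every fundamental triple is automatically low. The statement therefore reduces to classifying fundamental triples $(D_p, D_q, F)$ (with $p \leq q$) that satisfy $\Supp_S(D_p + D_q - F) = S$; for this I consider a descent
\[
F = F_n <_\grS F_{n-1} <_\grS \cdots <_\grS F_0 = D_p + D_q
\]
with each $\grg_i = F_{i-1} - F_i$ a covering difference.

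The covering differences that can appear split into two classes, classified by Proposition~\ref{prop: coperture A} applied to a sub-interval $[\grs_a, \grs_b]$ with $b \leq s-1$ (``pure type-$\sfA$'') and by the preceding Proposition applied to a sub-interval $[\grs_a, \grs_s]$ with $s - a + 1$ even (``type-II''). On a two-color divisor $D_{p'} + D_{q'}$, pure type-$\sfA$ moves shift the two indices symmetrically by $1$ or by $2$, possibly collapsing to a single color when the left index meets the boundary $D_0 = 0$; a type-II move transforms $D_a + D_j$ (with $j - a + 1$ odd) into $D_{a-1} + D_{j-1}$, collapsing the degree precisely when $a = 1$. Since the simple roots $\gra_{s+1}, \ldots, \gra_r$ appear only in the support of $\grs_s$, the full-support condition forces at least one type-II move in the descent.

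The conclusion then follows by tracking the index sum $p_i + q_i$ and the parity class of the pair of indices along the descent. Pure type-$\sfA$ moves preserve the index sum, while type-II moves decrease it by $2$; moreover, shift-by-$2$ pure type-$\sfA$ moves and type-II moves both preserve ``same parity'' configurations, whereas shift-by-$1$ moves preserve ``different parity'' configurations. Since the target single-color $F = D_{p+q-2}$ corresponds to a penultimate state $(1, j)$ with $j$ odd---of equal parity to $1$---the descent lives entirely in the ``same parity'' regime, so no shift-by-$1$ moves occur. It follows that the pure type-$\sfA$ portion consists only of shift-by-$2$ moves, each decreasing the left index by $2$, which together with the final type-II collapse forces $p$ odd (hence $q$ odd as well), $s$ even (needed for the final type-II with $a = 1$ to exist), $F = D_{p+q-2}$, and $p + q \leq s + 2$. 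The principal obstacle will be to verify that no other terminal $F$---for instance a two-color divisor, or a single color of a different index arising from multiple type-II collapses---is compatible with full support; I would settle this via a direct coefficient analysis of $\grg = \sum a_i \grs_i$, showing that the only solution with $a_i > 0$ for all $i$ and $D_p + D_q - \grg \in \mN\grD$ is the one producing $F = D_{p+q-2}$ under the stated conditions.
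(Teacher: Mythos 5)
Your plan follows the same route the paper gestures at (``as in Lemma~\ref{lemma: triple fondamentali piatte D}''): descend by covering differences and track indices and parities. The preliminary reduction (every colour is minimal, hence every fundamental triple is low), the observation that the full-support condition forces a type-II move, and the identification of the parity regimes (shift-by-$2$ type~$\sfA$ and type-II live on same-parity pairs, shift-by-$1$ type~$\sfA$ on opposite-parity pairs) are all correct. However, as you yourself flag at the end, the argument as written does not rule out \emph{several} type-II moves in the descent, which would give $F=D_{p+q-2k}$ with $k\geq 2$; only the final collapse at $a=1$ is accounted for, and the paragraph beginning ``Since the target single-colour $F=D_{p+q-2}$\dots'' is reasoning from the conclusion. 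A separate argument is really needed here.

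The good news is that the gap can be closed with the same parity bookkeeping, no coefficient computation required. For a type-II step, the localization to $\{\grs_a,\dots,\grs_s\}$ must have an even number $s'=s-a+1$ of spherical roots, so $a\equiv s+1 \pmod 2$. The type-II step sends the left index $a$ to $a-1$, and shift-by-$2$ steps preserve its parity. If there were two type-II steps at left indices $a_1$ (earlier) and $a_2$ (later) with no type-II step in between, then the shift-by-$2$ steps would carry the left index from $a_1-1$ to $a_2$, forcing $a_1-1\equiv a_2\pmod 2$; but $a_1\equiv a_2\equiv s+1$, a contradiction. So there is exactly one type-II move, hence $F=D_{p+q-2}$, and the same congruences give $a_1\equiv 1\pmod 2$ at the moment the left index reaches $1$ (or $2$ for a shift-by-$2$ collapse), whence $s$ even and $p$ odd; $p+q\leq s+2$ then follows because $p+q-2$ is an even colour index, so at most $s$. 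One more small caution: for the fallback ``direct coefficient analysis'' you would need the condition $\Supp_S(\gamma)=S$ rather than $a_i>0$ for all $i$ — the latter is strictly stronger and fails for several of the triples in the statement.
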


\begin{proposition}
Let $(D,E,F)$ be a low fundamental triple.
Then $s^{D+E-F}V_F\subset V_D V_E$.
\end{proposition}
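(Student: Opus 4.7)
The argument follows the strategy of the proof of Theorem~A from Section~\ref{ssec:proofA} combined with explicit computations analogous to the type $\sfD$ case treated in Section~\ref{sec: low model}.

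First, consider a low fundamental triple $(D,E,F)$ with $S':=\Supp_S(D+E-F)\subsetneq S$. Since the low-triple condition forces $S'$ to be connected, I apply parabolic induction with respect to the standard Levi subgroup associated to $S'$, exactly as in the proof of Theorem~A. Via Proposition~\ref{prop: ind-par}, the desired inclusion on $M$ follows from the analogous inclusion on a smaller wonderful variety, which is either a localization/quotient of a model wonderful variety of type $\sfA$ or $\sfD$ already handled in Section~\ref{sec: low model}, or a wonderful variety of the same shape as $M$ with strictly smaller parameter $s$, to which an inductive hypothesis applies.

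Thus only the case $\Supp_S(D+E-F)=S$ remains, and by Lemma~\ref{lemma: triple fondamentali piatte II} the triples in question are $(D_p,D_q,D_{p+q-2})$ with $s$ even, $p,q$ odd, and $p+q\leq s+2$. By Lemma~\ref{lemma: supporto moltiplicazione}, the claim amounts to showing that the projection of $h_{D_p}\otimes h_{D_q}\in V(\omega_p)^*\otimes V(\omega_q)^*$ onto the isotypic component of highest weight $\omega_{p+q-2}^*$ is non-zero. Since $p+q-2\leq s\leq(k-3)/2$, each representation $V(\omega_i)^*$ involved coincides with the exterior power $\Lambda^i U$ of the standard representation $U=\mC^k$ of $G=\Spin(k)$, and the $G$-equivariant projection $\Lambda^p U\otimes\Lambda^q U\lra\Lambda^{p+q-2}U$ is given by contraction with the defining symmetric bilinear form $b$ on $U$.

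The main obstacle is to identify the $\goh_0$-invariants $h_{D_i}\in\Lambda^i U$ explicitly enough to evaluate this contraction. Using the descriptions of the stabilizers arising in cases (18) and (43) of \cite{BCF}, I would decompose $U$ into a sum of isotropic pairs $V\oplus V^*$ and $\mC\,e_0\oplus\mC\,e_0^*$ together with a residual orthogonal piece $W_0$ accounting for the simple roots appearing in $\sigma_s$. Setting $h_1=e_0-e_0^*\in U$ and $h_2\in\Lambda^2V^*$ an invariant symplectic form, the vectors $h_{D_{2j}}=h_2^{\wedge j}$ and $h_{D_{2j+1}}=h_1\wedge h_2^{\wedge j}$ would form $\goh_0$-invariants, mirroring the type $\sfD$ model computation of Section~\ref{sec: low model}. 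The contraction applied to $h_{D_p}\otimes h_{D_q}$ then yields $b(h_1,h_1)\,h_2^{\wedge(p+q-4)/2}$ plus a part linearly independent from $h_2^{\wedge(p+q-4)/2}$, and since $b(h_1,h_1)\neq0$ the projection is non-zero, as required.
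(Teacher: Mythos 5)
Your plan is essentially the paper's proof: a parabolic-induction reduction to the full-support triples $(D_p,D_q,D_{p+q-2})$ of Lemma~\ref{lemma: triple fondamentali piatte II}, followed by an explicit $b$-contraction of exterior-power semiinvariants $\Lambda^pU\otimes\Lambda^qU\to\Lambda^{p+q-2}U$. Two small corrections: the paper parameterizes with a single non-isotropic $e_0$ (so $b(e_0,e_0)=1$) and the invariant $\omega_V\in\Lambda^2 V$, writing $h_{2j}=\omega_V^{\wedge j}$ and $h_{2j+1}=e_0\wedge\omega_V^{\wedge j}$, which avoids having to verify separately that $e_0-e_0^*$ is $\goh_0$-fixed; and your exponent should be $(p+q-2)/2$, not $(p+q-4)/2$ --- since $V$ is totally isotropic and $e_0$ is orthogonal to $V\oplus V^*$, the contraction yields exactly $b(e_0,e_0)\,\omega_V^{\wedge(p+q-2)/2}=h_{p+q-2}$, with no additional linearly independent term.
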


\begin{proof}
As in the proof of Theorem~\ref{teo: model}, if $\supp (D+E-F)\neq S$
we can proceed by localization and parabolic induction.
Therefore it is enough to consider the triples of Lemma~\ref{lemma: triple fondamentali piatte II}. 

Denote by $U = \mathbb C^k$ the standard representation of $\Spin(k)$
with the invariant symmetric bilinear form $b$. Let $V \subset U$
be a totally isotropic subspace of dimension $s$,
let $\omega_V\in \mathsf{\Lambda}^2 V$ be a symplectic form on $V$
and fix $e_0 \in U \smallsetminus (V \oplus V^*)$ with $b(e_0,e_0)=1$. 

Let $H$ be the generic stabilizer of $M$.
Then $H$ contains the center of $\Spin(k)$ and its image $\overline{H}$
in $\SO(k)$ is described as follows:
\[
\overline H = \{g\in \SO(k)\, : \, g\, e_0 = e_0, \quad g\,V \subset V, \quad g\,\omega_V \in \mathbb{C}\, \omega_V\}.
\]
If $j \leqslant s+1$, then we have $V_{D_j} = V(\omega_j) = \mathsf{\Lambda}^j U$.
Let $h_j$ be a non-zero $H$-semiinvariant vector in $V_{D_j}$.
If $2j\leqslant s$, then up to a scalar factor we have
$h_{2j} = \omega_V^{\wedge j}$ and $h_{2j+1} = e_0 \wedge h_{2j}$.
The projection $\pi\colon\mathsf{\Lambda}^p U \otimes \mathsf{\Lambda}^q U \longrightarrow \mathsf{\Lambda}^{p+q-2}U$
is given by contraction as follows:
\[ \pi\big((v_1\wedge\dots\wedge v_p)\otimes (u_1\wedge\dots \wedge u_q)\big)= \sum_{i,j}
(-1)^{i+j} b(v_i,u_j) v_1\wedge\dots\wedge \hat v_i\wedge\dots \wedge \hat u_j\wedge \dots \wedge u_q.\]
Therefore, if $(D_p,D_q,D_{p+q-2})$ is as in Lemma~\ref{lemma: triple fondamentali piatte II}, 
we get $\pi(h_p \otimes h_q) = h_{p+q-2}\neq 0$,
and by Lemma~\ref{lemma: supporto moltiplicazione}
it follows that $s^{D_p+D_q-D_{p+q-2}}V_{D_{p+q-2}} \subset V_{D_p} V_{D_q}$.
\end{proof}

If $k = 4n+2m+3$ and $s=2n+1$, then $M = M_\mathcal{O}$ is the wonderful variety
corresponding to the nilpotent orbit $\mathcal{O}$ of the case II,
while if $k = 4n+2m+4$ and $s=2n+1$, then $M = M_\mathcal{O}$ is the wonderful variety
corresponding to the nilpotent orbit $\mathcal{O}$ of the case IV.
By the previous proposition, the multiplication of sections
is surjective for every couple of globally generated line bundles on $M_\mathcal{O}$.
Since $\theta = D_2$ is minuscule in $\mathbb{N}\Delta$,
it follows that $\overline{\mathcal{O}}$ is normal.

\subsection{Case $\mathrm{V}$}

The variety $M_\mathcal{O}$ corresponds to the case (53) in \cite{BCF}. We have
$\Sigma=\{\sigma_1,\sigma_2,\sigma_3\}$ where
\begin{align*}
    \sigma_1 & = \alpha_1 + \alpha_6 =  2 \, D_1 -    D_3, \\
    \sigma_2 & = \alpha_2 + \alpha_4 =       D_2 -    D_3 +      D_4, \\
    \sigma_3 & = \alpha_3 + \alpha_5 =      -D_1 + 2\,D_3 - 2 \, D_4
\end{align*}
and $\omega(D_1)=\omega_1+\omega_6$, $\omega(D_2)=\omega_2$,
$\omega(D_3)=\omega_3+\omega_5$, $\omega(D_4)=\omega_4$.
The covering differences are
\[ \sigma_1,\; \sigma_2, \; \sigma_3, \; \sigma_1+\sigma_3, \; \sigma_2+\sigma_3, \;
2\,\sigma_2+\sigma_3, \; \sigma_1+\sigma_2+\sigma_3, \]
therefore we have $\alt(\sigma^+) = 2$ for every covering difference.
Correspondingly, we get the following low fundamental triples:
\[ (D_1,D_1,D_3),\; (D_2,D_4,D_3),\; (D_3,D_3,D_1+2\,D_4),\; (D_1,D_3,2\,D_4)\]
\[(D_2,D_3,D_1+D_4),\; (D_2,D_2,D_1),\; (D_1,D_2,D_4).\]

We need to show
that $s^{E+D-F}V_F\subset V_D V_E$ for all low fundamental triples
$(D,E,F)$. As in the proof of Theorem~\ref{teo: model}, if $\supp (D+E-F)\neq S$
we can proceed by localization and parabolic induction.
In this way, the triples $(D_1,D_1,D_3)$, $(D_3,D_3,D_1+2\,D_4)$, $(D_1,D_3,2\,D_4)$
reduce to the case of a symmetric wonderful variety,
the triple $(D_2,D_4,D_3)$ reduce to a model wonderful variety
and the triples $(D_2,D_3,D_1+D_4)$, $(D_2,D_2,D_1)$ reduce
to a wonderful variety studied in the case IV.
So we are left to study the case $D=D_1$, $E=D_2$ and $F=D_4$,
which can easily be checked via computer (as explained in Section~\ref{ss: computer}).
Since $\theta=D_2$ is minuscule in $\mathbb{N} \Delta$, it follows that $\overline{\mathcal{O}}$ is normal.

\subsection{Case $\mathrm{X}$}

The variety $M_\mathcal{O}$ corresponds to the case (60) in \cite{BCF}.
We have $\Sigma=\{\sigma_1,\sigma_2,\sigma_3\}$ where 
\begin{align*}
\sigma_1&=2\,\alpha_4= 2\,D_1-D_3; \\
\sigma_2&=\alpha_1 +\alpha_2 =  D_2-D_3+D_4; \\
\sigma_3&=2\,\alpha_3 = -D_1+2\,D_3-2\,D_4;
\end{align*}
and $\omega(D_1)=2\omega_4$, $\omega(D_2)=\omega_2$,
$\omega(D_3)=2\omega_3$, $\omega(D_4)=\omega_1$.
Since the Cartan pairing of $M_\mathcal{O}$ is the same as that of the previous case,
it follows that the covering differences and the low fundamental triples
are also the same.

In order to prove 
that $s^{E+D-F}V_F\subset V_D V_E$ for all low fundamental triples
$(D,E,F)$, if $\supp (D+E-F)\neq S$
we can proceed by localization and parabolic induction.
In this way, the triples $(D_1,D_1,D_3)$, $(D_3,D_3,D_1+2\,D_4)$, $(D_1,D_3,2\,D_4)$
reduce to the case of a symmetric wonderful variety,
the triple $(D_2,D_4,D_3)$ reduce to a model wonderful variety
and the triples $(D_2,D_3,D_1+D_4)$, $(D_2,D_2,D_1)$ reduce
to a wonderful variety studied in the case III.
The only remaining case, $D=D_1$ $E=D_2$ and $F=D_4$,
can easily be checked via computer (as explained in Section~\ref{ss: computer}).
Since $\theta=D_4$ is minuscule in $\mathbb{N} \Delta$, it follows that $\overline{\mathcal{O}}$ is normal.

\section{Application to the real model orbit of type $\mathsf{E}_8$}\label{sec:modelloreale}

In \cite{AHV}, Adams, Huang and Vogan study the model orbit in the Lie
algebra of type $\mathsf{E}_8$. Their study is motivated by the so-called orbit method
to construct representations of reductive Lie groups.  In their case
the group is the complex algebraic group of type
$\mathsf{E}_8$ considered as a real Lie group.  In particular they describe the
decomposition into irreducible modules of the coordinate ring of the
nilpotent orbit of type IX (see Section~\ref{sec:orbite}) and prove it is indeed a model orbit. In the
same paper they also make some conjectures about another orbit which
is the analogue of the model orbit for the split real
form of $\mathsf{E}_8$.

We start with some general preliminaries, we refer to \cite{AHV} and
to \cite{Vogan} for the motivation of these constructions coming from
the representation theory of Lie groups. Let $\tilde G_\mathbb{R}$ be a real
form of a connected and complex algebraic semisimple group $\tilde G$
and let $\sigma$ be the associated Galois involution of $\tilde
G$. There exists a complex algebraic involution $\theta$ of $\tilde G$
which commutes with $\sigma$ such that the subgroup $K_\mathbb{R}$ of points of $\tilde G_\mathbb{R}$
fixed by $\theta$ is a maximal compact subgroup of
$\tilde G_\mathbb{R}$.  Then the subgroup $K$ of points of $\tilde G$ fixed
by $\theta$ is a complexification of $K_\mathbb{R}$. The Lie algebra $\tilde{\mathfrak{g}}$ of $\tilde G$ decomposes as $\mathfrak{k} \oplus \mathfrak{p}$ where $\mathfrak{k}$ is
the Lie algebra of $K$ and $\mathfrak{p}$ is the eigenspace of eigenvalue $-1$
of $\theta$. An analogue of the nilpotent cone $\mathcal{N}$ is defined as
\[ \mathcal{N}_\theta = \mathcal{N} \cap \mathfrak{p}. \] Fix a point $e\in \mathcal{N}_\theta$,
let $\mathcal{O}$ be its $K$-orbit and $K(e)$ its stabilizer. Consider the
multiplicative character $\gamma_e$ of $K(e)$ given by
$\gamma_e(g)=\det(\Ad_g\bigr|_{\mathfrak{k}(e)})\det(\Ad_g\bigr|_\mathfrak{k})^{-1}$. If
$\chi\colon K(e)\longrightarrow \mathbb{C}^*$ is any multiplicative character we can consider
the algebraic line bundle on $\mathcal{O}$ given by $\mathcal V_\chi=K\times_{K(e)}
\mathbb{C}_\chi$. As in \cite{AHV}, the pair $(e,\chi)$ is said to be
\emph{admissible} if $\overline{\mathcal{O}}\smallsetminus \mathcal{O}$ has codimension at least
two in $\overline{\mathcal{O}}$ and $\chi^2(g)=\gamma_{e}(g)$ for all $g$ in the
identity component of $K(e)$. In this paper we need a slightly more
general definition of admissible pair.

Let $G$ be a double covering of $K$ and $G(e)$ be the inverse image of
$K(e)$ in $G$ so that $\mathcal{O}\simeq G/G(e)$. 
Let $\gamma'_{e}$ denote the character of $G(e)$ induced by $\gamma_e$.
Given a character $\chi$ of $G(e)$ we can construct the line bundle $\mathcal V_\chi$ as above.  
We say that the pair
$(e,\chi)$ is admissible if $\overline{\mathcal{O}}\smallsetminus \mathcal{O}$ has codimension at
least two in $\overline{\mathcal{O}}$ and $\chi^2(g)=\gamma'_{e}(g)$ for all $g$ in
the identity component of $G(e)$. Let also $G_\mathbb{R}$ be the inverse
image of $K_\mathbb{R}$ in $G$. Notice that $G$ is the complexification of
$G_\mathbb{R}$. The coverings of $\tilde G_\mathbb{R}$ are in correspondence with the
coverings of $K_\mathbb{R}$ hence there exists a Lie group $\hat G_\mathbb{R}$ which
is a double covering of $\tilde G_\mathbb{R}$ whose maximal compact subgroup is
$G_\mathbb{R}$.

Assume that $R(e,\chi)$ is the irreducible unitary representation of $\hat G_\mathbb{R}$,
attached to an admissible pair $(e,\chi)$ according to the orbit method. Then
the decomposition of the $G_\mathbb{R}$-finite vectors of $R(e,\chi)$ into irreducible
submodules should be equal to the decomposition into irreducible $G$-submodules of the
space of algebraic sections $\Gamma(\mathcal{O},\mathcal V_\chi)$ (see
\cite[Conjecture~2.9]{AHV}).

In \cite{AHV}, Adams, Huang and Vogan analyze the geometric side of two particular cases of this
construction. They obtain a complete description of the decomposition 
of $\Gamma(\mathcal{O},\mathcal V_\chi)$ into $G$-modules in one of these cases
and a conjectural description in the second case. In Section 
\ref{ssec:modellocomplesso} we recall the result obtained by Adams, Huang and Vogan 
in the first case and we analyze it using our techniques. In Sections \ref{ssec:modelloreale1}, \ref{ssec:modelloreale2} and \ref{ssec:modelloreale3} 
we analyze the second case considered by Adams, Huang and Vogan and we prove that their conjectural description of 
$\Gamma(\mathcal{O},\mathcal V_\chi)$ is correct.

\subsection{The case of the complex model orbit}\label{ssec:modellocomplesso}
Let $\tilde G_\mathbb{R}$ be the complex algebraic group of type $\mathsf{E}_8$. Hence $\tilde G=\tilde G_\mathbb{R}\times \tilde G_\mathbb{R}$
and $G=K$ is the diagonal subgroup. So $\mathfrak{p}$ is isomorphic to the
Lie algebra of $G$ and one can consider the nilpotent orbit $\mathcal{O}$ with Kostant-Dynkin
diagram equal to $(0,1,0,0,0,0,0,0)$ (case IX of Section~\ref{sec:orbite}). In this case
$\gamma_e$ is trivial and the stabilizer $K(e)$ is connected, so also $\chi$ has to be trivial and 
$\Gamma(\mathcal{O},\mathcal V_\chi) =\mathbb{C}[\mathcal{O}]$. In \cite{AHV} it is proved that all simple modules of $G$ appears 
in $\mathbb{C}[\mathcal{O}]$ with multiplicity one. 

We fix a maximal torus $T$, and a Borel subgroup of $G$ containing $T$. 
We denote by $\Phi$ the set of roots and by $S$ the
set of simple roots determined by these choices. We denote also by
$\varepsilon_1,\dots,\varepsilon_8$ an orthonormal basis of $\mathcal{X}(T)$ such that
$\Phi$ and $S$ have the following description (with respect to the
choice given in \cite{AHV} we have changed the sign of $\varepsilon_1$):
\begin{align*}
  \Phi   & = A \cup B \text{ where } A = \{\pm \varepsilon_i \pm \varepsilon_j\, : \, i\neq j\} {\text{ and }} \\
  B      & = \{\frac 12 (\sum_{i=1}^8 a_i \varepsilon_i) \, : \, a_i=\pm 1 {\text{ and }} \prod_{i=1}^8 a_i=1\};\\
  S      & = \{\alpha_1,\dots,\alpha_8\} \text{ where } \alpha_1 = -\varepsilon_1-\varepsilon_2, \\
  \alpha_2 & = \frac 12 (\varepsilon_1-\varepsilon_2-\varepsilon_3+\varepsilon_4+\varepsilon_5+\varepsilon_6+\varepsilon_7+\varepsilon_8) {\text{ and }} \\
  \alpha_i & = \varepsilon_{i-1}-\varepsilon_i \text{ for } i=3,\dots,8.
\end{align*}
For each root $\alpha$ choose also an $\mathfrak{sl}(2)$-triple
$x_\alpha,\alpha^\vee,y_\alpha$ where $x_\alpha$ has weight $\alpha$ and
$y_\alpha$ has weight $-\alpha$.  Let $\beta_1=-\varepsilon_1+\varepsilon_2$,
$\beta_2=\varepsilon_3+\varepsilon_4$, $\beta_3=\varepsilon_5+\varepsilon_6$, $\beta_4=\varepsilon_7+\varepsilon_8$
and define
\begin{gather*}
  e_0 = x_{\beta_1} + x_{\beta_2} + x_{\beta_3} + x_{\beta_4}, \quad
  f_0 = y_{\beta_1} + y_{\beta_2} + y_{\beta_3} + y_{\beta_4} \\
  h_0 = \beta_1^\vee + \beta_2^\vee + \beta_3^\vee + \beta_4^\vee = -\varepsilon_1^*+\varepsilon_2^*+\dots+\varepsilon_8^*.
\end{gather*}
These elements are an $\mathfrak{sl}(2)$-triple and it is clear that $e_0$
is an element with associated Kostant-Dynkin diagram equal to
$(0,1,0,0,0,0,0,0)$. Let $\mathcal{O}$ be its orbit. As we have already
recalled in Section~\ref{sec:orbite}, the orbit of the line spanned by
$e_0$ in $\mathbb{P}(\mathfrak{g})$ is the open orbit of the model wonderful variety
of $\mathsf{E}_8$. 

The colors of the model wonderful variety of $\mathsf{E}_8$ 
are $D_1,\dots,D_8$ with $\omega(D_i)=\omega^\alpha_i$, where
$\omega^\alpha_1,\dots,\omega^\alpha_8$ are the fundamental weights
w.r.t.\ the simple system $\alpha_1,\dots,\alpha_8$. Notice also that, in
the notation of the previous section, we have $\overline{\mathcal{O}} = C_{D_8}$.
Finally the spherical roots of $M$ are
\begin{gather*}
  \sigma_1=\alpha_6+\alpha_7, \quad
  \sigma_2=\alpha_4+\alpha_5, \quad
  \sigma_3=\alpha_1+\alpha_3, \quad
  \sigma_4=\alpha_2+\alpha_4, \\
  \sigma_5=\alpha_5+\alpha_6, \quad
  \sigma_6=\alpha_3+\alpha_4, \quad
  \sigma_7=\alpha_7+\alpha_8.
\end{gather*}
Here we have ordered them so that it is clear that they are a basis of a root system of
type $\mathsf{D}_7$. Notice also that, since in this case $\omega$ is injective,
the list above determines also the Cartan pairing $c$ of the
wonderful variety $M$.

\begin{theorem}[see {\cite[Theorem~1.1]{AHV}}]\label{teo:E8}
The variety $\overline{\mathcal{O}}$ is normal and 
\[ \mathbb{C}[\overline{\mathcal{O}}]=\mathbb{C}[\mathcal{O}]\simeq\bigoplus_{\lambda\in \mathcal{X}(T)^+} V(\lambda). \]
\end{theorem}
\begin{proof}
We know that $\overline{\mathcal{O}}$ is normal by the discussion in Section
\ref{sec:orbite}, in particular we have $\overline{\mathcal{O}} = C_{D_8}=\widetilde
C_{D_8}$. Moreover each adjoint orbit has even dimension so the first
equality follows by normality.  Since $\overline{\mathcal{O}} = \widetilde C_{D_8}$ we have also 
\[ 
 \mathbb{C}[\overline{\mathcal{O}}] \simeq 
 \bigoplus_{n\geqslant 0} \Gamma(M,\mathcal L_{nD_8})\simeq 
 \bigoplus_{\substack{ n\geqslant 0, \gamma \in \mathbb{N}\Sigma:\\ 
 nD_8-\gamma \in \mathbb{N}\Delta} } s^{\gamma}V_{nD_8-\gamma}.
\] Now notice that $D_8$ and the spherical roots are linearly
 independent and that $\omega$ is injective in this case, so all 
 irreducible $G$-representations occur with multiplicity at most one.
 Moreover, since the variety is irreducible, if $V(\lambda)$ and $V(\mu)$
 occur in this decomposition then also $V(\lambda+\mu)$
 occurs. Finally, we have
\begin{align*}
 D_1 & 
= 2\,D_8 - ( \sigma_4 + 2\,\sigma_5 + \sigma_6 + 2\,\sigma_7)\\
 D_2 & 
= 4\,D_8 - ( \sigma_1 + 2\,\sigma_2 + 3\,\sigma_3 + 4\,\sigma_4 + 6\,\sigma_5 + 3\,\sigma_6 + 5\,\sigma_7) \\
 D_3 & 
= 5\,D_8 - ( \sigma_1 + 2\,\sigma_2 + 3\,\sigma_3 + 5\,\sigma_4 + 7\,\sigma_5 + 3\,\sigma_6 + 6\,\sigma_7)\\
D_4 & 
= 7\,D_8 - ( \sigma_1 + 2\,\sigma_2 + 4\,\sigma_3 + 6\,\sigma_4 + 9\,\sigma_5 + 4\,\sigma_6 + 8\,\sigma_7)\\
D_5 & 
= 6\,D_8 - ( \sigma_1 + 2\,\sigma_2 + 4\,\sigma_3 + 6\,\sigma_4 + 8\,\sigma_5 + 4\,\sigma_6 + 7\,\sigma_7)\\
D_6 & 
= 4\,D_8 - ( \sigma_2 + 2\,\sigma_3 + 3\,\sigma_4 + 4\,\sigma_5 + 2\,\sigma_6 + 4\,\sigma_7)\\
D_7 & 
= 3\,D_8 - ( \sigma_2 + 2\,\sigma_3 + 3\,\sigma_4 + 4\,\sigma_5 + 2\,\sigma_6 + 3\,\sigma_7)\\
D_8 & 
= D_8 - 0
\end{align*}
hence $V(\omega_i)$ occurs in the decomposition of $\mathbb{C}[\overline{\mathcal{O}}]$, for all $i$. 
\end{proof}

As we have already recalled above the second isomorphism in the statement of the Theorem was already proved 
in \cite{AHV}. Notice that our proof of this isomorphism does not use the projective normality proved in 
Section \ref{sec: low model} but only general considerations on wonderful varieties.

The normality of $\overline{\mathcal{O}}$ was not studied in \cite{AHV}, however Panyushev in the last section of \cite{Pa} sketches  
how to deduce it by their result. Notice that our proof of the normality of $\overline{\mathcal{O}}$, 
relies on Theorem~\ref{teo: model} for which, in the case of $\mathsf{E}_8$, we used a computer. 
The normality of $\overline{\mathcal{O}}$ is due to A.~Broer (see \cite[Section~7.8, Remark iii)]{Bro}), however we could not find a proof in the literature.

\subsection{From complex to real orbits: general considerations}

Let $\tilde G, K, \tilde{\mathfrak{g}}, \mathfrak{p}$ be as at the beginning of this section.
Let $\mathcal{O}\subset \tilde{\mathfrak{g}}$ be a nilpotent adjoint orbit of the complex
algebraic group $\tilde G$.  We want to make some general standard remarks on
the intersection $\mathcal{O}\cap \mathfrak{p}$ (see for example \cite[Section~9]{R}). 
Fix $e\in \mathcal{O}\cap \mathfrak{p}$ and let $\tilde
G(e)$ and $K(e)$ be the stabilizers of $e$ in $\tilde G$ and $K$, respectively.
The subgroup $\tilde G(e)$ is stable under $\theta$ and we define
$Z=\{z\in \tilde G(e)\, : \, \theta(z)=z^{-1}\}$. We have an action of
$\tilde G(e)$ on $Z$ by $g \cdot z = g z \theta(g)^{-1}$ and we define 
$\mathbb{H}^1$ as $Z$ modulo the action of $\tilde G(e)$.

\begin{lemma}\label{lem:Ocapp1}
\begin{enumerate}[\indent i)]
\item Every connected component of $\mathcal{O}\cap \mathfrak{p}$ is a single $K$-orbit;
\item The map $ g\cdot x \mapsto g^{-1}\theta(g)$ induces a bijection from the set of $K$-orbits 
in $\mathcal{O}\cap \mathfrak{p}$ to $\mathbb{H}^1$;
\item Every connected component of $Z$ is a single $\tilde G(e)^0$-orbit
  (where $\tilde G(e)^0$ is the identity component of $\tilde G(e)$).
\end{enumerate}
\end{lemma}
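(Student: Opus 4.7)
The plan is to treat (i), (iii), and (ii) separately, using respectively a dimension count (Kostant--Rallis), an infinitesimal computation, and a Galois cohomology argument for the group $\langle\theta\rangle\isocan\mZ/2\mZ$.

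For (i), I would invoke the Kostant--Rallis theorem: $O\cap\gop$ is the union of finitely many $K$-orbits, all of dimension $\tfrac12\dim O$. Locally closed subvarieties of a common ambient variety that share the same dimension cannot lie in one another's closures, so each $K$-orbit is closed in $O\cap\gop$; since the orbits form a finite partition, each is therefore open as well. Finally, every $K$-orbit is the continuous image of the connected group $K$, hence connected, so it coincides with a connected component of $O\cap\gop$.

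For (iii), I would analyze the orbit map $\mu_1\colon\tilde G(e)^0\lra Z$, $g\mapsto g\theta(g)^{-1}$. Its differential at the identity sends $X\in\gog(e)$ to $X-d\theta(X)$; on the $\theta$-stable decomposition $\gog(e) = (\gog(e)\cap\gok)\oplus(\gog(e)\cap\gop)$ this map is zero on the first summand and is multiplication by $2$ on the second, hence has image $\gog(e)\cap\gop$. Realizing $Z$ as the fiber over $1$ of the morphism $z\mapsto z\theta(z)$, whose differential at $1$ is $X\mapsto X+d\theta(X)$, one sees that the tangent space to $Z$ at $1$ is also $\gog(e)\cap\gop$. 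Hence $\mu_1$ is submersive at $1$, and by the same computation translated via the $\tilde G(e)^0$-action, every orbit in $Z$ is open. A partition into open pieces is automatically into clopen pieces, and each piece is connected as the continuous image of a connected group, so the $\tilde G(e)^0$-orbits are exactly the connected components of $Z$.

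For (ii), well-definedness on $K$-orbits is direct: if $x = g\cdot e$ then $\theta(e)=-e$ and $\theta(x)=-x$ give $\theta(g)\cdot e = g\cdot e$, whence $g^{-1}\theta(g)\in\tilde G(e)$, and $\theta(g^{-1}\theta(g)) = (g^{-1}\theta(g))^{-1}$ places it in $Z$; replacing $g$ by $gh$ with $h\in\tilde G(e)$ modifies this element by the $\tilde G(e)$-action, while replacing $g$ by $kg$ with $k\in K$ leaves it unchanged. Reversing these manipulations yields injectivity on $K$-orbits. The main obstacle is surjectivity: given $z\in Z$ one has to exhibit $g\in\tilde G$ with $g^{-1}\theta(g) = z$, i.e.\ to show that the class $[z]\in H^1(\langle\theta\rangle,\tilde G)$ is trivial. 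This follows from Steinberg's theorem when $\tilde G$ is simply connected; for a general semisimple $\tilde G$, I would lift $z$ along the isogeny $\tilde G^{\mathrm{sc}}\lra\tilde G$ and absorb the resulting central obstruction using that the center of $\tilde G^{\mathrm{sc}}$ is $\theta$-stable.
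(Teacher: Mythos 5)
For part (i) you reach the correct conclusion by a different route from the paper. You invoke the Kostant--Rallis theorem to get both the finiteness of the $K$-orbits in $O\cap\gop$ and the equidimensionality statement $\dim K\cdot x=\tfrac12\dim O$, then deduce that the orbits are simultaneously closed and open in $O\cap\gop$. The paper proceeds by a self-contained tangent-space computation: it observes that $O\cap\gop=O^{\eta}$ with $\eta=-\theta$ is smooth (fixed locus of an involution of the smooth $O$), and then checks directly that $(\tilde\gog\cdot e)\cap\gop=\gok\cdot e$, so that $K\cdot e$ is open in $O^\eta$; connectedness of $K$ then finishes. Both arguments are valid; yours imports a black box where the paper's is elementary, and the paper's computation is also the template for part (iii). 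Your treatment of (iii) is essentially the paper's intended argument; the differential computation and the identification of $T_1Z$ with $\gog(e)\cap\gop$ are correct, although the cleanest justification of $T_1Z=\gog(e)\cap\gop$ is to note that $Z$ is the fixed locus of the regular involution $z\mapsto\theta(z)^{-1}$ on $\tilde G(e)$, hence smooth with tangent space the $(+1)$-eigenspace of $-d\theta$, rather than to argue with the fibre of $z\mapsto z\theta(z)$ (which is not a priori smooth over $1$).

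The issue is in part (ii). Well-definedness and injectivity, which you check carefully, are correct, and they are the only content the paper uses later: Lemma~\ref{lem:Ocapp2} only needs that $\mH^1$ a singleton forces at most one $K$-orbit. But your surjectivity argument is not sound as written. You reduce it to the vanishing of $H^1(\langle\theta\rangle,\tilde G)$ and appeal to ``Steinberg's theorem when $\tilde G$ is simply connected.'' The Lang--Steinberg theorem (surjectivity of $g\mapsto g^{-1}\theta(g)$) requires the fixed-point group $\tilde G^{\theta}$ to be \emph{finite}; here $\tilde G^{\theta}=K$ is a positive-dimensional reductive group, so that theorem does not apply. Steinberg's Galois-cohomology vanishing theorem concerns perfect fields of cohomological dimension $\leq 1$ and field automorphisms, not a holomorphic involution over $\mathbb C$. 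In fact, over $\mathbb C$ the image of $g\mapsto g^{-1}\theta(g)$ is precisely the connected component through $1$ of $\{z\in\tilde G:z\theta(z)=1\}$ (a theorem of Richardson), so surjectivity is equivalent to connectedness of that variety, which requires an argument of its own. The subsequent ``absorb the central obstruction'' step for non--simply-connected $\tilde G$ is likewise not justified. If you want to retain the full strength of (ii) rather than just the injectivity the paper needs, you should either cite the relevant result of Richardson on twisted conjugacy classes, or read $\mH^1$ as the image of the map (which seems to be what ``trivial'' signals in the paper).
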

\begin{proof}
Set $\eta = -\theta$. First notice that $\mathcal{O}\cap \mathfrak{p}=\mathcal{O}^\eta$. Being $\mathcal{O}$ smooth and $\eta$ an
involution we deduce that $\mathcal{O}^\eta$ is smooth. Take $e \in \mathcal{O}^\eta$.
We prove that
\[
(\tilde{\mathfrak{g}}\cdot e)\cap \mathfrak{p} = \mathfrak{k} \cdot e.
\] 
Let $\tilde{\mathfrak{g}}(e)$ be the annihilator of $e$ in $\tilde{\mathfrak{g}}$ and let $y\in
\tilde{\mathfrak{g}}$ be such that $y\cdot e \in \mathfrak{p}$. Then $\theta(y\cdot e) = -y \cdot e$, hence
$z=\theta(y)-y \in \tilde{\mathfrak{g}}(e)$. Take $u=\tfrac 12 z$ then $u\in \tilde{\mathfrak{g}}(e)$ is such that
$v=y+u \in \mathfrak{k}$ and $v\cdot e=y\cdot e$.

This proves that any $K$-orbit in $\tilde G \cdot e\cap \mathfrak{p}$ is
open. This implies $i)$.

Point $iii)$ can be proved similarly and $ii)$ is trivial.
\end{proof}

In particular notice that if we prove that $Z$ is connected then
it follows that $\mathcal{O}\cap \mathfrak{p}$ is a single $K$-orbit.

We refine this lemma, using the Jordan decomposition.
Choose an $\mathfrak{sl}(2)$-triple, $e,h,f$ such that $\theta(h)=h$ and
$\theta(f)=-f$, this is always possible, see \cite[Proposition~4]{KR}.
Let $U$ be the unipotent radical of $\tilde G(e)$ and $L=\{g\in \tilde
G(e)\, : \, g\cdot h = h\}$. Then $\tilde G(e) = L \cdot U$ is a Levi
decomposition of $\tilde G(e)$ (see \cite[Proposition~2.4]{BV}). Notice
also that $L$ and $U$ are stable under the action of $\theta$. Define
$Z_L=Z\cap L$ and $\mathbb{H}_L^1$ as $Z_L$ modulo the action of $L$ given by
$g\cdot z= g\,z\,\theta(g)^{-1}$.

\begin{lemma}\label{lem:Ocapp2}
\begin{enumerate}[\indent i)]
  \item The inclusion $Z_L\subset Z$ induces a bijection from $\mathbb{H}^1_L$
    to $\mathbb{H}^1$;
  \item If $Z_L$ is connected then $\mathcal{O}\cap \mathfrak{p}$ is a single $K$-orbit.
\end{enumerate}
\end{lemma}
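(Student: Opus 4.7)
The plan is to exploit the $\theta$-stable Levi decomposition $\tilde G(e)=L\ltimes U$. I would first observe that the projection $\pi\colon \tilde G(e)\to L$ commutes with $\theta$ (since both $L$ and $U$ are $\theta$-stable), and therefore restricts to a well-defined map $\pi\colon Z\to Z_L$. The relation $\pi(gz\theta(g)^{-1})=\pi(g)\pi(z)\theta(\pi(g))^{-1}$ shows that $\pi$ descends to $\bar\pi\colon\mH^1\to\mH^1_L$, and one has $\bar\pi\circ\bar i=\mathrm{id}_{\mH^1_L}$, where $\bar i\colon\mH^1_L\to\mH^1$ is induced by the inclusion $Z_L\hookrightarrow Z$. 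This immediately gives injectivity of $\bar i$.

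For surjectivity of $\bar i$ in (i), the key step is to show that an arbitrary $z\in Z$ is $\tilde G(e)$-equivalent to $\pi(z)$. Writing $z=\ell u$ with $\ell\in L$ and $u\in U$, the identity $\theta(z)=z^{-1}$, together with the uniqueness of the Levi decomposition, forces $\theta(\ell)=\ell^{-1}$ (so $\ell=\pi(z)\in Z_L$) and $\theta(u)=\ell u^{-1}\ell^{-1}$. I would then pass to the semidirect product $\hat G(e)=\tilde G(e)\rtimes\langle\theta\rangle$: the assignment $z\mapsto z\theta$ identifies $Z$ with the set of involutions in the coset $\tilde G(e)\theta$, and the twisted $\tilde G(e)$-action on $Z$ corresponds to ordinary $\tilde G(e)$-conjugation in $\hat G(e)$. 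Setting $\sigma=\ell\theta\in\hat G(e)$ and $\hat u=\ell u\ell^{-1}\in U$, we have $z\theta=\hat u\sigma$ and $\sigma\hat u\sigma^{-1}=\hat u^{-1}$; moreover, the desired equality $vz\theta(v)^{-1}=\ell$ for some $v\in U$ translates into $v^{-1}\sigma(v)=\hat u$ in $U$.

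The existence of such $v$ is the main technical obstacle: it amounts to the vanishing of the non-abelian cohomology $H^1(\langle\sigma\rangle,U)$ for the unipotent group $U$ in characteristic zero. Here I would invoke the exponential $\exp\colon\gou\to U$, which is a bijection: the involution $\sigma$ preserves $U$ and induces a linear involution on $\gou$ (still denoted $\sigma$), and if $\hat u=\exp(X)$ then $\sigma(\hat u)=\hat u^{-1}$ translates into $\sigma(X)=-X$. The choice $v=\exp(-X/2)$ then yields $v^{-1}\sigma(v)=\exp(X/2)\exp(X/2)=\exp(X)=\hat u$, because $X$ commutes with itself. This completes (i).

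Part (ii) will follow formally from (i) together with the analog of Lemma~\ref{lem:Ocapp1}(iii) applied to the pair $(L,Z_L)$: repeating the argument given for $(\tilde G(e),Z)$ shows that the map $L\to Z_L$, $g\mapsto g\theta(g)^{-1}$, is a smooth surjection onto the connected component of the identity of $Z_L$, and hence that every connected component of $Z_L$ is a single $L^0$-orbit. If $Z_L$ is connected, $\mH^1_L$ consequently reduces to a point; by (i) so does $\mH^1$, and Lemma~\ref{lem:Ocapp1}(ii) yields that $O\cap\gop$ is a single $K$-orbit.
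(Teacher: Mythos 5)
Your proposal is correct, and parts (i)-injectivity and (ii) are essentially the paper's argument (the paper's ``it follows immediately that $x = \ell y\theta(\ell)^{-1}$'' is exactly your application of the projection $\pi$, and for (ii) the paper also implicitly invokes the $L$-analog of Lemma~\ref{lem:Ocapp1}(iii)). Where you diverge is in the surjectivity step of (i). The paper works entirely inside $\tilde G(e)$: from $\theta(u)=\ell u^{-1}\ell^{-1}$ it takes the unique $v\in U$ with $v^{-2}=u$, uses uniqueness of square roots in a unipotent group to deduce $\theta(v)=\ell v^{-1}\ell^{-1}$, and then computes directly that $h=\ell v\ell^{-1}$ satisfies $hz\theta(h)^{-1}=\ell$. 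You instead pass to the semidirect product $\tilde G(e)\rtimes\langle\theta\rangle$, recognize the obstruction as a class in $H^1(\langle\sigma\rangle,U)$, and kill it by the explicit linear-algebra argument in exponential coordinates: $\hat u=\exp X$ with $\sigma(X)=-X$, and $v=\exp(-X/2)$. The two constructions produce the same conjugating element (conjugating your $v$ by $\ell$ recovers the paper's square root), so the content is the same; but your formulation makes the cohomological mechanism explicit and replaces the square-root-uniqueness lemma by the bijectivity of $\exp$ on $\gou$, which some readers may find more transparent. The paper's version is shorter because it never leaves $\tilde G(e)$ and does not need to set up the semidirect product.
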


\begin{proof}
Let $x,y\in Z_L$ and assume that $x = h y \theta(h)^{-1}$ with $h=\ell
u$, $\ell\in L$ and $u\in U$. It follows immediately that $x=\ell y \theta(\ell)^{-1}$.
This prove the injectivity of the map $\mathbb{H}^1_L\longrightarrow \mathbb{H}^1$ induced by the inclusion $Z_L \subset Z$.

Now let $z=\ell u \in Z$ with $\ell\in L$ and $u\in U$. From
$\theta(z)=z^{-1}$ we deduce that $\theta(\ell)=\ell^{-1}$ and
$\theta(u)= \ell u^{-1} \ell^{-1}$. Now being $U$ unipotent there
exists a unique $v\in U$ such that $v^{-2}=u$. Using again that $U$ is
unipotent we deduce that $\theta (v)= \ell v^{-1} \ell^{-1}$. Set
$h'=\ell v \ell^{-1}$ then
\[
h' (\ell u) \theta(h')^{-1}= \ell v \ell^{-1} \ell u \ell^{-1} \ell v \ell^{-1} \ell = \ell.
\]
Hence the map $\mathbb{H}^1_L\longrightarrow \mathbb{H}^1$ is surjective. This proves $i)$. Now $ii)$ follows from
Lemma~\ref{lem:Ocapp1}.
\end{proof}

\subsection{The case of the real model orbit of $\mathsf{E}_8$}\label{ssec:modelloreale1}

In the last two sections of \cite{AHV} a real version of the model
orbit is considered. We now recall from \cite{AHV} some of the
structural results about this orbit. We also prove 
Proposition~\ref{prp:unicaorbita} which is probably well known and is somehow
implicit (even if not necessary) in the discussion in \cite{AHV}.

From now on $\tilde G$ is the complex algebraic group of type $\mathsf{E}_8$ and 
$\tilde G_\mathbb{R}$ is its split real form. Then $K$ is isogenous to 
$\Spin(16)$. 
For the complex group of type $\mathsf{E}_8$ we have already introduced some notations in Section \ref{ssec:modellocomplesso}
(notice that this group had the role of the group $G$ and not of $\tilde G$ in that Section). 
We keep that notation, so $T$ is a maximal torus of $\tilde G$, $\Phi$, the associated root system, 
and $\varepsilon_1,\dots,\varepsilon_8$, $A$, $B$, $S=\{\alpha_1,\dots,\alpha_8\}$ are as in Section \ref{ssec:modellocomplesso}.

Then we can choose $\tilde G_\mathbb{R}$ so that $\mathfrak{k}$
is the Lie algebra spanned by $\mathfrak{t}$, the Lie algebra of $T$, and by
the vectors $x_\alpha$ with $\alpha \in A$. With this choice $\mathfrak{p}$ is spanned by the vectors $x_\alpha$
with $\alpha \in B$. 

As a simple system for the root system of $\mathfrak{k}\simeq \mathfrak{so}(16)$ we choose the
usual basis, but we enumerate it starting from zero, that is
\[ \tau_i=\varepsilon_{i+1}-\varepsilon_{i+2} {\text{ for }}
i=0,\dots,6 {\text{ and }} \tau_7=\varepsilon_7+\varepsilon_8.
\] We denote by $\omega^\mathsf{D}_i$ the associated fundamental weights.
In particular we obtain that $\mathfrak{p}$ is the spin representation
associated to the weight $\omega^\mathsf{D}_7$. Moreover, since $\tilde G$
is simply connected the subgroup $K$ is connected. Finally notice that
$\omega_6^\mathsf{D}\not\in\mathcal{X}(T)$ while
$\omega^\mathsf{D}_7 \in \mathcal{X}(T)$. Hence $\Spin(16)$ is a double covering of $K$. 
We set $G=\Spin(16)$.  Notice that in
\cite{AHV} it is claimed that $K$ is isomorphic to $\Spin(16)$, but this
does not affect any of their arguments.

In order to prove that the roots $\tau_1,\dots,\tau_7$ are conjugated to the
roots $\sigma_1,\dots,\sigma_7$ introduced in Section~\ref{ssec:modellocomplesso}, 
we introduce a new simple system of the root system of type $\mathsf{E}_8$.
The vectors 
\begin{align*}
 \zeta_1 & = \tfrac 12 (-\varepsilon_2-\varepsilon_3-\varepsilon_4+\varepsilon_5),&
 \zeta_2 & = \tfrac 12 (\varepsilon_1+\varepsilon_6+\varepsilon_7-\varepsilon_8),\\
 \zeta_3 & = \tfrac 12 ( \varepsilon_2+\varepsilon_3-\varepsilon_4+\varepsilon_5),&
 \zeta_4 & = \tfrac 12 (\varepsilon_1+\varepsilon_6-\varepsilon_7+\varepsilon_8),\\
 \zeta_5 & = \tfrac 12 (\varepsilon_2-\varepsilon_3+\varepsilon_4+\varepsilon_5),&
 \zeta_6 & = \tfrac 12 (\varepsilon_1-\varepsilon_6+\varepsilon_7+\varepsilon_8),\\
 \zeta_7 & = \tfrac 12 (-\varepsilon_2+\varepsilon_3+\varepsilon_4+\varepsilon_5),&
 \zeta_8 & = \tfrac 12 (\varepsilon_1-\varepsilon_6-\varepsilon_7-\varepsilon_8),\\
\end{align*}
form an orthonormal basis of $\mathfrak{t}^*$, and $\pm \zeta_i\pm \zeta_j\in \Phi$ for
$i$ odd and $j$ even.  Notice also that
\begin{gather*}
 \gamma _2 = \frac 12 (\zeta_1-\zeta_2-\zeta_3+\zeta_4+\zeta_5+\zeta_6+\zeta_7+\zeta_8) =\\
 \qquad \quad = \frac 12 (\varepsilon_1-\varepsilon_2-\varepsilon_3+\varepsilon_4+\varepsilon_5-\varepsilon_6-\varepsilon_7+\varepsilon_8) \in \Phi, \\ 
 \gamma _1 = -\zeta_1-\zeta_2,{\text{ and }} \gamma_i=\zeta_{i-1}-\zeta_i, {\text{ for }} i=3,\dots,8,
\end{gather*}
form a simple system of $\Phi$ (since they are elements of $\Phi$ with the
right scalar products). Hence there exists an element $w$
in the Weyl group such that $w(\varepsilon_i)=\zeta_i$, for all $i$. Finally, notice that 
we have 
\begin{gather*}
  \tau_1=\gamma_6+\gamma_7, \quad
  \tau_2=\gamma_4+\gamma_5, \quad
  \tau_3=\gamma_1+\gamma_3, \quad
  \tau_4=\gamma_2+\gamma_4, \\
  \tau_5=\gamma_5+\gamma_6, \quad
  \tau_6=\gamma_3+\gamma_4, \quad
  \tau_7=\gamma_7+\gamma_8 
\end{gather*}
so that $w(\sigma_i)=\tau_i$. Notice also that
\[
\tau_0 = \frac 12 (\zeta_1-\zeta_3-\zeta_5+\zeta_7+\zeta_2+\zeta_4+\zeta_6+\zeta_8) = \gamma_2+\gamma_3+\gamma_4+\gamma_5.
\] We denote by $\Lambda$ the weight lattice of $\Spin(16)$ and by
$\Lambda_\mathsf{E}=\mathcal{X}(T)$ the sublattice given by the weights of
$\mathsf{E}_8$.
We denote also by $\Lambda^+$ the dominant weights of
$\Lambda$ w.r.t.\ the simple system $\tau_0,\dots,\tau_7$ and by
$\Lambda_\mathsf{E}^+$ the dominant weights of $\Lambda_\mathsf{E}$ w.r.t.\ the simple
system $\gamma_1,\dots,\gamma_8$. We have $\Lambda^+_\mathsf{E}\subset
\Lambda^+$.  We denote by $\omega^\gamma_i$ the fundamental weights of
$\Lambda_\mathsf{E}$ w.r.t.\ the simple system $\gamma_1,\dots,\gamma_8$. Notice that
$\omega^\mathsf{D}_7=\omega^\gamma_8$.

Let now $\mathcal{O}$ be the adjoint orbit of $\tilde{\mathfrak{g}}$ considered in
Section \ref{ssec:modellocomplesso}. 
We want to study the intersection
$\mathcal{O}\cap \mathfrak{p}$. Let us first choose an element $e \in \mathcal{O}\cap \mathfrak{p}$. 

Consider the semisimple part $K_0$ of the standard Levi factor of the
maximal parabolic subgroup $P_0$ of $K$ associated with $\tau_0$. This is a group
isogeneous to $\Spin(14)$, we denote its Lie algebra by
$\mathfrak{k}_0$. Let $\mathfrak{p}_0$ be the subspace of $\mathfrak{p}$ spanned by the root
vectors of weight of the form $\tfrac 12(\sum a_i\varepsilon_i)$ with
$a_1=-1$. Its highest weight is
$\tfrac12(-\varepsilon_1+\varepsilon_2+\varepsilon_3+\varepsilon_4+\varepsilon_5+\varepsilon_6+\varepsilon_7-\varepsilon_8)$. Hence
it is an irreducible $\Spin(14)$-submodule of $\mathfrak{p}$ isomorphic to the
module $V_{D_8}^*$ of Section~\ref{ssec:comodelloh8}.  

\begin{lemma}\label{lem:hE}
The vector $h_8$ of Section~\ref{ssec:comodelloh8} belongs to $\mathcal{O}$. 
\end{lemma}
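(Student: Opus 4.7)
The plan is to recognize $h_8$, up to conjugation by $T$, as a sum of root vectors for four pairwise strongly orthogonal roots of $\sfE_8$, and then identify the associated nilpotent orbit with $O$ via the Kostant--Dynkin diagram. First I would compute the $T$-weights of the four summands of $h_8 = \psi_1 + \psi_{762} - \psi_{753} + \psi_{654}$ in $\gop_0 \subset \tilde\gog$. Via the $K_0$-equivariant identification $\gop_0 \isocan V^*_{D_8}$, the formula for the $\sfD_7$-weights of the spin basis given in Section~\ref{ssec:cotipoE8}, and the constraint that weights of $\gop_0$ have first $\sfE_8$-coordinate $-1/2$, one obtains that each $\psi_I$ is a root vector for a root of $\sfE_8$ in $B$. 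Explicitly, the four roots are
\begin{align*}
\gamma_1 &= \tfrac12(-\gre_1-\gre_2+\gre_3+\gre_4+\gre_5+\gre_6+\gre_7+\gre_8),\\
\gamma_2 &= \tfrac12(-\gre_1+\gre_2-\gre_3+\gre_4+\gre_5+\gre_6-\gre_7-\gre_8),\\
\gamma_3 &= \tfrac12(-\gre_1+\gre_2+\gre_3-\gre_4+\gre_5-\gre_6+\gre_7-\gre_8),\\
\gamma_4 &= \tfrac12(-\gre_1+\gre_2+\gre_3+\gre_4-\gre_5-\gre_6-\gre_7+\gre_8),
\end{align*}
so that $h_8 = \sum_i c_i x_{\gamma_i}$ for some nonzero constants $c_i$.

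Next I would verify by inspection that $(\gamma_i, \gamma_j) = 0$ for $i \neq j$ and that no $\gamma_i \pm \gamma_j$ is a root of $\sfE_8$: every such sum or difference has at least four nonzero entries in the $\gre$-basis, so it lies neither in $A$ (two nonzero entries equal to $\pm 1$) nor in $B$ (all entries $\pm 1/2$). Hence the four roots are pairwise strongly orthogonal, and since they are linearly independent one can find $t \in T$ with $\gamma_i(t) = c_i^{-1}$, so that $\Ad(t)\, h_8 = \sum_i x_{\gamma_i} =: e^\sharp$; it then suffices to show $e^\sharp \in O$.

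By strong orthogonality the $x_{\pm\gamma_i}$ generate four commuting copies of $\mathfrak{sl}(2)$, and $(e^\sharp, h^\sharp, f^\sharp)$ with $h^\sharp := \sum_i \gamma_i^\cech = -2\gre_1^* + \gre_2^* + \gre_3^* + \gre_4^* + \gre_5^*$ and $f^\sharp := \sum_i x_{-\gamma_i}$ is an $\mathfrak{sl}(2)$-triple. By Kostant's theorem the $\tilde G$-orbit of $e^\sharp$ is determined by the $W$-conjugacy class of $h^\sharp$, so I would conclude by showing that the weighted Dynkin diagram of $h^\sharp$ is $(0,1,0,0,0,0,0,0)$, matching that of $h_0$, whence $h_8 \in O$.

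The main obstacle is this last step. Reflections by roots in $A$ act as permutations and double sign changes on the $\gre$-coordinates, preserving the multiset of absolute values, which differs for $h^\sharp$ and $h_0$; one therefore has to use reflections by roots in $B$ to mix the two types of coordinates. The cleanest route is to exhibit an explicit short word in Weyl reflections taking $h^\sharp$ to a dominant element, from which the Kostant--Dynkin diagram can be read off; alternatively one can invoke the Bala--Carter classification, which identifies any sum of four strongly orthogonal root vectors in $\sfE_8$ (since all $A_1^4$-subsystems are $W$-conjugate in $\sfE_8$) with the distinguished orbit labeled $4A_1$, whose weighted Dynkin diagram is $(0,1,0,0,0,0,0,0)$.
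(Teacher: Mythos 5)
Your strategy is sound and the first two steps land exactly where the paper's proof does: the four $T$-weights you extract from $h_8$, namely $\gamma_1,\dots,\gamma_4$, are precisely the roots $\delta_1,\dots,\delta_4$ of the paper's proof, and after rescaling by a torus element the question does reduce to determining the $\tilde G$-orbit of $e^\sharp = \sum_i x_{\gamma_i}$, equivalently the $W$-orbit of $h^\sharp = -2\gre_1^* + \gre_2^* + \gre_3^* + \gre_4^* + \gre_5^*$ (which also agrees with the paper's $h$). But you stop at what you rightly call the main obstacle, offering two routes without completing either, and that is a genuine gap. Moreover your second route is shakier than you suggest: the $4A_1$ orbit is not a \emph{distinguished} orbit (its Bala--Carter data is a Levi of type $A_1^4$ with the regular nilpotent therein, not the whole group), and to invoke Bala--Carter one would need both that the $A_1^4$ generated by the $\gamma_i$ is (conjugate to) a genuine Levi subalgebra and that $A_1^4$-subsystems of $\sfE_8$ form a single $W$-orbit; neither assertion is established in your sketch.

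The paper sidesteps the weighted-Dynkin-diagram computation entirely. It introduces a new orthonormal basis $\eta_1,\dots,\eta_8$ of $\got^*$, checks that the analogues $\hat\gamma_1,\dots,\hat\gamma_8$ of the simple roots built from the $\eta_i$ again form a simple system of $\Phi$, and deduces that there is $w\in W$ with $w(\gre_i)=\eta_i$ for all $i$. Then $e:=\dot w(e_0)$ lies in $O$ by construction, and one reads off that its support is exactly $\{\gamma_1,\dots,\gamma_4\}$; since these four weights are linearly independent, $e$ and $h_8$ are conjugate by an element of $T$, whence $h_8\in O$. So the missing step in your argument -- exhibiting a Weyl element conjugating $h^\sharp$ to $h_0$ -- is exactly what the $\eta$-basis construction supplies; you could graft it onto your proof to close the gap, or instead verify directly, by a short sequence of reflections in roots of type $B$, that $h^\sharp$ is $W$-conjugate to the dominant element with Kostant--Dynkin diagram $(0,1,0,0,0,0,0,0)$.
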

\begin{proof}
The vectors
\begin{subequations}\label{eq:u}
\begin{align}
 \eta_1 & = \tfrac 12 ( \varepsilon_1 - \varepsilon_6 - \varepsilon_7 - \varepsilon_8),&
 \eta_2 & = \tfrac 12 (-\varepsilon_2 + \varepsilon_3+\varepsilon_4+\varepsilon_5),\\
 \eta_3 & = \tfrac 12 ( - \varepsilon_1 + \varepsilon_6-\varepsilon_7-\varepsilon_8),&
 \eta_4 & = \tfrac 12 ( \varepsilon_2 - \varepsilon_3+\varepsilon_4+\varepsilon_5),\\
 \eta_5 & = \tfrac 12 ( - \varepsilon_1 - \varepsilon_6+\varepsilon_7-\varepsilon_8),&
 \eta_6 & = \tfrac 12 ( \varepsilon_2 + \varepsilon_3-\varepsilon_4+\varepsilon_5),\\
 \eta_7 & = \tfrac 12 ( - \varepsilon_1 - \varepsilon_6-\varepsilon_7+\varepsilon_8),&
 \eta_8 & = \tfrac 12 ( \varepsilon_2 + \varepsilon_3+\varepsilon_4-\varepsilon_5).
\end{align}
\end{subequations}
form an orthonormal basis of $\mathfrak{t}^*$, and $\pm \eta_i\pm \eta_j\in \Phi$ if
$i$ is odd and $j$ is even.  Notice also that
\begin{gather*}
 \hat \gamma_2 = \frac 12 (\eta_1-\eta_2-\eta_3+\eta_4+\eta_5+\eta_6+\eta_7+\eta_8)= \varepsilon_2-\varepsilon_6, \\
 \hat \gamma_1 = -\eta_1-\eta_2,{\text{ and }} \hat \gamma_i=\eta_{i-1}-\eta_i, {\text{ for }} i=3,\dots,8,
\end{gather*}
is a simple system of $\Phi$. Hence there exists an element
$w$ of the Weyl group such that $w(\varepsilon_i)=\eta_i$, for all $i$. Choose a
representative $\dot w$ of $w$ in $\tilde G$. Define $e =\dot w (e_0)$,
$h=w(h_0)=-\eta_1^*+\eta_2^*+\dots+\eta_8^*$ and $f=\dot w(f_0)$. Then $\theta(h)=h$
and $\theta(f)=-f$.  More explicitly, we have
$h=-2\varepsilon^*_1+\varepsilon^*_2+\varepsilon^*_3+\varepsilon^*_4+\varepsilon^*_5$,
$e=x_{\delta_1}+x_{\delta_2}+x_{\delta_3}+x_{\delta_4}$, where
$w(\beta_1)=\delta_1=-\eta_1+\eta_2$, $w(\beta_2)=\delta_2=\eta_3+\eta_4$,
$w(\beta_3)=\delta_3=\eta_5+\eta_6$, $w(\beta_4)=\delta_4=\eta_7+\eta_8$ and similarly
for $f$.

Notice that $e \in \mathfrak{p}_0$. Moreover, the two vectors $e$ and $h_8$ are linear
combinations with nonzero coefficients of vectors of the same weights
$\delta_1,\delta_2,\delta_3,\delta_4$ and these weights are linearly
independent, so they are conjugated under the action of the
maximal torus. 
\end{proof}

Let $w \in W$ be the element defined in the previous proof,
as shown there we can choose a representative $\dot w$ of
$w$ such that $\dot w (e_0)=h_8$. Set $e=h_8$ and $h=w(h_0)$.  In
particular the stabilizer of $e$ is the parabolic induction of the
stabilizer of $h_8$ in $\Spin(14)$. More explicitly, we have
\begin{equation}\label{eq:ke} 
\mathfrak{k}(e) = \mathbb{C}\,\omega_0^\vee \oplus \mathfrak{u}^-_0 \oplus \mathfrak{h}_0 
\end{equation}
where
$\mathfrak{u}^-_0$ is the Lie algebra of the unipotent radical of the
parabolic subgroup opposite to $P_0$, $\mathfrak{h}_0$ is the annihilator of $h_8$ in
$\mathfrak{k}_0$ and $\omega_0^\vee$ is orthogonal to
$\tau_1,\dots,\tau_7$. In particular the Levi factor of $\mathfrak{k}(e)$ is
isomorphic to $\mathfrak{gl}(4)$.

We now want to describe in some detail the stabilizer $K(e)$ and
apply Lemma~\ref{lem:Ocapp2} to prove that $\mathcal{O}\cap \mathfrak{p}$ is a single
$K$-orbit. As recalled in Section~\ref{ssec:modellocomplesso},
the Levi factor $\tilde L$ of $\tilde G(e)$ is $\Sp(8)$ and $L=\tilde
L^\theta$. Furthermore, notice that there is only one involution of $\Sp(8)$
such that the Lie algebra of the fixed point subgroup is
isomorphic to $\mathfrak{gl}(4)$. This involution can be described as
follows. Let $I_4 \in \mathrm{SL}(4)$ be the identity matrix,
and define the $8\times 8$ matrices
\[ J = \begin{pmatrix} 0 & I_4 \\ -I_4 & 0 \end{pmatrix},
\qquad \rho = \begin{pmatrix} I_4 & 0 \\ 0 & -I_4 \end{pmatrix} \] 
and $\Sp(8)$ as the matrices preserving the form $J$. Then $\Sp(8)$
is stable under the conjugation by the matrix $\rho$, which is an involution.
Moreover $L$ is isomorphic to $\GL(4)$.

\begin{proposition}\label{prp:unicaorbita}
If $\mathcal{O}$ is the model orbit of $\mathsf{E}_8$ and $\tilde G_\mathbb{R}$ is the split real
form of $\mathsf{E}_8$, then $\mathcal{O}\cap \mathfrak{p} $ is a single $K$-orbit.
\end{proposition}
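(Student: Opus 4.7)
The plan is to apply Lemma~\ref{lem:Ocapp2}(ii), which (in the notation of the lemma) reduces the statement to showing that
\[
Z_{\tilde L} = \{z \in \tilde L : \theta(z) = z^{-1}\}
\]
is connected, where $\tilde L \cong \Sp(8)$ is the complex Levi factor of $\tilde G(e)$ and $\theta$ restricts to $\tilde L$ as conjugation by the matrix $\rho$ introduced just above. Since $\tilde L$ is connected and the fixed point subgroup $\tilde L^\theta \cong \GL(4)$ is a Levi in $\tilde L$, every hypothesis of the lemma is met.

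The first step I would take is to translate $Z_{\tilde L}$ by $\rho$. Setting $M = \rho z$, the condition $\rho z \rho^{-1} = z^{-1}$ becomes $M^2 = I$, while $z \in \Sp(8)$, combined with the elementary identity $\rho J \rho = -J$ (an immediate matrix computation using the explicit form of $\rho$ and $J$), becomes $M^T J M = -J$. The assignment $z \mapsto M = \rho z$ therefore gives a homeomorphism between $Z_{\tilde L}$ and the set
\[
\Omega = \{M \in \GL(8) : M^2 = I, \ M^T J M = -J\}.
\]

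Next I would interpret $\Omega$ geometrically. Any involution $M \in \GL(V)$, with $V = \mC^8$, decomposes $V$ as $V_+ \oplus V_-$ into its $\pm 1$-eigenspaces. The antisymplectic condition $M^T J M = -J$ forces $J(v,w) = J(Mv,Mw) = -J(v,w)$ whenever $v,w$ lie in the same eigenspace, so both $V_\pm$ are $J$-isotropic; since their dimensions sum to $8$, both are Lagrangian. Conversely, any splitting of $V$ as a direct sum of two transverse Lagrangians determines a unique such $M$, and the correspondence is $\Sp(8)$-equivariant for the conjugation action on $\Omega$ and the standard action on pairs of Lagrangians. By Witt's extension theorem, $\Sp(8)$ acts transitively on ordered pairs of transverse Lagrangian subspaces of $V$, and the stabilizer of such a pair is the diagonally acting $\GL(V_+) \cong \GL(4)$. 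Hence $\Omega$ is $\Sp(8)$-equivariantly identified with the connected homogeneous variety $\Sp(8)/\GL(4)$, so $Z_{\tilde L}$ is connected and the proposition follows from Lemma~\ref{lem:Ocapp2}(ii).

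The only substantive step is the equivalence between the antisymplectic condition $M^T J M = -J$ and $V_\pm$ being Lagrangian; everything else (the change of variables $M = \rho z$, the transitivity by Witt, and the identification of the stabilizer with $\GL(4)$) is a routine verification.
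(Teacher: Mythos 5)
Your proof is correct, and it takes a genuinely different and cleaner route than the paper's. Both arguments invoke Lemma~\ref{lem:Ocapp2}(ii) to reduce the statement to the connectedness of $Z_{\tilde L}=\{z\in\Sp(8):\rho z\rho^{-1}=z^{-1}\}$, but they diverge from there. The paper writes $z$ in $4\times 4$ blocks, derives the system of quadratic equations~\eqref{eq:unicaorbita}, and then shows by an explicit sequence of moves under the twisted conjugation action $g\cdot z=gz\theta(g)^{-1}$ that every element can be brought into the affine slice $B=I_4$, which is connected; this is a hands-on computation whose correctness depends on careful bookkeeping. You instead apply the linear change of variables $M=\rho z$, which converts the two defining conditions into $M^2=I$ and $M^TJM=-J$, and observe that such an $M$ is the same thing as an ordered decomposition of $\mC^8$ into two transverse Lagrangian subspaces (its $\pm1$-eigenspaces). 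Transitivity of $\Sp(8)$ on such pairs is Witt's theorem, the stabilizer is the diagonally embedded $\GL(4)$, and so $Z_{\tilde L}\cong\Sp(8)/\GL(4)$, which is connected since $\Sp(8)$ is. The one small point worth noting is that your map $z\mapsto\rho z$ intertwines the twisted conjugation on $Z_{\tilde L}$ with conjugation by $\theta(g)$ rather than by $g$ on $\Omega$; but this plays no role, since you only use the correspondence between $\Omega$ and ordered pairs of transverse Lagrangians (which is equivariant for ordinary conjugation), not the intertwining with the twisted action. What your approach buys is conceptual transparency, independence from case-by-case matrix manipulation, and an argument that visibly generalizes to $\Sp(2n)$ for any $n$; what the paper's approach buys is self-containedness, at the cost of a longer verification.
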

\begin{proof}
By Lemma~\ref{lem:Ocapp2} and the above discussion, it is enough to prove that
\[
Z_L=\{z\in \Sp(8)\, : \, \theta(z)=z^{-1}\}
\]
is connected. Let $z=\left(\begin{smallmatrix}A & B \\ C & D \end{smallmatrix}\right)$
where $A,B,C,D$ are $4\times 4$ matrices. The condition $z\in Z_L$ is equivalent to 
\begin{equation}\label{eq:unicaorbita}
A^2=I_4+BC, \quad D=A^t, \quad B=B^t, \quad C=C^t, \quad AB=BA^t {\text{ and }}
A^t\,C=CA.
\end{equation}
Acting by $g\in\Sp(8)$, via $g\cdot z = g z \theta(g)^{-1}$, 
we remain in the same connected component. 
Using $g$ of the form
$\left(
\begin{smallmatrix} \alpha & 0 \\ 0 & ^t \alpha^{-1} \end{smallmatrix}
\right) $ with $\alpha \in \GL(4)$ we see we can assume that $B$ is of
the form $\left(\begin{smallmatrix}I_r & 0 \\ 0 &
  0 \end{smallmatrix}\right)$ and $A =\left(\begin{smallmatrix}a & b
  \\ c & d \end{smallmatrix}\right)$, and using equations
\eqref{eq:unicaorbita} we see that $c=0$, $a=a^t$ and $d^2=1$. Then
using $g$ of the form $\left(\begin{smallmatrix} \beta & 0 \\ 0 & ^t
  \beta^{-1} \end{smallmatrix} \right) $ and
$\beta=\left(\begin{smallmatrix} I_r & 0 \\ 0 &
  \gamma \end{smallmatrix}\right)$ we can also assume that $d$ is
diagonal. Now we
choose $g$ of the form $\left(\begin{smallmatrix}I & sI \\ 0 &
  I \end{smallmatrix}\right)$. We get
\[
g\cdot z = 
\begin{pmatrix}
A + s C & B+ s(A+A^t)+s^2C \\
C & A^t+sC
\end{pmatrix}.
\] If we compute the determinant of $B+ s(A+A^t)+s^2C$ we see that it
is a polynomial in $s$ and its lowest degree term is $4\det
(d)$. Hence there exists $s$ such that $B+ s(A+A^t)+s^2C$ is
invertible. So we can assume $B$ invertible and arguing as before we
can assume $B=I_4$. Now, for $B=I_4$, the equations in \eqref{eq:unicaorbita}
take the form $A=A^t=D$, and $C=-I+A^2$. Such equations define an algebraic subset 
which is isomorphic to an affine space and, in particular, connected. 
Therefore $Z_L$ is connected.
\end{proof}

\subsection{The coordinate ring of the real model orbit}\label{ssec:modelloreale2}

Here we describe the coordinate ring of $\mathcal{O} \cap \mathfrak{p}$. In \cite{AHV} it is
shown that this description follows from the vanishing of certain cohomology groups which in
the case of the real model orbit is conjectural (see Conjecture~3.13
and Theorem~7.13 in \cite{AHV}).
 
Let $M_0$ be the wonderful comodel variety of cotype
$\mathsf{E}_8$. This is a wonderful variety for the group
$\Spin(14)$. Consider the parabolic induction $M$ of $M_0$ to $\mathrm{Spin}(16)$. This is a wonderful
variety with spherical roots equal to $\tau_1,\dots,\tau_7$ and
colors $D'_0,\dots,D'_8$, where $\omega(D_0')=\omega^\mathsf{D}_0$ and,
for $i>0$, $D'_i$ is induced by the respective color of $M_0$ (see Section~\ref{sec:comodello}).

Let $\mathcal{O}_\mathfrak{p}=\mathcal{O}\cap \mathfrak{p}$. Then $X_\mathfrak{p}=\mathbb{P}(\mathcal{O}_\mathfrak{p})\subset\mathbb{P}(\mathfrak{p})$ is the open orbit of $M$,
and $\overline{\mathcal{O}_\mathfrak{p}}$ is the cone $C_D$ for $D=D'_8$. Indeed $e=h_8\in V_{\omega(D'_8)}^*=V^*
_{\omega^\mathsf{D}_7}=V _{\omega^\mathsf{D}_7}\simeq \mathfrak{p}$ and $D'_8$ is faithful.

\begin{theorem}\label{teo:E8R}
The cone $\overline{\mathcal{O}_\mathfrak{p}}$ is normal and we have the following
isomorphism of $K$-modules
\[ 
\mathbb{C}[\overline{\mathcal{O}_\mathfrak{p}}]\simeq \bigoplus_{\lambda \in \Lambda^+_\mathsf{E}}V(\lambda).
\]
\end{theorem}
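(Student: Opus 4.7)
The plan is to identify $\overline{O_\gop}$ with the affine cone $C_{D'_8}\subset V^*_{D'_8}\isocan\gop$, apply Proposition~\ref{prop: cono1} to obtain its normality together with the equality
\[
\mC[\overline{O_\gop}]=A(D'_8)=\wt A(D'_8)=\bigoplus_{n\geq 0} \Gamma(M, \calL_{n D'_8}),
\]
and then match the isotypic decomposition of the right-hand side with $\bigoplus_{\grl \in \Lambda^+_\sfE} V(\grl)$. Since $D'_8$ is faithful by the discussion preceding the theorem, everything reduces to (a) the surjectivity of the multiplication of globally generated line bundles on $M$, (b) the minusculeness of $D'_8$ in $\mN\Delta(M)$, and (c) the determination of the highest weights $\omega^M(E)$ that arise.

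For (a), I would invoke Theorem~\ref{teo:comodello} applied to the comodel wonderful variety $M_0$ of cotype $\sfE_8$, of which $M$ is the parabolic induction: the reduction to low fundamental triples of Lemma~\ref{lem: proiettiva normalita}, together with Proposition~\ref{prop: ind-par}, transfers the required inclusions $s^\grg V_F \subset V_D V_E$ from $M_0$ to $M$, while the extra color $D'_0 \in \Delta(M) \senza \gre(\Delta(M_0))$ is handled by Corollary~\ref{cor: moduli ortogonali}. For (b), I would use that the Cartan pairing of the comodel of cotype $\sfE_8$ coincides with that of the model of $\sfE_8$ and that $D_8$ is minuscule in the latter (see the list in Section~\ref{sec:orbite}); since $D'_0$ has trivial Cartan pairing with every $\tau_i$, minusculeness persists under parabolic induction. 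Proposition~\ref{prop: cono1}.i) then yields the projective normality of $X_{D'_8}$ and hence the normality of the cone.

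For (c), Proposition~\ref{prop: decomposizione sezioni} gives
\[
\Gamma(M,\calL_{nD'_8}) = \bigoplus_{E\leq_\grS nD'_8} s^{nD'_8 - E} V_E,
\]
and each such $E$ lies in $\mN\{D'_1,\ldots,D'_8\}$ because the $\tau_i$'s have no $D'_0$-component in $\mZ\Delta(M)$ (the spherical roots of $M_0$ are weights of $L^{\mathrm{ss}}$, so they are trivial on $\calX(L/L^{\mathrm{ss}})$). The identities $D_i = n_i D_8 - \gamma_i$ in $\mZ\Delta(M^{\mathrm{mod}}_{\sfE_8})$ established in the proof of Theorem~\ref{teo:E8} lift to $D'_i = n_i D'_8 - \gamma_i^\tau$ in $\mZ\Delta(M)$ via the coincidence of Cartan pairings, so $\omega^M(D'_i) = n_i \omega^\grg_8 - \gamma_i^\tau$. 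Applying the Weyl element $w$ with $w(\gra_i)=\grg_i$ and $w(\grs_i)=\tau_i$ to the same identities yields $n_i \omega^\grg_8 - \gamma_i^\tau = \omega^\grg_i$; hence $\omega^M(D'_i) = \omega^\grg_i$, and every $\grl = \sum m_i \omega^\grg_i \in \Lambda^+_\sfE$ is realized uniquely by $E = \sum m_i D'_i$, each with multiplicity one by sphericity of $M$. The principal technical point is the transfer of surjectivity from $M_0$ to $M$ in step (a); the remainder is a transposition via $w$ of the computations already carried out in Section~\ref{ssec:modellocomplesso}.
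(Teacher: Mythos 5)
Your proposal is correct and follows essentially the same route as the paper's proof: identify $\overline{O_\gop}$ with the cone $C_{D'_8}$, use Theorem~\ref{teo:comodello} and the minusculeness of $D'_8$ together with Proposition~\ref{prop: cono1} to conclude normality, decompose $\bigoplus_n\Gamma(M,\calL_{nD'_8})$ via Proposition~\ref{prop: decomposizione sezioni}, discard the $D'_0$ coordinate, and transfer the identities from Theorem~\ref{teo:E8} through the Weyl element $w$. Your step (a) is a welcome expansion of a point the paper states very tersely (the paper simply says ``from Theorem~\ref{teo:comodello} it follows''), and your use of Proposition~\ref{prop: ind-par} and Corollary~\ref{cor: moduli ortogonali} to carry the surjectivity from the comodel $M_0$ to its parabolic induction $M$ is the right way to fill it in. One point you compress too much: your ``each with multiplicity one by sphericity of $M$'' only covers multiplicity-freeness within a single degree $\Gamma(M,\calL_{nD'_8})$; to see that a given highest weight cannot occur in two different degrees $n\neq n'$ one needs that $\omega(D'_8),\tau_1,\dots,\tau_7$ are linearly independent in $\calX(T)$, a fact the paper singles out explicitly and observes fails for $M_0$ but holds for $M$ precisely because of the extra color $D'_0$. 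Your own identities $\omega^M(D'_i)=\omega^\grg_i$ do imply this, but the inference deserves to be spelled out rather than attributed to sphericity.
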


\begin{proof}
Notice that the combinatorics of colors and spherical roots is essentially
the same as that of the model wonderful variety of type $\mathsf{E}_8$.  In
particular, since $D'_8$ is minuscule, from Theorem~\ref{teo: comodello-proj-norm} 
it follows that $\overline{\mathcal{O}_\mathfrak{p}}$ is normal. 
Hence, as a $K$-module we have that its coordinate ring is the sum
of all the modules $V_{nD'_8-\tau}$ with $n\geqslant 0$, $\tau \in
\mathbb{N}[\tau_1,\dots,\tau_7]$ and $nD'_8-\tau\in \mathbb{N}[D'_0,\dots,D'_8]$.
Moreover, with $\tau$ as above, the latter condition is equivalent to $nD'_8-\tau\in \mathbb{N}[D'_1,\dots,D'_8]$.
Now notice that $\omega(D_8')$ and $\tau_1,\dots,\tau_7$ are linearly
independent (this is not true for $M_0$ but it
is true for $M$ because of the presence of the extra color $D'_0$). So
we obtain that the coordinate ring of $\overline{\mathcal{O}_\mathfrak{p}}$ is the sum of
all modules $V(n\omega^\mathsf{D}_8-\tau)$ where $n$ and $\tau$ are as
above. Finally, the computation is exactly the same as that given 
in the proof of Theorem~\ref{teo:E8} for the model orbit of type $\mathsf{E}_8$, 
since the two situations are conjugated by an element of the Weyl group.
\end{proof}

Notice that our proof of the normality of $\overline {\mathcal{O}_\mathfrak{p}}$
via Theorem~\ref{teo: comodello-proj-norm} 
did not require any computer calculation. 
Moreover, the description of the coordinate ring of the normalization of $\overline {\mathcal{O}_\mathfrak{p}}$ 
is independent of Theorem~\ref{teo: comodello-proj-norm}.

The combinatorics of distinguished subsets of colors 
allows to describe completely the $K$-orbits in the closure of $\mathcal{O}_\mathfrak{p}$
(see for example \cite{Ga}), and in particular to prove that
$\overline {\mathcal{O}_\mathfrak{p}}\smallsetminus \mathcal{O}_\mathfrak{p}$ has
codimension at least two in $\overline {\mathcal{O}_\mathfrak{p}}$. Indeed, one sees that this property depends
only on the combinatorics of colors and spherical roots, 
and in this case the combinatorics is the same as that of  
the complex model orbit, whose boundary has codimension at least two.
More precisely, here $\mathrm{codim}_{\overline{\mathcal{O}}}(\overline{\mathcal{O}} \smallsetminus \mathcal{O})=16$ 
and $\mathrm{codim}_{\overline{\mathcal{O}_\mathfrak{p}}}(\overline{\mathcal{O}_\mathfrak{p}}\smallsetminus \mathcal{O}_\mathfrak{p})=8$.

Here we can avoid such an argument.
Below we prove that $\mathbb{C}[\mathcal{O}_\mathfrak{p}]= \mathbb{C}[\overline {\mathcal{O}_\mathfrak{p}}]$ and
this also implies that $\overline {\mathcal{O}_\mathfrak{p}}\smallsetminus \mathcal{O}_\mathfrak{p}$ has
codimension at least two. 

\subsection{Computation of the space of sections of a line bundle associated to an admissible pair: general considerations}

For the next lemma we put ourself in a general setting. Let $G$ be simply connected and let
$M$ be a wonderful compactification of $G/H$.  Let $E\in \mathbb{N}\Delta$.
Let $\mathbb{C}\, h$ be a line in $V(\omega_E^*)$ where $H$ acts with
character $\xi_E$. Assume that the stabilizer of $\mathbb{C}\,h$ is equal to 
$H$ and let $H_0$ be the stabilizer of $h$. 
Furthermore, assume that $\xi_E$ induces an isomorphism of $H/H_0$
with $\mathbb{C}^*$: we identify $H/H_0$ with $\mathbb{C}^*$ choosing such an 
isomorphism. Finally, notice that $H/H_0$ acts on the right on $G/H_0$.

\begin{lemma}\label{lemma:0}Let $D\in \mathbb{Z} \Delta$ and let $\chi$ denote the restriction of $\xi_D$ to $H_0$.
We have the following isomorphism of $G$-modules
\[ \Gamma(G/H_0,\mathcal V_\chi) \simeq 
\bigoplus_{\substack{n\in\mathbb{Z}, \sigma \in \mathbb{Z}\Sigma :\\ D+nE-\sigma \in \mathbb{N}\Delta}} V_{D+nE-\sigma}. \] 
\end{lemma}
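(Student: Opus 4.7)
My plan is to exploit the fact that $H/H_0 \isocan \mC^*$ (via $\xi_E$) to present $\pi\colon G/H_0 \lra G/H$ as a principal $\mC^*$-bundle, decompose the sections of $\calV_\chi$ according to this $\mC^*$-action, and then compute each weight component using the description of $\Gamma(M,\calL_F)$ already available from Proposition~\ref{prop: decomposizione sezioni}.

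First, I would identify $\Gamma(G/H_0,\calV_\chi)$ with the space of regular functions $f\colon G \lra \mC$ such that $f(gh_0) = \chi(h_0)^{-1} f(g)$ for all $h_0 \in H_0$. The right multiplication by $H/H_0 = \mC^*$ on $G/H_0$ induces a decomposition into weight subspaces
\[
\Gamma(G/H_0,\calV_\chi) = \bigoplus_{n \in \mZ} \Gamma(G/H_0,\calV_\chi)_n,
\]
where the $n$-th summand consists of those $f$ satisfying additionally $f(gh) = \xi_E(h)^{-n}f(g)$ for all $h \in H$. Since $\chi = \xi_D|_{H_0}$, the characters of $H$ whose restriction to $H_0$ equals $\chi$ are precisely $\xi_D \cdot \xi_E^n = \xi_{D+nE}$ for $n \in \mZ$, and therefore $\Gamma(G/H_0,\calV_\chi)_n$ is naturally identified with $\Gamma(G/H,\calL_{D+nE})$, where $\calL_{D+nE}$ is the $G$-linearized line bundle on $G/H$ corresponding to the character $\xi_{D+nE}$.

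Next, for each $n \in \mZ$, I would compute $\Gamma(G/H,\calL_{D+nE})$ by realizing $\calL_{D+nE}$ as the restriction to $G/H$ of a line bundle on $M$. Since the restriction map $\Pic(M) \lra \calX(H)$ has kernel $\mZ\grS$, the various extensions of $\calL_{D+nE}$ to $M$ are the $\calL_{D+nE+\tau}$ with $\tau \in \mZ\grS$, and sections on the open orbit $G/H$ correspond to sections on $M$ with possible poles along the boundary $\grS$. Hence
\[
\Gamma(G/H,\calL_{D+nE}) = \varinjlim_{\tau \in \mN\grS} \Gamma(M,\calL_{D+nE+\tau}),
\]
where the transition maps are multiplication by invariant sections $s^{\tau'-\tau}$. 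Applying Proposition~\ref{prop: decomposizione sezioni} to each term and passing to the limit, the condition $E' \leq_\grS D+nE+\tau$ for some $\tau \in \mN\grS$ becomes simply $E' - (D+nE) \in \mZ\grS$, so
\[
\Gamma(G/H,\calL_{D+nE}) = \bigoplus_{\substack{\sigma \in \mZ\grS \\ D+nE-\sigma \in \mN\grD}} V_{D+nE-\sigma}.
\]
Summing over $n \in \mZ$ yields the claimed decomposition.

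The main technical point to verify carefully is the identification of the $n$-th weight component of $\Gamma(G/H_0,\calV_\chi)$ with $\Gamma(G/H,\calL_{D+nE})$; this requires that the assumption $\xi_E\colon H/H_0 \lra \mC^*$ be an isomorphism (so that distinct integers $n$ really produce distinct characters of $H$), and that $\calV_\chi$ on the total space descend, weight by weight, to the line bundles $\calL_{D+nE}$ on the base. The second issue — computing sections on $G/H$ as a direct limit of sections on $M$ — is routine once one knows that $M\setminus G/H$ is a union of Cartier $G$-stable divisors whose local equations are the sections $s^\grs$.
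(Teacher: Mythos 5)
Your proof is correct, but it takes a genuinely different route from the paper's. The paper works directly at the level of $\mC[G]$: it writes $\Gamma(G/H_0,\calV_\chi)\isocan(\mC[G]\otimes\mC_{-\chi})^{H_0}$, expands $\mC[G]$ by Peter--Weyl, and then decomposes each $(V(\grl)\otimes\mC_{-\chi})^{H_0}$ into its $H/H_0$-eigenspaces, invoking sphericity to see that each eigenspace is at most one-dimensional and occupied precisely when the character is $\xi_F$ for some $F\in\mN\grD$. You instead decompose the $\mC^*$-graded ring of sections first, reduce each graded piece to $\Gamma(G/H,\calL_{D+nE})$, and compute the latter as a direct limit of $\Gamma(M,\calL_{D+nE+\tau})$ over $\tau\in\mN\grS$ using the fact that $M\senza G/H$ is the union of the boundary divisors and that $\calO(M^\grs)=\calL_\grs$; Proposition~\ref{prop: decomposizione sezioni} then finishes. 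Both arguments consume the same underlying inputs (the exact sequence $0\lra\mZ\grS\lra\Pic(M)\lra\calX(H)\lra 0$ and the one-dimensionality of $V(\gro_F^*)^{(H)}_{\xi_F}$), but the paper stays at the level of $H_0$-invariants in $\mC[G]$ while yours passes through the geometry of the wonderful compactification; neither is shorter in any serious way, so the choice is a matter of taste.

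One small slip: the characterization of the $n$-th graded piece as ``those $f$ with $f(gh)=\xi_E(h)^{-n}f(g)$'' is incompatible with the $H_0$-semi-invariance $f(gh_0)=\chi(h_0)^{-1}f(g)$, since $\xi_E^{-n}$ is trivial on $H_0=\ker\xi_E$ while $\chi$ in general is not. The correct condition is $f(gh)=\xi_{D+nE}(h)^{-1}f(g)$ for all $h\in H$, which is exactly what you use in the next sentence when you identify the eligible characters of $H$ as $\xi_{D+nE}$, so this does not affect the argument.
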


\begin{proof} We have \[ \Gamma(G/H_0,\mathcal V_\chi)\simeq (\mathbb{C}[G]\otimes \mathbb{C}_{-\chi})^{H_0}\simeq \bigoplus_{\lambda \in \mathcal{X}(T)^+}
  V(\lambda^*) \otimes (V(\lambda)\otimes \mathbb{C}_{-\chi} )^{H_0}.\] Now notice
  that $H$ acts on $(V(\lambda)\otimes \mathbb{C}_{-\chi} )^{H_0}$ and so the latter
  decomposes into $H$-stable subspaces where $H$ acts by a character of
  the form $\xi_D+n\xi_E$. Moreover, all these eigenspaces are of
  dimension one, since $H$ is spherical. Hence we have
\[ \Gamma(G/H_0,\mathcal V_\chi) \simeq 
\bigoplus_{\lambda\in\mathcal{X}(T)^+, n\in \mathbb{Z}}V(\lambda^*) \otimes (V(\lambda)\otimes \mathbb{C}_{-\xi_D-n\xi_E} )^{H}.\]

Now recall that the space of spherical vectors $V(\omega_F^*)^{(H)}_{\xi_F}$ is nonzero if and only if $F\in\mathbb{N}\Delta$,
and $\xi_F=\xi_{D+nE}$ if and only if $F=D+nE-\sigma$ for $\sigma\in\mathbb{Z}\Sigma$.
\end{proof}

Let us specialize this identity to the case $D=0$ and compare the
coordinate ring of $G/H_0$ with the coordinate ring of the
normalization of its closure in $V(\omega_E^*)$. Since
\[
\mathbb{C}[\widetilde C_E] = 
\bigoplus _{n\geqslant 0 }\Gamma(M,\mathcal L_{nE})
\simeq \bigoplus _{\substack{n\geqslant 0, \sigma \in \mathbb{N}\Sigma :\\ nE-\sigma \in \mathbb{N}\Delta}} V_{nE-\sigma},
\] 
we have the following.
\begin{lemma}\label{lemma:A}
The equality $\mathbb{C}[G/H_0] = \mathbb{C}[\widetilde C_E]$
 is equivalent to the fact that, for all $n\in \mathbb{Z} $ and $\sigma \in
\mathbb{Z}\Sigma$, if $nE-\sigma\in \mathbb{N}\Delta$ then $n\geqslant 0$ and $\sigma \in
\mathbb{N}\Sigma$.
\end{lemma}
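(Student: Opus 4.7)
The strategy is to compare, summand by summand, the $G$-module decompositions of both rings. By Proposition~\ref{prop: decomposizione sezioni} applied degreewise,
\[
\mC[\wt C_E] = \bigoplus_{n \geq 0} \grG(M,\calL_{nE}) = \bigoplus_{(n,\grs)\in J} s^\grs V_{nE-\grs},
\]
with $J = \{(n,\grs) : n\geq0,\, \grs\in\mN\grS,\, nE-\grs \in \mN\grD\}$. On the other hand, Lemma~\ref{lemma:0} applied with $D=0$ (so that $\chi$ is trivial and $\calV_\chi = \calO_{G/H_0}$) gives
\[
\mC[G/H_0] = \bigoplus_{(n,\grs)\in I} V_{nE-\grs},
\]
with $I = \{(n,\grs) : n\in\mZ,\, \grs\in\mZ\grS,\, nE-\grs \in \mN\grD\}$. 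Clearly $J \subseteq I$, and the stated condition is precisely the assertion that $I = J$.

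Since $H/H_0 \isocan \mC^*$ acts on $h$ through $\xi_E$ (the identity character under the given identification), one has $\mC^*\cdot h \subset H\cdot h \subset G\cdot h$, so the orbit $G\cdot h \isocan G/H_0$ has the same dimension as $\wt C_E$ and is therefore open and dense in it. Consequently, restriction of functions defines a $G$-equivariant injection $\mC[\wt C_E] \hookrightarrow \mC[G/H_0]$, which also respects the $H/H_0$-grading on both sides. I would then verify that the parametrization of summands by pairs $(n,\grs)$ is unambiguous: if $nE-\grs = n'E-\grs'$ in $\mZ\grD$, then $(n-n')E \in \mZ\grS$, and passing to $\calX(H) \isocan \mZ\grD/\mZ\grS$ yields $(n-n')\xi_E = 0$; since $\xi_E$ generates the infinite cyclic group $\calX(H/H_0)$, this forces $n=n'$ and hence $\grs=\grs'$. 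Thus the summand $s^\grs V_{nE-\grs}$ in $\mC[\wt C_E]$ maps into the summand $V_{nE-\grs}$ in $\mC[G/H_0]$ indexed by the same pair, and the equality $\mC[\wt C_E] = \mC[G/H_0]$ is equivalent to $I = J$, which is the stated condition.

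The only mild subtlety is that $\mC[G/H_0]$ need not be multiplicity-free as a pure $G$-module (distinct $F, F' \in \mN\grD$ may share the same highest weight $\omega_F$); however, within each $H/H_0$-weight $n$ the $H$-character $n\xi_E$ is fixed, so $F$ is determined by $\omega_F$ alone, and the $H/H_0$-graded decomposition is multiplicity-free on both sides. The matching of the isotypic components in each fixed degree therefore proceeds without ambiguity, and the equivalence follows.
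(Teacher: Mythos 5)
Your proof is correct and matches the paper's intended argument, which is left implicit (the paper simply juxtaposes the two decompositions from Lemma~\ref{lemma:0} and the displayed formula for $\mC[\wt C_E]$ and asserts the lemma follows). You fill in the needed verifications — that the restriction map is a graded $G$-equivariant injection and that the parametrization by pairs $(n,\grs)$ is unambiguous — which is exactly what makes the comparison of index sets legitimate.
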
 

We now apply this lemma to our special case in which $G$ is
$\Spin(16)$, $H_0=G(e)$, $M$ is the wonderful compactification of
$X_\mathfrak{p}$ and $E=D'_8$.

\begin{lemma}\label{lemma:B}
For all $n\in\mathbb{Z}$ and $\sigma\in\mathbb{Z}\Sigma$, if $nD'_8-\sigma\in\mathbb{N}\Delta$ 
then $n\geqslant0$ and $\sigma\in\mathbb{N}\Sigma$.
\end{lemma}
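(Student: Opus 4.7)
The plan is to mimic the argument from the proof of Theorem~\ref{teo:E8}, exploiting the fact that the Cartan pairing of the comodel variety agrees with that of the model $\sfE_8$ variety, together with the effect of the parabolic induction on the extra color $D'_0$.

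First I would reduce to the relation in the sublattice $\mZ\{D'_1,\dots,D'_8\}$. Write $nD'_8-\grs = \sum_{i=0}^{8} a_i D'_i$ with $a_i \in \mN$. By construction $M$ is obtained from the comodel wonderful variety $M_0$ of $\Spin(14)$ by parabolic induction, and $D'_0$ is precisely the extra color lying outside $\gre(\Delta(M_0))$. By property~(6) of parabolic induction, $\calO(D'_0)$ restricts trivially to $M_0$, so each spherical root $\tau_j$ (viewed as an element of $\Pic(M_0)$) has zero $D'_0$-coefficient, that is $c(D'_0,\tau_j)=0$ for all $j$. Consequently $\grs \in \mZ\{D'_1,\dots,D'_8\}$, and since $D'_8$ also has zero $D'_0$-coefficient, comparing the coefficient of $D'_0$ on both sides forces $a_0 = 0$.

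Next I would transfer the linear relations computed in the proof of Theorem~\ref{teo:E8}. By construction of the comodel (Section~\ref{sec:comodello}), the Cartan pairing $c^\vee\colon\Delta^\vee\times\Sigma^\vee\to\mZ$ coincides with the Cartan pairing of the model $\sfE_8$ variety under the given bijections. Hence the explicit formulas displayed in the proof of Theorem~\ref{teo:E8}, expressing each color $D_i$ ($i=1,\dots,7$) as $D_i = n_i D_8 - \sum_j c_{ij}\grs_j$ with $n_i \in \mZ_{>0}$ and $c_{ij}\in\mN$, transfer verbatim:
\[
D'_i = n_i D'_8 - \tau^{(i)}, \qquad n_i \in \mZ_{>0}, \quad \tau^{(i)} \in \mN\Sigma, \quad i=1,\dots,7.
\]

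Finally I would substitute these relations into $\sum_{i=1}^{8} a_i D'_i = nD'_8 - \grs$ to obtain
\[
\Big(a_8 + \sum_{i=1}^{7} a_i n_i\Big) D'_8 - \sum_{i=1}^{7} a_i \tau^{(i)} = nD'_8 - \grs.
\]
Since $\{D'_8,\tau_1,\dots,\tau_7\}$ is linearly independent in $\mZ\Delta$ (as already observed in the proof of Theorem~\ref{teo:E8R}, thanks precisely to the presence of the extra color $D'_0$), comparing coefficients yields $n = a_8 + \sum_i a_i n_i \geq 0$ and $\grs = \sum_i a_i \tau^{(i)} \in \mN\Sigma$, as claimed.

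There is no real obstacle beyond the bookkeeping above: the entire argument is linear algebra once one has (i) the explicit non-negative expressions of the $D_i$ already tabulated in the proof of Theorem~\ref{teo:E8} and (ii) the vanishing $c(D'_0,\tau_j)=0$ which is immediate from the structure of parabolic induction.
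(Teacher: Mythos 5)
Your overall strategy---reduce the positivity conditions to the model $\sfE_8$ lattice combinatorics via the shared Cartan pairing---is the same as the paper's. However, the way you handle the extra color $D'_0$ contains a genuine gap in Step~2.

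You claim $c(D'_0,\tau_j)=0$ for all $j$, citing property~(6) of parabolic induction. This does not follow: property~(6) only says that $\calO(D'_0)$ restricts to the trivial line bundle on $M_0$, i.e.\ that the $\Pic_{L^{\mathrm{ss}}}(M_0)$-component of $\rho([D'_0])$ vanishes. It says nothing about the coefficient $c(D'_0,\tau_j)$ of $D'_0$ in the expansion of $\tau_j$ inside $\Pic(M)=\mZ\Delta$. In fact the claim is false: using $\omega(D'_0)=\gre_1$, the weight data of Section~\ref{sec:comodello} for the comodel of cotype $\sfE_8$, and the fact that $\langle\omega(D'_i),\tau_0^\cech\rangle=0$ for $i\geq 1$ (as $P_{\tau_0}$ does not move $D'_i$), one computes that $c(D'_0,\tau_j)\neq 0$ for one of the spherical roots. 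Consequently your Step~3 (that $a_0=0$ follows) and Step~4 (that the relations $D'_i=n_iD'_8-\tau^{(i)}$ from the proof of Theorem~\ref{teo:E8} transfer ``verbatim'' to $\mZ\Delta$) do not hold: for several $i$ the transferred relation acquires a nontrivial $D'_0$-term, so the substitution in Step~5 is not carried out in a consistent lattice.

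The correct argument is in fact simpler and is the one given in the paper. Write $nD'_8-\grs=\sum_{i=0}^8 a_iD'_i$. The coefficients $a_1,\dots,a_8$ are computed from $n$ and $\grs$ using only the Cartan pairing restricted to $\{D'_1,\dots,D'_8\}\times\Sigma$, which by construction of the comodel variety coincides with the Cartan pairing $\{D_1,\dots,D_8\}\times\Sigma$ of the model $\sfE_8$ variety. Hence the inequalities $a_1,\dots,a_8\geq0$ (part of the hypothesis) are exactly the inequalities appearing in the model $\sfE_8$ case, and Lemma~\ref{lemma:A} applied to the complex model orbit---whose hypothesis is supplied by Theorem~\ref{teo:E8}---already yields $n\geq0$ and $\grs\in\mN\Sigma$. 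The extra condition $a_0\geq0$ is simply never used, so no information about $c(D'_0,\tau_j)$ is needed.
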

\begin{proof}
Write $nD'_8-\sigma=\sum_{i=0}^8 a_iD'_i$ and notice that the
conditions $a_1,\dots,a_8\geqslant 0$ are equivalent to the conditions
obtained from Theorem~\ref{teo:E8} 
by applying Lemma~\ref{lemma:A} to the complex model orbit of type $\mathsf{E}_8$. 
\end{proof}

From this and Theorem~\ref{teo:E8R}, by applying Lemma~\ref{lemma:A} to the real model orbit, we get that
\[
\mathbb{C}[\mathcal{O}_\mathfrak{p}]=\mathbb{C}[\overline{\mathcal{O}_\mathfrak{p}}].
\]

\subsection{Sections of the line bundle associated to the admissible pair for the real model orbit}\label{ssec:modelloreale3}

We now want to compute the characters $\gamma_e$ and $\gamma'_e$. For this
we further analyze the stabilizer $T(e)$ of $e$ in $T$. Let
$\Lambda^\vee_\mathsf{E}$ be the lattice dual to $\Lambda_\mathsf{E}$. 
Recall the definition of the vectors $\eta_i$ given in the equations \eqref{eq:u} 
and denote by $\eta^*_i$ the vectors of the 
dual basis. Let $x_1^*=\eta^*_1+\eta^*_2$ and $x^*_i=\eta^*_{2i-1}-\eta^*_{2i}$
for $i=2,3,4$ and define $R^\vee$ as the sublattice of
$\Lambda^\vee_\mathsf{E}$ generated by $x^*_1,\dots,x^*_4$ and $R^\vee_0$ as
the sublattice generated by $x^*_1+x^*_2, x^*_1+x^*_3, x^*_1+x^*_4$.
Then $R^\vee$ and $R^\vee_0$ are direct factors of
$\Lambda_\mathsf{E}^\vee$. Finally, let $T_0$ be the maximal torus of $K_0$
(the subgroup of $K$ isogenous to $\Spin(14)$ introduced above) 
contained in $T$ such that $T_0(e)=T(e)\cap T_0$. We have
\[ 
T = \Lambda_\mathsf{E}^\vee \otimes _\mathbb{Z} \mathbb{C}^*, \qquad T(e) = R^\vee \otimes _\mathbb{Z}
\mathbb{C}^* \quad {\text{ and }} \quad T_0(e) = R^\vee_0 \otimes _\mathbb{Z} \mathbb{C}^*.
\] 

We already know that the Levi factor of $K(e)$ is isomorphic to
$\GL(4)$ so that any character is a power of the determinant. We
describe now the center of $\GL(4)$ and the determinant as an element
of dual lattice of $R^\vee$. Let
$z^*=x^*_1-x^*_2-x^*_3-x^*_4=2\varepsilon^*_1+\varepsilon^*_2+\varepsilon^*_3+\varepsilon^*_4+\varepsilon^*_5$,
then using the description of $\mathfrak{h}_0$ given in Section~\ref{ssec:cotipoE8} 
it is easy to see that $z^*$ is a central
cocharacter. In particular, if $R^\vee_Z=\mathbb{Z} z^*$ then
\[
T_Z= R_Z^\vee\otimes_\mathbb{Z} \mathbb{C}^*
\] is the center of $L\simeq \GL(4)$. Now we compute the character
$\gamma_e$. We have already noticed that it is enough to compute its restriction to $T$. 
Moreover its restriction to 
$T_0(e)$ must be trivial since $\SL(4)$ has no characters so we compute
its restriction to
$T_Z$. Using the description of the stabilizer $\mathfrak{k}(e)$ in \eqref{eq:ke}
and the description of $\mathfrak{h}_0$ given in Section~\ref{ssec:cotipoE8} it is easy to prove that 
$\langle \gamma_e, z^*\rangle=4$. Hence we see that $\gamma_e$ restricted to the Levi of $K(e)$, which
is isomorphic to $\GL(4)$, equals the determinant and
\[
\gamma_e=x_1-x_2-x_3-x_4.
\]
In particular it is not
the square of a character. However, on the covering $G$ of $K$ we can
consider the character $-\omega_0^\mathsf{D}=-\varepsilon_1$, and we denote by $\chi$ its
restriction to $G(e)$. We have 
\[\chi=\tfrac12 (x_1-x_2-x_3-x_4)=\tfrac 12 \gamma'_e.\] Notice that
$\xi_{D'_0}=\chi$. Indeed, $\omega_{D'_0}=\omega_0^\mathsf{D}$ and $G(e)$ 
contains the unipotent radical $U_0^-$ of $P_0^-$. Hence the only
$G(e)$-semiinvariant in $V(\omega_0^\mathsf{D})$ is the lowest weight vector
which has weight $-\omega_0^\mathsf{D}$.

We now describe the space of sections $\Gamma(\mathcal{O}_\mathfrak{p},\mathcal V_\chi)$. In
\cite{AHV} the same description follows from a vanishing result which
is not proved in this case (see Conjecture 8.6 and Theorem~8.10 in
\cite{AHV}).

\begin{theorem}\label{teo:E8chi}  
We have the following isomorphism of $G$-modules
\[ \Gamma(\mathcal{O}_\mathfrak{p},\mathcal V_\chi)\simeq\bigoplus_{\lambda\in \Lambda^+_\mathsf{E}} V(\omega^\mathsf{D}_0+\lambda). \]
\end{theorem}
\begin{proof}
We have seen above that $\xi_{D'_0}=\chi$. Hence we can apply Lemma~\ref{lemma:0} 
with $D=D'_0$ and $E=D'_8$. We obtain
\[
\Gamma(\mathcal{O}_\mathfrak{p},\mathcal V_\chi)\simeq
\bigoplus_{\substack{n\in \mathbb{Z},\sigma\in \mathbb{Z}\Sigma:\\ D'_0+nD'_8-\sigma\in \mathbb{N}\Delta}} 
V_{D'_0+nD'_8-\sigma} .
\] 
Write $D'_0+nD'_8-\sigma=\sum_{i=0}^8 a_i D'_i$. Notice that as in the
proof of Lemma~\ref{lemma:B} this implies $a_1,\dots,a_8\geqslant0$. 
Hence we obtain $n\geqslant 0$ and $\sigma\in \mathbb{N}\Sigma$.  
In particular the condition $a_0\geqslant 0$ is automatically satisfied. 
As already noticed in the proof of Theorem~\ref{teo:E8R},
$\Lambda^+_\mathsf{E}$ consists of the weights of the form 
$\omega(nD'_8-\sigma)$ for $n\geqslant0$, $\sigma\in\mathbb{N}\Sigma$ and $nD'_8-\sigma\in\mathbb{N}\Delta$.
\end{proof}

\section{Degeneracy of the multiplication}

Here we give a counterexample to the surjectivity of the multiplication of sections of line bundles, on wonderful varieties, generated by global sections. 

\subsection{Counterexample}\label{subsect: non projnorm}

Let $G$ be $\SO(2r+1)$ and let $M$ be the model wonderful
variety for the group $G$. It is not isomorphic to
the model wonderful variety for $\Spin(2r+1)$. Its set of
colors $\Delta=\{D_1,\dots,D_r\}$ is in bijection, via $\omega$, with
the set $\{\omega_1,\dots,\omega_{r-1},2\omega_r\}$. Its set
of spherical roots is
$\{\alpha_1+\alpha_2,\dots,\alpha_{r-1}+\alpha_r,2\alpha_r\}$. If
$r=2$ or $r=3$, then the multiplication of sections is surjective. Set
$r=4$ and consider the low triple $(D_2,D_2,D_1)$: then
$V(\omega_1)\not\subset V(\omega_2)^{\otimes 2}$, hence $s^{2D_2-D_1}
V_{D_1}\not\subset V_{D_2}^2$. In particular, the multiplication of
sections is not surjective, for all $r\geqslant 4$.

\subsection{Degeneracy of the multiplication}

Roughly speaking, in its nature the previous example does not
express a lack of the multiplication, but rather a lack of the
tensor product. Indeed $V(\omega_1) \not \subset V(\omega_2)^{\otimes
  2}$ but $V(2\omega_1) \subset V(2 \omega_2)^{\otimes 2}$, so that it
expresses the fact that the saturation property does not hold for
groups of type $\mathsf{B}$: similar things cannot happen if $G$ is of type
$\mathsf{A}$ and, conjecturally, whenever $G$ is simply laced. It is worth
noticing that the same situation holds if we consider the
multiplication of the wonderful variety considered in previous
example: $s^{2D_2-D_1} V_{D_1}\not\subset V_{D_2}^2$ but
$s^{4D_2-2D_1} V_{2D_1} \subset V_{2D_2}^2$.

We briefly recall what the saturation property is. Let $G$ be a simply
connected almost simple algebraic group. We say that the \textit{saturation
  property} holds for $G$ if, whenever $d >0$ and $\lambda, \mu, \nu \in
\Lambda^+$ are such that $\nu \leqslant \lambda + \mu$ and $V(d\nu) \subset
V(d\lambda) \otimes V(d \mu)$, then it holds also $V(\nu) \subset V(\lambda)
\otimes V(\mu)$. In \cite{KT} A.~Knutson and T.~Tao showed that the saturation
property holds if $G$ is of type $\mathsf{A}$, while in \cite{KM} M.~Kapovich and J.~Millson
conjectured that the saturation property holds whenever $G$ is simply laced.

We want to consider the saturation property in the more general
context of the multiplication law attached to a wonderful variety, the
classical case corresponding to the wonderful compactification of an
adjoint group (\cite[Lemma~3.1]{Ka}). We say that the \textit{saturation property} holds for
a wonderful variety $M$ with set of colors $\Delta$ and set of spherical
roots $\Sigma$ if, whenever $d>0$ and $D,E,F \in \mathbb{N} \Delta$ are such
that $D \leqslant_\Sigma E+F$ and $s^{d(E+F-D)} V_{dD} \subset V_{dE}V_{dF}$,
then it holds also the inclusion $s^{E+F-D} V_D \subset V_E V_F$.

Suppose that $M$ is a wonderful variety and let $E,F \in
\mathbb{N}\Delta$. Then the following inclusion holds
\[
	V_E V_F \subset \bigoplus_{\substack{D \in \mathbb{N}\Delta \, : \, D \leqslant_\Sigma E+F\\ 
                                             V(\omega_D) \subset V(\omega_E) \otimes V(\omega_F)}} 
        s^{E+F-D}V_D.
\]
If the equality holds in the previous inclusion, then we say that the
product $V_E V_F$ is \textit{non-degenerate}, while if the
equality holds for every $E,F \in \mathbb{N}\Delta$ then we say that
the multiplication of $M$ is \textit{non-degenerate}. It is
easy to show that if the multiplication of $M$ is
non-degenerate and if the saturation property holds for $G$,
then the latter holds for $M$ as well.

Another equivalent description of the multiplication follows
by identifying sections of line bundles on $M$ with $H$-semiinvariant
functions on $G$, $H$ acting on the right.
More precisely, given $E,F \in \mathbb{N}\Delta$,
we may identify $\Gamma(M,\mathcal L_E)$, $\Gamma(M,\mathcal L_F)$ and $\Gamma(M,\mathcal L_{E+F})$
with submodules of $\mathbb{C}[G]^{(H)}$, and the multiplication of sections
$\Gamma(M,\mathcal L_E) \otimes \Gamma(M,\mathcal L_F) \longrightarrow \Gamma(M,\mathcal L_{E+F})$
coincides with the restriction of the multiplication defined in $\mathbb{C}[G]^{(H)}$.
Therefore we regard the modules $V_E$, $V_F$, $V_E V_F$ inside $\mathbb{C}[G]^{(H)}$,
and the multiplication is the multiplication of functions.

\begin{example}
Let $G = \SL(2)$ and consider the basic case of
the rank one wonderful variety $M = \mathbb{P}^1 \times \mathbb{P}^1$,
whose generic stabilizer is the maximal torus $T$.
Then $\Sigma = \{\alpha\}$ is the unique simple root of $G$
and $\Delta = \{D^+, D^-\}$, where $D^+ = \mathbb{P}^1 \times \{[1,0]\}$
and $D^- = \{[1,0]\} \times \mathbb{P}^1$.
Moreover $\alpha = D^+ + D^-$ and
$\omega_{D^+} = \omega_{D^-} = 1$ equal the fundamental weight of $G$.

Given $n \in \mathbb{N}$, identify the simple $G$-module $V(n)$ of highest weight $n$
with $\mathbb{C}[x,y]_n$, the set of homogeneous polynomials of degree $n$ in two variables.
Given $aD^+ +bD^- \in \mathbb{N}\Delta$, the corresponding $T$-eigenvector
is $h(a,b) = x^a y^b$.

The projection $\pi_m \colon V(a+b) \otimes V(c+d) \longrightarrow V(a+b+c+d-2m)$ 
is described on the basis of $T$-eigenvectors as follows:
\[x^a y^b \otimes x^c y^d  \longmapsto  
\sum_{u+v=m} (-1)^u u! v! {a \choose u} {b \choose v} {c \choose v} {d \choose u} x^{a+c-m} y^{b+d-m}. \]
As shown by the following two examples, the multiplication of $M$ is degenerate.
\begin{itemize}
	\item[-] Consider $D^+ + D^- \leqslant_\Sigma  2(D^+ + D^-)$.
		Then $V(2) \subset V(2)^{\otimes 2}$ but $\pi_1(h(1,1) \otimes h(1,1)) = 0$,
		so that	$V_{D^+ +D^-} \not \subset V_{D^+ +D^-}^2$.
		This can also be explained since $V(2)$ is not contained
		in the second symmetric power of $V(2)$.
	\item[-]  Consider $2D^+ + D^- \leqslant_\Sigma  (3D^+ + D^-) + (D^+ +2D^-)$.
		Then $V(3) \subset V(4) \otimes V(3)$ but $\pi_2(h(3,1) \otimes h(1,2)) = 0$,
		 so that $V_{2D^+ +D^-} \not \subset V_{3D^+ +D^-} V_{D^+ +2D^-}$.\\
\end{itemize}
\end{example}

More generally, the multiplication is degenerate whenever $\Sigma \cap S \neq \varnothing$.
This can be reduced to the basic case of $\SL(2)/T$
as shown in the following proposition. 

\begin{proposition}	\label{prop: inducing-degenericity}
Suppose that $S \cap \Sigma \neq \varnothing$.
Then the multiplication is degenerate.
\end{proposition}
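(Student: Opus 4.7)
Assume $\alpha \in S \cap \Sigma$. The strategy is to reduce to the basic $\mathrm{SL}(2)$ example $M_0 = \mP^1 \times \mP^1$ treated just above, where the multiplication was shown degenerate by an explicit computation of the projection $\pi_m$. The reduction will combine localization (to kill all spherical roots but $\alpha$) and parabolic induction (to cut down from $G$ to an $\mathrm{SL}(2)$-factor).

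First, form the localization $N \subset M$ obtained by intersecting the boundary divisors $M^\sigma$ for $\sigma \in \Sigma \setminus \{\alpha\}$; this is a rank-one wonderful $G$-variety whose unique spherical root is $\alpha$. Since $\mathrm{Supp}_S(\alpha) \subsetneq S$ (in the nontrivial cases where there are other simple roots), the criterion at the end of Section~1.5 applies and $N$ is the parabolic induction of a rank-one wonderful $L_{\mathrm{ad}}$-variety $N_0$, where $L$ has semisimple part $\mathrm{SL}(2)_\alpha$; the structure of rank-one wonderful $\mathrm{SL}(2)$-varieties with $\alpha$ as spherical root forces $N_0 \cong \mP^1 \times \mP^1$ (the case already treated in the preceding example).

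Next, transfer the degeneracy from $N_0$ up to $N$. Take on $\mP^1\times\mP^1$ the triple $(D_0,E_0,F_0)=(2D^++D^-,\,3D^++D^-,\,D^++2D^-)$ from the second item of the example: here $V(\omega_{D_0}) = V(3) \subset V(4)\otimes V(3) = V(\omega_{E_0})\otimes V(\omega_{F_0})$ and $D_0 \leq_\Sigma E_0+F_0$, yet the explicit computation of $\pi_2(h(3,1)\otimes h(1,2))=0$ shows that $s^{E_0+F_0-D_0}W_{D_0} \not\subset W_{E_0}W_{F_0}$. By Proposition~\ref{prop: ind-par} the corresponding inclusion fails also on $N$: the triple $(D,E,F)$ in $\mN\grD(N)$ obtained from $(D_0,E_0,F_0)$ via the identification $\Pic_L(N_0)\hookrightarrow \Pic(N)$ witnesses the degeneracy of the multiplication on $N$, i.e.\ $V(\omega_D)\subset V(\omega_E)\otimes V(\omega_F)$ but $s^{E+F-D}V_D\not\subset V_EV_F$.

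Finally, lift the degeneracy from $N$ to $M$. Identify $\Pic(N)$ with the sublattice of $\Pic(M)$ generated by the colors whose intersection with $N$ is still a divisor in $N$; pick $\tilde E, \tilde F, \tilde D \in \mN\grD(M)$ lifting $E,F,D$ along this identification. On the closed orbit (which is the same in $M$ and $N$) the weights $\omega_{\tilde E}, \omega_{\tilde F}, \omega_{\tilde D}$ agree with $\omega_E, \omega_F, \omega_D$, so the hypothesis $V(\omega_{\tilde D}) \subset V(\omega_{\tilde E}) \otimes V(\omega_{\tilde F})$ is preserved. On the other hand, by Lemma~\ref{lemma: supporto moltiplicazione} the failure of $s^{E+F-D}V_D \subset V_E V_F$ on $N$ is equivalent to the vanishing of the projection of $h_E \otimes h_F$ onto the isotypic component $V(\omega_D^*)$; since the spherical vectors $h_{\tilde E}, h_{\tilde F}$ on $M$ may be chosen to restrict to $h_E,h_F$ on $N$, this projection vanishes on $M$ as well, yielding the desired degeneracy of $m_{\tilde E,\tilde F}$ on $M$.

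\textbf{Main obstacle.} The delicate step is the last one: matching divisor classes on $M$ with those on the localization $N$ in a way that preserves the highest-weight data on the closed orbit and preserves the spherical vectors $h_{\tilde D}$. Unlike parabolic induction, where Proposition~\ref{prop: ind-par} provides an exact dictionary for the multiplication, localization only gives surjections on spaces of sections; the lift $(\tilde E,\tilde F,\tilde D)$ has to be selected so that the relevant $H$-eigenvectors on $M$ restrict nontrivially to the $H\cap L$-eigenvectors on $N$, and so that the non-containment really propagates upward and is not destroyed by additional isotypic components appearing on $M$ but not on $N$.
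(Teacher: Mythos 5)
Your first two steps coincide with the paper's: intersect the boundary divisors $M^\sigma$ for $\sigma \neq \alpha$ to obtain a rank-one localization $N = M'$ which is a parabolic induction of $\mP^1\times\mP^1$, and then use the explicit $\SL(2)$ computation together with Proposition~\ref{prop: ind-par} to exhibit a degenerate triple on $N$. That much is correct and is exactly how the paper begins. The gap is precisely where you flag it as the ``main obstacle,'' namely the passage from $N$ back to $M$, and you do not actually resolve it: you only describe what a resolution would have to achieve.

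Two concrete problems. First, you assume lifts $\tilde D,\tilde E,\tilde F \in \mN\grD(M)$ with $\rho(\tilde D)=D$, $\rho(\tilde E)=E$, $\rho(\tilde F)=F$ exist, but the restriction map $\rho:\mN\grD(M)\to\mN\grD(N)$ need not be surjective. The paper handles this by only asking for $E_0',F_0'\in\mN[\grD'\setminus\grD'(\alpha)]$ and $E,F\in\mN\grD$ with $\rho(E)=E'+E_0'$ and $\rho(F)=F'+F_0'$, and then invoking Corollary~\ref{cor: moduli ortogonali} to show that multiplying by such ``orthogonal'' colors $E_0',F_0'$ does not alter the containment question on $N$; without this normalization-and-cancel device, the lifted triple need not exist. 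Second, your argument that ``the spherical vectors $h_{\tilde E},h_{\tilde F}$ on $M$ may be chosen to restrict to $h_E,h_F$ on $N$'' does not make sense as stated: $h_E$ is an $H$-eigenvector in the abstract $G$-module $V(\gro_E^*)$, not a section, and the generic stabilizers $H_M$ and $H_N$ differ (localization changes the open orbit, hence $H$), so there is no restriction of spherical vectors, and the nonvanishing of the projection in Lemma~\ref{lemma: supporto moltiplicazione} is in principle a different statement for $M$ and for $N$. The paper sidesteps this entirely by never comparing spherical vectors: it restricts sections along the closed embedding $N\subset M$, where restriction of sections of globally generated line bundles is surjective and commutes with multiplication, so $V_D\subset V_E V_F$ on $M$ directly forces $V_{\rho(D)}\subset V_{\rho(E)}V_{\rho(F)}$ on $N$. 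Running the paper's argument as a contradiction (assume $M$ non-degenerate, deduce $N$ non-degenerate, contradict the $\mP^1\times\mP^1$ example via Proposition~\ref{prop: ind-par}) makes the transfer one-directional and avoids the issues your forward lift runs into.
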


\begin{proof}
Let $\alpha \in S \cap \Sigma$ and suppose that the multiplication is non-degenerate.
Consider the rank one wonderful subvariety $M'$ defined by intersecting all
the $G$-stable prime divisors $M^\sigma$ with $\sigma \in \Sigma \smallsetminus \{\alpha\}$,
denote $\Delta'$ its set of colors. Then $M'$ is quotient of a parabolic induction of a
wonderful variety for $\SL(2)$ isomorphic to $\mathbb{P}^1 \times \mathbb{P}^1$,
whose generic stabilizer is a maximal torus of $\SL(2)$
and whose set of colors is identified with
$\Delta'(\alpha) = \{D \in \Delta' \, : \, P_\alpha D \neq D\}$,
where $P_\alpha$ denotes the minimal parabolic associated to $\alpha$.
It follows then by Corollary~\ref{cor: moduli ortogonali} that,
for every $D' \in \mathbb{N}[\Delta' \smallsetminus \Delta'(\alpha)]$,
$V_{D'}V_{E'}=V_{D'+E'}$ for all $E' \in \mathbb{N} \Delta'$.

Let $D', E', F' \in \mathbb{N}\Delta'$ be such that $D' \leqslant_{\Sigma'} E' + F'$
and $V(\omega_{D'}) \subset V(\omega_{E'}) \otimes V(\omega_{F'})$.
Denote $\rho \colon \Pic(M) \longrightarrow \Pic(M')$ the restriction.
From the combinatorics of colors and spherical roots it follows that
there exist $E'_0, F'_0 \in \mathbb{N}[\Delta' \smallsetminus \Delta'(\alpha)]$ and $E,F \in \mathbb{N}\Delta$
such that $E'+E'_0 = \rho(E)$ and $F' +F'_0 = \rho(F)$.
Let $D =E+F-(E'+F'-D')$ and notice that $D \in \mathbb{N}\Delta$ and $D'+E'_0+F'_0 = \rho(D)$.
Moreover the inclusion $V(\omega_{D'}) \subset V(\omega_{E'}) \otimes V(\omega_{F'})$
implies that $V(\omega_D) \subset V(\omega_E) \otimes V(\omega_F)$,
hence the non-degeneracy of the multiplication of $M$
implies that $V_D \subset V_E V_F$ and it follows
$V_{D'+E'_0+F'_0} \subset V_{E'+E'_0} V_{F'+F'_0}$.
On the other hand, by Corollary~\ref{cor: moduli ortogonali}, 
$V_{D'+E'_0+F'_0} \subset V_{E'+E'_0} V_{F'+F'_0}=V_{E'}V_{F'}V_{E'_0+F'_0}$ if and only if 
$V_{D'} \subset V_{E'} V_{F'}$.

Therefore, we have deduced that the multiplication of $M'$ is non-degenerate,
but this is a contradiction by the previous example
together with Proposition~\ref{prop: ind-par}.
\end{proof}

\begin{remark}
More generally, if $M$ is a wonderful variety whose multiplication is non-degenerate and if $M' \subset M$ is a localization of $M$, notice that the multiplication of $M'$ is non-degenerate as well. If indeed $\rho : \Pic(M) \longrightarrow \Pic(M')$ is the restriction of line bundles and if $D,E \in \mathbb{N} \Delta$, then $\rho(V_D V_E) = V_{\rho(D)} V_{\rho(E)}$, and if $V_F \subset V_D V_E$, then $\rho(V_F) \neq 0$ if and only if $\supp_\Sigma(D+E-F) \subset \Sigma'$ (where $\Sigma' \subset \Sigma$ are the spherical roots of $M'$). On the other hand, given $E', F' \in \mathbb{N} \Delta'$, by making use of Corollary \ref{cor: moduli ortogonali}, one can reduce the description of the product $V_{E'} V_{F'}$ to that one corresponding to a pair $E'',F'' \in \rho (\mathbb{N} \Delta)$ (where $\Delta'$ is the set of colors of $M'$).

As well, if $M \longrightarrow M'$ is a quotient of $M$ by a distinguished set of colors and if $H \subset H'$ are the corresponding spherical subgroups, by Corollary \ref{cor: sezioni e quozienti} it follows that the multiplication of $M'$ is non-degenerate if that of $M$ is non-degenerate. Indeed, if we identify the set of colors $\Delta'$ of $M'$ with a subset of $\Delta$, then $V_E V_F \subset \mathbb C[G]^{(H')} \subset \mathbb C[G]^{(H)}$ for all $E,F \in \mathbb{N}\Delta'$.

By using Proposition \ref{prop: ind-par}, similar statements are easily proved for parabolic inductions and localizations at a parabolic subgroup (see e.g.\ \cite[1.1.4]{BL} for the definition of the latter). Therefore the non-degeneracy property of the multiplication of a wonderful variety is stable under the standard operations of localization, quotient by a distinguished subset of colors, parabolic induction, localization at a parabolic subgroup.
\end{remark}

Given a spherical subgroup $H \subset G$, define its \textit{spherical
closure} $\overline{H}$ as the kernel of the action of the normalizer
$N_G(H)$ on the set of colors of $G/H$. If $H$ is equal to its spherical closure, then
we say that $H$ is \textit{spherically closed}.
By a theorem of F.~Knop
\cite[Corollary~7.6]{Kn}, if $H$ is spherically closed then $G/H$ admits
a wonderful compactification. If $G$ is not simply
laced, then not spherically closed spherical subgroups $H$ 
such that $G/H$ admits a wonderful compactification exist.
The projection $G/H \longrightarrow G/\overline{H}$ canonically
identifies the colors of $G/\overline{H}$ with those of $G/H$.

Generally speaking, if $M$ is the wonderful compactification of $G/H$ 
where $H$ is not spherically closed,
then the multiplication may be degenerate.
Indeed, it is easy to show that $\mathbb{C}[G]^{(H)} = \mathbb{C}[G]^{(\overline{H})}$,
therefore, if $\overline{\Sigma}$ is the set of spherical roots of $G/\overline{H}$ and if
$D \in \mathbb{N}\Delta$ is such that $V_D \subset V_E V_F$, then we must have
$D\leqslant_{\overline{\Sigma}} E+F$. In this way we may construct examples
of non-spherically closed spherical subgroups $H$ of $G$ 
(possessing no simple spherical roots)
whose associated multiplication is degenerate.

\begin{example} \label{ex: moltiplicazione non-sfer-closed}
Consider the non-adjoint symmetric wonderful variety $M$ for
$\Sp(8)$ with spherical roots $\sigma_1 = \alpha_1 + 2 \alpha_2 +
\alpha_3, \sigma_2 = \alpha_3 + \alpha_4$. Then $M$ possesses two colors $D_2$
and $D_4$, where $\omega_{D_2} = \omega_2$ and $\omega_{D_4} = \omega_4$. Then
we have $D_2 <_\Sigma 2D_2$ and $V(\omega_2) \subset V(\omega_2)^{\otimes
  2}$, on the other hand $\overline{\Sigma} = \{\sigma_1 , 2\sigma_2\}$ and $2D_2 -
D_2 = \sigma_1 + \sigma_2 \not \in \mathbb{N} \overline{\Sigma}$, therefore it cannot be
$V_{D_2} \subset V_{D_2}^2$.
\end{example}

\begin{proposition}	\label{prop: semigruppo-differenze}
Let $\mathscr{M} \subset \mathbb{N}\Sigma$ be the semigroup generated by the set of differences $\{E+F-D \, : \, V_D \subset V_E V_F \}$.
Then $\mathscr{M} = \mathbb{N} \overline{\Sigma}$.
\end{proposition}

\begin{proof}
As already noticed we have $\mathscr M \subset \mathbb{N} \overline{\Sigma}$. Here we show that indeed $\mathscr{M} \supset \overline{\Sigma}$.
Since $\mathbb{C}[G]^{(H)} = \mathbb{C}[G]^{(\overline{H})}$ we may assume that $H$ is spherically closed. 
Let $\sigma \in \Sigma$, proceeding by localization and parabolic induction as in Proposition~\ref{prop: inducing-degenericity} 
we may assume that $M$ is a cuspidal (i.e.\ parabolic induction of no other wonderful variety) rank one wonderful variety 
with spherical root $\sigma$ (see \cite[Table 1]{Was} for a complete list of such varieties). 
With three exceptions (cases 9B, 9C, 15 in \cite{Was}) such wonderful varieties are all flag varieties 
for their automorphism group, so that they are projectively normal. 
Being $H$ spherically closed, notice that apart from the cases 9B, 7C, 9C, 12, 15 in \cite{Was} 
we have that $\sigma = D + E$ for some $D,E \in \Delta$: 
then $(0,D,E)$ is a low triple and the projective normality of $M$ implies that $V_0 \subset V_D V_E$, 
hence $\sigma \in \mathscr M$. 
Moreover, since the case 7C is a quotient of the case 9C, we are reduced to the cases 9B, 7C, 12 and 15, 
where the following inclusions may be checked directly (we index the colors as the corresponding fundamental weights):
\begin{itemize}
	\item[9B)] $G$ is of type $\mathsf{B}_r$, $\sigma = \alpha_1 + \ldots + \alpha_r = D_1$ and we have $V_{D_r} \subset V_{D_1} V_{D_r}$.
	\item[7C)] $G$ is of type $\mathsf{C}_r$, $\sigma = \alpha_1 + 2\alpha_2 + \ldots + 2\alpha_{r-1} +\alpha_r = D_2$ and we have $V_{D_2} \subset V_{D_2}^2$.
	\item[12)] $G$ is of type $\mathsf{F}_4$, $\sigma = \alpha_1 + 2\alpha_2 + 3 \alpha_3 + 2\alpha_4 = D_4$ and we have $V_{D_4} \subset V_{D_4}^2$.
	\item[15)] $G$ is of type $\mathsf{G}_2$, $\sigma = \alpha_1 + \alpha_2 = D_2 - D_1$ and we have $V_{2D_1} \subset V_{D_1} V_{D_2}$.
\end{itemize}
\end{proof}

Suppose now that $M$ is a strict wonderful variety and suppose that
$E,F \in \mathbb{N}\Delta$ are such that $E+F$ is not faithful. The following
example shows that the product $V_E V_F$ may be degenerate,
essentially reducing to a not spherically closed case.

\begin{example} \label{ex: moltiplicazione modello C4}
Let $M$ be the model wonderful variety of type $\mathsf{C}_4$. 
Then $D_2 <_\Sigma 2D_2$
and $V(\omega_2) \subset V(\omega_2)^{\otimes 2}$
(more precisely, $V(\omega_2)$ is also contained in
the second symmetric power of $V(\omega_2)$).
Notice that $2D_2$ is not faithful: the maximal distinguished subset of $\Delta$ which does
not intersect the support of $2D_2$ is $\Delta^* = \{D_1, D_3\}$, and
the quotient $M/\Delta^*$ is the symmetric wonderful variety $M'$
considered in Example~\ref{ex: moltiplicazione non-sfer-closed}, whose
colors we still denote by $D_2$ and $D_4$. 
By Corollary~\ref{cor: sezioni e quozienti} we may identify $\Gamma(M,\mathcal L_{D_2}) =
\Gamma(M',\mathcal L_{D_2})$ and $\Gamma(M,\mathcal L_{2D_2}) =
\Gamma(M',\mathcal L_{2D_2})$, so that we may regard the product $V_{D_2}^2$
inside $\Gamma(M',\mathcal L_{2D_2})$ and by 
Example~\ref{ex: moltiplicazione non-sfer-closed} it follows that
$V_{D_2} \not \subset V_{D_2}^2$.\\
\end{example}

\begin{question} Let $M$ be a strict wonderful variety and
let $E,F \in \mathbb{N} \Delta$ be such that $E+F$ is a faithful divisor. Is
the product $V_E V_F$ non-degenerate?
\end{question}

Suppose that $G$ is simply laced: then the class of the strict wonderful
varieties is stable with respect to the operation of quotient by a
distinguished set of colors, so phenomena as that in Example~\ref{ex:
  moltiplicazione modello C4} cannot happen. Therefore, if the answer
to the previous question were affirmative, proceeding by induction on the
rank of $M$, it would follow that the multiplication is non-degenerate
whenever $M$ is a strict wonderful variety for a simply laced group.

\bibliographystyle{amsplain}

\begin{thebibliography}{99}

\bibitem{AHV}
{ J.~Adams, J.-S.~Huang \and D.A.~Vogan Jr.},
\newblock{\em Functions on the model orbit in ${\sf E}_8$},
\newblock Represent.\ Theory \textbf{2} (1998) 224--263.

\bibitem{Ak}
D.~N.~Akhiezer,
\newblock {\em Equivariant completions of homogeneous algebraic varieties by homogeneous divisors},
\newblock Ann. Global Anal. Geom. \textbf{1} (1983) 49--78.

\bibitem{BV}
{ D.~Barbasch \and D.A.~Vogan Jr.}, 
\newblock{\em Unipotent representations of complex semisimple groups},
\newblock Ann.\ of Math.\ \textbf{121} (1985) 41--110. 

\bibitem{BGG} 
{ J.N.~Bernstein, I.M.~Gel'fand \and S.I.~Gel'fand},
\newblock{\em Models of representations of compact Lie groups (Russian)},
\newblock Funktsional.\ Anal.\ i Prilozhen.\ \textbf{9} (1975), no.\ 4, 61--62;
English translation in: Funct.\ Anal.\ Appl.\ \textbf{9} (1975) 322--324.

\bibitem{Bra}
{ P.~Bravi},
\newblock {\em Primitive spherical systems},
\newblock Trans.\ Amer.\ Math.\ Soc. \textbf{365} (2013) 361--407.

\bibitem{BCF}
{ P.~Bravi \and S.~Cupit-Foutou},
\newblock {\em Classification of strict wonderful varieties},
\newblock Ann.\ Inst.\ Fourier (Grenoble) \textbf{60} (2010) 641--681.

\bibitem{BGMR}
{ P.~Bravi, J.~Gandini, A.~Maffei \and A.~Ruzzi},
\newblock {\em Normality and non-normality of group compactifications in simple projective spaces},
\newblock Ann.\ Inst.\ Fourier (Grenoble) \textbf{61} (2011) 2435--2461.

\bibitem{BL}
{ P.~Bravi \and D.~Luna},
\newblock {\em An introduction to wonderful varieties with many examples of type ${\sf F}_4$},
\newblock J.\ Algebra \textbf{329} (2011) 4--51.

\bibitem{BP}
{ P.~Bravi \and G.~Pezzini},
\newblock {\em Wonderful varieties of type $\sf D$},
\newblock Represent.\ Theory \textbf{9} (2005) 578--637.

\bibitem{Br1}
{ M.~Brion},
\newblock {\em Groupe de Picard et nombres caract\'eristiques des vari\'et\'es sph\'eriques},
Duke Math.\ J.\ \textbf{58} (1989) 397--424. 

\bibitem{Br2}
{ M.~Brion},
\newblock {\em The total coordinate ring of a wonderful variety},
\newblock J.\ Algebra \textbf{313} (2007) 61--99. 

\bibitem{Bro}
{A.~Broer},
\newblock {\em Decomposition varieties in semisimple Lie algebras},
\newblock Canad.\ J.\ Math.\ \textbf{50} (1998) 929--971. 

\bibitem{CDM}
{ R.~Chiriv\`\i, C.~De Concini \and A.~Maffei},
\newblock {\em On normality of cones over symmetric varieties},
\newblock Tohoku Math.\ J.\ (2) \textbf{58} (2006) 599--616.

\bibitem{CLM} 
{ R.~Chiriv\`\i, P.~Littelmann \and A.~Maffei},
\newblock{\em Equations defining symmetric varieties and affine Grassmannians},
Int.\ Math.\ Res.\ Not.\ \textbf{2009} 291--347. 

\bibitem{CM} 
{ R.~Chiriv\`\i\ \and A.~Maffei},
\newblock{\em  Projective normality of complete symmetric varieties},
Duke Math.\ J. \textbf{122} (2004) 93--123.

\bibitem{CM2} 
{ R.~Chiriv\`\i\ \and A.~Maffei},
\newblock{\em  Pl\"ucker relations and spherical varieties: application to model varieties},
\newblock Transform.\ Groups \textbf{19} (2014) 979--997. 

\bibitem{Col}
{ D.H.~Collingwood \and W.M.~McGovern}, 
\newblock {\em Nilpotent orbits in semisimple {L}ie algebras}, 
\newblock Van Nostrand Reinhold Mathematics Series, Van Nostrand
  Reinhold Co., New York, 1993.

\bibitem{Cos} 
{M.~Costantini},
\newblock {\em On the coordinate ring of spherical conjugacy classes},
\newblock Math.\ Z.\ \textbf{264} (2010) 327--359.

\bibitem{CDC}
{ C.~De~Concini},
\newblock {\em Normality and non normality of certain semigroups and orbit closures},
\newblock in: Algebraic transformation groups and algebraic varieties,
Encyclopaedia Math.\ Sci.\ \textbf{132}, Springer, Berlin, 2004, pp.\ 15--35.

\bibitem{CP}
{ C.~De~Concini \and C.~Procesi},
\newblock{\em Complete symmetric varieties},
\newblock in: Invariant theory (Montecatini, 1982), Lecture Notes in Math.\ \textbf{996},  
Springer, Berlin, 1983, pp.\ 1--44.

\bibitem{dG}
{ W.~de~Graaf},
\newblock{\em Constructing representations of split semisimple Lie algebras},
\newblock J.\ Pure Appl.\ Algebra \textbf{164} (2001) 87--107. 

\bibitem{Ga}
{ J.~Gandini},
\newblock{\em Spherical orbit closures in simple projective spaces and their normalizations},
\newblock Transform.\ Groups \textbf{16} (2011) 109--136. 

\bibitem{gap}
{ The GAP Group, Aachen, St Andrews},
\newblock {\sf GAP}--{\em Groups, Algorithms, and Programming - a System for Computational Discrete Algebra},
\newblock Version 4.5.4, 2012. http://www-gap.dcs.st-and.ac.uk/$\sim$gap

\bibitem{GZ1}
{ I.M.~Gel'fand \and A.V.~Zelevinsky},
\newblock{\em Models of representations of classical groups and their hidden symmetries (Russian)},
\newblock  Funktsional.\ Anal.\ i Prilozhen.\ \textbf{18} (1984), no.\ 3, 14--31;
English translation in: Funct.\ Anal.\ Appl.\ \textbf{18} (1984) 183--198. 

\bibitem{GZ2} 
{ I.M.~Gel'fand \and A.V.~Zelevinsky}, 
\newblock{\em Representation models for classical groups and their higher symmetries}, 
\newblock in: The mathematical heritage of \'Elie Cartan 
(Lyon, 1984), Ast\'erisque 1985, Numero Hors Serie, pp.\ 117--128.

\bibitem{He}
{ W.~Hesselink},
\newblock{\em The normality of closures of orbits in a Lie algebra},
\newblock Comment.\ Math.\ Helv.\ \textbf{54} (1979) 105--110.

\bibitem{Ka} 
{ S.S.~Kannan},
\newblock {\em Projective normality of the wonderful compactification of semisimple adjoint groups},
\newblock Math.\ Z.\ \textbf{239} (2002) 673--682.

\bibitem{KM} 
{ M.~Kapovich \and J.~Millson},
\newblock{\em Structure of the tensor product semigroup},
Asian J.\ Math.\ \textbf{10} (2006) 493--539.

\bibitem{Kn}
{ F.~Knop},
\newblock {\em Automorphisms, root systems, and compactifications of homogeneous varieties},
\newblock J.\ Amer.\ Math.\ Soc.\ \textbf{9} (1996) 153--174.

\bibitem{KKV}
{ F.~Knop, H.~Kraft \and T.~Vust}, 
\newblock {\em The Picard group of a $G$-variety},
\newblock in: Algebraische Transformationsgruppen und Invariantentheorie, 
DMV Sem.\ \textbf{13}, Birkh\"auser, Basel, 1989, pp.\ 77--87.

\bibitem{KT}
{ A.~Knutson \and T.~Tao},
\newblock{\em The honeycomb model of $\GL_n(\mathbb{C})$ tensor products I. Proof of the saturation conjecture},
\newblock J.\ Amer.\ Math.\ Soc.\ \textbf{12} (1999) 1055--1090.

\bibitem{KR}
{ B.~Kostant \and S.~Rallis},
\newblock {\em Orbits and representations associated with symmetric spaces},
\newblock Amer. J. Math. \textbf{93} (1971) 753--809.

\bibitem{Ku}
{ S.~Kumar},
\newblock{\em Tensor product decomposition},
\newblock in: Proceedings of the International Congress of Mathematicians.
Volume III, Hindustan Book Agency, New Delhi, 2010, pp.\ 1226--1261.

\bibitem{Lu1}
{ D.~Luna},
\newblock {\em Toute vari\'et\'e magnifique est sph\'erique},
\newblock  Transform.\ Groups \textbf{1} (1996) 249--258.

\bibitem{Lu2}
{ D.~Luna},
\newblock {\em Vari\'et\'es sph\'eriques de type ${\sf A}$},
\newblock  Publ.\ Math.\ Inst.\ Hautes \'Etudes Sci.\ \textbf{94} (2001) 161--226.

\bibitem{Lu3}
{ D.~Luna},
\newblock{\em La vari\'et\'e magnifique mod\`ele},
\newblock J.\ Algebra \textbf{313} (2007) 292--319.

\bibitem{Ma}
{ A.~Maffei},
\newblock{\em Orbits in degenerate compactifications of symmetric varieties},
\newblock Transform.\ Groups \textbf{14} (2009) 183--194.

\bibitem{Pa} 
{ D.I.~Panyushev},
\newblock{\em Some amazing properties of spherical nilpotent orbits},
\newblock Math.\ Z.\ \textbf{245} (2003) 557--580

\bibitem{Pe}
{ G.~Pezzini},
\newblock{\em Simple immersions of wonderful varieties},
\newblock Math.\ Z.\ \textbf{255} (2007) 793--812.

\bibitem{R}
{ R.W.~Richardson}, 
\newblock{\em On orbits of algebraic groups and Lie groups}, 
\newblock Bull.\ Austral.\ Math.\ Soc.\ \textbf{25} (1982) 1-28.

\bibitem{St}
{ J.R.~Stembridge},
\newblock{\em The partial order of dominant weights},
Adv.\ Math.\ \textbf{136} (1998) 340--364.

\bibitem{Ti}
{ D.A.~Timashev},
\newblock{\em Homogeneous spaces and equivariant embeddings},
\newblock Encyclopaedia Math.\ Sci.\ \textbf{138}, Springer, Heidelberg, 2011.

\bibitem{Vogan}
{ D.A.~Vogan Jr.},
\newblock{\em The method of coadjoint orbits for real reductive groups},
\newblock in: Representation theory of Lie groups (Park City, UT, 1998), 
IAS/Park City Math. Ser. \textbf{8}, Amer. Math. Soc., Providence, RI, 2000, pp.\ 179--238.

\bibitem{Was}
B.~Wasserman,
\newblock{\em Wonderful varieties of rank two},
\newblock Transform. Groups \textbf{1} (1996) 375--403. 

\end{thebibliography}

\end{document}